\def\cI{\mathbb{\mathcal I}}
\def\PB{\color{purple}}
\def\IC{\color{red}}
\def\AK{\color{orange}}
\DeclareMathOperator{\dist}{dist}
\DeclareMathOperator{\mad}{mad}
\newcommand{\D}{{\mathcal D}}
\newcommand{\mb}{\mathbf}
\newtheorem{theorem}{Theorem}[section]
\newtheorem{observation}[theorem]{Observation}
\newtheorem{question}[theorem]{Question}
\newtheorem{lemma}[theorem]{Lemma}
\newtheorem{claim}{Claim}[theorem]
\newtheorem{corollary}[theorem]{Corollary}
\theoremstyle{definition}
\def\epsilon{\varepsilon}
\def\phi{\varphi}
\title{\vspace{-0.5in} Flexible DP 3-coloring of sparse multigraphs}
\author{
{{Peter Bradshaw}}\thanks{
\footnotesize{University of Illinois at Urbana--Champaign, Urbana, IL 61801, USA.
 E-mail: \texttt {pb38@illinois.edu}.
 Research %%% of this author
is supported in part by NSF RTG grant DMS-1937241, as well as an AMS-Simons Travel Grant.
}}
\and
{{Ilkyoo Choi}}\thanks{Department of Mathematics, Hankuk University of Foreign Studies, Yongin-si, Gyeonggi-do, Republic of Korea.
 E-mail: \texttt {ilkyoo@hufs.ac.kr}
 and  Discrete Mathematics Group, Institute for Basic Science (IBS), Daejeon, Republic of Korea. 
 Research %%% of this author
is supported by the Hankuk University of Foreign Studies Research Fund, the Institute for Basic Science (IBS-R029-C1), and the National Research Foundation of Korea(NRF) grant funded by the Korea government(MSIT) (RS-2025-23324220).
}
\and
{{Alexandr Kostochka}}\thanks{
\footnotesize {University of Illinois at Urbana--Champaign, Urbana, IL 61801, USA.
 %and Sobolev Institute of Mathematics, Novosibirsk 630090, Russia. 
 E-mail: \texttt {kostochk@illinois.edu}.
 Research %%% of this author
is supported in part by  NSF  Grant DMS-2153507 and by NSF RTG Grant DMS-1937241.
}}
}
\begin{document}

\maketitle

\begin{abstract}
    A \emph{request} on a graph assigns a preferred color to a subset of the vertices.
    A graph $G$ is \emph{$\epsilon$-flexibly $k$-choosable} if for every $k$-list assignment $L$ and every request $r$ on $G$, there is an $L$-coloring such that an $\epsilon$-fraction of the requests are satisfied. 
    This notion was introduced in 2019 by Dvo\v{r}\'ak, Norin, and Postle, who also proved important properties of flexible colorings and posed several natural problems.
    { However, the weighted version of this problem is a special case of the much older problem of fractional hypergraph matchings, introduced by Lov\'asz in 1975.}
 %   Bradshaw and Bi proved that every graph with maximum average degree less than $3$ is $2^{-32}$-flexibly $3$-choosable; the threshold on the maximum average degree is tight.
 %   The same authors also proved (in a different paper) that every graph with maximum average degree less than $\frac{11}{3}$ is $2^{-145}$-flexibly $4$-choosable. 
 %   {\PB I think this result is kind of weak, maybe there is something better to put in the abstract.
%   Maybe we could mention that the Zhu paper showed that certain classes of mad $<3$ graphs attain $\epsilon = \frac 13$, including planar graphs of girth at least $6$.
%    }
 %   {\IC sure - these results seemed the most similar result to ours - but we can certainly change what's written here}

    We  study  flexibly DP-colorable multigraphs. 
    We prove that every loopless multigraph with maximum average degree less than $3$ is $\frac{1}{5}$-flexibly DP $3$-colorable, except for an infinite family of multigraphs that we completely characterize. 
    {The constant $\epsilon = \frac 15$ is best possible in the weighted setting,
    as shown by an infinite family of tight examples.}
    Our result follows from a stronger statement in terms of potential. 
    We also provide a family of graphs that gives a 
    negative answer to a question by Dvo\v{r}\'ak, Norin, and Postle regarding flexibility for list coloring in the setting of DP-coloring.

\bigskip\noindent
{\bf{Mathematics Subject Classification:}}  05C07, 05C15, 05C35.\\
{\bf{Keywords:}}  Flexible coloring, DP-coloring, sparse graphs.
\end{abstract}

\section{Introduction}
\subsection{Background}
%%%%%%%%%%%%%%%%%%%%%%%%%%%%%%%%%%%%
%%%%%%%%%% proper coloring
%%%%%%%%%%%%%%%%%%%%%%%%%%%%%%%%%%%%

 We write {\em graphs} for finite simple graphs, and we say that {\em multigraphs} may have multiple edges, but not loops.
Given a multigraph $G$, let $V(G)$ and $E(G)$ denote the vertex set and edge set, respectively, of $G$. 
A multigraph $G$ has a \emph{proper $k$-coloring} if there is a function $\varphi:V(G)\rightarrow\{1, \ldots, k\}$ such that $\varphi(x)\neq\varphi(y)$ for every edge $xy$. 
In the theory of graph coloring,
a statement involving pre-colored vertices is often easier to prove inductively than a simpler statement;
for example, 
in order to prove that every triangle-free planar graph is $3$-colorable, Gr\"otzsch~\cite{1958Grotzsch} proved that every proper $3$-coloring of a $4$-cycle or a $5$-cycle in a planar graph extends to a proper $3$-coloring.
In these settings,
the goal is  usually to determine whether a pre-coloring of some vertex set extends to a coloring of the entire multigraph.
It is easy to come up with examples where a partial (proper) $k$-coloring of a properly $k$-colorable graph does not extend to the entire graph. 
For instance, a pre-coloring of two vertices at arbitrary distance in a planar graph does not necessarily extend to a proper 4‐coloring~\cite{1978Fisk}.
A natural relaxation of this binary problem is to seek a proper $k$-coloring where a large portion of a set of pre-coloring requests 
is satisfied.
In this relaxed setting,
since we no longer need to satisfy all pre-coloring requests, it is natural to lift the restriction that the pre-coloring requests form a partial proper  coloring.
The complete graph on $k$ vertices shows that
for a $k$-chromatic graph with a set of pre-coloring requests,
no more than a $\frac{1}{k}$-fraction of the requests can be satisfied in general; for example, 
the requests 
may ask for 
all vertices to be colored with the same color.
However, one can prove that a $\frac{1}{k}$-fraction of the requests can always be satisfied by permuting the colors of a proper $k$-coloring.
The problem of finding a coloring that satisfies a large fraction of coloring requests was investigated in the list coloring setting by Dvo\v{r}\'ak, Norin, and Postle~\cite{2019DvNoPo}.
%in 2018.
%{\IC should this be 2019?}
%{\PB Actually I looked at arXiv and the first draft was 2016, and they made an update in 2018. Which year is best?}
%{\IC Usually for published results I just use the year of the paper, but we don't have to.} {\AK We do not need to tell any year, just [10].}

%{\PB If we want to aim for a good journal, would it make the introduction stronger to say that this problem really goes back to Lov\'asz's general fractional matching and fractional cover hypergraph framework from 1975?
%We can describe the problem in this setting by letting $V(H)$ be hyperedges and letting $(H,L)$-colorings be vertices.
%Then, we could argue that the problem has more history and that it is of a similar flavor to parameters with consequences outside of graph coloring, such as fractional clique number, which was used by Lov\'asz to prove famous results about Shannon capacity.
%}
%{\IC sounds good to me. }

% In planar graphs, it is known that any precoloring of a set of vertices at distance at least three from one another extends when at least 5 colors are used \cite{1998Albertson} and for sufficiently distant vertices this holds also in the list coloring setting \cite{2017DvLiMoPo}; 

%%%%%%%%%%%%%%%%%%%%%%%%%%%%%%
%%%%%%%%%% list coloring
%%%%%%%%%%%%%%%%%%%%%%%%%%%%%%

List coloring was introduced by Vizing~\cite{1976Vizing} and independently by Erd\H{o}s, Rubin, and Taylor~\cite{1980ErRuTa}.
A \emph{list assignment} $L$ for a multigraph $G$ is a function on $V(G)$ that assigns a list $L(v)$ of available colors to each vertex $v$. 
An \emph{$L$-coloring}  of $G$  is a proper coloring $\varphi$ such that $\varphi(v)\in L(v)$ for every vertex $v$.
A multigraph is \emph{$k$-choosable} if it has an $L$-coloring for every list assignment $L$ such that $|L(v)|\geq k$ for every vertex $v$. 
Note that the proper $k$-coloring
problem
arises exactly from the case
that all lists have the same set of $k$ available colors. 
Dvo\v{r}\'ak, Norin, and Postle~\cite{2019DvNoPo} formalized the idea of 
a list coloring that satisfies a
positive fraction of pre-coloring requests:
 a \emph{request} on a graph $G$ with a list assignment $L$ is a function $r$ on
some nonempty 
$U\subseteq V(G)$ that requests a color $r(v)\in L(v)$ for each vertex $v\in U$. 
A graph $G$ with a list assignment $L$ is \emph{$\epsilon$-flexible} if for every request $r$ on $G$, there is an $L$-coloring $\varphi$
that satisfies
an $\epsilon$-fraction 
of the requests;
namely, $\varphi(v)=r(v)$ for at least $\epsilon |U|$ vertices. 
% For $\varepsilon>0$, a request $r$ is \emph{$\varepsilon$-satisfiable} if there is an $L$-coloring $\varphi$ such that $\varphi(v)=r(v)$ for at least $\varepsilon|\dom(r)|$ vertices $v\in \dom(r)$. 
% A graph $G$ is \emph{$\varepsilon$-flexible} if $G$ is $\varepsilon$-satisfiable for every list assignment $L$ and every request $r$.
As used in~\cite{2022BrMaSt}, we say a graph is \emph{$\varepsilon$-flexibly $k$-choosable} if it is $\epsilon$-flexible for every $k$-list assignment.
This concept of flexibility attracted the attention of many researchers and has led to a number of interesting questions and results. 

%%%%%%%%%%%%%%%%%%%%%%%%%%%%%%%%%%%%%%%%
%%%%%%%%%% degenerate graphs
%%%%%%%%%%%%%%%%%%%%%%%%%%%%%%%%%%%%%%%%

A graph is \emph{$d$-degenerate} if there is a linear ordering of the vertices such that every vertex has at most $d$ earlier neighbors. 
A simple greedy algorithm shows that every $d$-degenerate graph is $(d+1)$-choosable.
An important and fascinating question posed in~\cite{2019DvNoPo} asks whether $d$-degenerate graphs are $\varepsilon$-flexibly $(d+1)$-choosable. 

\begin{question}[Dvo\v{r}\'ak, Norin, Postle~\cite{2019DvNoPo}]\label{ques:degen}
    For every integer $d\geq 0$, is there an $\epsilon>0$ such that every $d$-degenerate graph is $\epsilon$-flexibly $(d+1)$-choosable? 
\end{question} % actually weighted version

The above question is easily proved when $d\leq 1$
and still open for each $d\geq 2$. 
 In fact, the weaker statement that a single pre-coloring assignment on a $2$-degenerate graph $G$ with a $3$-list assignment $L$ is satisfied by some $L$-coloring of $G$ has a highly nontrivial proof. 
 %(Feel free to delete this if you think it is not necessary.)}
The originators of \Cref{ques:degen} proved
that list size $d+2$ is sufficient, which is $1$ more than the questioned $d+1$, with $\epsilon=\frac{1}{2d^{2d}}$.
(Kaul, Mathew, Mudrock, Pelsmajer~\cite{2024KaMaMuPe}
recently improved this value to $\epsilon = 2^{-(d+1)}$.)
On the other hand, they answered \Cref{ques:degen} in the affirmative for a subclass of $d$-degenerate graphs, namely, for graphs with maximum average degree less than $d+\frac{2}{d+3}$.
Recall that the \emph{maximum average degree} of a graph $G$, denoted $\mad(G)$, is the maximum of $\frac{2|E(H)|}{|V(H)|}$ for every subgraph $H$ of $G$. 

\begin{theorem}[Dvo\v{r}\'ak, Norin, Postle~\cite{2019DvNoPo}]
    For every integer $d\geq 0$, there exists $\epsilon>0$ such that every $d$-degenerate graph is $\epsilon$-flexibly $(d+2)$-choosable. 
    % ($\epsilon=\frac{1}{2d^{2d}}$)
\end{theorem}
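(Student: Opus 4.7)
The plan is to reduce the flexibility problem to constructing a probability distribution on proper $L$-colorings of $G$ with large coordinate-wise marginals. Given any $(d+2)$-list assignment $L$, the goal is to exhibit a distribution on proper $L$-colorings $\phi$ such that for every vertex $v$ and every color $c \in L(v)$ one has $\Pr[\phi(v) = c] \geq \epsilon$ for some constant $\epsilon = \epsilon(d) > 0$. Such a distribution automatically yields $\epsilon$-flexibility: for any request $r$ on $U \subseteq V(G)$, by linearity of expectation $\mathbb{E}[|\{v \in U : \phi(v) = r(v)\}|] \geq \epsilon |U|$, so some coloring in the support satisfies at least $\epsilon|U|$ requests.

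To construct the distribution, fix a degeneracy ordering $v_1, \ldots, v_n$ of $G$ in which each $v_i$ has at most $d$ neighbors in $\{v_1, \ldots, v_{i-1}\}$, and run the random greedy algorithm that processes vertices in this order and at step $i$ picks $\phi(v_i)$ uniformly at random from the set $A_i$ of colors in $L(v_i)$ not yet used on an earlier neighbor of $v_i$. Since $|L(v_i)| \geq d+2$ and $v_i$ has at most $d$ earlier neighbors, $|A_i| \geq 2$ at every step, so the algorithm always produces a proper $L$-coloring. One may assume without loss of generality that $|L(v_i)| = d+2$ for every $i$, so that $|A_i| \leq d+2$ as well.

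For the marginal bound, fix $v = v_i$ and $c \in L(v_i)$, and let $u_1, \ldots, u_k$ be the earlier neighbors of $v_i$ in processing order (so $k \leq d$). At every step $j$, conditioning on any outcome of the preceding steps, $\phi(v_j)$ is drawn uniformly from a set of at least $2$ colors, so the conditional probability of $\{\phi(v_j) = c\}$ is at most $1/2$. Setting $B_t := \{\phi(u_t) \neq c\}$, this pointwise bound combined with the chain rule gives $\Pr[B_1 \cap \cdots \cap B_k] \geq 2^{-d}$; conditioned on this event, $c \in A_i$ and is selected with conditional probability at least $1/(d+2)$, so $\Pr[\phi(v_i) = c] \geq 2^{-d}/(d+2) =: \epsilon$, completing the proof. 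The main subtlety is the chain-rule bookkeeping that converts the pointwise bound $\Pr[\phi(v_j) = c \mid \text{history}] \leq 1/2$ into the conditional statement $\Pr[B_t \mid B_1, \ldots, B_{t-1}] \geq 1/2$; once this is verified by integrating over histories consistent with $B_1, \ldots, B_{t-1}$, the argument is complete. Obtaining sharper values of $\epsilon(d)$ than this naive bound, as in the improvements cited in the excerpt, would presumably require a more clever random process (for instance one that randomizes over orderings, or uses a non-uniform sampling rule) rather than a substantially different strategy.
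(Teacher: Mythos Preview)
The paper does not prove this theorem; it is quoted as a background result of Dvo\v{r}\'ak, Norin, and Postle, with the original constant $\epsilon = 1/(2d^{2d})$ and the later improvement to $\epsilon = 2^{-(d+1)}$ by Kaul, Mathew, Mudrock, and Pelsmajer both mentioned but not reproved. There is therefore no proof in the paper to compare against.

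Your argument is correct. The reduction to a distribution with marginal lower bounds is exactly Lemma~1.5 of the paper (stated there for DP-covers but the list-coloring case is the same). The random greedy analysis is sound: the key point is that the pointwise bound $\Pr[\phi(u_t)=c\mid \mathcal{H}]\le 1/2$ holds for \emph{every} realization $\mathcal{H}$ of the full history up to the step of $u_t$, so by the tower property it survives conditioning on the coarser event $B_1\cap\cdots\cap B_{t-1}$, and the chain rule then gives $\Pr[B_1\cap\cdots\cap B_k]\ge 2^{-k}\ge 2^{-d}$. The final step is likewise correct: on $B_1\cap\cdots\cap B_k$ one has $c\in A_i$ deterministically, and $|A_i|\le d+2$ yields the conditional lower bound $1/(d+2)$ after another application of the tower property. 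Your resulting $\epsilon = 2^{-d}/(d+2)$ already beats the original $1/(2d^{2d})$; the sharper $2^{-(d+1)}$ of Kaul et al.\ requires a refinement of this analysis rather than a new idea, consistent with your closing remark.
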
 % actually weighted version

\begin{theorem}[Dvo\v{r}\'ak, Norin, Postle~\cite{2019DvNoPo}]
    For every integer $d\geq 1$, there exists $\epsilon>0$ such that every graph $G$ with $\mad(G)<d+\frac{2}{d+3}$ is $\epsilon$-flexibly $(d+1)$-choosable. 
\end{theorem}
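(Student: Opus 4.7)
The plan is to proceed by induction via the reducible-configuration method, tracking a potential adapted to the sparsity bound. Rewriting $\mad(G) < d + \frac{2}{d+3}$, the hypothesis is equivalent to
\[
\rho(H) := (d+1)(d+2)\,|V(H)| - 2(d+3)\,|E(H)| > 0 \quad\text{for every nonempty } H \subseteq G.
\]
It will be convenient to reformulate flexibility probabilistically: for every $(d+1)$-list assignment $L$ and every request $r$ with domain $U$, produce a probability distribution $\mathcal{D}$ on $L$-colorings of $G$ such that $\Pr_{\varphi \sim \mathcal{D}}[\varphi(v) = r(v)] \geq \epsilon$ for every $v \in U$. Linearity of expectation then yields an $L$-coloring satisfying at least $\epsilon|U|$ requests, which is what is needed.

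Next, I would run a discharging argument with initial charge $\deg(v) - d - \frac{2}{d+3}$ at each vertex to show that any $G$ satisfying the hypothesis must contain one of a small list of \emph{reducible configurations}. The tight threshold $\frac{2}{d+3}$ is exactly what forces this list to be short and concrete: at minimum, a vertex of degree at most $d-1$, together with one or two configurations involving $d$-vertices whose neighborhoods have controlled degree structure. Any overall minimum-potential subgraph $H$ can also be exploited: its extremality gives uniform lower bounds on internal degrees, which constrains the possible local structures to the ones listed.

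For each reducible configuration I would verify a \emph{reduction lemma}: letting $S$ be the associated small set of vertices, apply induction to $G-S$ (which has strictly smaller potential) to obtain a distribution $\mathcal{D}'$ on $L$-colorings of $G-S$ in which each requested vertex of $G-S$ receives its requested color with probability at least $\epsilon$. Then extend each $\varphi' \sim \mathcal{D}'$ to $S$ by a randomized completion carefully chosen so that for each vertex $v \in S$ carrying a request, $v$ is colored as requested with probability at least some constant $\delta = \delta(d) > 0$, while always yielding a proper $L$-coloring. Taking $\epsilon$ at most the minimum of $\delta$ over the finitely many configurations closes the induction; the base case is the empty (or one-vertex) graph.

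The main obstacle is the reduction step for the tightest configurations. Removing a vertex of degree at most $d-1$ is easy, because at least two colors are always available for the randomized completion, so the requested color is selected with probability at least $\frac{1}{2}$. The delicate case is $d$-vertex configurations, where for a typical $\varphi'$ only one spare color may be available at each deleted vertex; there the randomization must be spread jointly over several deleted vertices together with coordinated color swaps along short alternating walks, so that the requested color is hit with constant probability independently of how $\varphi'$ behaves on the boundary. Ensuring that this constant $\delta$ does not depend on $\mathcal{D}'$ — and hence does not degrade as the induction proceeds — is exactly what the potential-based analysis and the sharp threshold $\frac{2}{d+3}$ are calibrated to provide.
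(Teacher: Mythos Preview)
The paper does not prove this theorem; it is quoted in the introduction as a result of Dvo\v{r}\'ak, Norin, and Postle~\cite{2019DvNoPo} and no argument for it is given here. So there is no ``paper's own proof'' to compare against.

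As for your proposal on its own merits: the overall architecture (potential function matching the $\mad$ bound, probabilistic reformulation of flexibility, discharging to locate reducible configurations, inductive reduction) is indeed the standard one used in~\cite{2019DvNoPo}, and your potential $\rho(H)=(d+1)(d+2)|V(H)|-2(d+3)|E(H)|$ is correctly calibrated. But the proposal as written is a plan rather than a proof. The substantive content of such a result lies precisely in the two places you have left unspecified: (i) the explicit list of reducible configurations forced by the discharging, and (ii) for each configuration, the concrete randomized extension rule together with a verification that every request in the deleted set is met with probability at least some fixed $\delta(d)>0$. Your final paragraph correctly identifies that the $d$-vertex configurations are the hard case, but ``coordinated color swaps along short alternating walks'' is a hope, not an argument; nothing you have written shows that such swaps exist or that they yield a uniform positive probability independent of $\varphi'$. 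Until those two pieces are actually carried out, the induction does not close.
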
 % actualyl weighted

%A closely related question is the  question below, which is more of a meta-question that has been investigated by numerous researchers
%for specific graph classes $\mathcal G$:
%{\IC some paper quote this as DNP, but was it explicitly stated there?}
%{\PB No, but they have many questions of this form, and this seems to be the general motivation of their paper}

%\b<egin{question}
%[Dvo\v{r}\'ak, Norin, Postle~\cite{2019DvNoPo}]
%    If $\mathcal G$ is a specific  graph class where every graph in $\mathcal G$ is $k$-choosable, then is there an $\epsilon>0$ such that every graph in $\mathcal G$ is $\varepsilon$-flexibly $k$-choosable?
%\end{question} % actually weighted version

%{\AK I think, it is a vague question and does not relate much to ours. We may safely delete it.}
%{\IC I don't mind deleting it.}
%{\PB Neither do I}

% DNP's results planar graphs are flexible with list size 6 and planar graphs with girth 5 are flexible with list size 4 - they asked if the these list sizes can be reduced to the choosability bound. DNP also asked if planar graphs with girth 4 are flexible with list size 4 and if planar graphs with girth 6 (they proved girth 12) are flexible with list size 3. 

%%%%%%%%%%%%%%%%%%%%
%%%%%%%%%% 2 degen
%%%%%%%%%%%%%%%%%%%%
We first discuss some known
results on $2$-degenerate graphs, which are $3$-choosable.
Answering a question in~\cite{2019DvNoPo}, Dvo\v{r}\'ak, Masa\v{r}\'ik, Mus\'ilek, and Pangr\'ac~\cite{2020DvMaMuPa} proved that every planar graph with girth at least 6 is $\varepsilon$-flexibly $3$-choosable for some small universal $\epsilon > 0$. 
(Cambie, Cames van Batenburg, Zhu~\cite{arXiv_CaCaZh2023} recently proved
that the statement is still true when
$\varepsilon = \frac13$.)
The result was extended to a larger graph class by Bi and Bradshaw~\cite{arXiv_2025BiBr}, who proved that  every graph $G$ with $\mad(G)<3$ is $2^{-30}$-flexibly $3$-choosable; the threshold on the maximum average degree is tight since the complete graph on four vertices is not even $3$-choosable.  
Bradshaw, Masa\v{r}\'ik, and Stacho~\cite{2022BrMaSt} also 
proved that graphs with treewidth  at most 2 are
$\frac{1}{3}$-flexibly $3$-choosable, answering a question of Choi et al.~\cite{2022ChClFeHoMaMa}.
% Treedepth $k$ are $\frac{1}{k^2}$-flexibly $k$-choosable. 

%%%%%%%%%%%%%%%%%%%%
%%%%%%%%%% 3degen
%%%%%%%%%%%%%%%%%%%%
Regarding $3$-degenerate graphs, which are $4$-choosable, Bi and Bradshaw~\cite{arXiv_2024BiBr} proved that every graph $G$ with $\mad(G)<11/3$ is $2^{-145}$-flexibly $4$-choosable.
Dvo\v{r}\'ak, Masa\v{r}\'ik, Mus\'ilek, Pangr\'ac~\cite{2021DvMaMuPa} proved that every planar graph with girth at least $4$ is $2^{-126}$-flexibly $4$-choosable, and 
Yang and Yang~\cite{arXiv_YaYa2020} proved that every planar graph with neither $4$-cycles nor $5$-cycles is $\varepsilon$-flexibly $4$-choosable for some universal $\epsilon > 0$.

% 2023 Cambie, Cames van Batenburg, Zhu~\cite{arXiv_CaCaZh2023}: planar with girth 4 is weighted $\frac{1}{5}$-flexibly 5-choosable, planar with girth 5 weighted $\frac{1}{4}$-flexibly 4-choosable, planar with girth 6 is weighted $\frac{1}{3}$-flexibly 3-choosable. 

We now mention results for a graph $G$ with bounded maximum degree $D$, which is $(D+1)$-choosable.
Vizing~\cite{1976Vizing} proved that for $D\geq 3$, if $G$ does  not contain $K_{D+1}$, then $G$ is $D$-choosable.
%{\PB Didn't Vizing show this originally? Should we cite him? (Also, Sasha, if you could check the attached file called ``Vizing 1976.pdf" and see if this theorem is in there, that would be extremely helpful, because I can't read it.)}. 
Cambie, Cames van Batenburg, Davies, and Kang~\cite{2024CaCaDaKa} proved that $G$ is 
%weighted
$\frac{1}{D+1}$-flexibly $(D+1)$-choosable.
Bradshaw, Masa\v{r}\'ik, and Stacho~\cite{2022BrMaSt} proved that if $D\geq3$ and $G$ has no $K_{D+1}$-subgraph,
then $G$ is
%weighted $\frac{1}{2D^4}$-flexible $D$-choosable and 
$\frac{1}{6D}$-flexibly $D$-choosable; 
the value $\epsilon = \frac{1}{6D}$ is best possible up to a constant factor. 
See also~\cite{2022LiMaMuZe,2019Masarik,2022Yang} for relevant variations and~\cite{2022ChClFeHoMaMa} whose introduction surveys results for planar graphs.

%%%%%%%%%%%%%%%%%%%%%%%%%%%%%%
%%%%%%%%%% DP-coloring
%%%%%%%%%%%%%%%%%%%%%%%%%%%%%%
In this paper,
we investigate the notion of flexibility for \emph{DP-coloring}, introduced by Dvo\v{r}\'ak and Postle~\cite{2018DvPo}, where they use the term \emph{correspondence coloring}. 
Given a multigraph $G$ and a function $h:V(G) \rightarrow \mathbb N$, an \emph{$h$-cover} $(H, L)$ of $G$ is a graph $H$ constructed as follows. 
\begin{itemize}
    \item For each vertex $v \in V(G)$, $H$ has a clique $L(v)$ of size $h(v)$. 
    \item { For each pair $\{x,y\}$ of vertices of $G$, if $G$ has exactly $s$ edges joining $x$ to $y$, then $H$ is the union of at most $s$
     matchings between $L(x)$ and $L(y)$.} 
\end{itemize}
% If each set $L(v) \subseteq V(H)$ is size $k$, then we say that $H$ is a \emph{$k$-cover}. 
We often refer to the elements of $V(H)$ as \emph{colors}.
% If each matching joining two parts $L(u), L(v) \subseteq V(H)$ is a perfect matching of size $k$, then we say that $H$ is a \emph{topological $k$-cover}. 
% Note that if $H$ is a topological $k$-cover of $G$, then $H$ is a $k$-sheeted cover of $G$ when $G$ and $H$ are considered as CW-complexes with the standard topology.
An \emph{$(H, L)$-coloring} of a multigraph $G$ with a cover $(H, L)$ is a function $\varphi:V(G)\rightarrow V(H)$ such that $\varphi(v)\in L(v)$ for every vertex $v$ of $G$ and $\varphi(x)\varphi(y)\not\in E(H)$ for every edge $xy$ of $G$. 
A multigraph $G$ is \emph{DP $k$-colorable} if there is an $(H, L)$-coloring for every $k$-cover $(H, L)$ of $G$. 
We remark that for proper coloring and list coloring, multiplicity of edges is irrelevant, as the additional edges do not add extra restrictions.
However, for DP-coloring, the multiplicity of edges is important, as edge multiplicity determines the
number of matchings added to the cover 
between the available colors of the corresponding vertices.
For example,  the $2$-vertex multigraph $K_2^3$, which has two vertices joined by three edges, is properly $2$-colorable and $2$-choosable, but it may not even  DP 3-colorable,  depending on the cover.

% A graph $G$ has a \emph{DP $k$-coloring} if for every $k$-cover $(H, L)$ of $G$, $(H, L)$ has an independent set containing exactly one vertex in each set $L(v)$, for $v \in V(G)$. 
% When a cover $H$ of $G$ is fixed, a \emph{$H$-DP-coloring} of $G$ is a mapping $\phi:V(G) \rightarrow V(H)$ that maps each vertex $v \in V(G)$ to an element $\phi(v) \in L(v)$, with the property that $\{\phi(v): v \in V(G) \}$ is an independent set in $H$. 
% We often call $\varphi$ a \emph{DP-coloring} when $H$ is clear from context.
We say a multigraph $G$ is \emph{$\varepsilon$-flexibly DP $k$-colorable} if for every $k$-cover $(H, L)$ and every request $r$ on $G$, there is an $(H, L)$-coloring $\varphi$
that satisfies
an $\epsilon$-fraction of the requests given by $r$.

\subsection{Weighted flexibility and fractional packings}

Given a graph $G$ with a cover $(H, L)$, 
a \emph{weighted request} on $G$ is a mapping $w:V(H) \rightarrow \mathbb R_{\geq 0}$. Given a value $\epsilon > 0$, we say that 
$(G,H,L)$ is \emph{weighted $\epsilon$-flexible}
if 
for every weighted request 
%$G$ is
%\emph{weighted $\epsilon$-flexibly DP $k$-colorable} if for every $k$-cover $(H, L)$ and weighted request 
$w:V(H) \rightarrow \mathbb R_{\geq 0}$, $G$ has an $(H,L)$-coloring $\phi:V(G) \rightarrow V(H)$
satisfying
\begin{equation*}
\label{eqn:weighted-flexibility}
\sum_{v \in V(G)} w(\phi(v)) \geq \epsilon \sum_{v \in V(G)} \sum_{c \in L(v)} w(c) .
\end{equation*}
We say that $G$ is 
\emph{weighted $\epsilon$-flexibly DP $k$-colorable} 
if $(G,H,L)$ is weighted $\epsilon$-flexible for every $k$-cover $(H,L)$ of $G$.

% In this paper, we aim to show that there exists a universal constant $\epsilon > 0$ such that every $K_4$-free graph of maximum average degree less than $3.01$ is weighted $\epsilon$-flexibly DP $3$-colorable. 
%Similarly to Dvo\v{r}\'ak and Postle~\cite{2018DvPo}, we use probability distributions on DP-colorings in order to show weighted
%flexibility.
The following lemma, appearing in \cite{2019DvNoPo},
gives a sufficient condition
for weighted flexibility in terms of distributions on $(H,L)$-colorings. As pointed out in \cite{2019DvNoPo},
linear programming duality can be used to show that the sufficient condition is also necessary.
For linear programming duality, 
one can express
the weighted flexibility problem 
as a fractional hypergraph 
matching problem
involving a hypergraph $\mathcal H$ for which $E(\mathcal H) = V(H)$, $V(\mathcal H)$ is the set of $(H,L)$-colorings of $G$, and incidence in $\mathcal H$ is given by containment.
Then, the linear programming dual is the fractional cover problem on $\mathcal H$, which can be phrased in terms of distributions on $(H,L)$-colorings of $G$.

\begin{lemma}[\cite{2019DvNoPo}]
\label{lem:prob}
Let $G$ be a multigraph. Suppose that for every $k$-cover $(H, L)$ of $G$, there exists a probability distribution on DP-colorings $\phi:V(G) \rightarrow V(H)$ such that for each vertex $v \in V(G)$ and color $c \in L(v)$, $\Pr(\phi(v) = c) \geq \epsilon$. Then  $G$ is weighted $\epsilon$-flexibly DP $k$-colorable.
\end{lemma}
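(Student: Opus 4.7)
The plan is to prove this via a direct linearity-of-expectation argument followed by an averaging step. The key observation is that the hypothesized lower bound $\Pr(\phi(v) = c) \geq \epsilon$ for every $v$ and every $c \in L(v)$ is exactly what is needed to push the weighted request $w$ through expectation, color-by-color and vertex-by-vertex.

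More concretely, I would proceed as follows. Fix an arbitrary $k$-cover $(H,L)$ of $G$ and an arbitrary weighted request $w : V(H) \to \mathbb{R}_{\geq 0}$. Let $\phi$ be the random $(H,L)$-coloring drawn from the distribution guaranteed by the hypothesis of the lemma. For each vertex $v \in V(G)$, since $\phi(v) \in L(v)$ always and the values $w(c)$ are nonnegative, I compute
\[
\mathbb{E}[w(\phi(v))] = \sum_{c \in L(v)} w(c)\,\Pr(\phi(v) = c) \geq \epsilon \sum_{c \in L(v)} w(c).
\]
Summing over $v \in V(G)$ and applying linearity of expectation gives
\[
\mathbb{E}\!\left[\sum_{v \in V(G)} w(\phi(v))\right] \geq \epsilon \sum_{v \in V(G)} \sum_{c \in L(v)} w(c).
\]
Since the expected value is at least this quantity, at least one $(H,L)$-coloring $\phi$ in the support of the distribution must achieve it, which is exactly the defining inequality for weighted $\epsilon$-flexibility. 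Because $(H,L)$ and $w$ were arbitrary, this shows that $G$ is weighted $\epsilon$-flexibly DP $k$-colorable.

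There is essentially no obstacle here; the proof is a two-line calculation whose only subtlety is remembering that $\phi(v) \in L(v)$ with probability one (so that the sum $\sum_{c \in L(v)} \Pr(\phi(v)=c)$ equals $1$ and the bound $\Pr(\phi(v)=c) \geq \epsilon$ can be combined with the nonnegativity of $w$). The deeper content — that this sufficient condition is also necessary, via linear programming duality applied to the fractional hypergraph matching formulation — is discussed in the paragraph preceding the lemma statement but is not needed for the direction asserted here.
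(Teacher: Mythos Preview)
Your proof is correct and essentially identical to the paper's: both fix an arbitrary $k$-cover and weighted request, compute the expectation of $\sum_{v} w(\phi(v))$ via linearity (the paper writes this with indicator functions, you compute $\mathbb{E}[w(\phi(v))]$ vertex-by-vertex first), bound it below using $\Pr(\phi(v)=c)\geq\epsilon$ and $w\geq 0$, and then take a coloring achieving at least the expectation.
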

\begin{proof}
    Let $(H,L)$ be a $k$-cover of $G$, 
    and let $w:V(H) \rightarrow \mathbb R_{\geq 0}$ be a weighted request. Consider a DP-coloring $\phi:V(G) \rightarrow V(H)$ chosen randomly from our distribution. Then  
    \[\sum_{v \in V(G)} w(\phi(v)) = \sum_{v \in V(G)} \sum_{c \in L(v)} w(c) \mathbbm 1_{\phi(v) = c}.\]
    Therefore, 
     \[E  \left ( \sum_{v \in V(G)} w(\phi(v)) \right ) = \sum_{v \in V(G)} \sum_{c \in L(v)} w(c) \Pr(\phi(v) = c) \geq \epsilon \sum_{v \in V(G)} \sum_{c \in L(v)} w(c).\]
     Hence, at least one $(H,L)$-coloring $\phi$ satisfies 
      \[  \sum_{v \in V(G)} w(\phi(v))   \geq \epsilon \sum_{v \in V(G)} \sum_{c \in L(v)} w(c).\]
\end{proof}

% \subsection{3-covers}
% Now, we focus on the class $\mathcal G$ of graphs of $K_4$-free graphs of maximum average degree less than $3.01$.
% ({\RB cannot do better than $\frac{22}{7}=3.1428...$ since there is a $K_4$-free graph that is not properly 3-colorable.})
% {\PB What is the graph? I think we may still be able to do better asymptotically, since Kostochka and Yancey show that $4$-critical graphs % on $n$ vertices have at least $\frac{5n-2}{3}$ edges.}
% {\PB Sasha and I observed there is a family of nonflexible graphs satisfying $8n - 5e = -3$}
Given a multigraph $G$ and a $k$-cover $(H, L)$ of $G$, we say that a distribution on $(H,L)$-colorings of $G$ is an \emph{$\epsilon$-distribution} if for every vertex $v \in V(G)$ and color $c \in L(v)$, 
\[ \Pr(\phi(v) = c)\geq\epsilon .\]
By \Cref{lem:prob}, if every $k$-cover $(H, L)$ of $G$ admits an $\epsilon$-distribution on $(H,L)$-colorings,
%{\PB if and only if }
then $G$ is weighted $\epsilon$-flexibly DP $k$-colorable.
%Dvo\v{r}\'ak, Norin, and Postle~\cite{2019DvNoPo}
%mention that the converse holds by linear programming duality.

\bigskip

Given a graph $G$ with a $k$-cover $(H,L)$, 
%Cambie, Cames van Batenburg, Davies, and  Kang~\cite{2024CaCaDaKa}
%define a \emph{correspondence $L$-packing of $H$}
%{\PB I saw them define $\mathcal H  = (H,L)$ and talk about an $\mathcal H$-packing. Is it okay if we just say 
an \emph{$(H,L)$-packing} is a set of $k$ pairwise disjoint $(H,L)$-colorings of $G$ whose union gives $V(H)$.
If $G$ has an $(H,L)$-packing  for every $k$-cover $(H,L)$,
then the distribution given by choosing one of the $k$ disjoint $(H,L)$-colorings of $G$ uniformly at random shows that $G$ is weighted $\frac 1k$-flexibly DP $k$-colorable.
Cambie, Cames van Batenburg, and Zhu~\cite{arXiv_CaCaZh2023}
showed that the fractional version of the correspondence packing problem involving a $k$-cover can be equivalently defined in terms of flexible DP-coloring.
Thus, they define a \emph{fractional packing} of a graph $G$ with a $k$-cover $(H,L)$ as a distribution on $(H,L)$-colorings $\phi$ of $G$ such that for each $v \in V(G)$ and $c \in L(v)$, $\Pr(\phi(v) = c) = \frac 1k$.
For convenience, we call such a distribution a \emph{fractional  $(H,L)$-packing}.
In this way, a graph $G$ has a fractional $(H,L)$-packing for every $k$-cover
$(H,L)$ if and only if $G$ is $\frac 1k$-flexibly DP $k$-colorable.

For many graph classes $\mathcal G$, when $k$ is sufficiently large,
every graph in $\mathcal G$ is $\frac 1k$-flexibly DP $k$-colorable.
Indeed,
Cambie, Cames van Batenburg, and Zhu~\cite{arXiv_CaCaZh2023}
showed
when $\mathcal P_g$ is the class of planar graphs with girth at least $g$,
graphs in $\mathcal P_3$ are $\frac 18$-flexibly DP $8$-colorable,
graphs in $\mathcal P_4$ are $\frac 15$-flexibly DP $5$-colorable,
graphs in $\mathcal P_5$ are $\frac 14$-flexibly DP $4$-colorable, 
and graphs in $\mathcal P_6$ are $\frac 13$-flexibly DP $3$-colorable.
Cambie and Cames van Batenburg~\cite{CaCa2024}
also gave a layering condition
that implies the existence of fractional packings for many graph classes; in particular, they show that 
graphs with pathwidth $k$ are $\frac 1k$-flexibly $k$-choosable, and graphs with treedepth $k-1$ are $\frac 1k$-flexibly $k$-choosable.

\subsection{Our results}

%%%%%%%%%%%%%%%%%%%%%%%%%%%%%%
%%%%%%%%%% main theorem 
%%%%%%%%%%%%%%%%%%%%%%%%%%%%%%
For our  main contribution, we prove that a multigraph $G$ with $\mad(G)<3$ is $\frac{1}{5}$-flexibly DP $3$-colorable, except for graphs in a certain family
$\cI$.
This gives a large class of $2$-degenerate graphs that are flexibly DP $3$-colorable for the constant $\epsilon = \frac 15$.

%{\PB I am not sure how I feel about the placement of this theorem.
%It is our main result, but it is kind of hard to find in the introduction.
%Should it get its own subsection?
%}
%{\IC maybe \Cref{thm:main-mad} should come before the above counterexample to DNP's question. }

\begin{theorem}
\label{thm:main-mad}
    Every multigraph with maximum average degree less than $3$ is $\frac 15$-flexibly DP $3$-colorable, except for graphs with a subgraph in $\cI$. 
\end{theorem}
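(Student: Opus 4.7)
The plan is to prove a stronger statement in terms of a potential function, by induction on $|V(G)|$. Since $\mad(G) < 3$ is equivalent to $\rho(H) := 3|V(H)| - 2|E(H)| > 0$ for every nonempty subgraph $H$ of $G$, I would take $\rho$ as the potential; it measures the slack available for extending partial DP-colorings. The inductive hypothesis would track both the value of $\rho$ and additional conditional information about how probability mass is distributed near vertices whose colorings are already constrained by a reduction. By \Cref{lem:prob}, it suffices to construct, for every $3$-cover $(H,L)$ of $G$, a probability distribution on $(H,L)$-colorings under which every vertex-color pair $(v,c)$ with $c \in L(v)$ is realized with probability at least $\frac{1}{5}$.

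The induction proceeds by locating a reducible configuration in $G$. Natural candidates include a vertex of degree at most $1$, a pair of vertices joined by two or three parallel edges, two adjacent degree-$2$ vertices, a short path of degree-$2$ vertices, and certain small subgraphs whose deletion decreases $\rho$ only in a controlled way. For each reducible configuration $C$, I would pass to a smaller multigraph $G'$ still satisfying $\mad(G') < 3$, apply the inductive hypothesis to obtain a $\frac{1}{5}$-distribution on $(H',L')$-colorings of $G'$, and then extend this distribution to the vertices of $C$ while maintaining the $\frac{1}{5}$ threshold on every new vertex-color pair. The family $\cI$ would then be characterized as the set of multigraphs in which none of the listed reducible configurations appears; a discharging argument based on $\rho$ would confirm that every multigraph with $\mad(G) < 3$ either admits a reduction or contains a subgraph from $\cI$.

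The main technical obstacle is the extension step at a degree-$2$ vertex $v$ with neighbors $u_1$ and $u_2$. Given an $(H,L)$-coloring of $G-v$, at most two of the three colors in $L(v)$ are forbidden by the matchings to $\phi(u_1)$ and $\phi(u_2)$, so at least one color is always available; the difficulty is not existence but distribution. I would have to design the conditional law of $\phi(v)$, as a function of $(\phi(u_1),\phi(u_2))$ and of the structure of the matchings between $L(v)$ and $L(u_i)$, so that after averaging over the inductive distribution on $G-v$ each of the three colors in $L(v)$ receives probability at least $\frac{1}{5}$ at $v$, without destroying the analogous guarantees already in force at $u_1$ and $u_2$. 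This coupling is precisely why the inductive statement must be strengthened beyond a bare $\frac{1}{5}$-distribution: it needs to supply conditional probabilities at boundary vertices of a form that can be propagated across each extension step.

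Finally, the constant $\frac{1}{5}$, as opposed to $\frac{1}{3}$, would be pinned down by the multigraph phenomena (parallel edges between two vertices), which are the essential reason some members of $\cI$ obstruct reductions and force the threshold upward. Matching this threshold to the tight weighted examples promised in the abstract would complete the argument, confirming both that $\mad(G) < 3$ plus avoidance of subgraphs in $\cI$ implies weighted $\frac{1}{5}$-flexible DP $3$-colorability, and that the constant cannot be improved in the weighted setting.
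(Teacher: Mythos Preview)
Your high-level plan matches the paper's: strengthen the statement via a potential function, argue by minimum counterexample, identify reducible configurations, and close with discharging. You also correctly diagnose the extension at a degree-$2$ vertex as the central obstacle and note that the inductive hypothesis must carry extra information about the distribution near boundary vertices.

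The gap is that you leave the form of that strengthening unspecified, and this is exactly where the technical content lies. The paper's solution is to let the potential be a function $\rho:V(G)\to\{3,4,6\}$ rather than the constant $6$ (your $3|V|-2|E|$ is the scaled $\rho\equiv 6$ case). A vertex with $\rho(v)=3$ is given a random list of size $2$, drawn from a ``$\frac15$-admissible'' distribution of forbidden colors; a vertex with $\rho(v)=4$ keeps a list of size $3$ but carries a designated basepoint $\omega(v)$ at which the coloring distribution must take probability \emph{exactly} $\frac25$, with exactly $\frac3{10}$ on the other two colors. These two devices are precisely what allow the reductions to go through: when you delete a piece of $G$ and record its effect on the remainder, the interface vertex drops from $\Pi_6$ to $\Pi_3$ or $\Pi_4$, lowering potential by the right amount while leaving an invariant strong enough to glue the coloring distributions back together (this is the content of the paper's \Cref{lem:reducible} and \Cref{lem:cut-4}). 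With only the bare ``$\geq\frac15$'' hypothesis at the neighbors, the extension at a $2$-vertex genuinely fails: each neighbor's color can have probability as high as $\frac35$, and no local rule for choosing $\phi(v)$ recovers $\frac15$ on every color of $L(v)$.

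One smaller point: your description of $\mathcal I$ as ``the set of multigraphs in which none of the listed reducible configurations appears'' is not how it functions. The family $\mathcal I$ is an explicit infinite list of odd cycles with alternating doubled edges; its members are shown to be obstructions by exhibiting a concrete $3$-cover in which one color at one vertex is never used. The reducibility and discharging then show that a $(\rho,\frac15)$-critical graph either has nonpositive potential, or is an exceptional $C_2$, or belongs to $\mathcal I$; it is not that $\mathcal I$ emerges as whatever is left after the reductions.
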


In the setting of a multigraph $G$ with maximum average degree less than $3$,
we %unfortunately 
 cannot aim in general to find a 
fractional $(H,L)$-packing
for an arbitrary $3$-cover $(H,L)$ of $G$.
Indeed, Cambie, Cames van Batenburg, and Zhu~\cite{arXiv_CaCaZh2023} showed that when $G$ is 
the (simple) graph
obtained from $K_{2,3}$ by adding an edge between two vertices in the part of size $3$,
there is a $3$-cover $(H,L)$ of $G$ that admits no fractional packing.
Nevertheless, we will see that the notion of fractional packings plays a key role in the proof of 
\Cref{thm:main-mad}.

%%%%%%%%%%%%%%%%%%%%
%%%%%%%%%% example
%%%%%%%%%%%%%%%%%%%%

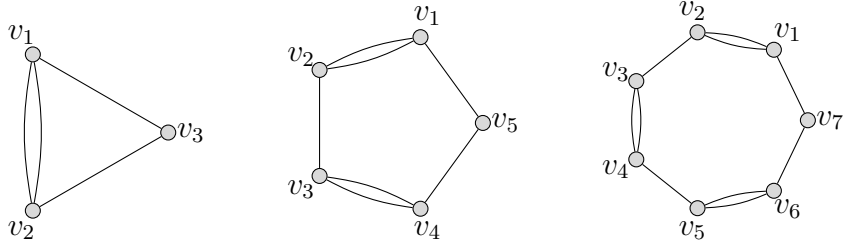
\begin{figure}
\begin{center}
\begin{tikzpicture}[scale=1.2, every node/.style={circle, draw, fill=gray!30, inner sep=2pt}]
  % Define positions of the 5 vertices on a circle
\foreach \i in {1,...,3}
    \node[draw = white, fill = white] (z\i) at ({360/3*\i}:1.25) {$v_{\i}$};
  
  \foreach \i in {1,...,3}
    \node (v\i) at ({360/3*\i}:1) {};

\node[draw = white,fill = white] (z) at (2,0) {};

  % Draw single edges
  \draw (v2) -- (v3);
  \draw (v3) -- (v1);

  % Draw double edges using slight bending
  \draw[bend left=10] (v1) to (v2);
  \draw[bend right=10] (v1) to (v2);
\end{tikzpicture}
\begin{tikzpicture}[scale=1.2, every node/.style={circle, draw, fill=gray!30, inner sep=2pt}]
  % Define positions of the 5 vertices on a circle
\foreach \i in {1,...,5}
    \node[draw = white, fill = white] (z\i) at ({360/5*\i}:1.25) {$v_{\i}$};
  
  \foreach \i in {1,...,5}
    \node (v\i) at ({360/5*\i}:1) {};
    
\node[draw = white,fill = white] (z) at (2,0) {};
  % Draw single edges
  \draw (v2) -- (v3);
  \draw (v4) -- (v5);
  \draw (v5) -- (v1);

  % Draw double edges using slight bending
  \draw[bend left=10] (v1) to (v2);
  \draw[bend right=10] (v1) to (v2);

  \draw[bend left=10] (v3) to (v4);
  \draw[bend right=10] (v3) to (v4);
\end{tikzpicture}
\begin{tikzpicture}[scale=1.2, every node/.style={circle, draw, fill=gray!30, inner sep=2pt}]
  % Define positions of the 7 vertices on a circle

\foreach \i in {1,...,7}
    \node[draw = white, fill = white] (z\i) at ({360/7*\i}:1.25) {$v_{\i}$};
  
  \foreach \i in {1,...,7}
    \node (v\i) at ({360/7*\i}:1) {};

  % Draw single edges
  \draw (v3) -- (v2);
  \draw (v5) -- (v4);
  \draw (v7) -- (v6);
  \draw (v7) -- (v1);

  % Draw double edges using slight bending
  \draw[bend left=10] (v2) to (v1);
  \draw[bend right=10] (v2) to (v1);

  \draw[bend left=10] (v4) to (v3);
  \draw[bend right=10] (v4) to (v3);

  \draw[bend left=10] (v6) to (v5);
  \draw[bend right=10] (v6) to (v5);

\end{tikzpicture}
\end{center}
\caption{The multigraphs $I_1$, $I_2$, and $I_3$ are shown from left to right. Each multigraph $I_{m+1}$ is obtained from $I_m$ by performing two subdivisions on any edge of multiplicity $1$ and then adding a second edge joining the two new vertices. Each of them has maximum average degree less than $3$ and is not flexibly DP $3$-colorable.}
\label{fig:Ilkyoo}
\end{figure}

%%%%%%%%%%%%%%%%%%%%%%%%%%%%
%%%%%%%% construction
%%%%%%%%%%%%%%%%%%%%%%%%%%%%
The graph class $\cI$ is defined as the following:
We first define the $2$-degenerate 
multigraph $I_m$ to be the multigraph obtained from a $(2m+1)$-cycle $v_1v_2\ldots v_{2m+1}v_1$ by duplicating the edge $v_{2i-1}v_{2i}$
for each $i\in\{1, \ldots, m\}$.
The graphs $I_1$, $I_2$, and $I_3$ are shown in \Cref{fig:Ilkyoo}.
We write $\cI=\{I_m: m\geq 1\}$.
We will see that for some 
DP $3$-cover $(H,L)$ of $I_m$,
no $(H,L)$-coloring assigns the color $3_{v_{2m+1}}$ to $v_{2m+1}$; therefore, a single request $r(v_{2m+1}) = 3_{v_{2m+1}}$ is never satisfied.
This shows that $I_m$ is not flexible for DP $3$-coloring.

\begin{figure}
\begin{center}
\begin{tikzpicture}[scale=1.2, every node/.style={circle, draw, fill=gray!30, inner sep=2pt}]
\def\n{3}   

\foreach \i in {3,...,3}
    \node[draw = white, fill = white] (z\i) at ({360/\n*\i}:1.3) {$v_{\i}$};

  \foreach \i in {1,...,\n}
    \node (v\i) at ({360/\n*\i}:1) {};

\foreach \i in {1,...,2}
{
    \node[draw = white, fill = white] (z\i) at ({360/\n*\i}:2.2) {$w_{\i}$};
    \node[draw = white, fill = white] (z\i) at ({360/\n*\i}:0.6) {$v_{\i}$};
    \node (w\i) at ({360/\n*\i}:1.85) {};
    \draw[bend left=10] (v\i) to (w\i);
    \draw[bend right=10] (v\i) to (w\i);
}

\draw (v1) -- (v2) -- (v3) -- (v1);

\end{tikzpicture}
\indent
\begin{tikzpicture}[scale=1.2, every node/.style={circle, draw, fill=gray!30, inner sep=2pt}]
  % Define positions of the 7 vertices on a circle

\def\n{5}   
\node[draw = white, fill = white] (z) at ({360/\n*1}:0.7) {$v_1$};
\node[draw = white, fill = white] (z) at ({360 - 360 / \n}:0.7) {$v_4$};
\node[draw = white, fill = white] (z) at ({360}:1.25) {$v_5$};

\foreach \i in {2,...,3}
    \node[draw = white, fill = white] (z\i) at ({360/\n*\i}:1.25) {$v_{\i}$};
  
  \foreach \i in {1,...,\n}
    \node (v\i) at ({360/\n*\i}:1) {};

\foreach \i in {1,4}
{
    \node[draw = white, fill = white] (z) at ({360/\n*\i}:2) {$w_{\i}$};
    \node (w\i) at ({360/\n*\i}:1.75) {};
    \draw[bend left=10] (v\i) to (w\i);
    \draw[bend right=10] (v\i) to (w\i);
}

  % Draw single edges
  \draw (v1) -- (v2);
  \draw (v3) -- (v4) -- (v5);
  \draw (v5) -- (v1);

  % Draw double edges using slight bending
  \draw[bend left=10] (v2) to (v3);
  \draw[bend right=10] (v2) to (v3);

\end{tikzpicture}
\indent \indent
\begin{tikzpicture}[scale=1.2, every node/.style={circle, draw, fill=gray!30, inner sep=2pt}]
  % Define positions of the 7 vertices on a circle

\node[draw = white, fill = white] (z) at ({360/7*1}:0.7) {$v_1$};
\node[draw = white, fill = white] (z) at ({360/7*6}:0.7) {$v_6$};
\node[draw = white, fill = white] (z) at ({360/7*7}:1.25) {$v_7$};

\foreach \i in {2,...,5}
    \node[draw = white, fill = white] (z\i) at ({360/7*\i}:1.25) {$v_{\i}$};
  
  \foreach \i in {1,...,7}
    \node (v\i) at ({360/7*\i}:1) {};

\foreach \i in {1,6}
{
    \node[draw = white, fill = white] (z) at ({360/7*\i}:2.4) {$w_{\i}$};
    \node (w\i) at ({360/7*\i}:2) {};
    \draw[bend left=10] (v\i) to (w\i);
    \draw[bend right=10] (v\i) to (w\i);
}

  % Draw single edges
  \draw (v1) -- (v2);
  \draw (v3) -- (v4);
  \draw (v5) -- (v6);
  \draw (v7) -- (v1);
  \draw (v7) -- (v6);

  % Draw double edges using slight bending
  \draw[bend left=10] (v2) to (v3);
  \draw[bend right=10] (v2) to (v3);

  \draw[bend left=10] (v4) to (v5);
  \draw[bend right=10] (v4) to (v5);

\end{tikzpicture}
\end{center}
\caption{The graphs $J_1$, $J_2$, and $J_3$ are shown from left to right. Each graph $J_{m+1}$ is obtained from $J_m$ by performing two subdivisions on an edge of multiplicity $1$ not incident with $v_{2m+1}$ and then adding a new edge joining the two new vertices.}
\label{fig:1/5}
\end{figure}
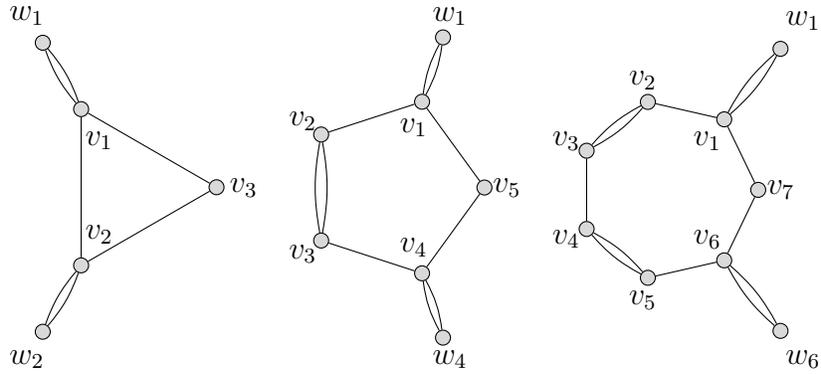

We also provide an infinite family of examples showing that the constant $\epsilon = \frac 15$ cannot be improved in the weighted setting, defined below.
For each integer $m \geq 1$,
let $J_m$ be obtained from the cycle $v_1\ldots v_{2m+1}v_1$ as follows. 
First, add two vertices $w_1$ and $w_{2m}$, and join each $w_j$ to $v_j$ by two parallel edges.
Then, for each even  $i\in\{2, \ldots, 2m-2\}$, add a second edge joining $v_i$ and $v_{i+1}$
(see \Cref{fig:1/5}).
We write $\mathcal J = \{J_m: m \geq 1\}$.

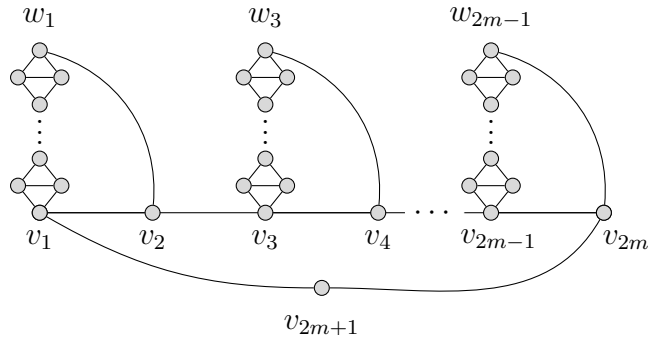
\begin{figure}
\begin{center}
\begin{tikzpicture}[scale=1, every node/.style={circle, draw, fill=gray!30, inner sep=2pt}]
\clip (-2,-1.62) rectangle + (11,4.6);

\def\s{3} %Spacing between diamonds
% Offsets
\def\dy{0.36}                 % vertical step per diamond
\def\dx{0.577*0.5}               % half width (to form equilateral triangle)

\node[draw = white, fill = white] (z) at (\s*2+0.1, -\dy) {$v_{2m-1}$};

\node[draw = white, fill = white] (z) at (\s*2.6 , -\dy) {$v_{2m}$};

\node[draw = white, fill = white] (z) at (0 , -\dy) {$v_{1}$};

\node[draw = white, fill = white] (z) at (\s*0.5 , -\dy) {$v_{2}$};

\node[draw = white, fill = white] (z) at (\s*1.5 , -\dy) {$v_{4}$};

\node[draw = white, fill = white] (z) at (\s*1.25 , -1.5) {$v_{2m+1}$};

\node[draw = white, fill = white] (z) at (0 , 7.2*\dy) {$w_{1}$};

\node[draw = white, fill = white] (z) at (\s , -\dy) {$v_{3}$};

\node[draw = white, fill = white] (z) at (\s , 7.2*\dy) {$w_{3}$};

\node[draw = white, fill = white] (z) at (\s*2 , 7.2*\dy) {$w_{2m-1}$};

\node (z1) at (0, 0) {};
\node (z2) at ( \s*2.5, 0) {};
\draw  (z1) -- (z2) ;

\node(bottom) at (1.25*\s,-1) {};
\draw [out = 180, in = -30] (bottom) to (z1);
\draw [out = 0, in = -120] (bottom) to (z2);

\foreach \i in {0,1,2}{
    % Bottom vertex
    \node (v0) at (\s*\i, 0) {};

    % Diamond 3
    \node[draw = white,fill=white] (label) at (\s*\i,3*\dy+0.1) {$\vdots$};
    \node (c3) at (\s*\i, 4*\dy) {};
    
    % Diamond 1
    \node (a1) at (\s*\i-\dx, \dy) {};
    \node (b1) at ( \s*\i+\dx, \dy) {};
    \node (c1) at (\s*\i, 2*\dy) {};
    
    \draw (v0) -- (a1) -- (b1) -- (v0);
    \draw (a1) -- (c1) -- (b1);
    
    % Diamond 4
    \node (a4) at (\s*\i-\dx, 5*\dy) {};
    \node (b4) at (\s*\i+ \dx, 5*\dy) {};
    \node (c4) at (\s*\i, 6*\dy) {};
    
    \draw (c3) -- (a4) -- (b4) -- (c3);
    \draw (a4) -- (c4) -- (b4);
    
    %Make a C2
    \node(c) at (\s*\i + 0.5*\s, 0) {};
    \draw (c) -- (v0);
    \draw[bend right=40]  (c) to (c4);
}

\node[draw = white, fill= white] (z) at (\s*1.75,0) {$\cdots$};

\end{tikzpicture}
\end{center}
\caption{
A  $2$-degenerate  graph that is not flexibly DP $3$-colorable.}
\label{fig:diamond-chains}
\end{figure}

 We briefly note that the family $\mathcal I$ can be modified to give a family of $2$-degenerate (simple) graphs that are not flexibly DP $3$-colorable, which gives a negative answer to \Cref{ques:degen} in the DP-coloring setting.
A \emph{diamond chain} is defined as follows. First, a \emph{diamond} is the graph $K_4^-$ obtained from $K_4$ by deleting an edge.
A diamond is a diamond chain. 
Furthermore, every graph obtained by identifying a $2$-vertex of a diamond with a $2$-vertex of a diamond chain is a diamond chain.
Note that every diamond chain has $3t+1$ vertices for some integer $t \geq 1$.
We write $\mathcal S$ for the family of simple graphs $G$ obtained through the following process.
Begin with a cycle $v_1\dots v_{2m+1}v_1$.
    For each odd $j\in\{1, \ldots, 2m-1\}$, let $D_j$ be a diamond chain, and identify one of the $2$-vertices of $D_i$ with $v_j$.
    Let $w_j$ be the second $2$-vertex of $D_j$, and add an edge $w_j v_{j+1}$.
    The family $\mathcal S$ is shown in \Cref{fig:diamond-chains}.
    It is easy to check that each graph in $\mathcal S$ is $2$-degenerate.
    When  the cover $(H,L)$ joins colors of common indices in the diamond chains, each $(H,L)$-coloring corresponds to a DP $3$-coloring of some $I_m$, 
and thus a similar argument shows that these graphs are
also not flexible  for DP $3$-coloring.

To prove \Cref{thm:main-mad},
we actually prove a statement in terms of potential that implies \Cref{thm:main-mad}.
Given a graph $G$, the \emph{potential} of a vertex set $U\subseteq V(G)$ is defined as $\rho_G(U)=6|U|-4|E(G[U])|$.
We prove the following statement:
\iffalse
{\PB 
Do we want to state the thing below for $\frac 15$ or for general $\epsilon \leq \frac 15$? I think it is true for general $\epsilon$, but to show that, a lot of the reduction proofs would need to be changed, and I am not sure that it is worth it. 
}
{\IC maybe have the ones that can be stated with $\epsilon$ without much effort - but I also don't mind changing everything to $\frac16$ if that makes life easier.}
{\PB My opinion is that always writing $\epsilon = \frac 15$ makes life easier.
I don't think the framework is much use outside of this problem, and we basically solved it completely, so I don't know how useful generality is.
}
\fi

\begin{theorem}
\label{thm:intro-potential}
Suppose that $G$ is not $\frac 15$-flexibly DP $3$-colorable, but every proper subgraph of $G$ is $\frac 15$-flexibly DP $3$-colorable.
    Then  $G\in\cI$ or $\rho_G(V(G)) \leq 0$. 
 %   If $G$ is multigraph that is not $\varepsilon$-flexibly DP $3$-colorable, then either $G\in\cI$ or $\rho_G(V(G))\leq 0$. 
\end{theorem}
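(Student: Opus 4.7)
The plan is to prove the stronger conclusion that, for every $3$-cover $(H,L)$ of $G$, there is a $\frac{1}{5}$-distribution on $(H,L)$-colorings of $G$; by Lemma~\ref{lem:prob}, this implies (even weighted) $\frac{1}{5}$-flexible DP $3$-colorability. I argue by contradiction via a discharging scheme. Suppose $G$ is a minimum-order counterexample to this strengthened assertion, with $G \notin \mathcal I$ and $\rho_G(V(G)) > 0$. Assign each vertex $v$ the initial charge $\mu(v) = 6 - 2 d_G(v)$, so that $\sum_v \mu(v) = \rho_G(V(G)) > 0$. The task is to redistribute this charge so that every vertex ends with charge at most $0$, producing the required contradiction.

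For reducibility, minimality gives that every proper subgraph of $G$ admits a $\frac{1}{5}$-distribution for every $3$-cover. I would identify a list of forbidden configurations---a vertex of degree at most $1$, a degree-$2$ vertex with two single edges, a pendant digon, and short alternating chains of degree-$2$ vertices and digons that do not close into an $I_m$---each of which could be deleted to obtain a smaller multigraph $G'$ whose $\frac{1}{5}$-distribution lifts back to $G$. A direct uniform random extension is not sufficient: for instance, a pendant digon between $v$ and $u$ gives, when extended uniformly conditioned on $u$, probability as low as $\frac{1}{15}$ to some colors of $v$. The remedy is to combine several $\frac{1}{5}$-distributions on distinct proper subgraphs (say, $G - v$ together with a subgraph where one digon edge is deleted, or where two colors are identified across a digon) via a convex combination tuned so that each color of $v$ reaches probability at least $\frac{1}{5}$. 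Verifying that such a combination exists for each non-exceptional configuration is the heart of the reducibility lemmas.

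For the discharging, once the reducible configurations are ruled out, I would set rules sending charge from degree-$\geq 3$ vertices and from digons to their incident $2$-vertices, calibrated so that every remaining $2$-vertex ends with charge at most $0$ and every higher-degree vertex stays at most $0$. The structural constraints force a rigid picture in which any long alternating chain of degree-$2$ vertices and digons must either contain one of the reducible configurations or close up into an odd cycle of the form $I_m$. Since $G \notin \mathcal I$, the latter is excluded, and the discharging yields $\sum_v \mu(v) \leq 0$, contradicting $\rho_G(V(G)) > 0$.

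The step I expect to be the main obstacle is designing the convex combinations used in the lifting arguments across digons, since a naive extension gives some color of $v$ probability as low as $\frac{1}{15}$. One must classify the possible structures of the two matchings between $L(v)$ and $L(u)$ in the cover---which reduce to a few cases up to relabeling---and, for each case, exhibit auxiliary $\frac{1}{5}$-distributions on further-reduced subgraphs whose mixture restores the $\frac{1}{5}$ bound at every color of $v$ simultaneously. The second subtle point is the rigidity step: the analysis must show that the only way for a positive-potential graph to avoid every reducible configuration is to be precisely some $I_m$. This inevitability is what forces the exceptional family $\mathcal I$ to appear in the statement.
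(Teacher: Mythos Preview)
Your high-level architecture---minimum counterexample, reducible configurations, discharging---matches the paper's, but there is a genuine gap in the reducibility step, and the paper's key technical idea for closing it is absent from your proposal.

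The obstruction is exactly where you flag it. At a pendant digon $\{v,u\}$ with the bad $C_2+C_4$ cover structure, writing $3_u$ for the color in the $C_2$-component, one needs $\Pr(\phi(u)=3_u)\ge\tfrac25$ in order to split that mass between $1_v$ and $2_v$; the criticality hypothesis on $G-v$ only supplies $\tfrac15$. Your proposed fix---convex combinations of $\tfrac15$-distributions on several proper subgraphs---cannot manufacture this boost. A convex combination of $\tfrac15$-distributions on $G-v$ is again a $\tfrac15$-distribution on $G-v$ and nothing more; deleting one digon edge yields a subgraph whose colorings are not $(H,L)$-colorings of $G$; and identifying colors or adding an edge leaves the class of proper subgraphs of $G$, so the bare criticality hypothesis no longer applies. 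The same obstruction blocks your proposed reduction of a degree-$2$ vertex $v$ with two simple neighbors $u,w$: from a $\tfrac15$-distribution on $G-v$ you get no lower bound on $\Pr(\phi(u)\neq i_u\text{ and }\phi(w)\neq i_w)$, since the union bound gives nothing.

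The paper resolves this by strengthening the inductive statement. It proves Theorem~\ref{thethm} for pairs $(G,\rho)$ with a variable potential $\rho\colon V(G)\to\{3,4,6\}$: vertices of potential $3$ carry a random list of size $2$ whose forbidden color is itself $\tfrac15$-admissibly distributed, and vertices of potential $4$ carry a designated basepoint that must receive probability exactly $\tfrac25$ (the other two colors receiving exactly $\tfrac3{10}$). With this framework the pendant-digon reduction downgrades $u$ to potential $4$ with basepoint $3_u$, and the minimality of $(G,\rho)$ then delivers the needed $\tfrac25$ (Lemma~\ref{lem:butterfly}); path and tree reductions downgrade boundary vertices to potential $3$ (Lemma~\ref{lem:reducible}, Lemma~\ref{lem:reducible-tree}). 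The enriched induction also drives the gap lemma $\rho_G(S)\ge 1+|E(S,\overline S)|$ for all $S\subseteq V(G)$ (Lemma~\ref{lem:j+2}), which is what justifies the non-subgraph reductions such as contracting a digon pair and adding an edge between its external neighbors (Lemma~\ref{lem:3-parallel-method}). Without the $\{3,4,6\}$-potential framework and the exact $\tfrac25/\tfrac3{10}$ constraints at $\Pi_4$-vertices, neither the digon reductions nor the gap lemma is available, and the discharging cannot close.
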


% {\IC maybe move all observations and proofs stating these examples are not flexible after the above theorem.}

In fact, we prove an even stronger statement
with a more involved potential function
that implies the theorem above (see~\Cref{sec:our-goal}).

%{\PB I tried moving things around, but I am not quite sure about this organization. The fractional packing section refers to our main result, and I think the reference that it makes is worthwhile. Does it make sense to have Our Results, Fractional Packings, then Preliminaries?}

%\subsection{Fractional packings}

\subsection{Preliminaries}

Let $G$ be a multigraph.
 For an integer $d \geq 1$, we say that $v \in V(G)$ is a \emph{$d$-vertex}, \emph{$d^+$-vertex}, \emph{$d^-$-vertex} if $d(v) = d$, $d(v)\geq d$, $d(v)\leq d$, respectively.
%For a DP $k$-cover $(H, L)$ of $G$, an \emph{$(H,L)$-packing} of $G$ is a set of $k$ $(H,L)$-colorings $\phi_1, \dots, \phi_k$ of $G$ for which $\phi_1 \cup \cdots \cup \phi_k = V(H)$.
%A \emph{fractional $(H,L)$-packing} of $G$ is a distribution on $(H,L)$-colorings $\phi$ of $G$ such that for each $v \in V(G)$ and $c \in L(v)$, $\Pr(\phi(v) = c) = \frac 1k$.
%Given an $(H,L)$-packing $\{\phi_1, \dots, \phi_k\}$ of $G$, the distribution that assigns one of the colorings $\phi_j$ uniformly at random is a fractional $(H,L)$-packing.
The \emph{connectivity} of $G$ is defined as the minimum value $k$ such that either $G - S$ is disconnected for some set $S\subseteq V(G)$ of size $k$ or $G$ is isomorphic to $K_{k+1}$. 
% If every distinct pair of vertices in $G$ is adjacent, then we define the connectivity of $G$ as  the minimum degree $\delta(G)$.
Given a subset $S \subseteq V(G)$, we write $\overline S = V(G) \setminus S$.
Given two disjoint subsets $S, T \subseteq V(G)$, we write $E_G(S,T)$ for the set of edges in $G$ with an endpoint in $S$ and an endpoint in $T$.
When $S$ or $T$ is a single vertex, we simply write that vertex instead of using set notation. 
When it is clear from context, we often remove the subscript. 
We use $G[S,T]$ to denote the bipartite subgraph of $G$ with vertex set $S \cup T$ and edge set $E_G(S,T)$.

Given a graph $G$ with a DP-cover $(H,L)$,
a subgraph $G'$ of $G$,
and a subset $L'(v) \subseteq L(v)$ for each $v \in V(G)$,
we say that an $(H,L')$-coloring of $G'$ 
is an assignment $\phi:V(G') \rightarrow V(H)$
such that $\phi(v) \in L'(v)$ for each $v \in V(G')$, and such that the image of $\phi$ is an independent set in $H$.
In other words,
we often consider partial DP-colorings, as well as DP-colorings with restricted lists, without explicitly defining a new cover.
For each vertex $v$, we typically write $L(v) = \{1_v, 2_v,3_v\}$.

\section{The Setup}\label{sec:multigraphs}

\subsection{Our framework}

%When we consider a DP $3$-cover $(H,L)$ of a graph $G$, we imagine that each list $L(v)$ is a \emph{pointed set}, that is, a set with a single distinguished element {\IC $\omega(v)$. 
%\sout{Given a list $L(v)$, we write $\omega(v)$ for the distinguished element of $L(v)$.}}

Let $G$ be a loopless (multi)graph.
%{\IC\sout{with a DP $3$-cover $(H,L)$.}} 
% {\PB Note: Ilkyoo, I am modifying your suggestion because I am afraid that if we make $\Pi$ be the thing that defines $\rho$ and $h$, then we need all of our tuples to have $\Pi$ instead of $\rho$. 
% Below is a suggestion where $\rho$ is the fundamental object that defines everything else.}
% {\IC this also works.}
 A function $\rho:V(G) \rightarrow \{3,4,6\}$ is called a \emph{potential function}.
We say that $\rho$ defines a partition $\Pi = \Pi(\rho)$ of $V(G)$ into parts $\Pi_3, \Pi_4, \Pi_6$, where $\Pi_i = \rho^{-1}(i)$ for each $i \in \{3,4,6\}$.
We also say that $\rho$ defines a \emph{list-size function} $h_{\rho}$ as follows:
$h_{\rho}(v) = 2$ for each $v \in \Pi_3$, and $h_{\rho}(v) = 3$ for each $v \in \Pi_4 \cup \Pi_6$.
Let $(H, L)$ be a DP $3$-cover of $G$.
An \emph{$h_\rho$-list assignment} $L'$ on $(G,\rho, H,L)$ is a function that assigns a list $L'(v) \subseteq L(v)$ of size $h_{\rho}(v)$ to each $v \in V(G)$.
%We define $\Pi_{\geq i}$ and $\Pi_{\leq i}$ similarly.
 %{\IC Can we say $\Pi$ is a partition of $V(G)$ into parts $\Pi_3, \Pi_4, \Pi_6$, and then say both $\rho$ and $h$ is defined by $\Pi$? We can say $\rho=\rho_\Pi$ and $h=h_\Pi$. This way we can refer to $\Pi$ instead of referring to $\rho$ and $h$ whenever necessary. I volunteer to change things if we think this is better. }
 %{\PB That would be fantastic}
A \emph{list distribution} $\mathcal D$ on $(G,\rho,H,L)$ is a function that assigns a probability to each $h_{\rho}$-list assignment on $(G,H,L)$. %, where $h$ is defined by $\rho$.
Hence, the distribution $\mathcal D$ produces a random $h_{\rho}$-list assignment $L_\D$ on $(G, \rho,H,L)$.
%{\PB I read about probability, and it seems that a ``distribution" is necessarily a measure function.
%I tried to rewrite everything to be precise and also to be clear. If you have any suggestions about notation, please say so.
%}
%{\IC so far so good.}
For a real  $\varepsilon\in(0,1)$, we say that $\mathcal D$ is \emph{$\epsilon$-admissible} on $(G,\rho,H,L)$ if the following holds: 
\begin{itemize}
    \item For each $v \in V(G)$ with $h_{\rho}(v)=2$ and $c \in L(v)$, $\Pr( \{c\} = L(v) \setminus L_{\D}(v)) \geq \epsilon$.
\end{itemize}
For a vertex $v \in V(G)$ with $h_{\rho}(v) = 2$, the unique color in $L(v) \setminus L_{\mathcal D}(v)$ is \emph{forbidden} by $L_{\D}$.

A \emph{coloring distribution} $\Phi$ on $(G,\rho,H,L,\mathcal D)$ is a function that maps each $h_{ \rho}$-list assignment $L_{\mathcal D}$ produced by $\mathcal D$  to a
 distribution $\Phi(L_{\mathcal D})$ 
 on $(H,L_{\mathcal D})$-colorings of $G$.
Abusing notation slightly, we say that
$\Phi$ assigns a probability to each pair $(L_{\mathcal D}, \phi)$, where $L_{\mathcal D}$ is an outcome of $\mathcal D$ and $\phi$ is an $(H,L_{\mathcal D})$-coloring of $G$.
% {\IC (just to be pedantic: first you say $\Phi$ maps an $h$-list assignment to distribution, then you say $\Phi$ maps $(L_\D, \phi)$ to a probability. 
% These two are technically different, no? 
 %{\PB You're right. Rather than saying ``formally," 
 %would it be better to say ``in this way"?
 %I guess it's like $f$ is a function taking inputs from $X$ and giving functions from $Y$ to $Z$ as output, then the function $(x,y) \mapsto f(x)(y)$ is implicitly defined.
 %}
 %{\IC I think we should say we pick one to be the `formal' definition, and then say we abuse notation a bit and allow the other definition at times.}
 %The former is $A\rightarrow (B\rightarrow C)$ where the latter is $A\times B\rightarrow C$ for appropriate $A, B, C$. The reader should be able to understand though.}
Furthermore, $\Phi$ satisfies the property that for each outcome $L_{\mathcal D}$ of $\mathcal D$, 
$\Pr(L_{\mathcal D}) =   \sum_{\phi} \Pr(L_{\mathcal D},\phi  ) $, where the sum is taken over all  possible $(H,L_{\mathcal D})$-colorings $\phi$ of $G$.
We note that $\Phi$ also gives a distribution on $(H,L)$-colorings $\phi$ of $G$ by letting $\Pr(\phi) = \sum_{L_{\mathcal D}} \Pr(L_{\mathcal D},\phi)$, 
where the sum is taken over all possible outcomes $L_\D$ of $\D$.
%
%A \emph{random $\mathcal D$-coloring}
%is a function $\Phi$ that maps each $h$-list assignment $L_{\mathcal D}$ produced by $\mathcal D$ to a
 %distribution $\Phi(L_{\mathcal D})$ 
 %on $(H,L_{\mathcal D})$-colorings of $G$.
%We note that a random $\mathcal D$-coloring $\Phi$ gives a distribution on $(H,L)$-colorings of $G$ via the following experiment: First, randomly choose an $h$-list assignment $L_{\mathcal D}$ on $(G,H,L)$; then, use the
%distribution
%$\Phi(L_{\mathcal D})$ to give $G$ a random $(H,L_{\mathcal D})$-coloring $\phi$.
%We also note that the existence of a random $\mathcal D$-coloring $\Phi$ implies that $G$ has an $(H,L')$-coloring for every $h$-list assignment $L'$ on $(G,H,L)$.
%
%{\PB I tried to rewrite things so that we can refer to an $(H,L)$ and $\mathcal D$ 
%making $(G,\rho)$ flexible or inflexible. This will make certain things easier later.}
%

For convenience, we let each list $L(v)$ be a \emph{pointed set}, that is, a set with a distinguished \emph{basepoint} $\omega(v) \in L(v)$.
Then  we say that a coloring distribution $\Phi$ on $(G,\rho,H,L,\mathcal D)$ is an \emph{$\epsilon$-distribution} if 
the random $(H,L)$-coloring $\phi$ of $G$ produced by $\Phi$ satisfies the following
%$(G,\rho)$ and a DP $3$-cover $(H,L)$ of $G$, 
%and given an $\epsilon$-admissible $h$-list distribution $\mathcal D$ on $(G, H, L)$, 
%we say that
%$(G,\rho,H,L,\mathcal D)$ is \emph{$\epsilon$-flexible} if 
%there exists a random $\mathcal D$-coloring distribution $\Phi$ such that the random $(H,L{\IC_\D})$-coloring $\phi$ of $G$ 
%produced by $\Phi{\IC (L_\D)}$
%satisfies the following {\IC for every color $c\in L_\D(v)$ for every vertex $v$}:
for every $v \in V(G)$: % and $c \in L(v)$: 
% {\PB Here is the definition!}
\begin{itemize}
    \item $\Pr(\phi(v) = c)  \geq \epsilon $  for each $c \in L(v)$;
    \item $\Pr(\phi(v) = \omega(v)) = 2 \epsilon$, and $\Pr(\phi(v) = c) = \frac 12 - \epsilon$ for each $c \in L(v) \setminus \{\omega(v)\}$ whenever $\rho(v) = 4$.
\end{itemize}
Note that the probabilities above are considered with respect to the overall distribution $\Phi$, not with respect to a distribution $\Phi(L_{\mathcal D})$ corresponding with a specific outcome $L_{\mathcal D}$ of $\mathcal D$.
If $(G,\rho,H,L,\mathcal D)$ has an $\epsilon$-distribution, then we say that $(G,\rho,H,L,\mathcal D)$ is \emph{$\epsilon$-flexible}.
%We say that such a random $\mathcal D$-coloring distribution is an \emph{$\epsilon$-distribution} for $(G,\rho,H,L,\mathcal D)$.

Finally, we say that $(G,\rho)$ is \emph{$\epsilon$-flexible} if $(G,\rho,H,L,\mathcal D)$ is $\epsilon$-flexible for every
DP $3$-cover $(H,L)$ of $G$ and every $\epsilon$-admissible
list distribution $\mathcal D$ on $(G,\rho,H,L)$.
We simply say that $G$ is \emph{$\epsilon$-flexible} if $(G,\rho)$ is $\epsilon$-flexible for the constant function $\rho \equiv 6$.
We say that $G$ is \emph{$(\rho, \epsilon)$-critical} if $(G, \rho)$ is not $\varepsilon$-flexible, but for every proper subgraph $G'$ of $G$, $(G', \rho)$ is $\varepsilon$-flexible (when $\rho$ is restricted to $V(G')$).

\begin{observation}
    \label{obs:minimal}
    If $(G,\rho)$ is not $\epsilon$-flexible, then $G$ has a subgraph $G'$ that is $(\rho,\epsilon)$-critical.
\end{observation}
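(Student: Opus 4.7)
The observation is a routine minimal counterexample argument. The plan is simply to select a subgraph of $G$ of smallest size that is not $\epsilon$-flexible, and verify that it fits the definition of being $(\rho,\epsilon)$-critical.

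Concretely, let $\mathcal{S}$ be the collection of all subgraphs $G' \subseteq G$ for which $(G', \rho|_{V(G')})$ fails to be $\epsilon$-flexible. By hypothesis $G \in \mathcal{S}$, so $\mathcal{S}$ is nonempty, and since $G$ has only finitely many subgraphs we may choose $G^* \in \mathcal{S}$ minimizing the measure $|V(G^*)|+|E(G^*)|$ (any well-founded measure on subgraphs would do). Then $(G^*, \rho|_{V(G^*)})$ is not $\epsilon$-flexible by membership in $\mathcal{S}$, while any proper subgraph $G''$ of $G^*$ is strictly smaller in this measure and therefore lies outside $\mathcal{S}$; hence $(G'', \rho|_{V(G'')})$ is $\epsilon$-flexible. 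These two statements together are exactly the definition of $G^*$ being $(\rho,\epsilon)$-critical.

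No step presents any obstacle: the argument uses only the finiteness of $G$ and the fact that $\rho$ restricts naturally to any vertex subset of $G$; none of the specifics of DP-covers, list distributions, $h_\rho$-list assignments, or coloring distributions need to be invoked. The role of the observation in the paper is purely structural. It licenses the subsequent focus on the class of $(\rho,\epsilon)$-critical graphs, since any failure of $\epsilon$-flexibility in the ambient graph $G$ is already witnessed by some critical subgraph to which the forthcoming reducible-configuration analysis can be applied.
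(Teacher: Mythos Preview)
Your proof is correct and is exactly the standard minimality argument one expects here; the paper in fact states this observation without proof, treating it as immediate, so your write-up fills in precisely the routine details the authors left implicit.
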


Given a pair $(G,\rho)$ and
a vertex set $U \subseteq V(G)$, we define
 $\sigma_U(v) = \rho(v) - 2d_U(v) $ for each $v \in U$. Then  we define the \emph{potential} of $U$ as 
\[\rho_{G}(U) = \sum_{v \in U} \rho(v) - 4 |E(G[U])| = \sum_{v \in U} (\rho(v) - 2d_U(v) )=\sum_{v\in U}  \sigma_U(v) .\]
We also write $\sigma(v) = \sigma_{V(G)}(v) = \rho(v) - 2d(v)$ for each $v \in V(G)$, so that $\rho_{G}(V(G)) = \sum_{v \in V(G)} \sigma(v)$.

\subsection{Our goal}
\label{sec:our-goal}

In order to prove \Cref{thm:main-mad},
we prove the following stronger statement.

\begin{theorem}\label{thethm}
    If $G$ is $(\rho,\frac 15)$-critical, then one of the following holds:
    \begin{itemize}
        \item $G\in \cI$ and $\rho_G(v)=6$ for all $v\in V(G)$,
        \item $G$ is a $C_2$ satisfying $\rho_G(V(G)) = 2$,  or
        \item $\rho_G(V(G))\leq 0$.
    \end{itemize}
    
\end{theorem}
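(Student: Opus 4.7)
The plan is to contrapose: assume $G$ is $(\rho,\tfrac15)$-critical, $\rho_G(V(G))>0$, and $G$ is not a $C_2$ of potential $2$; the goal is then to show $G\in\cI$ (and check that each $I_m$ in fact has $\rho_G(v)=6$ throughout). The proof follows the by-now standard potential-based scheme for sparse graph coloring, but adapted to the list-plus-coloring-distribution framework set up in \Cref{sec:multigraphs}.

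First I would build a catalogue of \emph{reducible configurations}. A configuration $S\subseteq V(G)$ is reducible if, whenever it appears in $G$, one can pass to a proper subgraph $G'$ obtained by deleting $S$ (or contracting a $C_2$, or otherwise simplifying $S$), equip it with a modified potential $\rho'$ (reduced on boundary vertices to reflect lost edges toward $S$), invoke criticality to get a $\tfrac15$-distribution on $(G',\rho',H',L',\D')$, and then lift it to a $\tfrac15$-distribution on $(G,\rho,H,L,\D)$. Lifting requires two ingredients: extending the list distribution $\D$ onto $S$ so that the appropriate colors of $S$-vertices are forbidden with probability $\geq \tfrac15$ independently of $\D'$, and extending the coloring $\Phi$ onto $S$ so that every color $c\in L(v)$ is used on $v\in S$ with probability $\geq \tfrac15$, with the additional basepoint equality $\Pr(\phi(v)=\omega(v))=\tfrac25$ when $\rho(v)=4$. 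Candidate reductions include: isolated and pendant vertices; $2$-vertices in $\Pi_6$ whose removal recovers enough slack; pairs of adjacent vertices joined by a $C_2$ with one endpoint of low outside-degree; and any proper subset $S\subsetneq V(G)$ with $\rho_G(S)\leq 0$, whose presence can be absorbed into a single dummy vertex with an appropriately reduced potential value in $\{3,4,6\}$.

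Second, I would read off the structural consequences of the above reductions. Because every proper $S$ with $\rho_G(S)\leq 0$ is forbidden, $G$ is ``$\rho$-dense'' in the sense that only the whole vertex set can have nonpositive potential; this will force $G$ to be connected, $2$-connected modulo trivial exceptions, and to have minimum degree $2$. The $C_2$-reductions, combined with the ban on small negative-potential subsets, should force each $C_2$ to be ``isolated'' in a precise sense: its two vertices have no other neighbors joined by a second edge, and at least one of them is a $3$-vertex whose third neighbor is also a $3$-vertex sitting on the opposite end of another $C_2$ or of a single edge in the same alternating pattern. Put together, these constraints say that $G$ is a cycle in which multi-edges and simple edges alternate, with the cycle length forced to be odd by the remaining potential count; this is exactly the family $\cI$. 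The $C_2$ exception and the computation $\rho_{I_m}(v)=6$ for every vertex follow directly from the definitions.

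The main obstacle is the first step: proving that the reducible configurations are genuinely reducible in this refined framework. Unlike classical list-coloring reductions, here one must simultaneously control a random list assignment (via $\D$) and a conditional coloring distribution (via $\Phi$) while meeting tight, two-sided probability targets --- in particular the exact value $\tfrac25$ on basepoints of $\Pi_4$-vertices. The natural tool is a careful coupling: for each boundary vertex of $S$, one biases the extension of $\D$ to $S$ based on the color the lifted distribution assigns on the boundary, using the slack guaranteed by the large potential of the removed piece to show that every color-frequency target on $S$ lies in the achievable polytope. Making this coupling go through simultaneously for every vertex of $S$, for every cover $(H,L)$, and for $\Pi_3$-, $\Pi_4$-, and $\Pi_6$-vertices, is where the real work of the proof lives.
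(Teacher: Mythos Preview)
Your high-level framing (minimum counterexample, potential method, reducible configurations, gap lemma forbidding proper subsets of low potential) matches the paper's architecture, but the endgame you describe is not what the paper does, and your version of it has a real gap.

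The paper does \emph{not} try to deduce that a positive-potential critical graph must lie in $\cI$. Instead it assumes the counterexample $G$ satisfies \emph{all three} negations at once---$G\notin\cI$, $G$ is not an exceptional $C_2$, and $\rho_G(V(G))\ge 1$---and then runs a discharging argument to reach $\rho_G(V(G))\le 0$. Each vertex starts with charge $\sigma(v)=\rho(v)-2d(v)$; positive vertices send charge $1$ along ``conductive'' paths (paths through vertices with $\sigma=0$, or $2$-vertices in $\Pi_3$) to negative vertices; structural lemmas bound how many negative (resp.\ positive) vertices a positive (resp.\ insulated) vertex can reach this way. The family $\cI$ enters only indirectly: when a reduction produces a smaller graph $G'$, minimality would give a $\tfrac15$-distribution on $G'$ \emph{unless} $G'$ contains an exceptional subgraph, and several lemmas (notably \Cref{lem:kck} and \Cref{lem:full-2-parallel}) have to argue separately that this exceptional case cannot occur or leads to its own contradiction.

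Your step two (``these constraints say $G$ is an odd cycle with alternating multiplicities'') is where the gap is. Nothing in the reductions you list forces such rigid global structure; the paper's structural lemmas, even after considerable work, only yield local constraints (no $2$-vertex incident to parallel edges, no two $3^-$-vertices joined by parallel edges, terminal blocks meet $\Pi_3$, etc.), and these are far from pinning $G$ down to $\cI$. The global conclusion comes from discharging, not from a direct classification. You also omit two technical tools that carry much of the weight: a reduction via fractional $(H,L)$-packings (\Cref{lem:gen-diamond}), which contracts certain dense pieces to a single vertex by decomposing colorings into matchings; and explicit stochastic-matrix constructions (as in \Cref{lem:pendent-heavy}, \Cref{lem:3-parallel-method}, and the case analysis of \Cref{lem:full-2-parallel}) that realize the exact target probabilities $\tfrac25,\tfrac3{10},\tfrac3{10}$ at $\Pi_4$-vertices when lifting a distribution back to $G$. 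Your coupling sketch is in the right spirit but does not anticipate that these liftings require case-by-case $3\times3$ or $4\times6$ matrices with hand-checked nonnegativity, nor that the basepoint constraint at $\Pi_4$ is what makes the cut-vertex reduction (\Cref{lem:cut-4}) go through.
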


Given a graph $G$ with a potential function $\rho$,
we say that a subgraph $G'$ of $G$ is \emph{exceptional} if $G' \in \mathcal I$ or $G'$ is a $C_2$ for which $\rho_G(V(G')) = 2$.
Note that an exceptional $C_2$ necessarily has one  vertex of $\Pi_4$ and one vertex of $\Pi_6$.
Thus, \Cref{thethm} states that if $G$ is $(\rho, \frac 15)$-critical,
then either $V(G)$ has nonpositive potential, or $G$ is exceptional. 

We will prove \Cref{thethm} by considering a minimum counterexample $(G,\rho)$ to the theorem, and then use minimality to conclude many properties of $(G,\rho)$. 
Then  we will use a discharging argument to reach a contradiction.

\iffalse
\Cref{thethm} has the following immediate corollary.
\begin{corollary}
    If $G$ is a multigraph with maximum average degree less than $3$,
    then $G$ has a subgraph in $\mathcal I$, or $G$ is $\epsilon$-flexibly DP $3$-colorable.
\end{corollary}
\begin{proof}
    % {\PB The ordinary flexible DP-coloring setting is represented by letting $h=3$ and everything be light}
    Suppose that $G$ is not $\epsilon$-flexibly DP $3$-colorable for a $3$-cover $(H, L)$ of $G$. 
        % Let $(H,L)$ be a $3$-cover of $G$ for which there is no distribution on $(H,L)$-colorings $\phi$ that satisfies $\Pr(\phi(v) = c) \geq \epsilon$ for each $v \in V(G)$ and $c \in L(v)$. 
    In particular, $(G, \rho)$ is not $\epsilon$-flexible when $\rho(v) = 6$ for all $v \in V(G)$. 
    Therefore, by \Cref{obs:minimal}, $G$ contains a subgraph $G'$ that is $(\rho,\epsilon)$-critical.
    As the average degree of $G'$ is less than $3$,
    \[\rho_{G'}(V(G')) = 6|V(G')| - 4|E(G')|  > 0.\]
    Therefore, by \Cref{thethm}, $G' \in \mathcal I$. 
\end{proof}
{\PB I'm not sure if we need this corollary, since it is basically the same as \Cref{thm:main-mad}.}
\fi

\subsection{Tightness examples}
In this section, we show that the bound on the potential in \Cref{thethm}
is tight by describing a family $\mathcal I$ of
$(\rho,\epsilon)$-critical graphs with potential $2$ for all $\epsilon>0$.
We also note that the graph $K_4$ with potential  function $\rho \equiv 6$ is $(\rho,\epsilon)$-critical for every $\epsilon > 0$ and has potential $0$,
so the upper bound of $\rho_G(V(G)) \leq 0$ cannot be decreased for graphs outside of $\mathcal I$.
In addition to $K_4$, we show an infinite family of pairs $(G,\rho)$ satisfying $\rho_G(V(G)) = 0$ that are not $\epsilon$-flexible for $\epsilon > \frac 16$.
We finally show that our value of $\epsilon = \frac 15$
is tight for an infinite family of graphs with potential $2$.

\begin{observation}
\label{obs:C2}
    Let $G$ be isomorphic to $C_2$ with a potential function $\rho$ satisfying $\rho_G(V(G)) = 2$. Then  for every $\epsilon > \frac 16$, $(G,\rho)$ is not $\epsilon$-flexible.
\end{observation}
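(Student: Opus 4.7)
Since $\rho$ is $\{3,4,6\}$-valued and $\rho_G(V(G)) = \rho(u) + \rho(v) - 4|E(G)| = \rho(u) + \rho(v) - 8 = 2$, we must have $\{\rho(u), \rho(v)\} = \{4,6\}$. My plan is to take $\rho(u) = 4$ and $\rho(v) = 6$ without loss of generality, and to exhibit a single DP $3$-cover $(H, L)$ of $G$ admitting no $\epsilon$-distribution for $\epsilon > \tfrac 16$. Since $h_{\rho}(u) = h_{\rho}(v) = 3$, the list distribution $\mathcal D$ is trivial, so it suffices to work directly with distributions over $(H, L)$-colorings of $G$.

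The cover I would use is built to pin most of the probability mass on a single color of $u$ while spreading at most $\tfrac 12 - \epsilon$ of it over two distinct colors of $v$. Write $L(u) = \{1_u, 2_u, 3_u\}$ with basepoint $\omega(u) = 1_u$ and $L(v) = \{1_v, 2_v, 3_v\}$, and take as the two matchings between $L(u)$ and $L(v)$ (one for each parallel edge of $G$) $M_1 = \{1_u 1_v,\, 2_u 2_v\}$ and $M_2 = \{1_u 2_v,\, 2_u 1_v,\, 3_u 3_v\}$. A short enumeration of $M_1 \cup M_2$ shows that the only $(H, L)$-colorings of $G$ are $\phi_1 = (1_u, 3_v)$, $\phi_2 = (2_u, 3_v)$, $\phi_3 = (3_u, 1_v)$, and $\phi_4 = (3_u, 2_v)$.

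Setting $p_i = \Pr(\phi = \phi_i)$ for a candidate $\epsilon$-distribution, the basepoint condition at $u$ (where $\rho(u) = 4$) forces $p_1 = 2\epsilon$, $p_2 = \tfrac 12 - \epsilon$, and $p_3 + p_4 = \tfrac 12 - \epsilon$. The lower-bound conditions at $v$ give $\Pr(\phi(v) = 1_v) = p_3 \geq \epsilon$ and $\Pr(\phi(v) = 2_v) = p_4 \geq \epsilon$, so $p_3 + p_4 \geq 2\epsilon$. Combining, $\tfrac 12 - \epsilon \geq 2\epsilon$, whence $\epsilon \leq \tfrac 16$, so no $\epsilon$-distribution exists once $\epsilon > \tfrac 16$. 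The only real content of the argument is finding the cover above; once the two matchings are chosen, the contradiction is a one-line calculation, and this is also the main conceptual step I expect to be non-obvious.
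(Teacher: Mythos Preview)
Your proof is correct and is essentially the same as the paper's. You use the identical cover (the edge set $\{1_u1_v,2_u2_v,3_u3_v,1_u2_v,2_u1_v\}$, just partitioned into matchings differently), the same basepoint $\omega(u)=1_u$, and the same arithmetic $p_3+p_4=\tfrac12-\epsilon\geq 2\epsilon$; the only cosmetic difference is that you enumerate the four $(H,L)$-colorings explicitly while the paper argues that $\phi(v)\in\{1_v,2_v\}$ forces $\phi(u)=3_u$ and then takes the smaller of $\Pr(\phi(v)=1_v),\Pr(\phi(v)=2_v)$.
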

\begin{proof}
    Let $V(G) = \{u,v\}$.
    Since $\rho_G(V(G)) = 2$, it 
    holds without loss of generality that $\rho(u) = 4$ and $\rho(v) = 6$.
    Consider the following $3$-cover $(H,L)$ of $G$. Let $H$ contain the edge $i_u i_v$ for each $i \in \{1,2,3\}$, as well as the edges $1_u 2_v, 2_u 1_v$.
    Let $\mathcal D$ be the empty distribution on forbidden colors.
    Finally, let $L(u)$ have a basepoint $\omega(u) = 1_u$.

    Now, suppose that we have a distribution on $(H,L)$-colorings $\phi$ of $G$.
    Since the events $\phi(v) = 1_v$ and $\phi(v) = 2_v$ occur only if $\phi(u) = 3_u$, we have without loss of generality that $\Pr(\phi(v) = 1_v) \leq \frac 12 \Pr(\phi(u) = 3_u) = \frac 14 - \frac 12 \epsilon < \epsilon$, as $\epsilon > \frac 16$.
    Therefore, $(G,\rho,H,L,\mathcal D)$ has no $\epsilon$-distribution, so $(G,\rho)$ is not $\epsilon$-flexible.
\end{proof}
It is easy to see that 
if the pair $(G,\rho)$ in \Cref{obs:C2}
has a cover $(H,L)$
such that $(G,\rho,H,L,\mathcal D)$ admits no $\frac 15$-distribution (where $\mathcal D$ is the empty forbidden color distribution),
then writing $V(G) = \{u,v\}$,
$H[L(u), L(v)]$ consists of a $C_2$ and a $C_4$.
% For each $m \geq 1$, we define a graph $I_m$ as follows. We begin with a $(2m+1)$-cycle $v_1,v_2,\ldots,v_{2m+1},v_1$, and we duplicate the edge $v_{2i-1}v_{2i}$ for each $1\leq i\leq m$.
%{\IC I_m is already defined in the intro}

%{\IC we should probably define the edges in the cover graph too..}
%{\PB I talked about this with Sasha. He seems to prefer just defining the graphs $I_m$. However, regardless of how we do things, it's probably a good idea to say which cover makes them critical, or else it's not clear why there's defined at all.}
% Next,  we let each $v_i \in V(I_m)$ be a light vertex, and we assign $h(v_i) = 3$.
%Thus $I_m$ has $3m+1$ edges and is not flexibly DP 3-colorable.
%We have $\rho(V(I_m))=6(2m+1)-4(3m+1)=2$. 
% We write $\cI=\{I_m: m\geq 1\}$.

 Recall the definitions of $I_m$ and $\mathcal I$ from the introduction.

\begin{observation}
\label{obs:Ilkyoo}
    Let $m \geq 1$, and
    let $\rho$ be a potential function for $I_m$.
    If $\rho(v) = 6$ for each $v \in V(I_m)$, then
   $\rho(V(I_m)) = 2$. Otherwise, $\rho(V(I_m)) \leq 0$.
\end{observation}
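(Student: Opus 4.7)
The proof plan is almost entirely a direct counting argument, with the key input being the edge count of $I_m$ and the definition of $\rho_G(U)$. First I would unpack the construction of $I_m$ from the introduction: $I_m$ is built from the $(2m+1)$-cycle $v_1 v_2 \ldots v_{2m+1} v_1$ by duplicating each of the $m$ edges $v_{2i-1}v_{2i}$ for $i \in \{1, \ldots, m\}$. This immediately gives $|V(I_m)| = 2m+1$ and $|E(I_m)| = (2m+1) + m = 3m+1$.

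Next, for the first bullet, I would simply substitute $\rho \equiv 6$ into the definition
\[\rho_{I_m}(V(I_m)) = \sum_{v \in V(I_m)} \rho(v) - 4|E(I_m)|.\]
This yields $6(2m+1) - 4(3m+1) = 12m+6 - 12m - 4 = 2$, establishing the first case.

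For the second case (some $\rho(v) \neq 6$), I would use the crucial fact that a potential function takes values only in $\{3,4,6\}$. Hence if $\rho(u) \neq 6$ for some $u \in V(I_m)$, then $\rho(u) \leq 4$, so $\rho(u) \leq 6 - 2$. Since replacing the summand $6$ by $\rho(u) \leq 4$ decreases $\sum_{v \in V(I_m)} \rho(v)$ by at least $2$ while $|E(I_m)|$ is unaffected, we conclude
\[\rho_{I_m}(V(I_m)) \leq 2 - 2 = 0,\]
completing the proof.

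The only real obstacle is keeping the bookkeeping straight on how many edges $I_m$ contains (the $(2m+1)$-cycle edges plus the $m$ duplicated edges); once that count is correct, both statements follow from a single line of arithmetic and the observation that potential values below $6$ can only be $3$ or $4$, each of which is at least $2$ less than $6$. No discharging, no induction, and no case analysis beyond the dichotomy in the statement is needed.
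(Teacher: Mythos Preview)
Your proposal is correct and essentially identical to the paper's proof: both count $|V(I_m)|=2m+1$ and $|E(I_m)|=3m+1$, compute $6(2m+1)-4(3m+1)=2$ for the constant case, and subtract at least $2$ when some vertex has $\rho(v)\leq 4$.
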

\begin{proof}
    Suppose first that $\rho(v) = 6$
    for each $v \in V(I_m)$. Then, as $I_m$ has $2m+1$ vertices and $3m+1$ edges,
    \[\rho_{I_m}(V(I_m)) = 6(2m+1) - 4(3m+1) = 2.\]
    Next, if some vertex $v \in V(I_m)$ satisfies $\rho(v) \leq 4$,
    then
    \[\rho_{I_m}(V(I_m)) \leq 6(2m+1) - 4(3m+1) -2 = 0.\]
\end{proof}

The following observation shows that 
%even when $\rho_{I_m}(V(I_m)) = 2$, it is possible that 
for every positive $\epsilon$,
$I_m$
is 
not $\epsilon$-flexible.
%\sout{$\epsilon$-flexible for 
%no 
%positive $\epsilon,\alpha$. }}
%{\PB I put a comma before ``for"; does this seem okay? I wanted to make it clear that the modified clause of ``not" is ``$\epsilon$-flexible," not ``flexible for every positive...."
%}
%{\IC Sure. We can also go with `it is possible that for every  $\epsilon, \alpha>0$,  $(I_m, \rho)$ is not $\epsilon$-flexible.' Your choice. }

\begin{observation}
\label{obs:Ilkyoo-inflexible}
    If $I_m \in \mathcal I$, 
    then for every $\epsilon> 0$ and every function $\rho$ on $I_m$,
    $(I_m,\rho)$ is not $\epsilon$-flexible.
    %{\PB Using the definition introduced above, can we just say that $I_m$ is not $\epsilon$-flexible?}
    %{\IC this works.}
    %{\PB Maybe I forgot what $(H,L)$ is. I thought it has an $(H,L)$-coloring, but it's impossible to color the $2$-vertex with color $3$.}
    %{\IC you're right.}
    %{\IC (should we define a graph $G$ to be \emph{$\epsilon$-flexible} if $(G, \rho)$ is $\epsilon$-flexible for every $\rho$?)}
    %{\PB We could, but this gets annoying when you consider constant functions $\rho \equiv 3$.
    %Could we define it this way for \emph{some} $\rho$?
    %Or equivalently, for $\rho \equiv 6$.
    %}
    %{\IC what part is annoying? I meant define both `$G$ is $\epsilon$-flexible' and `$(G, \rho)$ is $\epsilon$-flexible'}
    %{\PB It's annoying because most graphs $G$ will not be flexible, not even cycles, because of $\rho \equiv 3$.}
\end{observation}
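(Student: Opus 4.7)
The plan is to exhibit a single DP $3$-cover $(H,L)$ of $I_m$ under which no $(H,L)$-coloring $\phi$ satisfies $\phi(v_{2m+1}) = 3_{v_{2m+1}}$. Once such a cover is in hand, the conclusion follows uniformly in $\rho$: for any potential function $\rho$ on $I_m$ and any $\epsilon$-admissible list distribution $\mathcal D$ on $(I_m,\rho,H,L)$, every $(H,L_{\mathcal D})$-coloring is in particular an $(H,L)$-coloring, so every coloring distribution $\Phi$ produced by $(H,L,\mathcal D)$ satisfies $\Pr(\phi(v_{2m+1}) = 3_{v_{2m+1}}) = 0$. This contradicts the requirement $\Pr(\phi(v_{2m+1}) = 3_{v_{2m+1}}) \geq \epsilon > 0$ of an $\epsilon$-distribution, so $(I_m,\rho)$ is not $\epsilon$-flexible for any $\epsilon > 0$.

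To construct such a cover, set $L(v) = \{1_v, 2_v, 3_v\}$ for each $v \in V(I_m)$. On each doubled edge $v_{2i-1}v_{2i}$, I take the two partial matchings $\{2_{v_{2i-1}}2_{v_{2i}},\, 3_{v_{2i-1}}3_{v_{2i}}\}$ and $\{2_{v_{2i-1}}3_{v_{2i}},\, 3_{v_{2i-1}}2_{v_{2i}}\}$ (each with only two edges); their union forbids exactly the four pairs in $\{2,3\} \times \{2,3\}$, so every valid pair assignment to this edge satisfies $\phi(v_{2i-1}) = 1_{v_{2i-1}}$ or $\phi(v_{2i}) = 1_{v_{2i}}$. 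On each middle single edge $v_{2i}v_{2i+1}$ with $1 \leq i \leq m-1$ I take the identity matching, which in particular forces $\phi(v_{2i+1}) \neq 1_{v_{2i+1}}$ whenever $\phi(v_{2i}) = 1_{v_{2i}}$. Finally, on the two single edges $v_{2m}v_{2m+1}$ and $v_{2m+1}v_1$ incident with $v_{2m+1}$, I take any matchings containing the edges $1_{v_{2m}}3_{v_{2m+1}}$ and $1_{v_1}3_{v_{2m+1}}$, respectively; these give the implications $\phi(v_{2m+1}) = 3_{v_{2m+1}} \Rightarrow \phi(v_{2m}) \neq 1_{v_{2m}}$ and $\phi(v_{2m+1}) = 3_{v_{2m+1}} \Rightarrow \phi(v_1) \neq 1_{v_1}$.

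The verification that $\phi(v_{2m+1}) \neq 3_{v_{2m+1}}$ in every valid coloring is then a short propagation along the cycle. Assuming $\phi(v_{2m+1}) = 3_{v_{2m+1}}$, the two end-edge matchings give $\phi(v_1) \neq 1_{v_1}$ and $\phi(v_{2m}) \neq 1_{v_{2m}}$. Starting from $\phi(v_1) \neq 1_{v_1}$, the doubled edge $v_1v_2$ forces $\phi(v_2) = 1_{v_2}$, the middle edge $v_2v_3$ then forces $\phi(v_3) \neq 1_{v_3}$, the doubled edge $v_3v_4$ forces $\phi(v_4) = 1_{v_4}$, and so on; inductively, $\phi(v_{2j}) = 1_{v_{2j}}$ for every $j \in \{1,\ldots,m\}$, contradicting $\phi(v_{2m}) \neq 1_{v_{2m}}$. (For $m = 1$ there are no middle edges, and the contradiction is immediate from the single doubled edge.) The main obstacle I expect is recognizing that the doubled edges should use partial matchings of only two edges each rather than two disjoint perfect matchings: the perfect-matching choice would force $\phi(v_{2i})$ to be a specific permutation of $\phi(v_{2i-1})$, and a counting argument shows that in this case the combined single-edge constraints cannot simultaneously rule out $\phi(v_{2m+1}) = 3_{v_{2m+1}}$ for every possible pattern of the remaining vertex colors. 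Using smaller matchings replaces the ``permutation'' constraint by the asymmetric ``$1$ must appear'' constraint that can be chained around the odd cycle.
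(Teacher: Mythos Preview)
Your proof is correct and follows essentially the same approach as the paper: exhibit a single $3$-cover of $I_m$ under which the color $3_{v_{2m+1}}$ is never used, so that no $\epsilon$-distribution can exist. The paper's cover is cosmetically different---it uses the identity matching on every single edge and, on each doubled edge, forbids all pairs in $\{1,2\}^2$ together with $(3,3)$, so that color $3$ (rather than your color $1$) is the one forced to appear on each doubled pair; the propagation argument is then identical up to relabeling. Your side remark that two \emph{disjoint} perfect matchings on a doubled edge cannot achieve the same effect is correct and worth noting, though the paper simply sidesteps this by taking the identity matching together with the two extra edges $1_u2_v,2_u1_v$ (equivalently, two non-disjoint perfect matchings), which also yields the required ``one endpoint must receive the special color'' constraint.
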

\begin{proof}
    Consider the following DP $3$-cover $(H,L)$ of $I_m$.
   % {\IC\sout{For each $v \in V(I_m)$, we write $L(v) = \{1_v, 2_v,3_v\}$. }}
    For each edge $uv \in E( I_m)$, let $H$ contain the edge $i_u i_v$ for $i \in \{1,2,3\}$. 
    Furthermore, for each pair $u,v \in V(I_m)$ that induces a $C_2$, include the additional edges $1_u 2_v$ and $2_u 1_v$ in $H$.

    We claim that $I_m$ has no $(H,L)$-coloring $\phi$ for which $\phi(v_{2m+1}) = 3_{v_{2m+1}}$. This claim implies the statement of the observation, as it implies that no distribution on $(H,L)$-colorings of $I_m$ assigns $3_{v_{2m+1}}$ to $v_{2m+1}$ with positive probability. Thus, we aim to prove this claim.

    Suppose we assign $\phi(v_{2m+1}) = 3_{v_{2m+1}}$. Then  we are forced to assign $\phi(v_1) \in \{1_{v_1}, 2_{v_1}\}$, which forces $\phi(v_2) = 3_{v_2}$. This again forces  $\phi(v_3) \in \{1_{v_3}, 2_{v_3}\}$, which forces $\phi(v_4) = 3_{v_4}$. Continuing this pattern, we are forced to assign $\phi(v_{2m}) = 3_{2m}$. Then  the vertices $v_{2m}$ and $v_{2m+1}$ are assigned adjacent colors $3_{v_{2m}}$ and $3_{v_{2m+1}}$, showing that $\phi$ is not an $(H,L)$-coloring. This proves the claim and implies the statement in the observation.
\end{proof}

 An observation of the family $\mathcal I$ is given in \Cref{obs:Im+1}.

\begin{observation}
    \label{obs:Im+1}
    Let $G$ be a graph, and let $u'v'$ be an edge of multiplicity $1$.
    Obtain $G'$ from $G$ by deleting the edge $u'v'$, adding two new vertices $u$ and $v$, and finally adding edges $u' u$, $v'v$, and two copies of $uv$. 
    If $G' \in \mathcal I$, then $G \in \mathcal I$.
\end{observation}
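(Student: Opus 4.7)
The plan is to carry out a direct structural analysis of $G'$ once we know $G' \in \mathcal I$. Write $G' = I_{m'}$ for some $m' \geq 1$, with vertex labels $v_1, \ldots, v_{2m'+1}$ as in the definition of $I_{m'}$. The first step is to establish $m' \geq 2$: by construction $|V(G')| = |V(G)| + 2 = 2m'+1$, so $|V(G)| = 2m' - 1$, and since $G$ contains the edge $u'v'$ between distinct vertices, $|V(G)| \geq 2$ forces $m' \geq 2$. Note that $|V(G)| = 2(m'-1) + 1$ already matches the vertex count of the candidate target $I_{m'-1}$.

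Next, I would identify $\{u, v\}$ inside $I_{m'}$. By construction, $u$ and $v$ are joined by a pair of parallel edges in $G'$, and the only such pairs in $I_{m'}$ are $\{v_{2i-1}, v_{2i}\}$ for $i \in \{1, \ldots, m'\}$. Without loss of generality take $u = v_{2i-1}$ and $v = v_{2i}$; since each of $u, v$ has a unique single-edge neighbor in $G'$, the vertices $u'$ and $v'$ are forced to equal $v_{2i-2}$ and $v_{2i+1}$, respectively, where indices are read cyclically modulo $2m'+1$ with the convention $v_0 := v_{2m'+1}$. A quick check using $m' \geq 2$ confirms that $u' \neq v'$ in every sub-case, consistently with the requirement that $u'v'$ be a proper edge of $G$.

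The final step is to show $G \cong I_{m'-1}$. By construction, $G$ is obtained from $I_{m'}$ by deleting $u, v$ together with all their incident edges and then adding the single edge $u'v'$. The underlying $(2m'+1)$-cycle of $I_{m'}$, once two consecutive vertices are removed and the new edge closes the gap, becomes a $(2m'-1)$-cycle passing through the remaining vertices in their induced cyclic order, and exactly $m' - 1$ of the original $C_2$ pairs survive. Locally at the deletion site, the two single edges adjacent to the deleted $C_2$ collapse into the single edge $u'v'$, so the alternating pattern of $C_2$s and single edges around the cycle is preserved. Relabeling by $u_k = v_k$ for $1 \leq k \leq 2i-2$ and $u_k = v_{k+2}$ for $2i-1 \leq k \leq 2m'-1$ then exhibits the isomorphism with $I_{m'-1}$, giving $G \in \mathcal I$.

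The only real obstacle is the bookkeeping in the last step: verifying that after the relabeling, the surviving $C_2$ pairs occupy precisely the positions $\{u_{2j-1}, u_{2j}\}$ for $j = 1, \ldots, m'-1$. The crucial point is that an even number of consecutive vertices is removed from the cyclic sequence, so the parity along the new cycle is preserved and the $C_2$ pattern remains aligned; the wrap-around case $i = 1$, where $u' = v_{2m'+1}$, is handled symmetrically by shifting the starting label of the relabeling to $v_3$.
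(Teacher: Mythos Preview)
Your argument is correct. The paper does not give a proof of this observation at all; it is stated baldly, the intended justification being the caption of Figure~1, which notes that $I_{m+1}$ arises from $I_m$ by exactly this subdivision-and-double operation on a simple edge. Your direct structural verification---locating the new parallel pair $\{u,v\}$ inside $I_{m'}$, reading off $u',v'$ as the cyclically adjacent vertices, and then checking that the relabeled $(2m'-1)$-cycle with its $m'-1$ surviving $C_2$ pairs is $I_{m'-1}$---is the natural way to make this precise, and all the details (including the parity check that keeps the $C_2$ pattern aligned and the wrap-around case $i=1$) are handled correctly.
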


 Recall the definitions of $D_i$ and $\mathcal S$ from the introduction.

\begin{observation}
    If $G \in \mathcal S$, then for every $\epsilon > 0$ and every potential function $\rho$ on $G$, $(G,\rho)$ is not $\epsilon$-flexible.
\end{observation}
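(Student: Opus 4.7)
The plan is to adapt the construction in Observation~\ref{obs:Ilkyoo-inflexible} so that each diamond chain $D_j$, together with the pendant edge $w_j v_{j+1}$, plays the role of the duplicated edge in $I_m$. Concretely, I would exhibit a DP $3$-cover $(H,L)$ of $G$ under which no $(H,L)$-coloring $\phi$ satisfies $\phi(v_{2m+1}) = 3_{v_{2m+1}}$. Once this is established, non-flexibility follows as in Observation~\ref{obs:Ilkyoo-inflexible}: every coloring distribution has $\Pr(\phi(v_{2m+1}) = 3_{v_{2m+1}}) = 0$, violating the $\epsilon$-distribution condition at $v_{2m+1}$ for every positive $\epsilon$ and every potential function $\rho$ (taking $\mathcal D$ to forbid, at each $v \in \Pi_3$, a uniformly random color from $L(v)$, which is $\epsilon$-admissible whenever any admissible distribution exists).

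The cover I would use places the identity matching $\{1_x 1_y,\,2_x 2_y,\,3_x 3_y\}$ on every edge of the cycle $v_1\ldots v_{2m+1}v_1$ and on every pendant edge $w_j v_{j+1}$. For a single constituent diamond on vertices $u,a,b,v$ (with $2$-vertices $u,v$ and $3$-vertices $a,b$), I distinguish two configurations. An \emph{identity diamond} places the identity matching on all five edges $ua,ub,va,vb,ab$. A \emph{swap diamond} places the identity matching on $ua$ and $vb$, and the swap matching $\{1_x 2_y,\,2_x 1_y,\,3_x 3_y\}$ on each of $ub$, $ab$, and $va$. In every chain $D_j$ I designate exactly one constituent diamond as a swap diamond and make all the others identity diamonds (this is possible since each $D_j$ has at least one diamond).

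The main technical point, which I expect to be the only real obstacle, is a case analysis on a single diamond. For an identity diamond, $\phi(u) = i_u$ forces $\phi(a),\phi(b)$ to take the two indices other than $i$, and then $\phi(v) = i_v$ is forced. For a swap diamond, the analogous check shows that $\phi(u) = i_u$ forces $\phi(v) = \sigma(i)_v$, where $\sigma=(1\ 2)$ is the transposition fixing $3$; I would verify this by enumerating, for each $i \in \{1,2,3\}$, the two admissible pairs $(\phi(a),\phi(b))$ consistent with $\phi(u)$ and with the swap matching on $ab$, and observing that both pairs force the same value of $\phi(v)$. Composing the diamond relations along $D_j$ (which has exactly one swap diamond) then yields $\phi(w_j) = \sigma(\phi(v_j))$ in every $(H,L)$-coloring, for each odd $j \in \{1,3,\ldots,2m-1\}$.

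With this composition in hand, the forcing argument mirrors Observation~\ref{obs:Ilkyoo-inflexible}. Supposing $\phi(v_{2m+1}) = 3_{v_{2m+1}}$, the cycle edge $v_{2m+1} v_1$ gives $\phi(v_1) \in \{1_{v_1},2_{v_1}\}$; then $\phi(w_1) = \sigma(\phi(v_1)) \in \{1_{w_1},2_{w_1}\}$, and the two identity edges $v_1 v_2$ and $w_1 v_2$ exclude both $1_{v_2}$ and $2_{v_2}$, forcing $\phi(v_2) = 3_{v_2}$. Iterating this block around the cycle forces $\phi(v_{2i}) = 3_{v_{2i}}$ for each $i \in \{1,\ldots,m\}$, and the identity edge $v_{2m} v_{2m+1}$ then joins $3_{v_{2m}}$ to $3_{v_{2m+1}}$, contradicting properness. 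This gives the desired cover, and hence the non-flexibility of $(G,\rho)$.
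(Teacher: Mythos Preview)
Your argument is correct. Both your proof and the paper's build a cover on $G$ whose restriction to the cycle simulates the $I_m$ cover of \Cref{obs:Ilkyoo-inflexible}, so that the color $3_{v_{2m+1}}$ is never used; the only difference is where the ``swap'' is placed. The paper puts identity matchings on \emph{every} edge of each diamond chain (so the chain transmits the color index unchanged, $\phi(w_j)=\phi(v_j)$ index-wise, with no case analysis needed) and instead places the $(1\ 2)$-swap matching on the pendant edge $w_j v_{j+1}$. You do the reverse: identity on $w_j v_{j+1}$ and one designated swap diamond per chain, so the chain itself realises the transposition $\phi(w_j)=\sigma(\phi(v_j))$. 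Your route costs a short but genuine case check on the swap diamond (which does go through, as you outline), whereas the paper's placement makes the chain analysis a one-liner; otherwise the two arguments are interchangeable and yield the identical forcing $\phi(v_{2i})=3_{v_{2i}}$ around the cycle.
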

\begin{proof}
    Let $G \in \mathcal S$ be obtained from a cycle $C$ of length $2m+1$.
    Consider the following DP $3$-cover $(H',L')$ of $G$.
    For each edge $uv$ belonging to the cycle $C$ or to a diamond chain $D_j$, let $H'$ contain the edge $i_u i_v$. For each edge $w_j v_{j+1}$, let $H'$ contain the edges $1_{w_j} 2_{v_{j+1}}, 2_{w_j} 1_{v_{j+1}}, 3_{w_j}  3_{v_{j+1}}$.
    By construction of $H'$, for each odd $j\in\{1, \ldots, 2m-1\}$, and for each $(H',L')$-coloring $\phi$ of $G$, there is some $i \in \{1,2,3\}$ for which $\phi(v_j) = i_{v_j}$ and $\phi(w_j) = i_{w_j}$.
    Therefore, 
    for each odd $j\in\{1, \ldots,  2m-1\}$,
    $\phi$ does not contain the pair $1_{v_j} 2_{v_{j+1}}$ or $2_{v_j} 1_{v_{j+1}}$.

    Now, we claim that $G$ has no $(H',L')$-coloring $\phi$ for which $\phi(v_{2m+1}) = 3_{v_{2m+1}}$.
    Indeed, given such an $(H',L')$-coloring $\phi$, our observation above shows that the restriction of $\phi$ to $H[L(V(C))]$
    gives an $(H,L)$-coloring of $I_m$ with respect to the cover $(H,L)$ described in  \Cref{obs:Ilkyoo-inflexible}.
    As $\phi(v_{2m+1}) = 3_{v_{2m+1}}$,
    this is a contradiction.
    This proves the claim and implies the statement in the observation.
\end{proof}

Finally, we give an 
infinite family of pairs $(G,\rho)$ with $\rho_G(V(G)) = 2$
for which $G$ is not $\epsilon$-flexibly DP $3$-colorable for any value $\epsilon > \frac 15$.
Recall the definitions of $J_m$ and $\mathcal J$ from the introduction.
 We observe that when $\rho \equiv 6$ is a potential function for $J_m$, $\rho_{J_m}(V(J_m)) = 2$.

\begin{observation}
    If $J_m \in \mathcal J$, then for every  $\epsilon > \frac 15$ and potential function $\rho$ on $J_m$,
    $(J_m,\rho)$ is not $\epsilon$-flexible.
\end{observation}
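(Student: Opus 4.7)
The plan is to exhibit a specific DP $3$-cover $(H,L)$ of $J_m$ together with five pairwise disjoint events on $(H,L)$-colorings of $J_m$, each of which must carry probability at least $\epsilon$ under any $\epsilon$-distribution; summing the five probabilities will give $5\epsilon \leq 1$, contradicting $\epsilon > \frac 15$. For every edge $uv$ of $J_m$, $H$ will contain the identity matching $\{i_u i_v : i \in \{1,2,3\}\}$, and for every pair $\{u,v\}$ joined by a double edge, $H$ will additionally contain the swap matching $\{1_u 2_v,\, 2_u 1_v,\, 3_u 3_v\}$. A direct check shows that on every doubled pair the valid color pairs are exactly $(1_u, 3_v)$, $(2_u, 3_v)$, $(3_u, 1_v)$, and $(3_u, 2_v)$, so exactly one endpoint receives the color indexed by $3$; on every single edge at most one endpoint receives the color indexed by $3$.

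For any $(H,L)$-coloring $\phi$, write $x_v = 1$ if $\phi(v) = 3_v$ and $x_v = 0$ otherwise. The structural observation above becomes $x_u + x_v = 1$ on every doubled pair and $x_u + x_v \leq 1$ on every single edge. The path $v_1 v_2 \ldots v_{2m}$ on the cycle alternates single and double edges, starting and ending with a single edge, since by construction the doubled cycle edges are exactly $v_{2k} v_{2k+1}$ for $k \in \{1,\ldots,m-1\}$. Hence if $x_{v_1} = 1$, propagating along the path forces $x_{v_2} = 0$, $x_{v_3} = 1$, and so on, ultimately giving $x_{v_{2m}} = 0$. Combined with $x_{w_1} = 1 - x_{v_1}$ and $x_{w_{2m}} = 1 - x_{v_{2m}}$ from the doubled pendant edges, this yields the implication $\phi(w_1) \neq 3_{w_1} \Rightarrow \phi(w_{2m}) = 3_{w_{2m}}$, and symmetrically its reverse. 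Moreover, $\phi(v_{2m+1}) = 3_{v_{2m+1}}$ forces $x_{v_1} = x_{v_{2m}} = 0$ through the two single edges incident with $v_{2m+1}$, and hence forces both $\phi(w_1) = 3_{w_1}$ and $\phi(w_{2m}) = 3_{w_{2m}}$.

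These three implications together show that the five events $\{\phi(w_1) = 1_{w_1}\}$, $\{\phi(w_1) = 2_{w_1}\}$, $\{\phi(w_{2m}) = 1_{w_{2m}}\}$, $\{\phi(w_{2m}) = 2_{w_{2m}}\}$, and $\{\phi(v_{2m+1}) = 3_{v_{2m+1}}\}$ are pairwise disjoint: the first two force $\phi(w_1) \neq 3_{w_1}$, the next two force $\phi(w_{2m}) \neq 3_{w_{2m}}$, and the fifth forces both of these to equal the color indexed by $3$. Given any $\epsilon$-admissible $\mathcal D$ and any candidate $\epsilon$-distribution $\Phi$ on $(J_m, \rho, H, L, \mathcal D)$, each of these five events has probability at least $\epsilon$ by definition of an $\epsilon$-distribution, since $\Pr(\phi(v) = c) \geq \epsilon$ for every vertex $v$ and every color $c \in L(v)$ (and the basepoint rule at any $\Pi_4$-vertex gives the even stronger bound $2\epsilon$). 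Because the events are pairwise disjoint, their probabilities sum to at most $1$, giving $5\epsilon \leq 1$, a contradiction. Hence no such $\Phi$ exists, and $(J_m, \rho)$ is not $\epsilon$-flexible.

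The main obstacle is the propagation step along $v_1 \ldots v_{2m}$, which relies crucially on the single/double alternation starting and ending on single edges; this is precisely why $J_m$ is designed with doubled cycle edges at $v_{2k} v_{2k+1}$ for $k \in \{1, \ldots, m-1\}$ and at the pendants $v_1 w_1$ and $v_{2m} w_{2m}$. The degenerate case $m=1$, where the path is just the single edge $v_1 v_2$ and there are no doubled cycle edges to propagate through, is handled identically.
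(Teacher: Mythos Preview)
Your proof is correct and follows essentially the same approach as the paper: the same DP $3$-cover, the same propagation of the color indexed by $3$ along the cycle, and the same $5\epsilon \le 1$ counting. The only cosmetic difference is that the paper first shows the three events $\{\phi(v_1)=3_{v_1}\}$, $\{\phi(v_{2m})=3_{v_{2m}}\}$, $\{\phi(v_{2m+1})=3_{v_{2m+1}}\}$ are pairwise disjoint and then argues each of the first two has probability at least $2\epsilon$ (via the pendant vertices $w_1,w_{2m}$), whereas you go directly to the five leaf events; your packaging is if anything slightly cleaner.
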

\begin{proof}
    We consider the following $3$-cover $(H,L)$ of $J_m$.
    For each $j\in\{1, \ldots, 2m+1\}$ and $i \in \{1,2,3\}$, let $H$ contain the edge $i_{v_j} i_{v_{j+1}}$.
    For each even value $j\in\{2, \ldots, 2m-2\}$,
    let $H$ also contain the edges $1_{v_j} 2_{v_{j+1}}, 2_{v_j} 1_{v_{j+1}}$.
    Finally, for $j \in \{1,2m\}$ and $i \in \{1,2,3\}$, let $H$ contain the edges $i_{v_j} i_{w_j}$, as well as the edges
    $1_{v_j} 2_{w_j}, 2_{v_j} 1_{w_j}$.
    % {\IC edges between $v_{2m+1}$ and $v_1$?}

    Let $\Phi$ be a distribution on $(H,L)$-colorings $\phi$ of $J_m$.
    We claim that the events $\phi(v_1) = 3_{v_1}$, $\phi(v_{2m}) = 3_{v_{2m}}$, and $\phi(v_{2m+1}) = 3_{v_{2m+1}}$ are pairwise disjoint.
    By our construction of $H$, clearly the event at $v_{2m+1}$ is disjoint with each of the other two events.
    Now, we show that the event $\phi(v_1) = 3_{v_1}$ is disjoint with the event that 
    $\phi(v_{2m}) = 3_{v_{2m}}$.
    To this end, suppose that $\phi(v_1) = 3_{v_1}$.
    Then, $\phi(v_2) \neq 3_{v_2}$, so by construction of $H$,
    the assignment $\phi(v_3) = 3_{v_3}$ is forced.
    Then, $\phi(v_4) \neq 3_{v_4}$, so by construction of $H$,
    the assignment $\phi(v_5) = 3_{v_5}$ is forced.
    Continuing this pattern, the assignment $\phi(v_{2m-1}) = 3_{v_{2m-1}}$ is forced, so that $\phi(v_{2m}) \neq 3_{v_{2m}}$.
    Therefore, the three given events are pairwise disjoint.

    Now, suppose that for each $v \in V(J_m)$ and $c \in L(v)$, $\Pr(\phi(v) = c) \geq \epsilon$, for some $\epsilon > 0$.
    We see that the assignment $\phi(w_1) = 1_{w_1}$ and $\phi(w_1) = 2_{w_1}$ only occur when $\phi(v_1) = 3_{v_1}$.
    Thus, without loss of generality, $\epsilon \leq \Pr(\phi(w_1) = 1_{w_1}) \leq \frac 12 \Pr(\phi(v_1) = 3_{v_1})$.
    Rearranging, $\Pr(\phi(v_1) = 3_{v_1}) \geq 2 \epsilon$.
    By a similar argument, $\Pr(\phi(v_{2m}) = 3_{v_{2m}}) \geq 2 \epsilon$.
    Finally, $\Pr(\phi(v_{2m+1}) = 3_{v_{2m+1}}) \geq \epsilon$.
    Since these three events are pairwise disjoint, $2 \epsilon + 2 \epsilon + \epsilon \leq 1$, implying that $\epsilon \leq \frac 15$.
\end{proof}

A similar argument shows that if $G$ is the graph obtained from $J_m$ by adding a new vertex $w_3$ adjacent to $v_{2m+1}$ by parallel edges, and if $\rho \equiv 6$ is a constant function,
then 
$\rho_G(V(G)) = 0$ and 
$(G,\rho)$
is not $\epsilon$-flexible for any $\epsilon > \frac 16$.
Therefore,
our upper bound on $\rho_G(V(G))$ in  \Cref{thethm}
cannot be improved to $\rho_G(V(G)) \leq -1$ even when $K_4$ is excluded.

\iffalse
{\PB With the figure, is the proof even necessary?}

{\AK Maybe not, but I wanted to complete the hw.}
\begin{proof} By definition, $I_m$ is obtained from a $(2m+1)$-cycle $v_1,v_2,\ldots,v_{2m+1},v_1$ by duplicating the edge $v_{2i-1}v_{2i}$ for each $1\leq i\leq m$. Let $u'v' \in E(I_m)$ be an edge of multiplicity $1$. By renaming the vertices, we may assume that $u'v'=v_{2j}v_{2j+1}$ for some $1\leq j\leq m$.
Then the multigraph $G'$ described in the statement is simply  $I_{m+1}$. %This proves the lemma.
\end{proof}
\fi

\subsection{Some lemmas}
%%%%%%%%%%%%%%%%%%%%%%%%%%%%%%%%%%%
%%%%%% reductive lemmas %%%%%%%
%%%%%%%%%%%%%%%%%%%%%%%%%%%%%%%%%%%

% {\IC All lemmas in this subsection except \Cref{lem:path} do not use our framework. Maybe create a new section called `preliminaries' and let it contain the subsection 1.5 named 'preliminaries' and the lemmas in this subsection, except \Cref{lem:path}. }
% {\PB This is fine with me}
% {\IC Hmm.. then again, the intro is already 7 pages long. And including a new preliminary section will make the actual statement we prove appear much later in the paper. Need to rethink this. }
% {\PB Then maybe it's better to leave these here?}
% {\IC sure. Does Sasha have an opinion?}

The following statement is implicitly observed in~\cite{2018DvPo}. We include a proof for completeness.
\begin{lemma}
\label{lem:tree-packing}
    Let $T$ be a tree.
%    and let $\ell:V(T) \rightarrow \{2,3\}$.
    If $(H,L)$ is a $2$-cover of $T$, then $T$ has
    an $(H,L)$-packing;
    that is, there are two disjoint
    $(H,L)$-colorings $\phi_1,\phi_2$ such that for each $c \in V(H)$, $c$ belongs to exactly one of $\phi_1, \phi_2$.
\end{lemma}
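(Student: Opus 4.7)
I will construct $\phi_1$ and $\phi_2$ simultaneously by processing the vertices of $T$ in an order consistent with a root-to-leaf traversal. Fix an arbitrary root $r$ and order the vertices $v_1 = r, v_2, \ldots, v_n$ so that each non-root $v_j$ has a unique parent $v_{i(j)}$ with $i(j) < j$. At each vertex $v$ I choose an ordered pair $(\phi_1(v), \phi_2(v))$ that is a bijection between $\{1,2\}$ and $L(v)$; this automatically ensures that $\phi_1$ and $\phi_2$ are disjoint and that $\phi_1(V(T)) \cup \phi_2(V(T)) = V(H)$. It remains only to guarantee that each of $\phi_1$ and $\phi_2$ is an $(H,L)$-coloring, which is a constraint along each parent--child edge.

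At the root, pick any bijection and set $(\phi_1(r), \phi_2(r))$ arbitrarily. Now suppose $v = v_j$ has parent $u = v_{i(j)}$, with $(\phi_1(u), \phi_2(u))$ already fixed. Write $L(v) = \{a_1, a_2\}$. There are exactly two candidate pairs $(\phi_1(v),\phi_2(v))$, namely $(a_1, a_2)$ and $(a_2, a_1)$. Since $T$ is simple, the bipartite subgraph $M = H[L(u), L(v)]$ is a single matching of size at most $2$. The first candidate fails precisely when $\phi_1(u)a_1 \in M$ or $\phi_2(u)a_2 \in M$, and the second candidate fails precisely when $\phi_1(u)a_2 \in M$ or $\phi_2(u)a_1 \in M$.

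The key step is to show that these two candidates cannot both fail. If they did, then $M$ would contain one edge from each of the two failure sets. Enumerating the four possibilities, each one of them forces either a color in $L(u)$ or a color in $L(v)$ to be incident to two edges of $M$: for example, $\phi_1(u)a_1, \phi_1(u)a_2 \in M$ uses $\phi_1(u)$ twice; $\phi_1(u)a_1, \phi_2(u)a_1 \in M$ uses $a_1$ twice; and the remaining two cases are symmetric. This contradicts $M$ being a matching. Hence at least one candidate pair is compatible with $(\phi_1(u), \phi_2(u))$, and we assign $(\phi_1(v), \phi_2(v))$ to be such a pair.

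Proceeding through the ordering, we obtain two $(H,L)$-colorings $\phi_1$ and $\phi_2$ with $\{\phi_1(v), \phi_2(v)\} = L(v)$ for every $v$, which is precisely an $(H,L)$-packing. I do not anticipate any real obstacle; the argument is essentially a greedy extension along a tree, and the only substantive observation is the matching structure of $H[L(u), L(v)]$, which rules out all simultaneous-failure configurations for the two candidate assignments at each child.
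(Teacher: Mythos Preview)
Your proof is correct. Both your argument and the paper's are root-to-leaf greedy extensions, but the presentations differ slightly: the paper first observes that, because $T$ is acyclic, one may relabel the colors at each vertex so that every edge matching is the identity $\{1_v1_w,\,2_v2_w\}$, after which the two packings are given explicitly by the parity of $\dist(r,v)$. Your version skips the relabeling and instead verifies directly, via the matching property of $H[L(u),L(v)]$, that at each child at least one of the two candidate bijections is compatible with the parent's assignment. The paper's normalization yields a cleaner closed-form description of $\phi_1,\phi_2$, while your approach has the minor advantage of handling non-perfect matchings in $H$ without first padding them; the underlying inductive idea is the same.
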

\begin{proof}
    Let $(H, L$) be a $2$-cover of $T$. 
    %We obtain a $2$-cover $(H',L')$ of $T$ by deleting a color uniformly at random from each list $L(v)$ of size $3$.
      Since $T$ is acyclic, 
      for each edge $vw$ of $T$, we may assume without loss of generality that
     $H$ contains the edges $1_{v} 1_{w}$ and $2_{v} 2_{w}$. 
     Fix a root $r \in V(T)$.
    For $j \in \{1,2\}$, let $\phi_j:V(T) \rightarrow V(H)$ assign $1_{v}$ to each $v$ for which $\dist(r,v) + j$ is odd and assign $2_{v}$ to each $v$ for which $\dist(r,v)+j$ is even. 
    %Then, the distribution obtained by choosing each of $\phi_1$ and $\phi_2$ with probability $\frac 12$ 
    %is a fractional $(H',L')$-packing of $T$.
    %Therefore, if $\ell(u) = 2$ and $c \in L(u)$, then $\Pr(\phi(u) = c) = \frac 12$. 
    %If $\ell(u) = 3$ and $c \in L(u)$, then $c \in L'(u)$ with probability $\frac 23$, and hence $\Pr(\phi(u) = c) = \frac 13$.
    This gives us our two disjoint $(H,L)$-colorings $\phi_1, \phi_2$.
\end{proof}

Using \Cref{lem:tree-packing},
Cambie,
Cames van Batenburg,
and  Zhu~\cite[Theorem 9.4]{arXiv_CaCaZh2023} proved the following result. 
A graph $G$ is \emph{almost cubic} if $G$ is $2$-connected and every vertex is a $3$-vertex except for one $2$-vertex. 

\begin{lemma}
    \label{lem:almost-cubic}
    Let $G$ be a graph of maximum average degree less than $3$ with no almost cubic subgraph. If $(H,L)$ is a DP $3$-cover of $G$, then $G$ has a fractional $(H,L)$-packing.
\end{lemma}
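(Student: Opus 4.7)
The plan is to prove the statement by induction on $|V(G)|$, with \Cref{lem:tree-packing} furnishing the base case and also serving as a key tool for handling tree-like reductions. When $G$ is a forest, I would construct a fractional $(H,L)$-packing by first randomly selecting, for each $v$, one color $c_v \in L(v)$ to delete in a manner compatible with the edges of $H$ (so that the remaining colors still induce a $2$-cover of $G$), and then applying \Cref{lem:tree-packing} to the resulting random $2$-cover. Calibrating the distribution of the deleted colors so that each $c \in L(v)$ is deleted with probability $\frac{1}{3}$ gives marginal probability $\frac{1}{3}$ at every color of every vertex.

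For the inductive step, the plan is to identify a reducible configuration in $G$ using both hypotheses. Since $\mad(G) < 3$, the graph $G$ is $2$-degenerate, and by iteratively handling any vertex of degree at most $1$ (extending a packing on $G - v$ to $v$ using a direct conditional distribution construction at its single neighbor), I may reduce to the case where $G$ has minimum degree $2$. A counting argument together with the block decomposition then shows that in the absence of almost cubic subgraphs, $G$ must contain a reducible configuration: either two adjacent $2$-vertices, a $2$-vertex in a block that also contains a vertex of degree at least $4$, or a pendant cycle. The role of the no-almost-cubic-subgraph hypothesis is precisely to exclude the one obstruction — a $2$-connected block with a single $2$-vertex and all other vertices of degree exactly $3$ — for which no such reduction is available.

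For each configuration, I would pass to a smaller graph $G'$ that still satisfies the hypotheses (by contracting a thread, deleting an edge, or similar), obtain by induction a fractional $(H',L')$-packing of $G'$ for an appropriately induced $3$-cover, and then extend to a fractional $(H,L)$-packing of $G$. The extension at a reduced vertex $v$ is carried out by specifying the conditional distribution of $\phi(v)$ given $\phi$ on $v$'s neighbors in $G'$; a direct linear algebra verification shows that such a conditional distribution can always be chosen so that $\Pr(\phi(v) = c) = \frac{1}{3}$ exactly, for every $c \in L(v)$. For pendant cycles and more complicated tree-like substructures generated by the reduction, \Cref{lem:tree-packing} can be applied to a suitable $2$-cover obtained after a random color choice, analogously to the base case.

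The main obstacle is the verification step: a fractional packing requires the marginals to equal $\frac{1}{3}$, not merely to be bounded below, so the extension analysis is more rigid than in the $\epsilon$-flexibility setting. The case analysis must cover every local arrangement of matchings of the DP cover around the reducible configuration, and the almost cubic hypothesis is used most crucially here to ensure that the structural reductions suffice to cover all cases.
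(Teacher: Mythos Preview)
The paper does not give its own proof of this lemma: it is quoted as Theorem~9.4 of Cambie, Cames van Batenburg, and Zhu, with only the remark that \Cref{lem:tree-packing} is an ingredient. So there is no in-paper argument against which to compare your outline.

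On the merits of the outline itself, the forest base case and the degree-$1$ reduction are fine, but the minimum-degree-$2$ step has a genuine gap. Your trichotomy of reducible configurations---two adjacent $2$-vertices, a $2$-vertex sharing a block with a $4^+$-vertex, or a pendant cycle---misses the case of a $2$-connected subcubic graph whose $2$-vertices (there are at least two, else the graph would itself be almost cubic) are pairwise non-adjacent. Concretely, take $K_4$ and subdivide each edge of a perfect matching once: the resulting six-vertex graph has $\mad<3$, contains no almost cubic subgraph, has exactly two non-adjacent $2$-vertices, has maximum degree~$3$, and is not a cycle, so none of your configurations applies. (This is exactly the situation that \Cref{lem:red-3} addresses, but that corollary is \emph{deduced from} the present lemma and so cannot be assumed here.) Worse, the natural reduction of suppressing a $2$-vertex does not preserve the no-almost-cubic hypothesis: in this same example, suppressing either $2$-vertex produces $K_4$ with a single subdivided edge, which \emph{is} almost cubic, so the inductive hypothesis would not apply to the reduced graph. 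Both the structural case analysis and the choice of reduction therefore need substantially more work before the argument closes.
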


\Cref{lem:almost-cubic} has the following corollary.

\begin{corollary}
\label{lem:red-3}
    Let $G$ be a $2$-connected simple graph with maximum degree at most $3$ and at least two $2$-vertices.
    If $(H,L)$ is a DP $3$-cover of $G$, then $G$ has a fractional $(H,L)$-packing.
\end{corollary}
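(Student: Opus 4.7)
The plan is to derive this directly from \Cref{lem:almost-cubic}, so the entire task is to verify the two hypotheses of that lemma for $G$: that $\mad(G) < 3$ and that $G$ contains no almost cubic subgraph. Both follow from the combination of $2$-connectedness, maximum degree at most $3$, and the presence of two $2$-vertices.

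First, I would check that $\mad(G) < 3$. Any subgraph $G'$ of $G$ inherits maximum degree at most $3$, and hence has average degree at most $3$, with equality only if $G'$ is $3$-regular. If some subgraph $G'$ were $3$-regular, then every vertex of $G'$ would already use all three of its slots on edges inside $G'$, so $V(G')$ would be a union of connected components of $G$. Since $G$ is $2$-connected, this forces $G' = G$, contradicting the existence of a $2$-vertex in $G$. Thus $\mad(G) < 3$.

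Next, I would rule out almost cubic subgraphs. Suppose for contradiction that $G''$ is an almost cubic subgraph of $G$, and let $u$ be its unique $2$-vertex. Since each $3$-vertex of $G''$ already has all of its three $G$-neighbors inside $V(G'')$, the only vertex of $G''$ that can be incident to an edge leaving $V(G'')$ is $u$. If $V(G'') \neq V(G)$, then connectivity of $G$ forces an edge from $u$ to $V(G) \setminus V(G'')$, and then $\{u\}$ is a vertex cut separating $V(G'') \setminus \{u\}$ from $V(G) \setminus V(G'')$, contradicting $2$-connectedness. Hence $V(G'') = V(G)$; but then $G$ has exactly one $2$-vertex (namely $u$), contradicting the hypothesis of at least two $2$-vertices. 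So $G$ has no almost cubic subgraph, and applying \Cref{lem:almost-cubic} produces the desired fractional $(H,L)$-packing.

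There is no real obstacle here beyond being careful with the second step: one must observe that ``almost cubic'' forces the single $2$-vertex to be the only possible gateway to the rest of $G$, and then invoke $2$-connectedness to show that gateway cannot exist. Everything else is a routine reduction to \Cref{lem:almost-cubic}.
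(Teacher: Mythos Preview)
Your proposal is correct and follows essentially the same approach as the paper: both derive the result from \Cref{lem:almost-cubic} by checking $\mad(G)<3$ and ruling out almost cubic subgraphs via the observation that the unique $2$-vertex of such a subgraph would have to be a cut-vertex (or cut-edge endpoint) of $G$. The only cosmetic difference is that the paper first observes the almost cubic subgraph must be induced before concluding it is proper, whereas you argue directly that $V(G'')=V(G)$ is impossible.
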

\begin{proof}
    Since $G$ has no $3$-regular subgraph, the maximum average degree of $G$ is less than $3$. 
    To apply \Cref{lem:almost-cubic}, we need to show that $G$ has no almost cubic subgraph. Suppose that $G'$ is an almost cubic subgraph of $G$.
    If $G'$ is not induced, then $G'$ contains at least two $2^-$-vertices; therefore, $G'$ is induced.
    As $G$ has two $2$-vertices, $G'$ is a proper subgraph of $G$;
    this further implies  $G'$ is joined to $G - V(G')$ by a single cut-edge, contradicting the $2$-connectedness of $G$.
\end{proof}

Using \Cref{lem:tree-packing},
we obtain the following lemma, which is similar in spirit to \Cref{lem:red-3}.

\begin{lemma} 
\label{lem:path}
    Let $G$ be a connected graph with a potential function $\rho$, and suppose that the following hold:
    \begin{itemize}
        %\item {\IC \sout{$G$ is connected and simple.}}
        \item For each nonempty subset $U \subseteq V(G)$, $\rho_G(U) \geq 4$. In particular, $\rho(v) \geq 4$ for each $v \in V(G)$.
        \item For each $v \in V(G)$, $d(v) \leq \frac 12 \rho(v)$.
        %\item $G$ has two vertices $u,v$ for which $\rho(v) - 2d(v) \geq 2$.
    \end{itemize}
    Then, for every $3$-cover $(H,L)$ of $G$,
    there exists a distribution on $(H,L)$-colorings $\phi$ of $G$ for which the following hold:
    \begin{itemize}
        \item For each $v \in V(G)$ and $c \in L(v)$, $\Pr(\phi(v) = c) \in [\frac 3{10} , \frac 25]$.
        \item For each $v \in \Pi_4$ and $c \in L(v) \setminus \{\omega(v)\}$,
        $\Pr(\phi(v) = c) = \frac 3{10}$.
    \end{itemize}
\end{lemma}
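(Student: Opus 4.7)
The plan is to proceed by induction on $|V(G)|$, repeatedly peeling off a vertex of low degree that is guaranteed to exist by the sparseness hypotheses, and then extending the inductive distribution to the removed vertex via a suitable conditional distribution on its (at most two) neighbors. The base case $|V(G)| = 1$ is handled directly by defining $\phi(v)$ to be uniform on $L(v)$ when $v \in \Pi_6$, and defining $\Pr(\phi(v) = \omega(v)) = \tfrac{2}{5}$ with $\Pr(\phi(v) = c) = \tfrac{3}{10}$ for the other two colors when $v \in \Pi_4$.

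For the inductive step, I would first observe that the hypothesis $\rho_G(U) \geq 4 > 0$ for every nonempty $U$ implies that every nonempty subgraph contains a vertex $v$ with $\sigma_U(v) > 0$, i.e. $d_U(v) < \rho(v)/2 \leq 3$, so $G$ is $2$-degenerate. Applying this to $U = V(G)$ and using $\sigma(v) \geq 0$ together with $\sum_v \sigma(v) = \rho_G(V(G)) \geq 4$, one obtains a vertex $v$ with $\sigma(v) \geq 2$; equivalently, $d(v) \leq 1$ if $\rho(v) = 4$ and $d(v) \leq 2$ if $\rho(v) = 6$. Removing $v$ preserves both hypotheses on each component of $G - v$ (since $\rho_{G-v}(U) = \rho_G(U)$ for $U \subseteq V(G)\setminus\{v\}$, and degrees only decrease), so the inductive hypothesis furnishes a distribution on $(H,L)$-colorings of $G - v$ with the required marginals. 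It then remains to extend this distribution to a choice of $\phi(v)$, taking the conditional distribution $\Pr(\phi(v) = c \mid \phi(u), \phi(w))$ to be supported on the set of colors in $L(v)$ that are not forbidden by $\phi$ at the (at most two) neighbors of $v$.

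To finish, I would verify that the conditional probabilities at $v$ can be chosen so that the resulting marginals at $v$ land in $[\tfrac{3}{10}, \tfrac{2}{5}]$, with equality $\tfrac{3}{10}$ for non-basepoint colors when $v \in \Pi_4$. This reduces to showing feasibility of a small linear program whose variables are the conditional probabilities $\Pr(\phi(v) = c \mid \phi(u), \phi(w))$, whose equality constraints are the target marginals at $v$ together with the unit-mass constraints, and whose support constraints are given by the cover edges. When $d(v) = 1$ the system has a one-parameter family of solutions after using $\sum_c \Pr(\phi(v)=c) = 1$, and a direct case check on the matching between $L(u)$ and $L(v)$ together with the inductive bounds $\Pr(\phi(u) = c) \in [\tfrac{3}{10}, \tfrac{2}{5}]$ shows that a valid point in $[0,1]$ exists. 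When $d(v) = 2$ (necessarily $\rho(v) = 6$) there is more flexibility in both the target (any values in $[\tfrac{3}{10}, \tfrac{2}{5}]$) and the conditional distribution, so feasibility follows similarly.

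The main obstacle is the $d(v) = 2$ case: the joint distribution of $(\phi(u), \phi(w))$ delivered by induction is not a product distribution (since $u$ and $w$ can be joined by paths through $G - v$), and if each neighbor forbids a different color at $v$ then only one color remains available, forcing that color and potentially skewing the marginal at $v$. Handling this will require a careful case analysis of the matchings between $L(v)$ and $\{L(u), L(w)\}$ in the cover $H$, using the slack available in the $\Pi_6$ target range $[\tfrac{3}{10}, \tfrac{2}{5}]$ and the freedom to adjust the conditional distribution on pairs $(\phi(u), \phi(w))$ that leave two or three colors available to compensate for the pairs that leave only one.
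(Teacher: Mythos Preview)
Your approach has a genuine gap in the $d(v)=2$ case, and the paper's proof takes a different route precisely to avoid it.

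The problem is that the inductive hypothesis only controls the marginals $\Pr(\phi(u)=i_u)$ and $\Pr(\phi(w)=j_w)$, not the joint distribution of $(\phi(u),\phi(w))$, and the marginal bounds alone are not enough to guarantee a feasible extension at $v$. Concretely, suppose the matchings in $H$ are the identity ones $i_u i_v$ and $i_v i_w$. Then whenever $\phi(u)=i_u$ and $\phi(w)=j_w$ with $i\neq j$, the color at $v$ is forced to be the third one, so
\[
\Pr(\phi(v)=1_v)\ \geq\ p_{23}+p_{32},
\]
where $p_{ij}=\Pr(\phi(u)=i_u,\ \phi(w)=j_w)$. Now take the joint distribution $p_{11}=\tfrac13$, $p_{22}=p_{33}=\tfrac1{12}$, $p_{23}=p_{32}=\tfrac14$, and all other $p_{ij}=0$. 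Every marginal equals $\tfrac13\in[\tfrac3{10},\tfrac25]$, so this is consistent with your inductive hypothesis, yet $p_{23}+p_{32}=\tfrac12>\tfrac25$, and no choice of conditional distribution at $v$ can bring $\Pr(\phi(v)=1_v)$ down into $[\tfrac3{10},\tfrac25]$. Since you do not track (and cannot in general control) the joint produced by the induction, the ``careful case analysis'' you allude to cannot close this gap without a substantially stronger inductive statement.

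The paper sidesteps this issue entirely. It first shows that $G$ has \emph{two} vertices $u,v$ with $\sigma(\cdot)\geq 2$ (a single one would force $\rho_G(V(G)\setminus\{v\})\leq 2$), takes a shortest path $P$ from $u$ to $v$, and deletes all of $V(P)$ at once. Each internal vertex of $P$ then has at most one neighbor in $G'=G-V(P)$, and every vertex of $P$ lying in $\Pi_4$ has none. After sampling $\phi'$ on $G'$ by induction, each $v\in V(P)$ gets a list $L'(v)$ of size $2$ (either $L(v)\setminus N_H(\phi')$, or a random $2$-subset of $L(v)$ if $v$ has no neighbor in $G'$), and the tree-packing lemma for $2$-covers is applied to $P$ to assign each color of $L'(v)$ with conditional probability exactly $\tfrac12$. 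Because each vertex of $P$ sees at most one external neighbor, the probability that a given $c\in L(v)$ survives into $L'(v)$ is controlled by a single marginal of $\phi'$, not a joint, and the resulting product with $\tfrac12$ lands in $[\tfrac3{10},\tfrac25]$ as required.
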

\begin{proof}
    Let $(H,L)$ be a $3$-cover of $G$, and let $\rho$ be a potential function satisfying the condition in the lemma.
    Note that $\rho$ is even-valued.
    We proceed by induction on $|V(G)|$.
    If $|V(G)| = 1$, then the lemma clearly holds.
    Therefore, we assume that $|V(G)| \geq 2$.

    We first claim that there are two vertices $v \in V(G)$ for which $\sigma(v) \geq 2$.
    Indeed, we have $\rho_G(V(G)) = \sum_{v \in V(G)} \sigma(v) \geq 4$.
    Therefore, by parity,
     either exactly one vertex $v \in V(G)$ satisfies $\sigma(v) \geq 4$, or two vertices $v \in V(G)$ satisfy $\sigma(v) \geq 2$.
    In the first case, since $G$ is connected, it holds that $\rho(v) = 6$ and $d(v) = 1$, so that $\sigma(v) = \rho_G(V(G)) = 4$.
    Then, $\rho_G(V(G) \setminus \{v\}) = \rho_G(V(G)) - \rho(v) + 4 = \rho_G(V(G)) - 2 = 2$, a contradiction. Therefore, there are two vertices $v \in V(G)$ satisfying $\sigma(v) \geq 2$.
    Let $u,v \in V(G)$ be two such vertices.
    
    Let $P$ be a shortest path from $u$ to $v$, and observe that $P$ is induced.
    Then, let $G' = G - V(P)$.
    Observe that each $v \in V(P)$ has at most one neighbor in $G'$.
    We claim that $G'$ has a distribution $\Phi'$ on $(H,L)$-colorings satisfying the properties in the lemma. 
    For this, it is enough to observe that the conditions of the lemma hold for each component of $G'$ by the induction hypothesis.

    Now, we construct a distribution $\Phi$ on $(H,L)$-colorings $\phi$ of $G$.
    First, sample an $(H,L)$-coloring $\phi'$ of $G'$ using $\Phi'$.  Let $\phi$ agree with $\phi'$ on $G'$.
    Next, for each $v \in V(P)$ with a neighbor in $G'$, let $L'(v) = L(v) \setminus N_H(\phi)$, and observe that $|L'(v)| = 2$.
    If $v$ has no neighbor in $G'$ (which holds in particular for all vertices of $P$ in $\Pi_4$),
    then obtain $L'(v)$ from $L(v)$ by deleting $\omega(v)$ with probability $\frac 15$ and deleting each other color of $L(v)$ with probability $\frac 25$.
    Finally, use \Cref{lem:tree-packing}
    to find an $(H,L')$-coloring $\phi$ of $P$ such that for each $v \in V(P)$ and $c \in L'(v)$, 
    the conditional probability that $\phi(v) = c$ is $\frac 12$.

    Now, we check that $\Pr(\phi(v ) = c)$ satisfies the lemma for each $v \in V(G)$ and $c \in L(v)$.
    If $v \in V(G')$, then by the induction hypothesis,
    $\Pr(\phi(v) = c) \in [\frac 3{10}, \frac 25]$, 
    and $\Pr(\phi(v) = c) = \frac{3}{10}$ whenever $v \in \Pi_4$ and $c \in L(v) \setminus \{\omega(v)\}$.
    Otherwise, suppose that $v \in V(P)$.
    If $\rho(v) = 4$,
    then $\Pr(\phi(v) = c) = \frac 35 \cdot \frac 12 = \frac 3{10}$ for each $c \in L(v) \setminus \{\omega(v)\}$.
    If $\rho(v) = 6$,
    then for each $c \in L(v)$, $c \in L'(v)$ with probability in $[\frac 35, \frac 7{10}]$.
    Therefore, $\Pr(\phi(v) = c) \in [\frac 3{10}, \frac 7{20}] \subseteq  [\frac 3{10}, \frac 25]$. 
    Therefore, the distribution $\Phi$ satisfies the conditions of the lemma.
\end{proof}

Our final lemma
of this section
shows that any distribution on $(H,L)$-colorings of a graph $G$ with rational probabilities at each color can be realized
as a uniform distribution over a multiset of $(H,L)$-colorings.
\begin{lemma}
    \label{lem:rational}
    Let $G$ be a graph, and let $(H,L)$ be a DP $3$-cover of $G$.
    Suppose that $\Phi$ is a distribution on $(H,L)$-colorings of $G$ such that for each $c \in V(H)$, a randomly sampled $\phi \in \Phi$
    satisfies $c \in \phi$ with some rational probability $p_c$.
    Then, there exists
    a
    multiset $\Psi$ of $(H,L)$-colorings $\psi$ of $G$
    such that for each $v \in V(G)$ and $c \in L(v)$, if $\psi \in \Psi$ is sampled
    uniformly at random, then $\Pr(\psi(v) = c) = p_c$.
\end{lemma}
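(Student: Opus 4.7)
The plan is to pass from the given distribution $\Phi$ to a distribution with rational probabilities on \emph{individual} colorings via a short linear-programming argument, and then clear denominators.

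First, I would enumerate the finite set of all $(H,L)$-colorings $\phi_1, \ldots, \phi_m$ of $G$; this set is finite because $V(H)$ is finite. Writing $q_i = \Pr_{\Phi}(\phi_i)$, the hypothesis says that for each $c \in V(H)$, the marginal
\[
p_c \;=\; \sum_{i \,:\, c \in \phi_i} q_i
\]
is rational. Consider the polytope
\[
P \;=\; \Bigl\{\, (x_1, \ldots, x_m) \in \mathbb R^m_{\geq 0} \;:\; \sum_{i=1}^m x_i = 1,\quad \sum_{i \,:\, c \in \phi_i} x_i = p_c \text{ for all } c \in V(H) \,\Bigr\}.
\]
All defining constraints of $P$ have rational coefficients (the incidence $c \in \phi_i$ is integral, and the $p_c$ are rational), and $P$ is nonempty because $(q_1, \ldots, q_m) \in P$.

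Second, since $P$ is a nonempty polytope cut out by rational linear constraints, it contains a rational point; indeed, every vertex of $P$ is the unique solution of a rational linear system and is therefore rational, and $P$ has at least one vertex as a bounded polyhedron. Fix a rational point $(q_1', \ldots, q_m') \in P$, and let $N$ be a common denominator so that $q_i' = a_i/N$ with $a_i \in \mathbb Z_{\geq 0}$ and $\sum_i a_i = N$.

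Finally, define $\Psi$ to be the multiset containing $a_i$ copies of $\phi_i$ for each $i \in \{1, \ldots, m\}$. Sampling $\psi \in \Psi$ uniformly gives $\Pr(\psi = \phi_i) = a_i/N = q_i'$, so for each $v \in V(G)$ and $c \in L(v)$,
\[
\Pr(\psi(v) = c) \;=\; \sum_{i \,:\, c \in \phi_i} \tfrac{a_i}{N} \;=\; \sum_{i \,:\, c \in \phi_i} q_i' \;=\; p_c,
\]
as required. The only nontrivial step is the existence of a rational point in $P$, which is a standard consequence of the fact that a nonempty polytope defined over $\mathbb Q$ has rational vertices; everything else is bookkeeping.
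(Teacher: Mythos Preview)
Your proof is correct and follows essentially the same approach as the paper: enumerate the colorings, observe that the feasible set of probability vectors matching the given marginals $p_c$ is a nonempty polytope defined by rational data, extract a rational point, and clear denominators to form the multiset. The paper phrases this via a matrix equation $A\mathbf{x}=\mathbf{p}$ with $\mathbf{x}\geq\mathbf{0}$ and cites a polytope reference for the rational solution, while you add the constraint $\sum_i x_i=1$ explicitly (which is in fact implied by the other constraints, since summing the equations over $c\in L(v)$ for any fixed $v$ forces it) and invoke rationality of vertices directly; these are cosmetic differences.
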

\begin{proof}
    Let $\{\phi_1, \dots, \phi_k\}$ be the set of all $(H,L)$-colorings of $G$. 
    We define a matrix $A$ with rows indexed by $V(H)$, columns indexed by $\phi_1, \dots, \phi_k$, and with entries $A_{c,i}$ where 
    for each color $c \in V(H)$, $A_{c,i} = 1$ if $c \in \phi_i$, and $A_{c,i} = 0$ otherwise.
    Let $\Phi'$ be a distribution 
    on $(H,L)$-colorings of $G$ that realizes each probability $p_c$.
    For each $i \in \{1, \dots, k\}$, let $x_i$ be the probability that $\phi_i$ is chosen when an $(H,L)$-coloring is sampled from $\Phi'$.
    For each $c \in V(H)$, $\sum_{i=1}^k A_{c,i} x_i = p_c$.
    We define column vectors 
    $\mb p  = (p_c)_{c \in V(H)}$ (of dimension $|V(H)|\times 1$) and
    $\mb x = [x_1, \dots, x_k]^T$, and we observe that
    $A \mb x = \mb p$
    and that $\mb x \geq \mb 0$.
    As $A$ and $\mb p$ are rational,
    a well-known result from the theory of polytopes (see Kaibel~\cite{book_2011Kaibel})
    states that there is a rational vector $\mb q = [q_1, \dots, q_k]^T$ for which $A \mb q = \mb p$ and $\mb q \geq \mb 0$.
    %Therefore, $\mb p$ is a vector in the column space of $A$.

    %Now, let $R$ be a $|V(H)| \times |V(H)|$ matrix for which $RA$ is in reduced row echelon form. Since $R$ is constructed from the product of elementary row operation matrices applied to $A$, $R$ has rational entries. Furthermore, we observe that $RA \mb x = R \mb p = \mb b$ 
    %for some rational
    %vector $\mb b =  [b_1, \dots, b_{|V(H)|}]^T$.
    %We define a new rational 
    %vector $\mb q \in \mathbb Q^k$ as follows.    
    %For each row
    %$j$ of $RA$ with a leading $1$, we define $q_j = b_j$, and we let all other entries of $\mb q$ equal $0$. As $RA$ is in reduced row echelon form, it then follows that $RA\mb q = R \mb p$. 
    %As $R$ is invertible, we therefore see that $A \mb q = \mb p$.
    %{\PB I forgot to argue nonnegativity}
    
    Next, we define a distribution $\Phi$ 
    on $(H,L)$-colorings of $G$ 
    so that each $(H,L)$-coloring $\phi_i$ is sampled with probability $q_i$.
    Then, when $\phi \in \Phi$ 
    is sampled, it holds that
    for each $c \in V(H)$, 
    the probability that $c \in \phi$ is equal to $\sum_{i=1}^k  A_{c,i} q_i = p_c$.

    Finally, using the fact that each $q_i$ is rational,
    let $N$ be an integer such that each value $N q_i$ is an integer.
    Let $\Psi$ be a multiset of size $N$ 
    in which each $\phi_i$ appears with multiplicity $Nq_i$.
    Then,
    the distribution on $(H,L)$-colorings of $G$ obtained by sampling from $\Psi$ uniformly at random is equivalent to the distribution $\Phi$, and thus $\Psi$ satisfies the condition in the lemma.
\end{proof}

\section{Properties of a counterexample}

% \iffalse
% {\PB When will we start writing $\epsilon = \frac 15$?
% I am not sure if we should put general $\epsilon$ in the theorem, because we would have to rewrite proofs for general $\epsilon$.
% }
% {\IC maybe have the ones that can be stated with $\epsilon$ without much effort - but I also don't mind changing everything to $\frac16$ if that makes life easier.}
% {\PB I think changing everything to $\frac 15$ makes our life easier; I'm not sure how much value there is in having this framework for general $\epsilon$.}
% \fi

We now begin our proof of \Cref{thethm}. 
To this end, let $G$ be a counterexample to \Cref{thethm} for which   $|V(G)|+|E(G)|$ is minimum, and  among all potential functions $\rho$ on $V(G)$ for which $G$ is $(\rho, \frac 15)$-critical, choose $\rho$ that maximizes $\rho_G(V(G))$.
%Note that $G \not \in \mathcal I$, and $\rho_G(V(G)) > 0$.
 %We write $n=|V(G)|$. 
 Recall that $\rho$ gives a partition $\Pi$ of $V(G)$ with parts $\Pi_3, \Pi_4, \Pi_6$,
 where $\rho(v) = i$ if and only if $v \in \Pi_i$.
For each integer $j$, we also define $\Pi_{\geq j}$ to be the union of all parts $\Pi_i$ for which $i \geq j$. We define $\Pi_{\leq j}$ similarly.
We observe that $G$ is $(\rho,\frac 15)$-critical,
$G$ is not exceptional,  and 
$\rho_{G}(V(G)) \geq 1$.

As $G$ is $(\rho,\frac 15)$-critical, $G$ is connected.
As $(G,\rho)$ is not $\frac 15$-flexible,
there exists a DP $3$-cover $(H,L)$ of $G$ along with a $\frac 15$-admissible $h_{\rho}$-list distribution $\mathcal D$ for which $(G,\rho,H,L,\mathcal D)$ is not $\frac 15$-flexible.
 Without loss of generality, for each  $uv \in E(G)$, $H[L(u) \cup L(v)]$ is the  union of $|E_G(u,v)|$ perfect matchings.
We fix $H$, $L$, and $\mathcal D$, and we use them for the rest of the proof.

\subsection{Subgraphs of $G$}
We observe some properties of subgraphs of $G$.

\begin{observation}
\label{obs:no-exception}
    $(G,\rho)$ has no exceptional subgraph.
\end{observation}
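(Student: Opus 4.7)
The plan is a short contradiction argument that simply chains together the earlier observations about the inflexibility of exceptional graphs with the criticality of $G$. Suppose, for contradiction, that $(G,\rho)$ contains an exceptional subgraph $G'$. By definition, either $G' \in \mathcal I$, or $G'$ is a $C_2$ with $\rho_G(V(G'))=2$. Split into cases based on whether $G' = G$ or $G' \subsetneq G$.

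First I would dispose of the case $G' = G$. If $G \in \mathcal I$, then by \Cref{obs:Ilkyoo}, either $\rho\equiv 6$ on $V(G)$ and $\rho_G(V(G))=2$, or $\rho_G(V(G))\le 0$; in either case $G$ satisfies the conclusion of \Cref{thethm}, contradicting our assumption that $G$ is a counterexample. Similarly, if $G$ is itself a $C_2$ with $\rho_G(V(G))=2$, then $G$ again satisfies the second bullet of \Cref{thethm}. So in the $G'=G$ case there is nothing to prove.

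The main case is therefore $G' \subsetneq G$. Here I would invoke $(\rho,\tfrac{1}{5})$-criticality of $G$: since $G'$ is a proper subgraph, $(G',\rho|_{V(G')})$ must be $\tfrac{1}{5}$-flexible. I would then compare this against the known inflexibility of exceptional graphs. If $G' \in \mathcal I$, then \Cref{obs:Ilkyoo-inflexible} asserts that $(G',\rho|_{V(G')})$ is not $\epsilon$-flexible for \emph{any} $\epsilon>0$, in particular not $\tfrac{1}{5}$-flexible — a contradiction. If $G'$ is a $C_2$ with $\rho_G(V(G'))=2$, then since all edges of $G'$ lie in $G$, the potential computed inside $G'$ agrees with the potential computed inside $G$, so $\rho_{G'}(V(G'))=2$ as well; \Cref{obs:C2} then says $(G',\rho|_{V(G')})$ is not $\epsilon$-flexible for any $\epsilon>\tfrac{1}{6}$, and in particular not $\tfrac{1}{5}$-flexible — again a contradiction.

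Neither step poses any real obstacle; the entire proof is essentially bookkeeping between the definition of \emph{exceptional}, the criticality hypothesis on $G$, and the earlier observations that exceptional graphs fail to be $\tfrac{1}{5}$-flexible. The only subtle point I would make explicit is the remark that for the $C_2$ case, $\rho_G$ restricted to $V(G')$ coincides with $\rho_{G'}$, so that \Cref{obs:C2} applies verbatim.
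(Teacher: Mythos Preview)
Your proposal is correct and follows essentially the same approach as the paper's proof: split on whether $G'=G$ or $G'\subsetneq G$, use the fact that exceptional graphs satisfy the conclusion of \Cref{thethm} in the first case, and in the second case derive a contradiction between the $(\rho,\tfrac15)$-criticality of $G$ and the inflexibility of exceptional graphs established in \Cref{obs:C2} and \Cref{obs:Ilkyoo-inflexible}. The only minor remark is that your justification ``since all edges of $G'$ lie in $G$'' for why $\rho_{G'}(V(G'))=\rho_G(V(G'))$ in the $C_2$ case is not quite the right reason---in general $G[V(G')]$ could have more edges than $G'$---but here the constraint $\rho_G(V(G'))=2$ together with $\rho\le 6$ forces $|E_G(u,v)|=2$, so $G'$ is in fact induced and the potentials do agree; the paper glosses over this point as well.
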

\begin{proof}
    Suppose that $G$ has a subgraph $G'$
    for which either $G' \in \mathcal I$ or $G'$ is a $C_2$ for which $\rho_G(V(G')) = 2$.
    If $G' = G$, then $G$ is not a counterexample to \Cref{thethm}.
    If $G'$ is a proper subgraph of $G$, then as $(G',\rho)$ is not $\frac 15$-flexible by Observations \ref{obs:C2} and \ref{obs:Ilkyoo-inflexible},
    $G'$ has a $(\rho,\frac 15)$-critical subgraph, contradicting the assumption that $G$ is $(\rho,\frac 15)$-critical.
\end{proof}

The following lemma shows that the random lists of size $2$ assigned to vertices of $\Pi_3$ can
often help us reduce the flexibility problem on $G$ to a problem on a subgraph.
%The following lemma shows that the probabilistically reduced lists of the vertices of $\Pi_3$
%can help us to connect the flexibility problem for $G$ to flexibility problems for subgraphs of $G$.
%{\IC unclear sentence..? }
%{\PB Haha sorry. I want to say that the advantage of having vertices in $\Pi_3$ is that we can try to do reductions, in the way that you can do a reduction in list coloring by coloring part of the graph and reducing the list sizes of the neighbors. Do you have a good way of expressing this?}
%{\IC Nope, :P}
%{\PB How is this?}

\begin{lemma}
    \label{lem:reducible}    
    Let $U \subseteq V(G)$ be nonempty. Suppose that no $u \in U \cap \Pi_{\leq 4}$ has a neighbor in $V(G) \setminus U$ and that each $u \in U \cap \Pi_{6}$ satisfies $|E_G(u,V(G) \setminus U)| \leq 1$.
    Define $\rho':U \rightarrow \{3,4,6\}$ so that
    $\rho'(u) = 3$ if $u$ has a neighbor in $V(G) \setminus U$ and $\rho'(u) = \rho(u)$ otherwise.
%    Define $\ell:U \rightarrow \{2,3\}$
 %   so that 
  % 
   % Define $\rho':U \rightarrow \{3,4,6\}$ so that $\rho'$ is a potential function of $(G[U],\ell)$.
    %{\PB This will need to be rephrased based on how we do the partition notation. 
    %I wonder if we can have things so that we can say ``so that $\ell = h_{\rho'}$."}.
    Then, there is a $\frac 15$-admissible $h_{\rho'}$-list distribution
    $\mathcal D'$ such that $(G[U], \rho', H,L,\mathcal D')$ is not $\frac 15$-flexible.
\end{lemma}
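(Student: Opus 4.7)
The plan is to argue by contradiction: assume that for every $\frac 15$-admissible $h_{\rho'}$-list distribution $\mathcal{D}'$ on $G[U]$ with the restricted cover $(H[L(U)], L|_U)$, the tuple $(G[U], \rho', H, L, \mathcal{D}')$ is $\frac 15$-flexible, and use this to build a $\frac 15$-distribution on $(G, \rho, H, L, \mathcal{D})$, contradicting the $(\rho, \frac 15)$-criticality of $G$.

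Let $W = V(G) \setminus U$. If $W = \emptyset$, then $U = V(G)$ and $\rho' = \rho$, so the lemma holds with $\mathcal{D}' = \mathcal{D}$; hence assume $W$ is nonempty. Then $G[W]$ is a proper subgraph of $G$, so by the $(\rho, \frac 15)$-criticality of $G$, the pair $(G[W], \rho|_W)$ is $\frac 15$-flexible. The restriction $\mathcal{D}|_W$ of $\mathcal{D}$ to $\Pi_3 \cap W$ is $\frac 15$-admissible, so there is a $\frac 15$-distribution $\Psi$ on colorings of $G[W]$ for the cover $(H[L(W)], L|_W)$ and list distribution $\mathcal{D}|_W$.

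Next I would define $\mathcal{D}'$ on $G[U]$ by jointly sampling $\psi \sim \Psi$ together with independent forbidden colors $\{f_u : u \in \Pi_3 \cap U\}$ drawn from $\mathcal{D}$, declaring the forbidden color to be $f_u$ for each $u \in \Pi_3 \cap U$ and to be the unique color in $L(u)$ matched by $H$ to $\psi(w)$ for each $u \in \Pi_6 \cap U$ with (necessarily unique) $W$-neighbor $w$. The $\frac 15$-admissibility of $\mathcal{D}'$ at $\Pi_3 \cap U$-vertices comes from that of $\mathcal{D}$, and at the remaining $h_{\rho'}(u)=2$ vertices it comes from the $\frac 15$-distribution property of $\Psi$ at the matched $W$-vertex. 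The contradiction hypothesis then gives a $\frac 15$-distribution $\Phi'$ for $(G[U], \rho', H, L, \mathcal{D}')$, which I combine with $\Psi$ by sampling $(\psi, \{f_u\})$ first, then sampling $\phi_U$ from $\Phi'$ conditional on the resulting $h_{\rho'}$-list assignment, and setting $\phi = \psi \cup \phi_U$. The inter-$UW$ edges are automatically respected since $\phi_U(u)$ avoids the color matched to $\psi(w)$.

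The hypothesis that no $u \in U \cap \Pi_{\leq 4}$ has a neighbor in $W$ ensures $\rho'(u) = \rho(u)$ on those vertices, so the exact-probability condition at $\Pi_4$ is inherited from $\Phi'$ (and from $\Psi$ at $\Pi_4 \cap W$); the hypothesis that each $u \in U \cap \Pi_6$ has at most one edge to $W$ ensures that a single forbidden color per such vertex encodes the full constraint coming from $W$. The main obstacle, which is mostly careful bookkeeping, is making the interlocking probability conditions compatible: the list distribution $\mathcal{D}'$ must be $\frac 15$-admissible, which needs $\Psi$ to assign probability at least $\frac 15$ to each color of each $w \in W$ adjacent to $U$; and the combined distribution must yield the correct marginal probabilities at every vertex of $G$. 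Both are guaranteed by the $\frac 15$-distribution properties of $\Psi$ and $\Phi'$, yielding a $\frac 15$-distribution for $(G, \rho, H, L, \mathcal{D})$ and the desired contradiction.
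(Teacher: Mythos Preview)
Your argument is essentially the paper's: obtain a $\frac 15$-distribution $\Psi$ on $G-U$ by criticality, use $\Psi$ together with $\mathcal D$ to manufacture a $\frac 15$-admissible $\mathcal D'$ on $U$, and glue a hypothetical $\frac 15$-distribution $\Phi'$ on $(G[U],\rho',H,L,\mathcal D')$ back onto $\Psi$ to contradict the inflexibility of $(G,\rho,H,L,\mathcal D)$.

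One coupling detail needs correction. The forbidden colors $\{f_u : u \in \Pi_3 \cap U\}$ should not be sampled \emph{independently} of $\psi$; rather, a single outcome $L_{\mathcal D}$ of $\mathcal D$ must be drawn, from which you read off $f_u$ as the color missing from $L_{\mathcal D}(u)$ and then sample $\psi$ from $\Psi(L_{\mathcal D}|_W)$. The reason is that the final object $\Phi$ must be a coloring distribution on $(G,\rho,H,L,\mathcal D)$, i.e., for each outcome $L_{\mathcal D}$ it must output an $(H,L_{\mathcal D})$-coloring. If the $f_u$'s are resampled independently of the draw governing $\psi$, then $\phi_U(u)$ may be precisely the color $L_{\mathcal D}$ forbids at $u$, and $\phi=\psi\cup\phi_U$ is not an $(H,L_{\mathcal D})$-coloring. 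A second, subtler consequence is that $\Phi'$ is only guaranteed to yield the $\frac 15$ (and $\frac 3{10}$) marginals when its input list assignment is drawn from the particular $\mathcal D'$ it was built for; with the correct coupling the $h_{\rho'}$-list seen by $\Phi'$ is distributed exactly as your $\mathcal D'$, and all of your marginal computations then go through unchanged. This is precisely how the paper sets it up.
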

\begin{proof}
 %\[h_{\rho'} = h_{\rho}(u) - |E_G(u,V(G) \setminus  U)| \text{ for each $u \in U$.}\]
   Since $G$ is $(\rho,\frac 15)$-critical,
   %$(L,\cD)$-critical {\PB What is $\mathcal D$? Answer: After talking to Sasha, it seems that $\mathcal D$ is a distribution on lists $L_{\mathcal D}$.}, 
   $(G - U,\rho,H,L,\mathcal D)$ has a $\frac 15$-distribution $\Psi$. 

Let $W \subseteq U$ be the set of vertices $w$ for which  $\rho'(w)=3$. 
By the hypotheses of the lemma, each $w \in W$ either satisfies $\rho(w) = 3$,
or $\rho(w) = 6$ and $w$ has a unique neighbor $w' \in  V(G)-U$.
We define a distribution $\mathcal D'$ on the forbidden colors of vertices $w \in W$ using the following experiment. First,
we sample an outcome $L_{\mathcal D}$ of $\mathcal D$ and 
assign $L_{\mathcal D'}(v) = L_{\mathcal D}(v)$ for each $v \in W$ with $h_{\rho}(v) = 2$.
%Note that this step assigns a forbidden color to each $w \in W$ with $h(w) = 2$. 
Next, we choose a random $(H,L_{\mathcal D})$-coloring $\psi$ of $G-U$ according to the distribution $\Psi$.
Finally, for each $w \in W$ for which $\rho(w) = 6$,
%{\IC \sout{$w$ has a unique  neighbor $w' \in  V(G)-U$.
%We}} 
let $L_{\mathcal D'}(w) = L(w) \setminus N_H(\psi)$.
As $L(w)$ and $L(w')$ are joined by a perfect matching in $H$,
a unique color $c \in L(w)$ has a neighbor in $\psi$; therefore, $|L_{\mathcal D'}(w)| = 2$.
For each $u \in U \setminus W$, we let $L_{\mathcal D'}(u) = L(u)$.
Since $\Psi$ is a $\frac 15$-distribution, this distribution $\mathcal D'$ is $\frac 15$-admissible.
%{\PB What is written above is an attempt to rewrite the paragraph below more precisely.}

%Let $u\in N(S)\cap V(G')$.
% By~\eqref{ee1} and~\eqref{ee1'}, $\rho_{h,\omega}(u)=6$, and $u$ has exactly one neighbor in $S$, say $u'$. %We decrease the potential of $u$ to $3$ {\PB 
% We observe that $h'(u) = 2$, and 
 %let the restraining distribution be the distribution of colors on $u'$  {\AK maybe $\Phi$?}.
%Do this for every $u\in N(S)\cap  V(G')$ and  let $\cD'$ be the resulting restraining distribution for $(G',L',H')$.

We claim that 
    $(G[U],\rho',H,L,\mathcal D')$ has no $\frac 15$-distribution. Indeed, suppose that $(G[U],\rho',H,L,\mathcal D')$ has an $\frac 15$-distribution $\Phi'$. We define a $\frac 15$-distribution $\Phi$ for $(G,\rho,H,L,\mathcal D)$ as follows.

    For each outcome $L_{\mathcal D}$ of $\mathcal D$, we construct a random $(H,L_{\mathcal D})$-coloring $\phi$ of $G$ as follows.
    First, we sample an $(H,L_{\mathcal D})$-coloring $\psi$ of $G-U$
    using $\Psi(L_{\mathcal D})$.
    For each $v \in V(G-U)$, we let $\phi(v) = \psi(v)$.
    Then, 
    for each $u \in U$, we let $L'(u) = L_{\mathcal D}(u) \setminus N_H(\psi)$.
    By definition of $\mathcal D'$, 
    $L'$ has the same distribution as $L_{\mathcal D'}$.
    %we let $L'$ be defined from $L_{\mathcal D}$ and $\psi$ as described above.
    %{\PB Please see the sentence I commented it out. Is it clear if we just delete this sentence and leave it like this?}
    %By our definition of $\mathcal D'$,
    %sampling an outcome $L_{\mathcal D'}$ in this way is equivalent to sampling an outcome of $\mathcal D'$
    %using the probability distribution of $\mathcal D'$
    %directly {\PB I don't like the way I wrote this}.
    Finally,
    using the fact that $L'$ is in the outcome space of $L_{\mathcal D'}$,
    we sample an $(H,L')$-coloring $\phi'$ of 
    $G[U]$
    using $\Phi'(L')$---that is, 
    using the conditional distribution $\Phi'(L_{\mathcal D'})$
    arising from the outcome $L_{\mathcal D'} = L'$.
    We let $\phi(v) = \phi'(v)$ for each $v \in U$. 
    %By our definition of $\mathcal D'$, the coloring $\phi$ produced is an
    %$(H,L_{\mathcal D})$-coloring of $G$.
    Hence,
    this process gives us a coloring distribution $\Phi$ for $(G,\rho,H,L,\mathcal D)$.

    We claim that $\Phi$ is an $\frac 15$-distribution for $(G,\rho,H,L,\mathcal D)$. For $v \in V(G) \setminus U$,
    if $\rho(v) = 4$ and $c \in L(v) \setminus \{\omega(v)\}$,
    then $\Pr(\phi(v) = c) = \Pr(\psi(v) = c) = \frac 3{10}$.
    %and $c \in L(v)$, $\Pr(\phi(v) = c) = \Pr(\psi(v) = c) =$ when $\rho(v) = 4$, and 
        Otherwise, for each $c \in L(v)$,
     $\Pr(\phi(v) = c) = \Pr(\psi(v) = c) \geq \frac 15$.
     For $v \in U$,  $\rho(v) = 4$ if and only if $\rho'(v) = 4$; therefore,
     if $\rho(v) = 4$ and $c \in L(v) \setminus \{\omega(v)\}$, $\Pr(\phi(v) = c) = \Pr(\phi'(v) = c) = \frac 3{10}$. Otherwise, for each $c \in L(v)$, 
     $\Pr(\phi(v) = c) = \Pr(\phi'(v) = c) \geq \frac 15$. Therefore, $\Phi$ is an $\frac 15$-distribution for $(G,\rho,H,L,\mathcal D)$, contradicting the choice of $(G,\rho,H,L,\mathcal D)$.
\end{proof}

\begin{lemma} 
\label{lem:reducible-tree}
Let $U \subseteq V(G)$ be nonempty.
Suppose that no $u \in U \cap \Pi_{\leq 4}$ has a neighbor in $V(G) \setminus U$  and that each $u \in U \cap \Pi_6$
satisfies $|E_G(u,V(G) \setminus U)| \leq 1$.
Then, $G[U]$ is not a tree.
 %For each $u \in U$, define $\ell(u) = h(u) - |E_G(u,V(G) \setminus U)|$. Suppose that $\ell(u) \geq 2$ for each $u \in U$
  %  and $\ell(u) = 3$ for each $u \in U \cap \Pi_4$.
   % Then, 
    % $G[U]$ is not a tree.
\end{lemma}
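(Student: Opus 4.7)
Argue by contradiction. Assume $T := G[U]$ is a tree. Since the hypotheses of \Cref{lem:reducible} are exactly those of the present lemma, it yields a $\tfrac15$-admissible $h_{\rho'}$-list distribution $\mathcal D'$ such that $(T,\rho',H,L,\mathcal D')$ admits no $\tfrac15$-distribution. We derive a contradiction by constructing such a distribution directly, exploiting the fact that every $2$-cover of a tree admits an $(H,L)$-packing (\Cref{lem:tree-packing}).

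Sample $L_{\mathcal D'}$ from $\mathcal D'$. At this point each vertex $v$ with $\rho'(v)=3$ already has $|L_{\mathcal D'}(v)|=2$, while each vertex $v$ with $\rho'(v)\in\{4,6\}$ satisfies $L_{\mathcal D'}(v)=L(v)$. Independently reduce the latter lists to size $2$: at each vertex $v$ with $\rho'(v)=4$, delete $\omega(v)$ from $L(v)$ with probability $\tfrac15$ and each non-basepoint color with probability $\tfrac25$; at each vertex $v$ with $\rho'(v)=6$, delete a uniformly chosen color of $L(v)$. Let $L''$ denote the resulting list assignment, so that $(H,L'')$ is a $2$-cover of $T$. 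Applying \Cref{lem:tree-packing} to $(H,L'')$ produces two disjoint $(H,L'')$-colorings $\psi_1,\psi_2$ of $T$, and we set $\phi$ to be $\psi_1$ or $\psi_2$ with equal probability $\tfrac12$. Since $L''\subseteq L_{\mathcal D'}$, this defines a coloring distribution on $(T,\rho',H,L,\mathcal D')$.

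To check that $\phi$ is a $\tfrac15$-distribution, fix $v\in U$ and $c\in L(v)$. If $\rho'(v)=3$, admissibility of $\mathcal D'$ combined with disjointness of the three forbid-events gives $\Pr\bigl(\{c\}=L(v)\setminus L_{\mathcal D'}(v)\bigr)\leq 1-2\cdot\tfrac15=\tfrac35$, hence $\Pr(c\in L_{\mathcal D'}(v))\geq \tfrac25$, and the packing stage contributes a further factor $\tfrac12$, yielding $\Pr(\phi(v)=c)\geq\tfrac15$. If $\rho'(v)=4$, the deletion probabilities are tuned so that $\Pr(\phi(v)=\omega(v))=\tfrac45\cdot\tfrac12=\tfrac25$ and $\Pr(\phi(v)=c)=\tfrac35\cdot\tfrac12=\tfrac3{10}$ for the non-basepoint colors, matching the strict equalities required at $\Pi_4$-vertices. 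If $\rho'(v)=6$, then $\Pr(\phi(v)=c)=\tfrac23\cdot\tfrac12=\tfrac13\geq\tfrac15$. In each case the requirements of a $\tfrac15$-distribution are met, contradicting the choice of $\mathcal D'$. The main obstacle is calibrating the $\Pi_4$-deletion probabilities to land exactly on $\tfrac25$ and $\tfrac3{10}$ rather than merely the generic $\geq\tfrac15$ bound; the remainder is bookkeeping that multiplies the independent list-sampling and tree-packing probabilities.
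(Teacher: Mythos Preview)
Your argument is correct and follows essentially the same route as the paper: invoke \Cref{lem:reducible} to obtain $\mathcal D'$, randomly prune each list down to size $2$, apply \Cref{lem:tree-packing}, and verify the probabilities. The only (inessential) difference is that the paper uses the same $(\tfrac15,\tfrac25,\tfrac25)$ deletion weights at \emph{all} vertices with $h_{\rho'}=3$, whereas you use uniform deletion at $\Pi_6$-vertices; both satisfy the $\tfrac15$-distribution requirements, so this is merely cosmetic.
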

\begin{proof}
    Suppose that $G[U]$ is a tree.
    We show that \Cref{lem:reducible}
    is violated. 
    Define $\rho':U \rightarrow \{3,4,6\}$ so that $\rho'(u) = 3$ if $u$ has a neighbor in $V(G) \setminus U$ and $\rho'(u) = \rho(u)$ otherwise.
    Using \Cref{lem:reducible},
    let $\mathcal D'$ be a $\frac 15$-admissible $h_{\rho'}$-list distribution such that $(G[U],\rho',H,L,\mathcal D')$ is not $\frac 15$-flexible.

    We consider the following coloring distribution $\Phi$ for $(G[U],\rho',H,L,\mathcal D')$.
    Consider an outcome $L_{\D'}$ of $\D'$.
    For each $u \in U$
    with $h_{\rho'}(u) = 2$, we write $L'(u) = L_{\mathcal D'}(u)$. 
    For each $u \in U$ with $h_{\rho'}(u) = 3$,
    we obtain a list $L'(u)$ of size $2$
    from $L_{\mathcal D'}(u) = L(u)$ by deleting $\omega(u)$ with probability $\frac 15$ and deleting each other color of $L(u)$ with probability $\frac 25$.
    Then, we
    use \Cref{lem:tree-packing} to construct two disjoint $(H,L')$-colorings of $G[U]$ and choose one of them uniformly at random.
    Thus, we obtain a coloring distribution $\Phi$
    for $(G[U],\rho,H,L,\mathcal D')$.
    When $\phi \in \Phi$ is sampled, we see that
    for each $u \in U$ with $h_{\rho'}(u) = 3$,
    $\Pr(\phi(u) = \omega(u)) = \frac 45 \cdot \frac 12 = \frac 25$, and $\Pr(\phi(u) = c) = \frac 35 \cdot \frac 12 = \frac 3{10}$ for each $c \in  L(u) \setminus \{\omega(u)\}$; in particular, this equality holds whenever $\rho(u) = 4$.
    For each $u \in U$ with $h_{\rho'}(u) = 2$ and color $c \in L(u)$,
    since $\mathcal D'$ is $\frac 15$-admissible,
    we have $c \in L'(u)$ with probability at least $\frac 25$;
    therefore, $\Pr(\phi(u) = c) \geq \frac 25 \cdot \frac{1}{2} = \frac 15$.
    Therefore, $\Phi$ is a $\frac 15$-distribution for $(G[U],\rho',H,L,\mathcal D')$, a contradiction.
\end{proof}

\subsection{A gap lemma}
The goal of this subsection is to 
prove a ``gap lemma", which
shows that every proper vertex subset $S \subsetneq V(G)$
has a lower bound on its potential in terms of the number of edges joining $S$ to its complement. First, we need the following lemma.

%{\PB Unfortunately,  this lemma is annoying to write for general $\epsilon$ and is needed for the gap lemma.}
\begin{lemma}
\label{lem:pendent-heavy}
    If $u \in \Pi_{\geq 4}$, then $d(u) \geq 2$. 
\end{lemma}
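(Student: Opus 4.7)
The plan is to assume for contradiction that some $u \in \Pi_{\geq 4}$ has $d(u) \leq 1$, and then derive a contradiction with the criticality of $(G, \rho)$.

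\emph{Ruling out $d(u) = 0$.} Since $G$ is connected, $d(u) = 0$ would force $V(G) = \{u\}$. But a single vertex in $\Pi_{\geq 4}$ with any DP $3$-cover and list distribution is trivially $\frac{1}{5}$-flexible: when $\rho(u) = 6$, sample $L(u)$ uniformly, and when $\rho(u) = 4$, assign $\omega(u)$ probability $\frac{2}{5}$ and each of the other two colors probability $\frac{3}{10}$. This contradicts the criticality of $G$, so $d(u) = 1$. Let $v$ be the unique neighbor of $u$.

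\emph{The sub-case $\rho(u) = 6$.} I would apply \Cref{lem:reducible-tree} with $U = \{u\}$: its first hypothesis holds vacuously since $u \notin \Pi_{\leq 4}$, and $|E_G(u, V(G) \setminus U)| = 1$ verifies the second. The lemma then states that $G[U]$ is not a tree, but $G[U]$ is a single vertex, a contradiction.

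\emph{The sub-case $\rho(u) = 4$.} By criticality, $(G - u, \rho)$ is $\frac{1}{5}$-flexible. Because $h_\rho(u) = 3$, $\mathcal D$ does not randomize $L(u)$, so restricting $\mathcal D$ to $V(G) \setminus \{u\}$ gives a $\frac{1}{5}$-admissible $h_\rho$-list distribution $\mathcal D'$ on $(G - u, \rho, H, L)$. Let $\Phi'$ be a $\frac{1}{5}$-distribution for $(G - u, \rho, H, L, \mathcal D')$. Assume WLOG that $H$ joins $L(u)$ and $L(v)$ by the matching $i_u \sim i_v$ and that $\omega(u) = 1_u$. I would construct $\Phi$ on $(H, L)$-colorings of $G$ by sampling $\phi' \sim \Phi'$ and then sampling $\phi(u)$ conditionally on $\phi'(v)$, setting $q_{i,j} = \Pr(\phi(u) = j_u \mid \phi'(v) = i_v)$ with $q_{i,i} = 0$ and $q_{i,1} + q_{i,2} + q_{i,3} = 1$. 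Writing $p_i = \Pr(\phi'(v) = i_v) \in [\frac{1}{5}, \frac{3}{5}]$ with $p_1 + p_2 + p_3 = 1$, the target marginals at $u$ take the form
\[p_2 q_{2,1} + p_3 q_{3,1} = \tfrac{2}{5}, \quad p_1 q_{1,2} + p_3 q_{3,2} = \tfrac{3}{10}, \quad p_1 q_{1,3} + p_2 q_{2,3} = \tfrac{3}{10}.\]
After eliminating $q_{2,3}, q_{3,2}, q_{1,3}$ via the row-sum constraints and parameterizing by $t = p_3 q_{3,1}$, feasibility $q_{i,j} \in [0,1]$ reduces to
\[\max\!\left(0,\, \tfrac{2}{5} - p_2,\, p_3 - \tfrac{3}{10}\right) \;\leq\; \min\!\left(p_3,\, \tfrac{2}{5},\, p_1 + p_3 - \tfrac{3}{10}\right),\]
which splits into nine pairwise inequalities, each easily verified from $p_i \in [\frac{1}{5}, \frac{3}{5}]$ and $\sum_i p_i = 1$. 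The resulting $\Phi$ is then a $\frac{1}{5}$-distribution for $(G, \rho, H, L, \mathcal D)$, contradicting the assumption that $(G, \rho, H, L, \mathcal D)$ is not $\frac{1}{5}$-flexible.

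The main obstacle lies in the $\rho(u) = 4$ sub-case: correctly setting up the conditional distribution at $u$ and verifying all feasibility inequalities requires some care, though each individual step is elementary.
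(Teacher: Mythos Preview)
Your argument is correct. The case split and the feasibility check for $\rho(u)=4$ all go through as you describe; in particular, your use of \Cref{lem:reducible-tree} for the sub-case $\rho(u)=6$ is legitimate, since that lemma precedes \Cref{lem:pendent-heavy} and its proof does not rely on it.

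The paper proceeds a bit differently: it does \emph{not} split on $\rho(u)$. Instead, for $d(u)=1$ (with $\omega(u)=1_u$) it writes down a single explicit $3\times 3$ stochastic matrix $A$---with entries depending on $p_i$ and the auxiliary quantities $q_i=\tfrac{5}{2}p_i-\tfrac{1}{2}$---such that $A\mathbf p = [\tfrac{2}{5},\tfrac{3}{10},\tfrac{3}{10}]^T$ exactly. This single construction simultaneously meets the $\Pi_4$ requirement (exact marginals $\tfrac{2}{5},\tfrac{3}{10},\tfrac{3}{10}$) and the $\Pi_6$ requirement (each marginal at least $\tfrac{1}{5}$). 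Your feasibility argument establishes the existence of such a matrix, while the paper simply exhibits one; the paper's explicit matrix also gets reused verbatim later in the proof of \Cref{lem:unique-weak}. Conversely, your shortcut via \Cref{lem:reducible-tree} for $\rho(u)=6$ is a nice observation that avoids any computation in that sub-case.
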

\begin{proof}
   % Without loss of generality, assume that $\omega(v) = 1_v$.
   %{\PB This still works for $\epsilon = 1/5$}
    If $d(u) = 0$, then as $G$ is connected, $V(G) = \{u\}$.
    Then, the distribution $\Phi$ that assigns a color to $u$ uniformly at random
    shows that $(G, \rho, H,L,\mathcal D)$ is $\frac 15$-flexible, a contradiction.

    If $d(u) = 1$, then assume without loss of generality that $\omega(u) = 1_u$, and let $v$ be the unique neighbor of $u$. 
    Without loss of generality, assume that $H$ contains the edges $i_u i_v$ for each $i \in \{1,2,3\}$. 
    As $G$ is $(\rho,\frac 15)$-critical, $(G-u, \rho, H,L,\mathcal D)$ has
    a $\frac 15$-distribution $\Phi'$ on $(H,L)$-colorings of $G-u$.

    We construct a $\frac 15$-distribution $\Phi$ for $(G,\rho,H,L,\mathcal D)$ as follows.
    For each outcome $L_{\mathcal D}$ of $\mathcal D$, we sample an $(H,L_\D)$-coloring $\phi'$ of $G - u$ from $ \Phi'$.
    For each $i \in \{1,2,3\}$, let $p_i = \Pr(\phi'(v) = i_v)$,
    and let $q_i = \frac 52 p_i - \frac 12$.
    Observe that $q_i \in [0,1]$ and $q_1 + q_2 + q_3 = 1$.
    We construct an $(H,L)$-coloring $\phi$ for $G$ by letting $\phi$ agree with $\phi'$ on $V(G) \setminus \{u\}$,
    and then
    whenever $\phi'(v) = i_v$,
    we let $\phi(u) = j_u$ with conditional probability equal to entry $A_{ji}$ in the following matrix $A$:
    \[
    A = 
        \frac 1{10}
        \begin{bmatrix} 
        0 & \frac{2q_1 + 3q_2}{p_2} & \frac{1 + q_1 + 3q_3 }{p_3} \\
        \frac{1 + 2q_1 + q_2}{p_1} & 0 & \frac{q_2 + 2q_3}{p_3} \\
        \frac{3q_1 + q_3}{p_1} & \frac{ 3q_2 + 2q_3}{p_2} & 0
        \end{bmatrix}.
    \]

    As the diagonal values of $A$ are $0$, $\phi(u)$ and $\phi(v)$ are not adjacent in $H$.
    Furthermore, as the entries of $A$ are nonnegative and each column of $A$ sums to $1$,
    the columns of $A$ give valid conditional probability distributions.
    Letting $\mb p = [p_1, p_2,p_3]^T$, we see that
    $A \mb p = [\frac 25, \frac 3{10}, \frac 3{10}] ^T$
    gives the probability vector for the assignments of colors $(1_u,2_u,3_u)$ at $u$.
    In particular,
    %$\Pr(\phi(u) = \omega(u) ) = \frac 25$, and
    for each $c \in L(u) \setminus \{\omega(u)\}$, $\Pr(\phi(u) = c) = \frac 3{10}$.
    Therefore, $\Phi$ is a $\frac 15$-distribution for $(G, \rho, H,L, \mathcal D)$, contradicting the choice of $(G,\rho,H,L,\mathcal D)$.
\end{proof}

%Call a vertex $v$ for which $h(v) = 3$    {\em full} and 
%a vertex $v$ for which $h(v) = 2$ \emph{weak}.
%{\PB Let's not do this.}

The following ``gap lemma" gives a lower bound for the potentials of proper vertex subsets of $G$.
\begin{lemma}
\label{lem:j+2}
  For every subset $S \subseteq V(G)$, $\rho_G(S)\geq 1+ |E(S,\overline{S})|$.
\end{lemma}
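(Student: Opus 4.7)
The plan is to argue by contradiction: suppose some (necessarily nonempty) $S \subseteq V(G)$ satisfies $\rho_G(S) \leq |E(S, \overline{S})|$, and set $T := \overline{S}$. First I would dispose of the trivial case $S = V(G)$, where $|E(S, \overline{S})| = 0$ and the inequality would force $\rho_G(V(G)) \leq 0$, contradicting the standing fact that $G$ is a counterexample to \Cref{thethm} and therefore $\rho_G(V(G)) \geq 1$. So I may assume $T$ is nonempty and $G[T]$ is a proper subgraph of $G$.

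The driving idea is to use criticality of $G$ applied to $G[T]$ to build a $\frac{1}{5}$-distribution on $(H,L)$-colorings of $G$, contradicting the choice of $(G, \rho, H, L, \mathcal{D})$. By $(\rho, \frac{1}{5})$-criticality of $G$, the restriction of $\mathcal{D}$ to $T$ is a $\frac{1}{5}$-admissible list distribution for $(G[T], \rho|_T, H|_T, L|_T)$, and there exists a $\frac{1}{5}$-distribution $\Phi_T$ on $(H,L)$-colorings of $G[T]$. The task then reduces to extending $\Phi_T$ across the boundary by coloring $S$ in a way that maintains the marginal bounds at every vertex of $S$.

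For the extension, sampling $\phi_T$ from $\Phi_T$ gives each $v \in S$ a random residual list $L''(v) = L(v) \setminus N_H(\phi_T)$. The identity $\rho_G(S) - |E(S, T)| = \sum_{v \in S}(\rho(v) - 2d_S(v) - d_T(v))$ shows that the hypothesis $\rho_G(S) \leq |E(S, T)|$ is a global degree-based budget constraint on $S$, which I would exploit in one of two ways. Option (a): choose $S$ extremal — say, minimizing $|S|$ among violating sets — and argue that the hypothesis forces the boundary conditions of \Cref{lem:reducible} to hold for $U = S$ (no $u \in S \cap \Pi_{\leq 4}$ has a neighbor in $T$, and each $u \in S \cap \Pi_6$ has at most one such edge), so \Cref{lem:reducible} produces a smaller non-$\frac{1}{5}$-flexible pair contradicting the minimality of $G$. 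Option (b): combine $\Phi_T$ with a $\frac{1}{5}$-distribution on $G[S]$ (available by criticality of $G$, since $S$ is proper) via \Cref{lem:rational}, massaging probabilities so that each $v \in S$ still receives each of its colors with probability at least $\frac{1}{5}$.

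The main obstacle is that the residual lists $L''(v)$ on $S$ are random and can easily have size smaller than $h_\rho(v)$, so the $\frac{1}{5}$-flexibility of $G[S]$ cannot be invoked directly. Overcoming this requires converting the counting hypothesis $\rho_G(S) \leq |E(S, T)|$ into a structural constraint on the $S$–$T$ boundary that controls which vertices of $S$ can send multiple edges into $T$; I expect this is done by choosing $S$ extremally so that the minimality forces the boundary to be sparse enough that a reducibility lemma (\Cref{lem:reducible} or \Cref{lem:reducible-tree}) applies to $U = S$, yielding the desired contradiction.
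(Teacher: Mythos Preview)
Your overall strategy (extremal choice of $S$, then invoke \Cref{lem:reducible}) is right, but two concrete choices are inverted and a crucial closing step is missing. First, minimizing $|S|$ and applying \Cref{lem:reducible} to $U=S$ does not work: minimality of $S$ yields, for $v\in S$, an inequality of the form $d_{\overline S}(v)+3d_S(v)\ge 1+\rho(v)$, which bounds nothing from above and so cannot force $d_{\overline S}(v)\le 1$. The correct move is to take $S$ \emph{largest} among violating sets and apply \Cref{lem:reducible} to $U=\overline S$. Maximality gives, for every $v\in\overline S$, the two-sided estimate $1+j+d(v)-2|E(v,S)|\le \rho_G(S+v)\le j+\rho(v)-4|E(v,S)|$ (with $j=|E(S,\overline S)|$), hence $\rho(v)\ge d(v)+2|E(v,S)|+1$. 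Combined with \Cref{lem:pendent-heavy} this yields $|E(v,S)|\le 1$ for all $v\in\overline S$ and $|E(v,S)|=0$ when $\rho(v)\le 4$, which is exactly the hypothesis of \Cref{lem:reducible} for $U=\overline S$.

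Second, \Cref{lem:reducible} by itself does \emph{not} contradict the minimality of $G$: it only tells you that $(G-S,\rho',H,L,\mathcal D')$ is not $\tfrac15$-flexible for the modified potential $\rho'$, and criticality of $G$ is stated for $\rho$, not $\rho'$. You must take a $(\rho',\tfrac15)$-critical subgraph $G''\subseteq G-S$ with $W=V(G'')$, use minimality of $G$ together with \Cref{obs:no-exception} to get $\rho'_{G}(W)\le 0$, and then compute $\rho_G(S\cup W)\le \rho_G(S)+(\rho'_G(W)+3t)-4t\le j-t$ where $t=|E_G(S,W)|\ge 1$. This either gives $\rho_G(V(G))\le 0$ (if $S\cup W=V(G)$) or a strictly larger violating set, contradicting the maximality of $S$. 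Your Option~(b) cannot be made to work directly for the reason you identify: the residual lists on $S$ after coloring $\overline S$ need not have size $h_\rho$, and there is no mechanism here to repair that.
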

\begin{proof} Suppose the lemma fails.
Choose a largest set 
$S\subseteq V(G)$ for which $\rho_{G}(S)\leq |E(S,\overline{S})|=:j$. 
If $S = V(G)$, then $\rho_G(V(G)) = |E(V(G), \overline{V(G)})| = 0$, contradicting the choice of $G$.
%If $|S|=|V(G)|-1$ and $V(G)-S=\{u\}$, then
%$\rho_G(S) \leq j=d(u)$, and 
%\[\rho_{G}(V(G))=\rho_{G}(S)+\rho(u)-4d(u)\leq \rho(u)-3d(u).\] If $d(u)\geq 2$, then $\rho_{G}(V(G)) \leq 6-3(2)=0$, and $(G,\rho)$ is not a counterexample. As $G$ is connected, it follows that $d(u)=1$.
%Then, $\rho(u)=3$ by \Cref{lem:pendent-heavy}, and we have $\rho_G(V(G))\leq 3-3(1)=0$. In both cases, this is a contradiction. Thus, $|S|\leq |V(G)|-2$. 
Therefore, $|S| \leq |V(G)| - 1$.

For each $v\in V(G)-S$,
\begin{equation}
   \label{rho S+v upper}
\rho_{G}(S+v)=\rho_{G}(S)+\rho(v)-4|E(v,S)|\leq j+\rho(v)-4|E(v,S)|, 
\end{equation}
  and by the maximality of $S$, %and the fact that $|S|\leq |V(G)|-2$,
\begin{equation}\label{rho S+v lower}
\rho_G(S+v)\geq 1+|E(S+v,\overline{S+v})|=1+j+d(v)-2|E(v,S)|.
%\geq 1+j-|E(v,S)|.
\end{equation}
 By \eqref{rho S+v upper}~and \eqref{rho S+v lower}, we see that
$1+j+d(v)-2|E(v,S)|\leq j+\rho(v)-4|E(v,S)|,$ which yields 
\begin{equation}
\label{eqn:2Ed}
 \rho(v)  \geq d(v)+2|E(v,S)| + 1.
\end{equation}

% \iffalse
% {\PB In order to account for heavy vertices, we need to make a slightly more careful argument.
% By (\ref{eqn:2Ed}), we see that if $v \in N(S) \setminus S$ is heavy, then $|E(v,S)| = 1$. Furthermore, by \Cref{lem:pendent-heavy}, $d_G(v) \geq 2$.
% Therefore, using the inequality 
% \[\rho_{G}(S+v) \geq 1 + j + d(v) - 2|E(v,S)|\]
% from (\ref{rho S+v lower}), we actually see that 
% \[\rho(S+v) \geq 1 + j. \]
% Then, using (\ref{rho S+v upper}),
% we see that 
% \[1 + j \leq \rho(S+v) \leq j + \rho(v) - 4|E(v,S)| = j,\]
% giving us a contradiction.
% Therefore, no $v \in N(S) \setminus S$ is heavy.
% }
% \fi

Recall that by  \Cref{lem:pendent-heavy}, $d_G(v) \geq 2$ for each $v\in \Pi_{\geq 4}$.
Therefore, as $\rho(v) \leq 6$ for each $v \in V(G)$,
%since $\rho_{h,\omega}(v) \leq 6$ for every $v\in V(G)$, $\rho_{h,\omega}(v) \leq 4$ for every heavy $v\in V(G)$,
%and $\rho_{h,\omega}(v) =3$ for every $v\in V(G)$ satisfying $h(v) = 3$,
\eqref{eqn:2Ed} implies that
%We observe that 
\begin{equation}\label{ee1}
\mbox{\em for each $v\in V(G)-S$, $|E(v,S)|\leq 1$}
\end{equation}
 %Indeed, if $v\in V(G)-S$ and $|E(v,S)|\geq 2$,
 % , then $|E(S+v,V(G)-S-v)|=j+d(v)-2|E(v,S)|$ and
% \begin{equation}\label{ee2}
% \rho(S+v)=\rho(S)+\rho(v)-4|E(v,S)|\leq j+6-4|E(v,S)|.
% \end{equation}
%then (\ref{eqn:2Ed}) implies that
%$\rho_{h,\omega}(v) \geq 7$, a contradiction.
%Similarly, we observe that
and
\begin{equation}\label{ee1'}
\mbox{\em for each $v\in (V(G)-S) \cap \Pi_{\leq 4}$, %with $\rho_h(v)=3$
$E(v,S)=\emptyset$.
}
\end{equation}

Now, define $G' = G-S$.
Furthermore, define $\rho':V(G') \rightarrow \{3,4,6\}$ so that $\rho'(v) = 3$ whenever $v$ has a neighbor in $S$ and $\rho'(v) = \rho(v)$ otherwise.
%Note that by \eqref{ee1} and \eqref{ee1'}, $\ell(v)\in\{2,3\}$ for all $v\in V(G')$.
%Define $\rho':V(G') \rightarrow \{3,4,6\}$
%so that {\PB $\ell = h_{\rho'}$?} $\rho'$ is a potential function for $(G',\ell)$.
Note that by \eqref{ee1} and \eqref{ee1'}, 
the conditions of \Cref{lem:reducible}
hold for $U = V(G')$.
Thus, 
there is a $\frac 15$-admissible $h_{\rho'}$-list distribution $\D'$  such that $(G',\rho',H,L,\mathcal D')$ has no $\frac 15$-distribution.
Therefore,
 $G'$ has an $(\rho',\frac 15)$-critical subgraph $G''$.  
 Since $G''$ is a subgraph of $G$, and as $(G,\rho)$ has no exceptional 
 subgraph by \Cref{obs:no-exception},
 it follows that $(G'',\rho)$ is not exceptional.
 We write $W = V(G'')$.  
By definition,
$\rho'(w) = 3$ for each $w \in W$ satisfying $\rho(w) \neq \rho'(w)$.
Thus, as $(G,\rho)$ is minimal and has no exceptional subgraph, $\rho'_{G}(W) \leq 0$.
 %By the minimality of $G$,
    %$\rho'_{G'}(U) \leq 0$.
    Since $G$ is 
    $(\rho,\frac 15)$-critical, and 
    since $\rho'(w) = \rho(w)$
    for each $w \in V(G') \setminus N(S)$,
    there exists a vertex 
    $w\in N(S)\cap W$, for which $\rho'(w) \neq \rho(w)$.
     Therefore, $|E_G(S, W )|=:t \geq 1$.
    %in particular, $\rho'(u) = 3$, and $\rho(u) = 6$.
    %this $u$ is weak in $(G'',L')$,
% $G''\notin \cI$ {\PB If we want to make this statement, we need to say that the critical guys in $\mathcal I$ are only considered with full lists. Alternatively, we could have a lemma that if $G \in \mathcal I$ is a critical graph with a weak vertex, then $\rho(G) \leq 0$ and hence $G$ is not a counterexample.}

Note also that for each vertex $v \in N(S) \cap W$, we have
$\rho(v) = 6$ and $\rho'(v) = 3$
%$h(v) = 3$ and $\ell(v) = 2$. 
%Since each $v \in N(S) \cap V(G'')$ is in $\Pi_6$ 
by (\ref{ee1'}).
%this implies .
Let $S'=S\cup W$. 
Note that $|N(S)\cap W|\leq t$.
Thus,
\begin{equation}\label{ee21}
\rho_{G}(S')=\rho_{G}(S)+\rho_{G}(W)-4t \leq j+(\rho'_{G'}(W)+3t )-4t \leq j-t.
\end{equation}
If $S'=V(G)$, then $t=j = |E(S, \overline S)|$, and by~\eqref{ee21}, $\rho_G(V(G))\leq 0$,
 a contradiction to the choice of $G$. 
 Otherwise, $|E_G(S',\overline{S'})|\geq j-t$, so by~\eqref{ee21}, $S'$ is a larger choice than $S$, a contradiction to the maximality of $S$.
\end{proof}

\Cref{lem:j+2} immediately implies an upper bound on degrees of vertices in $G$.
\begin{lemma}
\label{lem:weak-deg} 
For every $v \in V(G)$, $d(v)\leq \rho(v)-1$.
In particular, if $\rho(v) = 3$, then $d(v) \leq 2$;
if $\rho(v) = 4$, then $d(v) \leq 3$; and
 $d(v) \leq 5$ for every $v \in V(G)$.
\end{lemma}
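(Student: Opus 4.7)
The plan is to apply the gap lemma (\Cref{lem:j+2}) directly to singleton vertex subsets. Specifically, for any vertex $v \in V(G)$, I would take $S = \{v\}$ in \Cref{lem:j+2}. Then $\rho_G(\{v\}) = \rho(v)$ because $G[\{v\}]$ has no edges, and $|E_G(\{v\}, \overline{\{v\}})| = d(v)$ since every edge at $v$ goes to $\overline{\{v\}}$ (recall $G$ has no loops).

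The gap lemma then immediately yields
\[
\rho(v) \geq 1 + d(v),
\]
which is precisely the claimed bound $d(v) \leq \rho(v) - 1$. I should briefly observe that this application is legitimate: if $V(G) = \{v\}$ were a single vertex, then $G$ would be trivially $\frac{1}{5}$-flexible (one can just sample a uniformly random color), contradicting that $G$ is $(\rho,\frac{1}{5})$-critical; hence $|V(G)| \geq 2$ and $\overline{\{v\}} \neq \emptyset$, though in fact \Cref{lem:j+2} is stated for all subsets $S \subseteq V(G)$ so this is not strictly needed.

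The three concrete inequalities then follow by plugging in the possible values of $\rho(v)$. Since $\rho$ takes values in $\{3,4,6\}$, we get $d(v) \leq 2$ when $\rho(v) = 3$, $d(v) \leq 3$ when $\rho(v) = 4$, and $d(v) \leq 5$ when $\rho(v) = 6$. Since $\rho$ is bounded above by $6$, the universal bound $d(v) \leq 5$ holds for all $v \in V(G)$.

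There is no real obstacle here; the lemma is an immediate corollary of \Cref{lem:j+2}, and the only thing to check carefully is that the single-vertex case of the gap lemma is indeed allowed. The content of the statement is essentially that the potential function controls pointwise degrees, which the gap lemma already encodes with room to spare (the constant $1$ on the right-hand side gives the strict inequality $d(v) < \rho(v)$).
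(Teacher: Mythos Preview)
Your proposal is correct and is exactly the approach the paper takes: the paper simply states that \Cref{lem:j+2} ``immediately implies'' this lemma, and your application of the gap lemma to the singleton $S=\{v\}$ is precisely what is intended. Your side remark about $|V(G)|\geq 2$ is unnecessary (as you note), since \Cref{lem:j+2} is stated for all $S\subseteq V(G)$.
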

%\begin{proof}    By Lemma \ref{lem:j+2}, for every $v \in V(G)$,  $6\geq \rho_h(v)\geq d(v)+1$, and if $v$ is weak, then 
   % $3 = \rho_h(v) \geq d(v) + 1$.
%\end{proof}

%If true, 
%The  lemma above implies that $\Delta(G)\leq 5$ and that for each $v\in V(G)$ with $\rho_{h,\omega}(v)=3$, $d(v)\leq 2$. {\AK is this the next lemma?}
%It would also imply that for each multiple edge $uv$, $d(u)+d(v)\leq 7$. And I think we had a proof that this is impossible.
%Thus if true, the lemma would imply that $G$ has no multiple edges.

\subsection{A reduction using fractional $(H,L)$-packings}

\begin{lemma}
\label{lem:gen-diamond}
    $G$ has no vertex set $U \subseteq \Pi_6$ 
    of size at least $2$
    satisfying the following:
    \begin{itemize}
        \item The set $U' \subseteq U$ consisting of vertices $u \in U$ with a neighbor in $V(G) \setminus U$
        has size at most $2$; %{\PB In fact, if we do the stronger gap lemma mentioned above, then this condition is redundant}
        \item $\rho_G(U) \leq 4$; %, or 
        %$\rho_G(U) = 6$, 
        %$U \subseteq \Pi_6$, and 
        %$|N(U') \setminus U| = 1$; 
        \item $G[U]$ has a fractional $(H,L)$-packing.
    \end{itemize}
\end{lemma}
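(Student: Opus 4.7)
Suppose for contradiction that such a set $U$ exists. The plan is to exhibit a $\frac{1}{5}$-distribution for $(G,\rho,H,L,\mathcal D)$, contradicting the $(\rho,\frac{1}{5})$-criticality of $G$. The key ingredients are the given fractional $(H,L)$-packing $\Phi_U$ on $G[U]$---under which every color of $L(v)$ at each $v \in U$ is used with probability exactly $\frac{1}{3}$---together with a $\frac{1}{5}$-distribution on the reduced graph $G^* := G - (U \setminus U')$ obtained from the minimality of $G$ and \Cref{obs:minimal}. The two distributions will be coupled through the matching edges of $H$ that join $L(U')$ to the lists of the neighbors of $U'$ in $V(G) \setminus U$.

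A direct calculation yields the potential identity
\[
\rho_{G^*}(V(G^*)) = \rho_G(V(G)) - \rho_G(U) + \rho_G(U').
\]
Combining this with $\rho_G(V(G)) \geq 1$ (since $G$ is a counterexample) and $\rho_G(U) \leq 4$ gives $\rho_{G^*}(V(G^*)) \geq \rho_G(U') - 3$, which is positive in the standard configurations where $|E(G[U'])| \leq 2$. The degenerate case $|U'| = 0$ forces $U = V(G)$ by connectedness, which is then immediately ruled out by $G \notin \mathcal I$, $\rho_G(V(G)) \geq 1$, and \Cref{obs:no-exception}. For the main argument, I would first sample $\phi_U \sim \Phi_U$, then downgrade $\rho^*(v) := 3$ at each $\Pi_6$-vertex $v \in V(G^*) \setminus U'$ joined to a unique $u_i \in U'$ by a single edge, and define a coupled forbidden-color distribution $\mathcal D^*$ that forbids at $v$ the color of $L(v)$ matched in $H$ to $\phi_U(u_i)$. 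Uniformity of $\Phi_U$ at $u_i$ makes every color forbidden with probability exactly $\frac{1}{3}$, so $\mathcal D^*$ is $\frac{1}{5}$-admissible. Using \Cref{lem:j+2} together with \Cref{obs:no-exception} to rule out $(\rho^*,\frac{1}{5})$-critical subgraphs of $G^*$, minimality of $G$ produces a $\frac{1}{5}$-distribution $\Phi^*$ on $(G^*,\rho^*,H,L,\mathcal D^*)$. The combined sample $\phi := \phi_U \cup \phi^*$ is a valid $(H,L)$-coloring of $G$ with marginals $\frac{1}{3}$ on $U$ and at least $\frac{1}{5}$ on $V(G) \setminus U$, giving the desired $\frac{1}{5}$-distribution.

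The main obstacle is the potential bookkeeping at $(\rho^*,\frac{1}{5})$-critical subgraphs of $G^*$ in the presence of vertices $v \in V(G) \setminus U$ with multiple edges to $U'$---either a multi-edge to a single $u_i$, or single edges to each of $u_1$ and $u_2$ when $|U'|=2$. In such a case the naive downgrade forbids more than one color at $v$, and the $-3$ decrement per downgraded vertex may exceed the gain $+|E_G(S,\overline S)|$ from \Cref{lem:j+2}, violating the required positivity check. I expect to handle this either by absorbing such $v$ into an enlarged $U$ that still satisfies the three hypotheses of the lemma (since adding $v$ only decreases $\rho_G(U)$, the boundary $|U'|$ does not grow past $2$, and the fractional packing extends in a way compatible with the uniform marginal of $\Phi_U$), or by a finer averaging argument using the joint distribution of $(\phi_U(u_1), \phi_U(u_2))$ to lower-bound the fraction of $\phi_U$-samples for which each color at $v$ remains available. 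The subcase $|U| = |U'| = 2$ (where $W = \emptyset$ and $G^* = G$, so no minimality reduction is available) will need separate direct analysis of the multi-edge $G[U]$ using $\Phi_U$ together with \Cref{lem:tree-packing} applied to attached tree-like components.
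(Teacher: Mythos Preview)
Your proposal has a genuine gap that stems from missing the paper's central construction. The issue is visible already in your coupling: you set $G^* = G - (U \setminus U')$, so $U' \subseteq V(G^*)$, and then you ``combine'' $\phi_U$ (a coloring of $G[U]$) with $\phi^*$ (a coloring of $G^*$). These two colorings overlap at $U'$, and nothing forces $\phi^*(u_i) = \phi_U(u_i)$; a $\tfrac15$-distribution on $G^*$ gives you no control over the joint law at $(u_1,u_2)$, so $\phi_U \cup \phi^*$ is not even a well-defined map, let alone an $(H,L)$-coloring of $G$. If instead you intended to work on $G - U$ and downgrade its $\Pi_6$-neighbors of $U'$ to $\rho^* = 3$ (which your description of $\mathcal D^*$ suggests), then two further problems arise. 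First, nothing in the hypotheses prevents a vertex $v \in V(G)\setminus U$ from lying in $\Pi_{\le 4}$ or from having several edges into $U'$, and your suggested fix of ``absorbing $v$ into $U$'' does not obviously preserve the fractional packing. Second, even when every such $v$ is in $\Pi_6$ with a single edge to $U'$, the potential bookkeeping does not close: if $S \subseteq V(G-U)$ satisfies $\rho^*_{G-U}(S) \le 0$ and meets the downgraded set in $m$ vertices, you only get $\rho_G(S\cup U) \le 3m + 4 - 4m = 4 - m$, which for $m\in\{1,2,3\}$ need not contradict \Cref{lem:j+2}.

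What the paper does instead is to \emph{identify} $u_1$ and $u_2$ into a single vertex $u^*$ (with $\rho(u^*)=6$), rather than keep them separate. The fractional packing on $G[U]$ is converted, via \Cref{lem:rational}, into a multiset $\Psi$ of $(H,L)$-colorings whose trace on $(u_1,u_2)$ gives a $\tfrac{|\Psi|}{3}$-regular bipartite multigraph $B$ on $L(u_1)\cup L(u_2)$. By Hall's theorem $B$ decomposes into perfect matchings; each matching $M$ determines a $3$-cover $(H',L')$ for the contracted graph $G'$ by fusing each pair $(i_{u_1}, M(i_{u_1}))$ into a color $i_{u^*}$. If every such $(G',\rho,H',L',\mathcal D')$ were $\tfrac15$-flexible, the matching structure lets one lift a random $(H',L')$-coloring back to $G$ through the coloring $\psi_e \in \Psi$ indexed by the edge $e$ of $B$ that $M$ and $\phi'(u^*)$ pick out; this is where the uniform $\tfrac13$-marginals on $U$ come from. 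Hence some $(G',\rho,H',L',\mathcal D')$ fails, and minimality yields $S' \ni u^*$ with $\rho_{G'}(S') \le 2$. Now the potential count is clean: replacing $u^*$ by $U$ gives $\rho_G(S) \le \rho_{G'}(S') - \rho(u^*) + \rho_G(U) \le 2 - 6 + 4 = 0$, contradicting \Cref{lem:j+2}. The identification is exactly what makes the arithmetic close and simultaneously sidesteps all the boundary cases (neighbors in $\Pi_{\le 4}$, multi-edges into $U'$, the $|U|=|U'|=2$ case) that your outline leaves open.
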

\begin{proof}
    %{\PB For $\epsilon = 1/5$,
    %we would only be able to use this when $U \subseteq \Pi_6$, or $U$ has a single heavy vertex.}
    Suppose that the lemma is false, and let $U \subseteq V(G)$ be a violating vertex subset.
    By the lemma's assumption, $|U| \geq 2$. 
    Since $G[U]$ has a fractional $(H, L)$-packing, by
     \Cref{lem:rational},
    we find a multiset $\Psi$
    of $(H,L)$-colorings of $G[U]$
    such that for each $u \in U$ and $c \in L(u)$,
    $c$ appears in exactly $|\Psi|/3$ colorings in $\Psi$.

    Let $U' \subseteq U$ be the set consisting of all $u \in U$ with a neighbor in $V(G) \setminus U$.
    If $U' = \emptyset$,
    then as $G$ is connected, 
    $U = V(G)$.
    Since $V(G) \subseteq \Pi_{6}$ and $G$ has a fractional $(H,L)$-packing,
    there is a $\frac 13$-distribution for $(G,\rho,H,L,\mathcal D)$, a contradiction.
    Otherwise, $|U'| \in \{1,2\}$.
    We write $U' = \{u_1, u_2\}$, where $u_1 = u_2$ whenever $|U'| = 1$.
    We construct an auxiliary bipartite graph $B$ with partite sets $L(u_1)$ and $L(u_2)$. For each $\psi \in \Psi$, we add the edge $e_{\psi} = \psi(u_1) \psi(u_2)$ to $B$, and we observe that $B$ is $\frac 13 |\Psi|$-regular.
    Using Hall's theorem, we decompose $E(B)$ into a set
    $\mathcal M = \{M_1, \dots, M_{|\Psi|/3} \}$ of
    perfect matchings. For each $i_{u_1} \in L(u_1)$ and $M \in \mathcal M$, we write $M(i_{u_1})$ for the unique neighbor of $i_{u_1}$ in $M$.

    Now, we construct a graph $G'$ with a  $3$-cover $(H',L')$ as follows. Choose $M \in \mathcal M$ uniformly at random. To obtain $G'$, delete $U \setminus \{u_1,u_2\}$ from $G$, and
    identify
    $\{u_1, u_2\}$ into a single vertex $u^*$.
    To obtain $H'$, delete $L(u)$ from $H$ for each $u \in U \setminus \{u_1, u_2\}$, and identify each color pair $(i_{u_1}, M(i_{u_1}))$ to a single color $i_{u^*}$.
    We write $L'(u^*) = \{1_{u^*}, 2_{u^*}, 3_{u^*}\}$, and we write $L'(v) = L(v)$ for each $v \in V(G') \setminus \{u^*\}$.
    We extend $\rho$ to let $\rho(u^*) = 6$.
    %If $U$ has a vertex of $\Pi_4$, then we assign $\rho'(u^*) = 4$; otherwise, we assign $\rho'(u^*) = 6$.
    %We let $\rho'(v) = \rho(v)$ for each $v \in V(G') \setminus \{u^*\}$, and we let $h':V(G') \rightarrow \{2,3\}$ be defined so that $\rho'$ is a potential function for $(G',h')$; namely, $h'(u^*)=3$.
    We let $\mathcal D'$ be an $h'_{\rho}$-list distribution on $(G',\rho, H',L')$ defined to agree with $\mathcal D$ on $V(G') \setminus \{u^*\}$; note that a color in $L'(u^*)$ is never forbidden by $L_{\mathcal D'}$.

    \begin{claim} 
    \label{claim:G'-no-good}
    There exists a $3$-cover $(H',L')$ for which
    $(G',\rho,H',L',\mathcal D')$ is not $\frac 15$-flexible.
    \end{claim}
    \begin{proof}
        Suppose that $(G',\rho,H',L',\mathcal D')$
        has a  $\frac 15$-distribution $\Phi'$ for every $3$-cover $(H',L')$ constructed above.
         Recall that $(G,\rho,H,L,\mathcal D)$ has no $\frac 15$-distribution. 
        We obtain a contradiction by constructing a
        $\frac 15$-distribution $\Phi$ for $(G,\rho,H,L ,\mathcal D)$ as follows.
        
        For each outcome $L_{\mathcal D}$ of $\mathcal D$, let $L'_{\mathcal D'}$ be the corresponding outcome of $\mathcal D'$.
        Let $\phi'$ be an $(H',L'_{\mathcal D'})$-coloring of $G'$ sampled from $\Phi'(L'_{\mathcal D'})$.
        We let $\phi(v) = \phi'(v)$ for each $v \in V(G') \setminus \{u^*\} = V(G) \setminus U$.

        For each $u \in U$, we assign $\phi(u)$ as follows. First, we observe that the color $\phi'(u^*) = i_{u^*}$ assigned to $u^*$ is associated with a unique edge $e = i_{u_1} M(i_{u_1})$ for some $M \in
        \mathcal M$. Furthermore, the edge $e$ is associated with a unique coloring $\psi_e$ of $G[U]$. We let $\phi$ agree with $\psi_e$ on $U$.
        This random procedure for constructing $\phi$ gives us a  distribution $\Phi$ on $(G,\rho,H,L,\mathcal D)$.

        We first argue that the coloring $\phi$ 
        that we construct from $\phi'$
        is an $(H,L_{\mathcal D})$-coloring of $G$.
        It is easy to check that $\phi$ contains exactly one vertex from each set $L_{\mathcal D}(v)$ for $v \in V(G) \setminus U$. 
        Since $h_{\rho}(u) = 3$ 
        for each $u \in U$, $L_{\mathcal D}(u) = L(u)$, 
        so $\phi$ thus contains exactly one color from each $L_{\mathcal D}(u)$ for $u \in U$.
        Additionally,
        as $\phi$ agrees with $\phi'$ on $V(G) \setminus U$ and $\phi$ agrees with $\psi_e$ on $U$, 
        $\phi$ contains no adjacent pair from $H[L(V(G) \setminus U)]$ and no adjacent pair from $H[L(U)]$.

        Now, suppose that $u \in U$ and $v \in V(G) \setminus U$, and that $uv \in E(G)$. 
        As no vertex of $U \setminus \{u_1,u_2\}$ has a neighbor in $V(G) \setminus U$,
        $u \in \{u_1,u_2\}$.
        Since $\phi'$ assigns the color $i_{u^*}$ to $u^*$, and since $i_{u^*}$ is obtained as contraction of $i_{u_1}$ and $M(i_{u_1})$, 
        it follows that $\phi(u) \in \{i_{u_1}, M(i_{u_1})\}$.
        Furthermore, as $\phi'(v) i_{u^*} \not \in E(H')$, it follows from the construction of $H'$ that $\phi'(v)$ has no neighbor in $\{ i_{u_1}, M(i_{u_1})\}$ in $H$. As $\phi(v) = \phi'(v)$, it thus follows that $\phi(u) \phi(v) \not \in E(H)$.

        Next, we argue that $\Phi$ is a $\frac 15$-distribution for $(G,\rho,H,L,\mathcal D)$. For each $v \in V(G) \setminus U$ and color $c \in L(v)$, since $\Phi'$ is a $\frac 15$-distribution, 
        \[\Pr(\phi(v) = c)= \Pr(\phi'(v) = c) \geq \frac 15.\]
        Furthermore, if $\rho(v) = 4$, then
        %$\rho'(v) = 4$, so
        %\[\Pr(\phi(v) = c)= \Pr(\phi'(v) = c) \geq 2 \epsilon.\]
        \[ \Pr(\phi(v) = c) = \frac{3}{10} \text{ for each $c \in L(v) \setminus \{\omega(v)\} $}.\]

        Next, suppose that $u \in U$, and consider a color $c \in L(u)$.
        We recall that $\phi_{\vert U}$ is chosen to agree with a random $(H,L)$-coloring $\psi_e$ of $G[U]$
        associated with an edge $e \in E(B)$, where $B$ is an auxiliary bipartite graph joining $L(u_1)$ and $L(u_2)$.
        %, which is taken from a decomposition of $E(B)$. 
        Thus, $\phi(u) = c$ if and only if  $\psi_e(u) = c$.

        Given an edge $f \in E(B)$, we estimate the probability that $\psi_f$ is the coloring used to define $\phi$ on $U$. 
        Observe that $\psi_f$ is used to define $\phi$ on $U$ if and only if the following two independent events occur:
        \begin{itemize}
            \item The randomly chosen matching $M \in \mathcal M$ contains the edge $f$.
            \item There exists $i \in \{1,2,3\}$ such that 
            $\phi'(u^*) = i_{u^*}$ and $\psi_f(u_1) = i_{u_1}$. 
        \end{itemize}
        The probability of the first event is $\frac 3{|\Psi|}$, and the probability of the second event
        is  
       % at least $2 \epsilon$
        %the interval $[2\epsilon, 1 - 4 \epsilon]$
%        when $\rho'(u^*) = 4$ and 
        at least $\frac 15$. %otherwise.
        %is in $[\epsilon,1-2\epsilon]$ otherwise.
        Therefore, the probability that $\psi_f$ is the coloring that defines $\phi$ on $U$ 
        is % in the interval 
        % $[\frac 3{|\Psi|} 2\epsilon, \frac 3{|\Psi|} (1 -4\epsilon)]$
        %at least $\frac 3{ |\Psi|}(2 \epsilon)$
        %when $\rho'(u^*) = 4$ and
        at least $\frac 3 { 5 |\Psi|} $.
        %is in 
        %$[\frac 3{|\Psi|} \epsilon, \frac 3{|\Psi|} (1-2\epsilon)]$
        %otherwise.
        
      Next, by construction of $\Psi$, exactly 
      $\frac 13 |\Psi|$
      colorings $\psi_f \in \Psi$ assign $\psi_f(u) = c$.
      Therefore, $\psi_e(u) = c$ for exactly $\frac 13 |\Psi|$
      outcomes of the random coloring $\psi_e$.
      Hence, using the probability estimates above for each fixed outcome $\psi_f$ of the random coloring $\psi_e$, we have that 
      %$\Pr(\psi_e(u) = c) \geq 2 \epsilon$ 
      %when $\rho'(u^*) = 4$, and 
      $\Pr(\psi_e(u) = c) \geq \frac 15$. % otherwise. 
        As $\phi(u) = \psi_e(u)$, %and $ \rho(u) = \rho(u^*) =6$, 
        we thus see that % $\Pr(\phi(u) = c) \geq 2 \epsilon$ whenever $\rho(u) = 4$, and 
        $\Pr(\phi(u) = c) \geq \frac 15$. %otherwise. 
        As $\rho(u) = 6$, $\Phi$ is a $\frac 15$-distribution for $(G,\rho,H,L,\mathcal D)$.
    \end{proof}
       As $(G',\rho,H',L',\mathcal D')$
is not $\frac 15$-flexible by \Cref{claim:G'-no-good},
    %and as $G$ is $(\rho,\epsilon)$-critical,
    the minimality of the counterexample $G$ tells us that 
    there is a set $S' \subseteq V(G')$ containing $u^*$
    for which
    $G'[S']$
    is an exceptional subgraph of potential $2$, or 
    $\rho_{G'}(S') \leq 0$; in both cases, $\rho_{G'}(S') \leq 2$.
    %If $G'[S]$ is a $C_2$ and $\rho_{G'}(S') = 2$,
    %this implies that $|U'| = 2$ and the two vertices of $U'$ have a common neighbor $w$ for which $\rho(w) = 4$.
    %Then, $\rho_G(U + w) \leq \rho_G(U) - 4 \leq 0$, contradicting Lemma \ref{lem:j+2}.
    %In the latter case, by \Cref{obs:Ilkyoo}, $\rho'(S') \leq 2$  when all vertices $v\in S'$ satisfy $\rho'(v)=6$, and $\rho'(S') \leq 0$ when 
    %$S'$ has a vertex $v$ satisfying $\rho'(v)=4$.
    % {\PB I don't think so; I think the potential could also be $\leq 0$ and the graph would not belong to $\mathcal I$}
    Let $S \subseteq V(G)$ be obtained from $S'$ by deleting $u^*$ and adding $U$.
    %If $\rho_G(U) \leq 4$, then 
    Then,
    $\rho_{G}(S) \leq \rho'(S') -\rho'(u^*)+\rho_{G}(U) \leq 2-6+4 = 0$, contradicting \Cref{lem:j+2}.
    %If $\rho_G(U) = 6$,
    %then $|N(U') \setminus U| \leq 1$, implying that $u^*$ has at most one neighbor in $G'$.
    %Therefore, $G'[S'] \not \in \mathcal I$,
    %and hence $\rho'_{G'}(S') \leq 0$.
    %Furthermore, if $\rho_G(U) = 6$, then $U \subseteq \Pi_6$, so $\rho'(u^*) = 6$.
    %Then, $\rho_G(S) \leq \rho'_{G'}(S') - \rho'(u^*) + \rho_G(U) = \rho'_{G'}(S') \leq 0$, again contradicting Lemma \ref{lem:j+2}.
    %{\PB This ending part was changed to allow $\rho(U) = 6$ under special conditions. Please check. Thank you!
    %Actually, we probably don't need this.
    %To be continued. {\IC okay}
    %}
\end{proof}

%{\PB 
%I wonder if this lemma should go earlier, since it doesn't use the gap lemma or any other lemmas at all.
%At the same time, we don't really need it until now. I think one of the big questions remaining is how to organize all these lemmas.
%}
%{\IC yeah that's a good question. I feel like a good order would be (a) some ad hoc lemmas (b) some lemmas about general $\epsilon$ (c) lemmas when $\epsilon=\frac16$ (d) reducibility lemmas - (with subsections like now) just a thought. I can try to organize after finishing reading - I only have a couple pages left.}
%{\PB Actually I was wrong, this uses the gap lemma at the end.}

\subsection{Pendent $C_2$ blocks of $G$}
The main goal of this subsection
is to show that $G$ has no pendent $C_2$ block.
In other words, we aim to show that no $2$-vertex of $G$ is incident with parallel edges.

\begin{lemma}
\label{lem:double-edges}
    For each pair $u,v  \in V(G)$, $|E_G(u,v)| \leq 2$.  
\end{lemma}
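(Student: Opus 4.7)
The plan is to apply the gap lemma, \Cref{lem:j+2}, directly to the two-vertex set $S = \{u,v\}$, assuming for contradiction that $|E_G(u,v)| \geq 3$.

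First I would record two easy degree observations. Since each of the at least three parallel edges contributes to the degree of both endpoints, $d(u), d(v) \geq |E_G(u,v)| \geq 3$. Combined with \Cref{lem:weak-deg}, which says $d(w) \leq \rho(w) - 1$, this forces $\rho(u), \rho(v) \geq 4$; in particular $\rho(u) + \rho(v) \leq 12$.

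Next I would compute both sides of the inequality from \Cref{lem:j+2} for $S = \{u,v\}$. On the one hand,
\[
\rho_G(S) = \rho(u) + \rho(v) - 4|E_G(u,v)|.
\]
On the other hand, counting edges leaving $S$,
\[
|E(S,\overline{S})| = d(u) + d(v) - 2|E_G(u,v)|.
\]
Substituting into $\rho_G(S) \geq 1 + |E(S,\overline{S})|$ and rearranging yields
\[
\rho(u) + \rho(v) \;\geq\; 1 + d(u) + d(v) + 2|E_G(u,v)|.
\]
Using $|E_G(u,v)| \geq 3$ and $d(u), d(v) \geq 3$, the right-hand side is at least $1 + 3 + 3 + 2 \cdot 3 = 13$, contradicting the bound $\rho(u) + \rho(v) \leq 12$ from the first paragraph.

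There is essentially no main obstacle: the proof is a one-line contradiction with the gap lemma once the degree and potential bounds are in place. The only subtlety is remembering to use the slack $+1$ term in \Cref{lem:j+2} — without it we would only get $\rho(u) + \rho(v) \geq 12$, which is barely achievable; the extra $+1$ is exactly what rules out a triple edge.
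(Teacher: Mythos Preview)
Your proof is correct and uses the same idea as the paper—apply \Cref{lem:j+2} to $S=\{u,v\}$—but the paper's version is a single line: since $\rho$ takes values in $\{3,4,6\}$, one has $\rho_G(\{u,v\}) \leq 6+6-4\cdot 3 = 0$, which already contradicts $\rho_G(S) \geq 1$ from \Cref{lem:j+2} without ever invoking degree bounds, \Cref{lem:weak-deg}, or the boundary term $|E(S,\overline S)|$. A small wording quibble: your ``in particular $\rho(u)+\rho(v)\leq 12$'' does not follow from $\rho(u),\rho(v)\geq 4$ but rather from the codomain of $\rho$; the first paragraph is only needed for the inequality $d(u),d(v)\geq 3$, and even that becomes unnecessary once you drop the boundary term.
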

\begin{proof}
If $|E_G(u,v)| \geq 3$, then $\rho_{G}(\{u,v\}) \leq 6+6 - 3(4) = 0$, contradicting \Cref{lem:j+2}.
\end{proof}

\begin{lemma}
\label{lem:unique-weak}
    $|\Pi_3| \neq 1$.
\end{lemma}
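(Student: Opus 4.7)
Suppose for contradiction that $\Pi_3 = \{v\}$ for some vertex $v$. My plan is to contradict the $(\rho, \frac{1}{5})$-criticality of $G$ by showing $(G, \rho)$ is in fact $\frac{1}{5}$-flexible. The strategy exploits the maximality of $\rho_G(V(G))$: let $\rho'$ denote the potential function agreeing with $\rho$ everywhere except at $v$, where $\rho'(v) = 6$. Then $\rho'_G(V(G)) = \rho_G(V(G)) + 3 > \rho_G(V(G))$, so the maximality of $\rho_G(V(G))$ among potential functions making $G$ critical implies that $G$ is not $(\rho', \frac{1}{5})$-critical.

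I will first rule out the possibility that $G$ has a proper $(\rho', \frac{1}{5})$-critical subgraph $G''$. By the minimality of $G$ as a counterexample to \Cref{thethm}, $G''$ must satisfy one of the theorem's conclusions. If $v \notin V(G'')$, then $\rho = \rho'$ on $V(G'')$ and $G''$ would be a proper $(\rho, \frac{1}{5})$-critical subgraph of $G$, contradicting the criticality of $G$ under $\rho$. If $v \in V(G'')$ and $G'' \in \mathcal{I}$, this contradicts \Cref{obs:no-exception}. If $v \in V(G'')$ and $G''$ is an exceptional $C_2$ with $\rho'_{G''}(V(G'')) = 2$, then $V(G'') = \{v, u\}$ with $|E(v,u)| = 2$, $\rho'(v) = 6$, and $\rho'(u) = 4$; hence $\rho_G(V(G'')) = 3 + 4 - 8 = -1$, which contradicts either \Cref{lem:j+2} or the inequality $\rho_G(V(G)) \geq 1$ in the case $V(G'') = V(G)$. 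Finally, if $\rho'_{G''}(V(G'')) \leq 0$ with $v \in V(G'')$, then since $G'' \subseteq G[V(G'')]$, one gets $\rho_G(V(G'')) \leq \rho'_{G''}(V(G'')) - 3 \leq -3$, again contradicting \Cref{lem:j+2} or $\rho_G(V(G)) \geq 1$. Therefore $(G, \rho')$ is $\frac{1}{5}$-flexible.

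Now, given a $\frac{1}{5}$-admissible distribution $\mathcal{D}$ on $(G, \rho, H, L)$, I define $\mathcal{D}'$ on $(G, \rho', H, L)$ to agree with $\mathcal{D}$ on $V(G) \setminus \{v\}$; since $h_{\rho'}(v) = 3$, $\mathcal{D}'$ is $\frac{1}{5}$-admissible and hence yields a $\frac{1}{5}$-distribution $\Phi'$ for $(G, \rho', H, L, \mathcal{D}')$. I will construct a $\frac{1}{5}$-distribution $\Phi$ for $(G, \rho, H, L, \mathcal{D})$ by sampling $\phi' \sim \Phi'$ together with $c^* \sim \mathcal{D}|_v$ (independently), setting $\phi = \phi'$ when $\phi'(v) \neq c^*$, and otherwise re-picking $\phi(v)$ (and possibly $\phi(u)$ at a neighbor $u$ of $v$) subject to the constraints imposed by the cover $(H,L)$. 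By \Cref{lem:weak-deg} we have $d(v) \leq 2$, so at least one valid re-pick always exists.

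The main obstacle will be verifying that $\Pr_\Phi(\phi(v) = c) \geq \frac{1}{5}$ for every $c \in L(v)$, since a naive analysis (treating the joint distribution of $\phi'(v)$ and $\phi'(N(v))$ adversarially) can yield probabilities as small as $\frac{1}{25}$. The verification will need to exploit the freedom to select $\Phi'$ from among all $\frac{1}{5}$-distributions for $(G, \rho', H, L, \mathcal{D}')$, together with case analysis according to whether $d(v) = 1$, $d(v) = 2$ with two distinct neighbors, or $d(v) = 2$ with a single neighbor joined by parallel edges; in the most delicate subcase, where two colors at $v$ are simultaneously blocked and exhaust $L(v) \setminus \{c^*\}$, the re-picking rule will need to jointly modify $\phi(v)$ and the color $\phi(u)$ at a suitable neighbor $u$ of $v$ in order to maintain all marginal lower bounds at once.
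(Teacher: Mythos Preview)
Your first half, where you rule out the existence of a proper $(\rho',\tfrac15)$-critical subgraph $G''$, is correct and in fact spells out a step the paper leaves implicit. The gap is in the second half.

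You set $\rho'(v)=6$, obtain a $\tfrac15$-distribution $\Phi'$ for $(G,\rho',H,L,\mathcal D')$, and then try to repair $\phi'$ whenever $\phi'(v)$ equals the forbidden color $c^*$. You correctly identify that a naive analysis yields probabilities as low as $\tfrac1{25}$ at $v$, and you then defer the fix to unspecified ``freedom to select $\Phi'$'' and ``jointly modifying $\phi(v)$ and $\phi(u)$ at a neighbor $u$''. Neither of these suffices as stated. With $\rho'(v)=6$ the distribution $\Phi'$ only guarantees $\Pr(\phi'(v)=c)\ge\tfrac15$, so the marginals at $v$ can be anywhere in the simplex $\{\alpha:\alpha_i\ge\tfrac15\}$; there is no general mechanism to upgrade this to, say, the uniform $(\tfrac13,\tfrac13,\tfrac13)$. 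Worse, if a neighbor $u$ of $v$ lies in $\Pi_4$, then any re-picking at $u$ perturbs the \emph{exact} probabilities $\Pr(\phi(u)=c)=\tfrac3{10}$ for $c\neq\omega(u)$ required by the $\tfrac15$-distribution condition, and you give no indication of how to restore them.

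The paper avoids all of this by choosing $\rho'(v)=4$ rather than $6$. This forces the $\tfrac15$-distribution $\Phi'$ to have \emph{exact} marginals $(\tfrac25,\tfrac3{10},\tfrac3{10})$ at $v$ (with basepoint $\omega(v)$). Given the forbidden color $i_v$ (occurring with probability $p_i\ge\tfrac15$), the paper samples an index $j\neq i$ via the column-stochastic matrix $A$ of \Cref{lem:pendent-heavy}, and then samples from $\Phi'$ \emph{conditioned} on $\phi'(v)=j_v$. Because $A\mathbf p=(\tfrac25,\tfrac3{10},\tfrac3{10})^T$ matches the marginals of $\phi'(v)$ exactly, the overall probability of selecting any particular coloring $\phi'$ is unchanged; hence every marginal at every vertex (including all $\Pi_4$ vertices) is preserved verbatim, while the zero diagonal of $A$ guarantees the forbidden color is never used. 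Your re-picking scheme cannot achieve this exact preservation without the $\rho'(v)=4$ hypothesis, and your proposal does not supply an alternative argument.
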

\begin{proof}
    Suppose that $\Pi_3$ contains a unique vertex $x$.
    Obtain $\rho': V(G) \rightarrow \{3,4,6\}$ from $\rho$ by letting $\rho'(x) = 4$ and $\rho'(v) = \rho(v)$
    for each $v \in V(G) \setminus \{x\}$.
    Assume without loss of generality that $\omega(x) = 1_x$.
    Since $\rho$ was chosen to maximize $\rho_G(V(G))$,
    and since $(G,\rho)$ is $\frac 15$-critical,
    there is a $\frac 15$-distribution $\Phi'$ for $(G,\rho',H,L,\mathcal D)$.
    For each $(H,L)$-coloring $\phi'$ of $G$, 
    write $\Pr(\phi')$
    for the probability that an $(H,L)$-coloring of $G$ randomly sampled from $\Phi'$ is equal to $\phi'$.

    Now, we construct a distribution $\Phi$ on $(H,L)$-colorings $\phi$ of $G$ as follows.
    For each $i \in \{1,2,3\}$, write $p_i$ for the probability that $L_{\mathcal D}$ forbids $i_x$.
    Whenever $L_{\mathcal D}$
    forbids $i_x$,
    we choose an index $j \in \{1,2,3\}$ with conditional probability equal to entry $A_{ji}$ 
    of the matrix $A$ from the proof of  \Cref{lem:pendent-heavy}.
    Then, sample an $(H,L)$-coloring $\phi'$
    from $\Phi'$ conditioning on the event that $\phi'(x) = j_x$.
    Finally, define $\phi = \phi'$.
    This process gives us a distribution $\Phi$ on $(H,L)$-colorings of $G$.
    As each diagonal entry of $A$ is $0$,
    the probability that $\phi$ assigns a forbidden color to $x$ is $0$.

    Now, we claim that $\Phi$ is a $\frac 15$-distribution for $(G,\rho,H,L,\mathcal D)$.
        As shown in \Cref{lem:pendent-heavy},
    writing $\mb p = [p_1, p_2, p_3]^T$,
    each index $j$ is chosen with probability equal to entry $j$ of $A\mb p = [\frac 25, \frac 3{10}, \frac 3{10}]^T$; thus $j=1$ with probability $\frac 25$, and for each $j' \in \{2,3\}$, $j=j'$ with probability $\frac 3{10}$.
    Since an $(H,L)$-coloring $\phi'$
    sampled unconditionally from $\Phi'$
    satisfies $\Pr(\phi'(x) = 1_x) = \frac 25$ and $\Pr(\phi'(x) = 2_x) = \Pr(\phi'(x) = 3_x) = \frac 3{10}$,
    it follows that 
    when we condition on the event 
    $\phi'(x) = j_x$,
    each $\phi'$ satisfying $\phi'(x) = j_x$
    is chosen with conditional probability $\frac 52 \Pr(\phi')$ when $j=1$ and $\frac{10}{3} \Pr(\phi')$ when $j \in \{2,3\}$, and all other $(H,L)$-colorings of $G$ are chosen with probability $0$.
    Therefore, each $(H,L)$-coloring $\phi'$ of $G$ is sampled with overall probability $\frac 25 \cdot \frac 52 \Pr(\phi') = \Pr(\phi')$ or $\frac 3{10} \cdot \frac{10}{3} \Pr(\phi') = \Pr(\phi')$.
    Thus,
    for each $c \in V(H)$, $\Pr(c \in \phi) = \Pr(c \in \phi')$, and hence $\Phi$ is a $\frac 15$-distribution for $(G,\rho,H,L,\mathcal D)$, a contradiction.
\end{proof}

\begin{lemma}
\label{lem:C2C4}
    If $u,v \in V(G)$ induce $C_2$, then $H[L(u), L(v)]$  consists of a $C_2$-component and a $C_4$-component.
\end{lemma}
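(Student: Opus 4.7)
The plan is to prove the statement by contradiction: suppose $H[L(u), L(v)]$ is not a union of a $C_2$-component and a $C_4$-component. Since $H[L(u), L(v)]$ is the union of at most two perfect matchings of $K_{3,3}$ coming from the two parallel edges $uv$, the only alternatives are (A) the two matchings coincide (yielding three $C_2$ components) or (C) they are disjoint (yielding a single $C_6$). I first extract structural constraints on $\rho$: since $u, v$ induce $C_2$, we have $d(u), d(v) \geq 2$, and applying \Cref{lem:j+2} to $\{u, v\}$ gives $\rho(u) + \rho(v) \geq 5 + d(u) + d(v) \geq 9$; combined with \Cref{lem:weak-deg} and \Cref{lem:unique-weak}, this forces, without loss of generality, $\rho(v) = 6$ and $\rho(u) \in \{4, 6\}$.

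Case (A) is handled uniformly. Since the two matchings coincide, deleting one parallel edge $uv$ yields a proper subgraph $G'$ whose $(H, L)$-colorings coincide exactly with those of $G$; the $\frac 15$-distribution for $G'$ guaranteed by criticality then pulls back to one for $G$. In case (C) with $\rho(u) = \rho(v) = 6$, the three non-edges of $H[L(u), L(v)]$ form a perfect matching of $K_{3,3}$, so uniform weight $\frac{1}{3}$ on the three valid color pairs gives a fractional $(H, L)$-packing of $G[\{u, v\}]$. Taking $U = \{u, v\}$, we have $U \subseteq \Pi_6$, $|U'| \leq 2$, and $\rho_G(U) = 4$, so \Cref{lem:gen-diamond} yields a contradiction.

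The remaining case is (C) with $\rho(u) = 4$ and $\rho(v) = 6$ (the symmetric case is analogous), where $\rho_G(\{u, v\}) = 2$ forces $|E(\{u, v\}, \overline{\{u, v\}})| \leq 1$ by \Cref{lem:j+2}. If this quantity is $0$, then $G = \{u, v\}$ is an exceptional $C_2$, contradicting \Cref{obs:no-exception}; so it equals $1$. Let $\pi : L(u) \to L(v)$ be the bijection induced by the $C_6$ structure, so that $\phi(v) = \pi(\phi(u))$ whenever $\phi$ is an $(H, L)$-coloring. If $v$'s only neighbors are $u$ (so $d(v) = 2$), I delete $v$: the $\frac 15$-distribution $\Phi'$ on $G - v$ (guaranteed by criticality) extends via $\phi(v) := \pi(\phi(u))$, and the prescribed probabilities at $u \in \Pi_4$ give $\Pr(\phi(v) = c) \in \{\frac{2}{5}, \frac{3}{10}\} \geq \frac{1}{5}$ for every $c \in L(v)$, which suffices for $v \in \Pi_6$.

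The asymmetric subcase---$u$'s only neighbors are $v$ (so $d(u) = 2$), and $v$ has a unique additional neighbor $w$---is the main obstacle. Here I contract $\{u, v\}$ into a single vertex $x$ to form a smaller graph $G'$ with a modified cover $(H', L')$ in which $L'(x)$ has three colors indexed by the three valid pairs $(c_u, \pi(c_u))$, and the matching between $L'(x)$ and $L(w)$ is obtained from the $v$-$w$ matching of $H$ via $\pi$. Setting $\rho'(x) = 4$ with basepoint $\omega'(x)$ corresponding to $(\omega(u), \pi(\omega(u)))$, I compute $\rho'_{G'}(V(G')) = 4$, and the natural bijection between $(H', L')$-colorings of $G'$ and $(H, L)$-colorings of $G$ lets any $\frac 15$-distribution for $(G', \rho')$ pull back to one for $(G, \rho)$. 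If $(G', \rho')$ is not $\frac 15$-flexible, a $(\rho', \frac 15)$-critical subgraph $G''$ of $G'$ falls under one of the conclusions of \Cref{thethm} by minimality of $|V(G)| + |E(G)|$: the possibilities $G'' \in \mathcal I$ and $G''$ exceptional $C_2$ both force $x \notin V(G'')$ (as $\rho'(x) = 4$ and as $x$ has only a single-multiplicity edge in $G'$), so $G''$ is a proper subgraph of $G$ with $(G'', \rho)$ critical, contradicting criticality of $G$; and the case $\rho'_{G''}(V(G'')) \leq 0$ with $x \in V(G'')$ lifts, via $S := (V(G'') \setminus \{x\}) \cup \{u, v\}$, to $\rho_G(S) = \rho'_{G''}(V(G'')) - 2 \leq -2$, contradicting \Cref{lem:j+2}.
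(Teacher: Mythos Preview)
Your proof is correct, but you work much harder than necessary in the case $\rho(u)=4$, $\rho(v)=6$. Observe that in this case $\rho_G(\{u,v\}) = 4+6-8 = 2$, so $G[\{u,v\}]$ is by definition an \emph{exceptional $C_2$ subgraph} of $(G,\rho)$, and \Cref{obs:no-exception} gives an immediate contradiction. You do invoke \Cref{obs:no-exception}, but only in the sub-case where $V(G)=\{u,v\}$; you seem to have overlooked that the observation forbids exceptional $C_2$'s as \emph{subgraphs}, not just as the whole graph. Once $\rho(u)=4$ is ruled out this way, the entire contraction argument (your ``asymmetric subcase'') and the deletion-of-$v$ subcase become unnecessary.

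With that simplification, the paper's proof proceeds uniformly: after establishing $u,v\in\Pi_6$, both the three-$C_2$ structure and the $C_6$ structure admit fractional $(H,L)$-packings on $G[\{u,v\}]$, so \Cref{lem:gen-diamond} disposes of both at once. Your treatment of case~(A) via deleting one redundant parallel edge is a valid alternative (and does not require $u,v\in\Pi_6$), but since that case is vacuous once $\{u,v\}\subseteq\Pi_6$ is known and \Cref{lem:gen-diamond} is available, the paper's unified route is shorter.
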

\begin{proof}
Suppose that $u$ and $v$ are joined by two parallel edges. 
If $u \in \Pi_3$, then $\rho_G(\{u,v\}) \leq 1$.
Then, by \Cref{lem:j+2}, $\rho_G(\{u,v\}) = 1$, so that $v \in \Pi_6$.
Furthermore, by \Cref{lem:j+2}, $V(G) = \{u,v\}$.
Then, $\Pi_3 = \{v\}$, contradicting \Cref{lem:unique-weak}.
If $\{u,v\}\not\subseteq\Pi_6$, then $\rho_G(\{u,v\})\leq 0$, contradicting \Cref{lem:j+2} or $(G, \rho)$ contains an exceptional $C_2$ subgraph, contradicting \Cref{obs:no-exception}.
Therefore, without loss of generality, $u,v \in \Pi_6$.
As each edge joining $u$ and $v$ corresponds to a perfect matching in $H$, $E_H(L(u), L(v))$ consists of three $C_2$-components, a $C_2$-component and a $C_4$-component, or a $C_6$-component. 

If $H[L(u), L(v)]$ consists of three $C_2$-components or a $C_6$-component, then it is easy to check that the distribution that assigns an $(H,L)$-coloring to $G[\{u,v\}]$ uniformly at random gives a fractional $(H,L)$-packing.
As $\rho(v) = 6$ and $\rho_G(\{u,v\}) = \rho(u) - 2 \leq 4$,
 \Cref{lem:gen-diamond} is violated.
Therefore,
$H[L(u), L(v)]$ consists of a $C_2$-component and a $C_4$-component.
\end{proof}

\begin{lemma}
\label{lem:weak-parallel}
    For each $x \in \Pi_3$ 
    and neighbor $v \in N(x)$, $|E_G(x,v)| = 1$.
\end{lemma}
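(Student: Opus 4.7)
\medskip

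\noindent\textbf{Proof plan.} The plan is to suppose for contradiction that some $x \in \Pi_3$ and some neighbor $v$ satisfy $|E_G(x,v)| \geq 2$, and then derive a contradiction by a short case analysis on $\rho(v)$. By Lemma~\ref{lem:double-edges}, $|E_G(x,v)| \leq 2$, so in fact $|E_G(x,v)| = 2$ and $\{x,v\}$ induces a $C_2$. I would then compute $\rho_G(\{x,v\}) = \rho(x) + \rho(v) - 4 \cdot 2 = \rho(v) - 5$ and split on the value of $\rho(v)$.

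If $\rho(v) \leq 4$, then $\rho_G(\{x,v\}) \leq -1 < 1$, directly contradicting the gap bound of Lemma~\ref{lem:j+2}. If $\rho(v) = 6$, then $\rho_G(\{x,v\}) = 1$, and Lemma~\ref{lem:j+2} forces $1 \geq 1 + |E_G(\{x,v\}, \overline{\{x,v\}})|$, so $\{x,v\}$ has no edges to its complement. Since $G$ is connected (as $(G,\rho)$ is $(\rho,\tfrac15)$-critical), this yields $V(G) = \{x,v\}$. But then $\Pi_3 = \{x\}$, contradicting Lemma~\ref{lem:unique-weak}. Either way we reach a contradiction, completing the proof.

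There is no real obstacle here: the argument essentially reuses the first paragraph of the proof of Lemma~\ref{lem:C2C4}, combined with the already-established fact that $|\Pi_3| \neq 1$. The only thing to be careful about is to make sure all three subcases of $\rho(v)$ are covered and that the appropriate previously proven lemma is invoked in each.
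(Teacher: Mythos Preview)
Your proposal is correct and follows essentially the same approach as the paper's proof: bound $|E_G(x,v)|$ via Lemma~\ref{lem:double-edges}, compute $\rho_G(\{x,v\})$, apply Lemma~\ref{lem:j+2} to force $\rho(v)=6$ and $V(G)=\{x,v\}$, and then invoke Lemma~\ref{lem:unique-weak}. The only cosmetic difference is that the paper handles all values of $\rho(v)$ in one line by writing $\rho_G(\{x,v\}) \leq 3+6-8=1$ and noting the inequality must be tight, whereas you split into the cases $\rho(v)\le 4$ and $\rho(v)=6$ explicitly.
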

\begin{proof}
Let $x \in \Pi_3$, and let $v \in N(x)$. 
By \Cref{lem:double-edges}, $|E_G(x,v)| \leq 2$. Suppose that $|E_G(x,v)| = 2$. Then, $\rho_{G}(\{x,v\}) \leq 3 + 6 - 2(4) = 1$. 
By \Cref{lem:j+2}, this inequality is tight, so $\rho(v) = 6$.
Furthermore, $V(G) = \{x,v\}$.
Then, $|\Pi_3| = 1$, violating \Cref{lem:unique-weak}.
\end{proof}

\begin{lemma}
    \label{lem:cut-4}
    Let $v \in \Pi_{\geq 4}$ be a cut-vertex of $G$. Let $G_1$ and $G_2$ be connected induced subgraphs of $G$ for which $G_1 \cup G_2 = G$ and $V(G_1) \cap V(G_2) = \{v\}$.
    Let $\rho'$ be a potential function on $G$ obtained from $\rho$ by updating $\rho'(v) = 4$.
    Furthermore, let $L'$ be obtained from $L$ by possibly updating the basepoint of $v$ if $\rho(v) = 6$.
    If $V(G_2) \subseteq \Pi_{\geq 4}$, 
    then either
    \begin{enumerate}
        \item $(G_1,\rho',H,L',\mathcal D)$ has no $\frac 15$-distribution, or
        \item $(G_2,\rho',H,L',\mathcal D)$ has no $\frac 15$-distribution.
    \end{enumerate}
\end{lemma}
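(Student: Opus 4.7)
The plan is to prove the contrapositive: assume both $(G_1,\rho',H,L',\mathcal D)$ and $(G_2,\rho',H,L',\mathcal D)$ admit $\frac{1}{5}$-distributions $\Phi_1$ and $\Phi_2$, and derive a contradiction by constructing a $\frac{1}{5}$-distribution $\Phi$ for $(G,\rho,H,L,\mathcal D)$, which by hypothesis does not exist. The driving observation is that, because $\rho'(v)=4$ in both pieces, the distributions $\Phi_1$ and $\Phi_2$ produce exactly the same marginal at $v$: namely $\Pr(\phi_i(v)=\omega'(v))=\frac{2}{5}$ and $\Pr(\phi_i(v)=c)=\frac{3}{10}$ for every $c\in L(v)\setminus\{\omega'(v)\}$. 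This matching of marginals is exactly what is needed to couple the two distributions through $v$.

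To construct $\Phi$, sample an outcome $L_{\mathcal D}$ of $\mathcal D$, and let $L_{\mathcal D}^{(i)}$ denote its restriction to $V(G_i)$; note that since $V(G_2)\subseteq\Pi_{\geq 4}$, the distribution $\mathcal D$ forbids no colors in $V(G_2)$, so $L_{\mathcal D}^{(2)}=L$ there. First sample $\phi_1\sim \Phi_1(L_{\mathcal D}^{(1)})$, record $c=\phi_1(v)$, and then sample $\phi_2\sim \Phi_2(L_{\mathcal D}^{(2)})$ conditioned on $\phi_2(v)=c$; this conditional distribution is well defined since $\Pr(\phi_2(v)=c)\in\{\tfrac{2}{5},\tfrac{3}{10}\}>0$. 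Define $\phi$ to agree with $\phi_1$ on $V(G_1)$ and with $\phi_2$ on $V(G_2)$. The two colorings agree at $v$ by construction, and every edge of $G$ lies in $E(G_1)\cup E(G_2)$, so $\phi$ is an $(H,L_{\mathcal D})$-coloring of $G$.

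Because the conditioning weight placed on each $c$ equals $\Pr(\phi_1(v)=c)=\Pr(\phi_2(v)=c)$, integrating out the conditioning shows the marginal of $\phi$ on $V(G_2)$ is exactly that of $\Phi_2$; the marginal on $V(G_1)$ is tautologically that of $\Phi_1$. Hence for each $u\neq v$ and $c\in L(u)$, $\Pr(\phi(u)=c)$ inherits the $\frac{1}{5}$-distribution conditions from $\Phi_1$ or $\Phi_2$, using that $\rho'(u)=\rho(u)$. At $v$, the marginal is $\frac{2}{5}$ at $\omega'(v)$ and $\frac{3}{10}$ elsewhere. If $\rho(v)=4$, then $\omega'(v)=\omega(v)$ and this is exactly the required marginal for $\rho(v)=4$. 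If $\rho(v)=6$, only $\Pr(\phi(v)=c)\geq \frac{1}{5}$ is required, which holds since $\tfrac{3}{10}\geq \tfrac{1}{5}$, and the choice of basepoint in $L$ is irrelevant. Therefore $\Phi$ is a $\frac{1}{5}$-distribution for $(G,\rho,H,L,\mathcal D)$, contradicting the choice of $(G,\rho,H,L,\mathcal D)$.

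The only technical obstacle is the bookkeeping around the possibly re-chosen basepoint $\omega'(v)$ when $\rho(v)=6$; this is resolved by the above remark that the original $\frac{1}{5}$-condition at a $\Pi_6$-vertex imposes no basepoint-sensitive constraint. No substantial combinatorial difficulty arises, since $V(G_2)\subseteq\Pi_{\geq 4}$ guarantees that $\mathcal D$ acts only on the $V(G_1)$-side, making the coupling purely a matter of matching marginals at the single shared vertex $v$.
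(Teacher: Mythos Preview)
Your proof is correct and follows essentially the same approach as the paper's. Both argue the contrapositive by coupling the two $\frac15$-distributions through the common vertex $v$: sample $\phi_1$ from $\Phi_1$, then sample $\phi_2$ from $\Phi_2$ conditioned on agreement at $v$, and observe that the marginals at $v$ match (being $\tfrac25$ at $\omega'(v)$ and $\tfrac3{10}$ elsewhere since $\rho'(v)=4$), so the marginal of the combined coloring on $V(G_2)$ is exactly that of $\Phi_2$. The paper writes the conditioning explicitly as rescaling by $\tfrac52$ or $\tfrac{10}{3}$, while you phrase it abstractly as conditioning, but the construction and verification are the same.
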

\begin{proof}
Suppose that $(G_1,\rho',H,L',\mathcal D)$ has a $\frac 15$-distribution $\Phi_1$ and that 
$(G_2,\rho',H,L',\mathcal D)$ has
a $\frac 15$-distribution $\Phi_2$.

We construct a distribution $\Phi$ on $(H,L)$-colorings $\phi$ of $G$ as follows. 
        First, we fix an outcome $L_{\mathcal D}$.
        We sample an $(H,L_{\mathcal D})$-coloring $\phi_1$ of $G_1$ using $\Phi_1$, and we let $\phi$ agree with $\phi_1$ on $V(G_1)$. 
        Next, for each $(H,L)$-coloring $\phi_2$ of $G_2$, let $\Pr(\phi_2)$ be the probability that an $(H,L)$-coloring of $G_2$ randomly sampled (independently of $\mathcal D$) using $\Phi_2$ equals $\phi_2$. 
        Then, if $\phi_1(v) = \omega(v)$, we choose each $(H,L)$-coloring $\phi_2$ of 
        $G_2$ with conditional probability $\frac 52\Pr(\phi_2)$ if $\phi_2(v) = \omega(v)$ and with conditional probability $0$ otherwise. If $\phi_1(v) = i_v$ for $i_v \neq \omega(v)$,
        we choose each $(H,L)$-coloring $\phi_2$ of $G_2$ with conditional probability $\frac{10}{3} \Pr(\phi_2)$ if $\phi_2(v) = i_v$ and with conditional probability $0$ otherwise. 
        Since a coloring $\phi_2$ randomly sampled from $\Phi_2$ satisfies $\Pr(\phi_2(v) = \omega(v)) = \frac 25$ and $\Pr(\phi_2(v) = i_v) = \frac 3{10}$ for each $i_v \neq \omega(v)$, these conditional distributions are valid.
    Furthermore, we see that each coloring $\phi_2$ is used to define $\phi$ on $G_2$ precisely with probability $\Pr(\phi_2)$.
    Then, we let $\phi$ agree with $\phi_2$ on 
    $G_2$.
    Since $L_{\mathcal D}$ never forbids a color at a vertex of $G_2$,
    this process gives us a distribution $\Phi$ on $(H,L_{\mathcal D})$-colorings of $G$.

    We claim that $\Phi$ is a $\frac 15$-distribution for $(G,\rho,H,L,\mathcal D)$.
    Indeed, consider a vertex $w \in V(G)$.
    If $w \in V(G_1)$,
    then for each $c \in L(w)$, $\Pr(\phi(w) = c) = \Pr(\phi_1(w) = c)$.
    If $w \in V(G_2)$, then for each $c \in L(w)$, 
$\Pr(\phi(w) = c) = \Pr(\phi_2(w) = c)$.
Furthermore, if $\rho(v) = 4$, then as $L(v)$ and $L'(v)$ assign the same basepoint to $v$, $\Pr(\phi(v) = i_v) = \Pr(\phi_1(v) = i_v) = \Pr(\phi_2(v) = i_v) = \frac 3{10}$ for each $i_v \in L(v) \setminus \{\omega(v)\}$.
Therefore, the fact that $\Phi$ is a $\frac 15$-distribution for $(G,\rho,H,L,\mathcal D)$ follows from the fact that $\Phi_1$ and $\Phi_2$ are $\frac 15$-distributions on their respective tuples.
This contradicts the choice of $(G,\rho,H,L,\mathcal D)$ and completes the proof.   
\end{proof}

\begin{lemma}
\label{lem:not-butterfly}
    $G$ is not isomorphic to the graph obtained from $K_{1,2}$ by giving each edge multiplicity $2$.
\end{lemma}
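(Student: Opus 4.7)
Suppose for contradiction that $G$ is the multigraph in the statement. The plan is to exhibit an explicit $\tfrac 15$-distribution for $(G,\rho,H,L,\mathcal D)$. Write $V(G) = \{u,v_1,v_2\}$ with $u$ the center, so $d(u) = 4$ and $d(v_1) = d(v_2) = 2$. By \Cref{lem:weak-deg}, $\rho(u) = 6$. Applying \Cref{lem:j+2} to $S = \{u,v_i\}$ gives $\rho_G(S) \geq 1 + |E(S,\overline S)| = 3$, and since $\rho_G(S) = \rho(u) + \rho(v_i) - 8 = \rho(v_i) - 2$, we must have $\rho(v_i) = 6$. Thus $V(G) \subseteq \Pi_6$, so $h_\rho \equiv 3$ and $\mathcal D$ forbids no color; it suffices to produce a distribution on $(H,L)$-colorings $\phi$ of $G$ in which each color appears at each vertex with probability at least $\tfrac 15$.

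Next I read off the structure of the cover using \Cref{lem:C2C4}: for each $i \in \{1,2\}$, the bipartite graph $H[L(u),L(v_i)]$ is the union of a $C_2$-component and a $C_4$-component. Let $s_i \in L(u)$ and $t_i \in L(v_i)$ be the two endpoints of the $C_2$-component. Inspecting this structure shows that the valid extensions to the edge $uv_i$ are exactly the following: if $\phi(u) = s_i$ then $\phi(v_i)$ may be either of the two colors in $L(v_i) \setminus \{t_i\}$, while if $\phi(u) \neq s_i$ then $\phi(v_i) = t_i$ is forced.

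I then construct a random coloring of $G$ by choosing $\phi(u)$ according to some probability vector $(p_1,p_2,p_3)$ on $L(u)$ and, independently for each $i \in \{1,2\}$, choosing $\phi(v_i)$ uniformly among the valid extensions given $\phi(u)$. A direct computation yields $\Pr(\phi(v_i) = t_i) = 1 - p_{s_i}$ and $\Pr(\phi(v_i) = c) = \tfrac{1}{2} p_{s_i}$ for each $c \in L(v_i) \setminus \{t_i\}$. Requiring every vertex-color probability to be at least $\tfrac 15$ is therefore equivalent to requiring $p_{s_i} \in [\tfrac 25, \tfrac 45]$ for each $i$ together with $p_j \geq \tfrac 15$ for each $j \in \{1,2,3\}$. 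If $s_1 = s_2 = s$, take $p_s = \tfrac 25$ and the other two probabilities equal to $\tfrac{3}{10}$; if $s_1 \neq s_2$, take $p_{s_1} = p_{s_2} = \tfrac 25$ and the remaining probability equal to $\tfrac 15$. Both choices satisfy all of the required inequalities, producing a $\tfrac 15$-distribution for $(G,\rho,H,L,\mathcal D)$ and contradicting the choice of $(G,\rho,H,L,\mathcal D)$.

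The main (and essentially only) subtle point is the case $s_1 \neq s_2$: the three constraints $p_{s_1} \geq \tfrac 25$, $p_{s_2} \geq \tfrac 25$, and $p_j \geq \tfrac 15$ for the third color sum exactly to $1$, so the probability vector is uniquely determined and leaves no slack. This tight use of $\tfrac 15$ is consistent with the tightness examples in the paper and reflects the fact that the constant in \Cref{thethm} cannot be improved.
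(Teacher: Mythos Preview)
Your proof is correct and follows essentially the same approach as the paper's: both establish $V(G)\subseteq\Pi_6$, invoke \Cref{lem:C2C4} to pin down the cover structure, and split into the two cases $s_1=s_2$ versus $s_1\neq s_2$ (the paper's Cases~1 and~2), producing in each case a distribution that hits every color with probability at least $\tfrac15$. Your parametric description via the probability vector $(p_{s_1},p_{s_2},\dots)$ with uniform random extension is a clean repackaging of the paper's explicit lists of four and five colorings respectively; in fact, in Case~2 your choice $p_{s_1}=p_{s_2}=\tfrac25$, $p_{\text{third}}=\tfrac15$ yields exactly the same distribution as the paper's uniform choice over its five colorings.
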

\begin{proof}
    Suppose that $V(G) = \{v,w,x\}$ and that $|E_G(v,w)| = |E_G(w,x)| = 2$.
    If $V(G) \neq \Pi_6$, then $\rho_G(V(G)) \leq 0$, contradicting \Cref{lem:j+2}; therefore, $V(G) = \Pi_6$.
    Without loss of generality, $H$ contains the edges $i_v i_w, i_w i_x$ for each $i \in \{1,2,3\}$.
    By \Cref{lem:C2C4}, without loss of generality, $H$ contains the edges $1_v 2_w, 2_v 1_w$, and $H[L(w),L(x)]$ consists of a $C_2$ and a $C_4$. We consider two cases.
    \begin{enumerate}
        \item First, suppose 
        that $H$ contains the edges $1_w 2_x, 2_w 1_x$.
        We define $\phi_1 = (1_w, 3_v, 3_x)$, $\phi_2 = (2_w, 3_v, 3_x)$, $\phi_3 = (3_w, 1_v, 1_x)$, $\phi_4 = (3_w, 2_v, 2_x)$.
        We choose $j \in \{1,\dots, 4\}$ uniformly at random and give $G$ the $(H,L)$-coloring $\phi_j$.
        It is easy to see that each $c \in V(H)$ is assigned with probability at least $\frac 14$, so this gives us a $\frac 15$-distribution for $(G,\rho,H,L,\mathcal D)$, a contradiction.
        \item Otherwise, suppose
        without loss of generality
        that $H$ contains the edges $2_w 3_x, 3_w 2_x$.
        We define $\phi_1 = (1_w, 3_v, 2_x)$, $ \phi_2 =  (1_w, 3_v, 3_x)$, $\phi_3 = (2_w, 3_v, 1_x)$, $\phi_4 =  (3_w, 1_v, 1_x)$, $\phi_5 =  (3_w, 2_v, 1_x)$.
        We choose $j \in \{1,\dots,5\}$ uniformly at random and give $G$ the $(H,L)$-coloring $\phi_j$. It is easy to see that each $c \in V(H)$ is assigned with probability at least $\frac 15$, so this gives us a $\frac 15$-distribution for $(G,\rho,H,L,\mathcal D)$, a contradiction.
        
    \end{enumerate}
\end{proof}

\begin{lemma}
\label{lem:butterfly}
    Let $v$ be a $2$-vertex. If $v$ is joined to a neighbor $w$ by parallel edges,
    then $|E_G(v,w)| = 2$, $v,w \in \Pi_6$,
    $d(w) = 4$,
    and $w$ has a neighbor $x \in \Pi_6 \setminus \{v\}$ for which $|E_G(w,x)| = 2$.
\end{lemma}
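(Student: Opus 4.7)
My plan is to establish the four conclusions in the order $|E_G(v,w)|=2$, then $v,w\in\Pi_6$, then the existence of $x$, and finally $d(w)=4$.

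For the first two, \Cref{lem:double-edges} gives $|E_G(v,w)|\leq 2$, so the hypothesis forces $|E_G(v,w)|=2$. By \Cref{lem:weak-parallel}, neither $v$ nor $w$ lies in $\Pi_3$. If $v,w\in\Pi_4$, then $\rho_G(\{v,w\})=0$, violating \Cref{lem:j+2}. If exactly one of $v,w$ is in $\Pi_4$ and the other is in $\Pi_6$, then $\rho_G(\{v,w\})=2$, so $G[\{v,w\}]$ is an exceptional $C_2$ subgraph, contradicting \Cref{obs:no-exception}. Hence $v,w\in\Pi_6$.

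To produce $x$, I will set up the reduction $G'=G-v$. By \Cref{lem:C2C4}, $H[L(v),L(w)]$ consists of a $C_2$ component and a $C_4$ component; after relabeling, take $\{1_v,1_w\}$ to be the $C_2$. Define $\rho'$ to agree with $\rho$ except that $\rho'(w)=4$, with basepoint $\omega'(w)=1_w$. If $(G',\rho')$ admitted a $\tfrac15$-distribution $\Phi'$, I would extend it by setting $\phi(v)\in\{2_v,3_v\}$ uniformly when $\phi'(w)=1_w$ and $\phi(v)=1_v$ otherwise; since the $\Pi_4$-basepoint condition gives $\Pr(\phi'(w)=1_w)=\tfrac25$, this would yield a $\tfrac15$-distribution for $(G,\rho)$, contradicting our assumption. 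Hence $(G',\rho')$ is not $\tfrac15$-flexible and by \Cref{obs:minimal} has a $(\rho',\tfrac15)$-critical subgraph $G''$. The minimality of $G$ forces $G''$ into one of the three conclusions of \Cref{thethm}. The subcases in which $w\notin V(G'')$ either put an exceptional subgraph into $G$ (contradicting \Cref{obs:no-exception}) or violate \Cref{lem:j+2}. If $w\in V(G'')$ and $\rho'_{G''}(V(G''))\leq 0$, then $\rho_G(V(G''))\leq 2$, yet \Cref{lem:j+2} forces $\rho_G(V(G''))\geq 1+|E(w,v)|=3$, a contradiction. Thus the only surviving case is $V(G'')=\{w,x\}$ with $|E(w,x)|=2$, and solving $\rho'(w)+\rho(x)-8=2$ gives $\rho(x)=6$, producing the required $x\in\Pi_6\setminus\{v\}$.

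For $d(w)=4$, \Cref{lem:j+2} applied to $U=\{v,w,x\}$ gives $\rho_G(U)=2$ and $|E(U,\overline U)|=(d(w)-4)+(d(x)-2)\leq 1$, leaving only $(d(w),d(x))\in\{(4,2),(4,3),(5,2)\}$. Case $(4,2)$ forces $V(G)$ to be the multigraph excluded by \Cref{lem:not-butterfly}. Case $(4,3)$ is the desired conclusion. The main obstacle is case $(5,2)$, in which $w$ has a third neighbour $y$ joined by a single edge and $x$ is itself a $2$-vertex pendant-parallel to $w$. Running the previous reduction once with $v$ and once with $x$, and invoking the forcing of an exceptional-$C_2$ cover structure coming out of \Cref{obs:C2}, forces the $C_2$-colour $a_w$ of $H[L(w),L(x)]$ at $w$ to differ from $1_w$. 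I would then consider $G^{\star}=G-\{v,x\}$: the same minimality arguments show that $(G^{\star},\rho|_{V(G^{\star})})$ is $\tfrac15$-flexible, as is the variant in which $w$ is downgraded to $\Pi_3$ and the random list $\mathcal D^{\star}$ forbids the third colour of $L(w)$ with probability $\tfrac35$ and forbids each of $\{1_w,a_w\}$ with probability $\tfrac15$. Producing a $\tfrac15$-distribution on this variant whose $w$-marginal satisfies $\Pr(\phi(w)=1_w)=\Pr(\phi(w)=a_w)=\tfrac25$ would allow simultaneous extensions at $v$ and at $x$ meeting the $\tfrac15$ threshold, contradicting the non-flexibility of $(G,\rho,H,L,\mathcal D)$. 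Showing that this specific marginal at $w$ is actually realisable within a $\tfrac15$-distribution on $G^{\star}$ is the technical heart of case $(5,2)$ and is what I expect to be the hardest part of the proof.
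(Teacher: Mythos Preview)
Your treatment of the first three conclusions is correct and matches the paper: the potential computations, the reduction $G'=G-v$ with $\rho'(w)=4$ and basepoint at the $C_2$-colour of $H[L(v),L(w)]$, the extension argument, and the case analysis forcing the critical subgraph to be an exceptional $C_2$ on $\{w,x\}$ are all as in the paper. Your reduction of the fourth conclusion to ruling out $(d(w),d(x))=(5,2)$ via \Cref{lem:j+2} and \Cref{lem:not-butterfly} is also fine.

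The gaps are both in case $(5,2)$. First, your argument that $a_w\neq 1_w$ does not hold up. The $(\rho',\tfrac15)$-critical $C_2$ on $\{w,x\}$ produced by minimality is critical over \emph{all} covers and basepoints; its existence says nothing about whether the \emph{specific} restriction $(H|_{\{w,x\}},L')$ with basepoint $1_w$ admits a $\tfrac15$-distribution, so you cannot infer a relation between $1_w$ and the $C_2$-colour of $H[L(w),L(x)]$ from it. The remark after \Cref{obs:C2} only pins down the $C_2+C_4$ shape, not which colour sits in the $C_2$. Second, even granting $a_w\neq 1_w$, your plan needs a $\tfrac15$-distribution on $G^\star=G-\{v,x\}$ whose $w$-marginal hits the exact values $\Pr(\phi(w)=1_w)=\Pr(\phi(w)=a_w)=\tfrac25$. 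Minimality only supplies \emph{some} $\tfrac15$-distribution; it gives no handle on the marginal, and you yourself flag this step as unproven.

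The paper does not try to force $a_w\neq 1_w$. Instead it splits into the two subcases directly. When $a_w=1_w$ (the two $C_2$-colours at $w$ coincide), it deletes $x$ alone, takes a $\tfrac15$-distribution on $G-x$ by criticality, and sets $\phi(x)=i_x$ whenever $\phi(v)=i_v$; this is legal because $N_H(i_x)\cap L(w)=N_H(i_v)\cap L(w)$, and it immediately gives $\Pr(\phi(x)=i_x)\geq\tfrac15$. When $a_w\neq 1_w$, the paper deletes all of $\{v,w,x\}$ (so $w$ disappears and there is no marginal to control), takes a $\tfrac15$-distribution on the remainder, and extends to $(w,v,x)$ via an explicit $5\times3$ stochastic matrix indexed by the five valid colourings of $(w,v,x)$ and by $\phi(y)$; one checks that each $\phi_j$ is used with probability exactly $\tfrac15$. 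Both subcases are short and fully constructive, avoiding the marginal-realisability obstacle in your approach.
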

\begin{proof}
    By~\Cref{lem:weak-parallel}, $\rho(v) \geq 4$, and by~\Cref{lem:double-edges}, $|E_G(v,w) | = 2$.
    Also, by \Cref{lem:weak-parallel},
    $\rho(w) \geq 4$. 
    If $\{v,w\}\not\subseteq\Pi_6$, then either $(G, \rho)$ has an exceptional $C_2$ subgraph  or $\rho_G(\{v, w\})= 0$.
    In both cases, we have a contradiction.
    Therefore, $v,w\in\Pi_6$.

    By \Cref{lem:C2C4}, we may assume $i_vi_w$ is an edge of $H$ for $i\in\{1,2,3\}$ and that 
    $1_v2_w,  2_v1_w \in E(H)$.
    We let $G' = G-v$.
    We define $\rho':V(G') \rightarrow \{3,4,6\}$ so that $\rho'(w) = 4$ and $\rho'(u) = \rho(u)$ for all $u \in V(G') \setminus \{w\}$.
    We also obtain $L'$ from $L$ by restricting to $V(G')$ and updating the basepoint of $w$ to $3_w$.
    
    Suppose that $(G',\rho',H,L',\mathcal D)$
    has a $\frac 15$-distribution $\Phi'$.
    We obtain a $\frac 15$-distribution $\Phi$ for $(G,\rho, H,L,\mathcal D)$ as follows.
    We randomly sample $\phi'$ from $\Phi'$,
    and we define $\phi$ so that $\phi(u) = \phi'(u)$ for each $u \in V(G')$.
    Then, we choose a non-neighbor $c$ of $\phi'(w)$
    from $L(v)$ uniformly at random, and we assign $\phi(v) = c$.
    We claim that $\Phi$ is a $\frac 15$-distribution for $(G,\rho,H,L,\mathcal D)$.
    It is easy to check that the relevant probabilities hold for each vertex in $ V(G')$.
    Furthermore, $\Pr(\phi(v) = 1_v) = \Pr(\phi(v) = 2_v) = \frac 12 \Pr(\phi'(w) = 3_w) = \frac 15$. Additionally, 
    $\Pr(\phi(v) = 3_v) = 1 - \Pr(\phi'(w) = 3_w) = \frac 35$. Thus, $\Phi$ is 
    a $\frac 15$-distribution for $(G,\rho,H,L,\mathcal D)$,
     a contradiction. Therefore,  $(G',\rho',H,L',\mathcal D)$ has no $\frac 15$-distribution.

     Therefore $G'$ has a vertex subset $U$
     for which  $G'[U]$ has a $(\rho',\frac 15)$-critical spanning subgraph.
     As $G$ is $(\rho,\frac 15)$-critical, it follows that $\rho'$ does not agree with $\rho$ on $U$; therefore, $U$ contains $w$.
    Now, suppose that 
    $\rho'_{G'}(U) \leq 0$.
    Then,
    as $\rho(w) = \rho'(w) + 2$, we have
     $\rho_{G}(U \cup \{v\}) = \rho'_{G'}(U) + 2 - 2(4) + 6 \leq 0$, contradicting \Cref{lem:j+2}.
    Therefore, $\rho'_{G'}(U) \geq 1$, so that $(G'[U],\rho')$ has an
    exceptional subgraph.
    As $\rho'(w) = 4$ and $w \in U$,
    \Cref{obs:Ilkyoo}
    implies that $G'[U]$ is a $C_2$ for which $\rho'_{G'}(U) = 2$.
    Therefore, $w$ is joined to a neighbor $x$ by parallel edges.
    As $\rho_G(\{v,w,x\}) \geq 1$ by \Cref{lem:j+2}, $x \in \Pi_6$.

    Finally, we show that $d(w) = 4$. By \Cref{lem:weak-deg},  $d(w) \leq 5$.
    Suppose that $d(w) = 5$, 
    and write $y$ for the unique neighbor of $w$ in $V(G) \setminus \{w,x\}$.
    Then, as $\rho_G(\{v,w,x\})=2$,
    \Cref{lem:j+2}
    implies that
    $d(x) = 2$.
    Without loss of generality, $H$ contains the edges $i_w i_y,
    i_w i_x$ for $i \in \{1,2,3\}$. We consider two cases.
    \begin{enumerate}
        \item First, suppose that $H$ contains the edges $1_w 2_x, 2_w 1_x$. Let $G'' = G-x$.
        As $G$ is $(\rho,\frac 15)$-critical, $(G'',\rho,H,L,\mathcal D)$ has a $\frac 15$-distribution $\Phi''$. We construct a distribution $\Phi$ for $(G,\rho,H,L,\mathcal D)$ as follows. For each outcome $L_{\mathcal D}$, we sample a random $(H,L_{\mathcal D})$-coloring $\phi''$ of $G''$.
        We let $\phi(z) = \phi''(z)$ for each $z \in V(G'')$. Then, we let $\phi(x) = i_x$ if and only if $\phi''(v) = i_v$. As $N_H(i_x) = N_H(i_v)$ for each $i \in \{1,2,3\}$,
        $\phi$ is an $(H,L_{\mathcal D})$-coloring of $G$.
        Furthermore, as $\Phi''$ is a $\frac 15$-distribution, and as $\Pr(\phi(x) = i_x) = \Pr(\phi''(v) = i_v) \geq \frac 15$ for each $i \in \{1,2,3\}$, the random $(H,L_{\mathcal D})$-coloring $\phi$ gives a $\frac 15$-distribution for $(G,\rho,H,L,\mathcal D)$.
        \item Otherwise, suppose without loss of generality that $H$ contains the edges $2_w 3_x$ and $3_w 2_x$.
        Let $G'' = G - \{v,w,x\}$. As $G$ is $(\rho,\frac 15)$-critical, 
         $(G'',\rho,H,L,\mathcal D)$ has a $\frac 15$-distribution $\Phi''$. We construct a distribution $\Phi$ for $(G,\rho,H,L,\mathcal D)$ as follows. For each outcome $L_{\mathcal D}$, we sample a random $(H,L_{\mathcal D})$-coloring $\phi''$ of $G''$.
         We let $\phi(z) = \phi''(z)$ for each $z \in V(G'')$.
         We also define $\phi_1 = (1_w, 3_v, 2_x)$, $ \phi_2 =  (1_w, 3_v, 3_x)$, $\phi_3 = (2_w, 3_v, 1_x)$, $\phi_4 =  (3_w, 1_v, 1_x)$, $\phi_5 =  (3_w, 2_v, 1_x)$.

         For each $i \in \{1,2,3\}$, we write $p_i = \Pr(\phi''( y ) = i_{y})$.
         We also define $q_i = \frac 52 p_i - \frac 12$, and we observe $q_i \in [0,1]$ and $q_1 + q_2 + q_3 = 1$.
         Whenever $\phi''(  y ) = i_y $, we let $\phi$ agree with $\phi_j$ on $\{w,v,x\}$ with conditional probability equal to entry $A_{ji}$ in the following matrix:
         \[
            A = \frac 1{10} \begin{bmatrix}
            0 & \frac{q_1 + q_2}{p_2} & \frac{1 + q_3}{p_3} \\
            0 & \frac{q_1 + q_2}{p_2} & \frac{1 + q_3}{p_3} \\
            \frac{2q_1 + 2q_2}{p_1}  & 0 & \frac{2q_3}{p_3} \\
            \frac{2q_1 + q_3}{p_1} & \frac{2q_2 + q_3}{p_2} & 0 \\
            \frac{2q_1 + q_3}{p_1} & \frac{2q_2 + q_3}{p_2} & 0
            \end{bmatrix}.
         \]
         As each column of $A$ sums to $1$ and has nonnegative entries, the columns of $A$ give valid conditional probability distributions.
    Letting $\mb p = [p_1, p_2, p_3]^T$, we see that each coloring $\phi_j$ is used to define $\phi$ on $\{w,v,x\}$ with probability equal to entry $j$ of $A \mb p$, which is $1/5$.
    Therefore, for each $u \in \{w,v,x\}$ and $i \in\{1,2,3\}$, $\Pr(\phi(u) = i_u) \geq \frac 15$.
    As $\Phi''$ is a $\frac 15$-distribution, it follows that the random coloring $\phi$ gives a $\frac 15$-distribution for $(G,\rho,H,L,\mathcal D)$, a contradiction. 
    \end{enumerate}
    As we reach a contradiction in both cases, it follows that $d(w) = 4$.
\end{proof}

\begin{lemma}
\label{lem:3-parallel-method}
    Let $u,v \in \Pi_6$ be $3$-vertices joined by parallel edges. Suppose that the following hold:
    \begin{enumerate}
        \item $u'$ and $v'$ are the respective neighbors of $u$ and $v$ in $V(G) \setminus \{u,v\}$, $u'\neq v'$.
        \item $H$ contains the edges $i_v i_{v'}, i_u i_{u'}, i_u i_v$ for $i \in \{1,2,3\}$, as well as  $1_u 2_v, 2_u 1_v$.
    \end{enumerate}
    Let $G'$ be obtained from $G$ by deleting $u,v$ and adding an edge $u'v'$.
    Let $H'$ be obtained from $H$ by deleting $L(u) \cup L(v)$
    and adding edges $i_{u'} i_{v'}$ for $i \in \{1,2,3\}$.
    Then, $(G',\rho,H',L,\mathcal D)$ has no $\frac 15$-distribution.
\end{lemma}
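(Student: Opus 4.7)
I will argue by contradiction: suppose $(G',\rho,H',L,\mathcal D)$ admits a $\frac 15$-distribution $\Phi'$, and I aim to construct a $\frac 15$-distribution $\Phi$ for $(G,\rho,H,L,\mathcal D)$, contradicting the choice of $(G,\rho,H,L,\mathcal D)$. My plan is to sample $\phi'$ from $\Phi'$, copy it onto $V(G)\setminus\{u,v\}$, and then sample a coloring of $\{u,v\}$ conditional on the pair $(\phi'(u'),\phi'(v'))$. Inspecting the edges of $H$ between $L(u)$ and $L(v)$, exactly four $(H,L)$-colorings of $G[\{u,v\}]$ exist:
\[
C_1=(1_u,3_v),\quad C_2=(2_u,3_v),\quad C_3=(3_u,1_v),\quad C_4=(3_u,2_v),
\]
and for every pair $(i,j)$ with $i\ne j$, exactly two of these colorings are consistent with the constraints $\phi(u)\ne i_u$ and $\phi(v)\ne j_v$ imposed by the $u$-$u'$ and $v$-$v'$ matchings in $H$; I would tabulate which ones.

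Let $q_{ij}=\Pr((\phi'(u'),\phi'(v'))=(i_{u'},j_{v'}))$. The edges $i_{u'}i_{v'}\in E(H')$ force $q_{ii}=0$, and because $\Phi'$ is a $\frac 15$-distribution, the marginals $p_i^{u'}$ and $p_j^{v'}$ all lie in $[\frac 15,\frac 35]$. Since $u,v\in\Pi_6$, verifying the marginal bounds at $u$ and $v$ reduces to showing $\Pr(C_k)\geq \frac 15$ for each $k\in\{1,2,3,4\}$. I would parameterise the conditional probabilities of choosing each compatible $C_k$ at $(i,j)$ by six variables $\alpha_{12},\alpha_{21},\beta_{31},\beta_{32},\gamma_{13},\gamma_{23}\in[0,1]$, and observe that the four inequalities split into two pairs controlling $\{C_1,C_2\}$ and $\{C_3,C_4\}$, each solvable for its own $\beta$- or $\gamma$-slack once $\alpha_{12}q_{12}+\alpha_{21}q_{21}$ lies in a common interval. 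Using the identity $q_{12}+q_{21}=1-p_3^{u'}-p_3^{v'}$ and the marginal bounds $p_3^{u'},p_3^{v'}\leq \frac 35$, I expect this interval to be nonempty and to meet the attainable range $[0,q_{12}+q_{21}]$, which will yield feasible $\alpha,\beta,\gamma$.

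With the conditional probabilities chosen, $\Phi$ is defined, and verifying that it is a $\frac 15$-distribution is routine: at $u$ and $v$ the marginals come from $\Pr(C_k)\geq\frac 15$, and at every $w\notin\{u,v\}$ the required bounds---including the sharp equalities $\Pr(\phi(w)=\omega(w))=\frac 25$ and $\Pr(\phi(w)=c)=\frac 3{10}$ for $c\ne\omega(w)$ when $w\in\Pi_4$---are inherited from $\Phi'$ because $\phi$ agrees with $\phi'$ on $V(G)\setminus\{u,v\}$. The main technical obstacle is the feasibility step in the previous paragraph, but it reduces to an elementary check on four linear inequalities using only the ranges $[\frac 15,\frac 35]$ of the marginals of $(\phi'(u'),\phi'(v'))$.
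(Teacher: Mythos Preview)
Your plan is correct and follows the same overall strategy as the paper: assume a $\tfrac15$-distribution $\Phi'$ on $G'$, copy it to $V(G)\setminus\{u,v\}$, and then choose one of the four admissible pairs $C_1,\dots,C_4$ on $\{u,v\}$ by a conditional rule depending on $(\phi'(u'),\phi'(v'))$ so that each $C_k$ is used with probability at least $\tfrac15$.

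Where you diverge from the paper is in the feasibility step. The paper fixes two symmetry-breaking assumptions ($p_{31}+p_{32}\ge p_{13}+p_{23}$ and $p_{21}\ge p_{12}$) and then exhibits explicit $4\times 6$ conditional-probability matrices in four separate cases, checking in each that the columns are stochastic and that $A\mathbf p$ has all entries $\ge\tfrac15$. Your route is cleaner: writing $a=\alpha_{12}q_{12}+\alpha_{21}q_{21}$, the $\beta$-slack makes both of $\Pr(C_1),\Pr(C_2)\ge\tfrac15$ achievable iff $a+p_3^{u'}\ge\tfrac25$ (here you also need $p_3^{u'}\ge\tfrac15$, which you have, so that the individual ranges of $C_1,C_2$ actually reach $\tfrac15$), and symmetrically the $\gamma$-slack handles $\{C_3,C_4\}$ iff $(q_{12}+q_{21})-a+p_3^{v'}\ge\tfrac25$. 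Using $q_{12}+q_{21}=1-p_3^{u'}-p_3^{v'}$, the required interval for $a$ is $[\tfrac25-p_3^{u'},\;\tfrac35-p_3^{u'}]$, which is nonempty and meets $[0,q_{12}+q_{21}]$ precisely because $p_3^{u'},p_3^{v'}\le\tfrac35$. This avoids all case analysis. One small point to make explicit when you write it out: the lower bound $p_3^{u'},p_3^{v'}\ge\tfrac15$ is what guarantees that, once $a$ is chosen in this interval, the $\beta$- and $\gamma$-splits can be made individually (not just in sum) without leaving $[0,1]$; your sketch uses only the upper bound $\le\tfrac35$ in the displayed sentence, so state both.
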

\begin{proof}
        Suppose that $(G',\rho,H',L,\mathcal D)$ has a $\frac 15$-distribution $\Phi'$.
        We construct a $\frac 15$-distribution $\Phi$
        for $(G,\rho,H,L,\mathcal D)$  as follows. As $(G,\rho,H,L,\mathcal D)$ is chosen to have no $\frac 15$-distribution, this will give us a contradiction.

        Consider an outcome $L_{\mathcal D}$ of $\mathcal D$.
        First,
        we sample an $(H',L_{\mathcal D})$-coloring $\phi'$ from $\Phi'(L_{\mathcal D})$. Then, we define an $(H,L_{\mathcal D})$-coloring $\phi$ of $G$ as follows.
        For $w \in V(G) \setminus \{u,v\}$, we let $\phi(w) = \phi'(w)$.
        Write $p_{ij}$ for the probability that  $\phi'(u') = i_{u'}$ and $\phi'(v') = j_{v'}$,
        where the probabilities are considered with respect to the distribution $\Phi$ (and not $\Phi(L_{\mathcal D})$).
        %Note that $p_{12} + p_{21} + p_{13} + p_{23} + p_{31} + p_{32} = 1$.
        
        By symmetry of $u$ and $v$, we assume without loss of generality that $p_{31} + p_{32} \geq p_{13} + p_{23}$
        By symmetry of the labels $1$ and $2$, we assume without loss of generality that $p_{21} \geq p_{12}$.
        We write $\phi_1 = (1_u,3_v)$, $\phi_2 = (2_u, 3_v)$, $\phi_3 = (3_u, 1_v)$, $\phi_4 = (3_u, 2_v)$.

        Now, we proceed according to the first applicable case below.
        In each case, we provide a matrix $A$ 
        with columns indexed in order by $(1_{u'}, 2_{v'})$, 
        $(2_{u'}, 1_{v'})$,  
        $(1_{u'}, 3_{v'})$, 
        $(2_{u'}, 3_{v'})$, 
        $(3_{u'}, 1_{v'})$, $(3_{u'}, 2_{v'})$, 
        and such that each column of $A$ has nonnegative entries summing to $1$.
        Given the random coloring $\phi'$,
        we consider the column $j$ of $A$ corresponding to the pair $(\phi'(u'), \phi'(v'))$, 
        and we let $\phi$ agree with $\phi_i$ on $(u,v)$ with conditional probability $A_{ij}$.
        In this way, the probability that $\phi$ agrees with $\phi_i$ on $(u,v)$ is equal to entry $i$ of $A\mb p$, where $\mb p = [p_{12}, p_{21}, p_{13}, p_{23}, p_{31},p_{32}]^T$.
        \begin{enumerate}[(a)]
             \item If $p_{31} + p_{32} \geq \frac 25$,
        then we 
        let $\phi$ assign the coloring $\phi_i$ on $(u,v)$ with probability equal to entry $i$ in the vector in (a) below. 
        As $p_{12} + p_{13} \geq \frac 15$ and $p_{21} + p_{23} \geq \frac 15$, each entry is at least $\frac 15$. 
        \item If $p_{12} + p_{13} + p_{23} \geq \frac 25$,
        then  we 
        let $\phi$ assign the coloring $\phi_i$ on $(u,v)$ with probability equal to entry $i$ in the vector in (b) below. 
        As $p_{21} \geq p_{12}$ and $p_{31} + p_{32} \geq p_{13} + p_{23}$,
        each entry is at least $\frac 15$.
        \item If $p_{12} + p_{13} + p_{23} < \frac 25$,
        then $p_{21} + p_{31} + p_{32} > \frac 35$. As case (a) does not apply, $p_{21} > \frac 15$.
        If $p_{21} + p_{31} > \frac 25$, then we 
        let $\phi$ assign the coloring $\phi_i$ on $(u,v)$ with probability equal to entry $i$ in the vector in (c) below; otherwise, if $p_{21} + p_{31} \leq \frac 25$, 
        then we use the vector in (c').
        In both cases, each entry is at least $\frac 15$.
        As the vector in (c') is fairly involved, we compute it explicitly,  as shown below. 
       \end{enumerate}
       % \end{itemize}
        In each case, each coloring $\phi_i$ is assigned with probability at least $\frac 15$,
        which implies that $\Phi$ is a $\frac 15$-distribution, a contradiction.

        \[
                    \begin{bmatrix}
                        0 & 0 & 0 & 0 & \frac 12 & \frac 12  \\ 
                        0 & 0 & 0 & 0 & \frac 12 & \frac 12  \\
                        1 & 0 & 1 & 0 & 0 & 0\\
                        0 & 1 & 0 & 1 & 0 & 0 
                    \end{bmatrix}
                    \begin{bmatrix}
                        p_{12} \\ p_{21} \\ p_{1 3} \\ p_{23} \\ p_{3 1} \\ p_{32} 
                    \end{bmatrix}
                    \indent 
                        \begin{bmatrix}
                        0 & \frac 12 & 0 & 0 &  \frac 12 & \frac 12 \\ 
                        \frac 12 & 0 & 0 & 0 & \frac 12 & \frac 12  \\
                        \frac 12 & 0 & \frac 12 &  \frac 12 & 0 & 0\\
                        0 & \frac 12 & \frac 12 & \frac 12 & 0 & 0 
                    \end{bmatrix}
                    \begin{bmatrix}
                        p_{12} \\ p_{21} \\ p_{1 3} \\ p_{23} \\ p_{3 1} \\ p_{32} 
                    \end{bmatrix}
\indent
            \begin{bmatrix}
                0 & 1 - \frac 1{5p_{21}} & 0 & 0 & 1 & 0 \\
                1 & 0 & 0 & 0 & 0 & 1\\
                0 & 0 & 1 & 1 & 0 & 0 \\
                0 & \frac 1{5 p_{21}} & 0 & 0 & 0 & 0 
            \end{bmatrix}
                                \begin{bmatrix}
                        p_{12} \\ p_{21} \\ p_{1 3} \\ p_{23} \\ p_{3 1} \\ p_{32} 
                    \end{bmatrix}
                \]
                \[
                    \ \ \ \ \text{(a)} \ \ \ \ \  \ \ \ \ \ \ \ \ \ \ \ \ \ \ \ \ \ \ \ \ \ \ \ \ \ \ \ \ \ \ \ \text{(b)}  \ \ \ \ \ \ \ \ \ \ \ \ \ \ \ \ \ \ \ \ \ \ \ \ \ \ \ \ \ \ \ \ \ \ \ \ \text{(c)} \ \ \ \ \ \ \ \ \ \ \ 
                \]
                \[
\begin{bmatrix}
                0 & 1 - \frac 1{5p_{21}} & 0 & 0 & 1 & \frac{\frac 25 - p_{21} - p_{31}}{p_{32}} \\
                1 & 0 & 1 - \frac{1}{5(p_{13}+p_{23})}  & 1 - \frac{1}{5(p_{13}+p_{23})}  & 0 & \frac{p_{21} + p_{31} + p_{32} - \frac 25}{p_{32}} \\
                0 & 0 & \frac{1}{5(p_{13}+p_{23})} & \frac{1}{5(p_{13}+p_{23})}  & 0 & 0 \\
                0 & \frac 1{5 p_{21}} & 0 & 0 & 0 & 0 
\end{bmatrix}
\begin{bmatrix}
    p_{12} \\ p_{21} \\ p_{1 3} \\ p_{23} \\ p_{3 1} \\ p_{32} 
\end{bmatrix}
    =
    \begin{bmatrix}
         - \frac 15 + \frac 25 \\[0.4em]
        1 - \frac{p_{13} + p_{23}}{5(p_{13} + p_{23})} - \frac 25 \\[0.4em]
        \frac{p_{13} + p_{23}}{5(p_{13} + p_{23})}\\[0.4em]
        \frac 15
    \end{bmatrix}
    =
    \begin{bmatrix}
        \frac 15\\[0.4em]
        \frac 25\\[0.4em]
        \frac 15\\[0.4em]
        \frac 15
    \end{bmatrix}
                \]
             \[
                    \text{(c')}  \]

    \end{proof}

Now, we are ready to prove the main lemma of this subsection.

\begin{lemma} 
\label{lem:full-2-parallel}
    No $2$-vertex is incident with parallel edges.
\end{lemma}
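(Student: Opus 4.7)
The plan is to argue by contradiction. Suppose some $2$-vertex $v$ of $G$ is incident with parallel edges. By \Cref{lem:butterfly}, $v,w\in\Pi_6$, $|E(v,w)|=2$, $d(w)=4$, and $w$ has a further neighbor $x\in\Pi_6$ with $|E(w,x)|=2$.

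First I would prove $d(x)=3$. Applying \Cref{lem:j+2} to $S=\{v,w,x\}$, which satisfies $\rho_G(S)=18-4\cdot 4=2$ and $|E(S,\overline{S})|=d(x)-2$, yields $d(x)\le 3$. The case $d(x)=2$ forces $V(G)=\{v,w,x\}$ by connectivity of $G$ (since neither $v$ nor $w$ has a neighbor outside $\{v,w,x\}$), contradicting \Cref{lem:not-butterfly}. Hence $d(x)=3$, and $x$ has a unique additional neighbor $y$ joined by a single edge.

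Next I would apply \Cref{lem:reducible} to $U=V(G)\setminus\{v,w,x\}$. In the case $y\in\Pi_6$, both hypotheses of \Cref{lem:reducible} hold: the unique edge between $U$ and $\{v,w,x\}$ is $xy$, so no vertex of $U\cap\Pi_{\le 4}$ has a neighbor in $\{v,w,x\}$, and $|E(y,\{v,w,x\})|=1$. Setting $\rho'(y)=3$ and $\rho'(u)=\rho(u)$ otherwise on $U$, \Cref{lem:reducible} produces a $\frac15$-admissible $\mathcal D'$ for which $(G[U],\rho',H,L,\mathcal D')$ is not $\frac15$-flexible, so $G-\{v,w,x\}$ contains a $(\rho',\frac15)$-critical subgraph $G^*$. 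By \Cref{obs:no-exception}, the options $G^*\in\mathcal I$ and $G^*$ exceptional $C_2$ are excluded; by minimality of $G$ and the main theorem applied to $G^*$, we get $\rho'_{G^*}(V(G^*))\le 0$. Careful potential accounting using \Cref{lem:j+2} applied to $V(G^*)\cup\{v,w,x\}$ (which contributes $+18$ to the potential sum and has $4$ internal plus $1$ boundary edge) then pins $V(G^*)=V(G)\setminus\{v,w,x\}$, $G^*$ induced, $\rho_G(V(G))=1$, and $\rho'_{G^*}(V(G^*))=0$. Applying \Cref{lem:unique-weak} to $G^*$, whose $\Pi'_3$ contains $y$ and all original $\Pi_3^G$-vertices, yields $|\Pi_3^G|\ge 2$.

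The subcase $y\notin\Pi_6$ is handled analogously by enlarging $U$ to absorb $y$ (and, iteratively, the $\Pi_{\le 4}$-neighbors of $y$), with parallel potential bookkeeping. The main obstacle is sealing the contradiction in the tight case $\rho_G(V(G))=1$ with $\rho'_{G^*}(V(G^*))=0$: the combined constraints $|\Pi_3^G|\ge 2$ and $G^*=G-\{v,w,x\}$ being $(\rho',\frac15)$-critical pin the structure extremely tightly, and I expect the final contradiction to follow by applying \Cref{lem:cut-4} at the cut-vertex $x$ (after distinguishing the subcases $\Pi_3^G=\emptyset$ versus $|\Pi_3^G|\geq 2$, using \Cref{lem:unique-weak}) together with \Cref{lem:weak-deg} and \Cref{lem:weak-parallel} to rule out the residual configuration around $y$ in $G^*$.
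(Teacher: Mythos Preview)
Your approach has a genuine gap: reducing via \Cref{lem:reducible} with $\rho'(y)=3$ is too weak to close the potential argument. With $\rho'(y)=3$ you only obtain $\rho_G(V(G^*))\le 3$, and hence $\rho_G(V(G^*)\cup\{v,w,x\})\le 3+18-16-4=1$, which is consistent with $\rho_G(V(G))\ge 1$ and produces no contradiction. You acknowledge this yourself by landing on ``$\rho_G(V(G))=1$'' and then casting about for a further argument. Your attempt to invoke \Cref{lem:unique-weak} ``on $G^*$'' is illegitimate---that lemma is a property of the fixed minimal counterexample $G$, not of an arbitrary $(\rho',\tfrac15)$-critical subgraph---and even the corrected conclusion $|\Pi_3^G|$ is odd (hence $\ge 3$ by \Cref{lem:unique-weak} applied to $G$) still does not yield a contradiction. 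The sketch that ``the final contradiction should follow from \Cref{lem:cut-4} at the cut-vertex $x$'' does not work either, since \Cref{lem:cut-4} requires one side to lie in $\Pi_{\ge 4}$, and in any case merely reproduces a $\rho'(\cdot)=4$ reduction whose analysis you have not carried out.

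The paper's proof supplies exactly the missing leverage, and it comes from the \emph{cover structure}, which you never touch. By \Cref{lem:C2C4} the matchings between $L(w)$ and $L(x)$ form a $C_2$ plus a $C_4$; one alignment (the ``symmetric'' case) lets you delete $v$ and copy the color of $x$ to $v$, ending the argument immediately. In the remaining alignment the paper applies \Cref{lem:cut-4} at $y$ with $\rho'(y)=4$ (not $3$), which is only possible because an explicit list of ten $(H,L)$-colorings shows that $(G[\{v,w,x,y\}],\rho',H,L',\mathcal D)$ \emph{does} have a $\tfrac15$-distribution. With $\rho'(y)=4$ the potential drop is $2$ rather than $3$, and now $\rho'_{G'}(U)\le 0$ gives $\rho_G(U\cup\{v,w,x\})\le 0$, a genuine contradiction. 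The residual exceptional-$C_2$ case forces a further vertex $z$ with $|E(y,z)|=2$ and then a neighbor $z'$, and the paper finishes by applying \Cref{lem:3-parallel-method} to the pair $y,z$. None of this structure is visible in your outline, and without exploiting the cover you cannot upgrade $\rho'(y)$ from $3$ to $4$, which is the step that makes the whole argument close.
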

\begin{proof}
%{\PB This proof was shortened. Please check.}
    Suppose a $2$-vertex $v\in V(G)$ is joined to a neighbor $w$ by parallel edges.
    By \Cref{lem:butterfly},
    $|E_G(v,w)| = 2$, $v,w \in \Pi_6$, $d(w) = 4$, 
    and $w$ has a neighbor $x \in \Pi_6
    \setminus\{v\}$ for which $|E_G(w,x)| = 2$.
    Without loss of generality, $H$ contains the edges $i_v i_w, i_w i_x$ for each $i \in \{1,2,3\}$.
    Furthermore, by \Cref{lem:C2C4}, $H$ contains the edges $1_v 2_w, 2_v 1_w$ without loss of generality.
    We consider two cases.
    \begin{enumerate}
        \item Suppose $H$ contains the edges $1_w 2_x, 2_w 1_x$. Then, we let $G' = G - v$.
        As $G$ is $(\rho,\frac 15)$-critical, $(G',\rho,H,L,\mathcal D)$ has a $\frac 15$-distribution $\Phi'$. We construct a distribution $\Phi$ for $(G,\rho,H,L,\mathcal D)$ as follows. For each outcome $L_{\mathcal D}$, we sample a random $(H,L_{\mathcal D})$-coloring $\phi'$ of $G'$.
        We let $\phi(z) = \phi'(z)$ for each $z \in V(G')$. Then, we let $\phi(v) = i_v$ if and only if $\phi'(x) = i_x$. 
        As $N_H(i_v) = N_H(i_x)
        \cap L(w)$ for each $i \in \{1,2,3\}$,
        $\phi$ is an $(H,L_{\mathcal D})$-coloring of $G$.
        Furthermore, as $\Phi'$ is a $\frac 15$-distribution, and as $\Pr(\phi(v) = i_v) = \Pr(\phi'(x) = i_x) \geq \frac 15$ for each $i \in \{1,2,3\}$, the random $(H,L_{\mathcal D})$-coloring $\phi$ gives a $\frac 15$-distribution for $(G,\rho,H,L,\mathcal D)$.
        \item Otherwise, suppose without loss of generality that $H$ contains the edges $2_w 3_x, 3_w 2_x$.
        Since $\rho(\{v, w, x\})=3(6)-4(4)=2$, by  \Cref{lem:j+2}, $d(x) \leq 3$.
        Furthermore, $V(G) \neq \{v,w,x\}$ by \Cref{lem:not-butterfly}; therefore, $d(x) = 3$.
        Let $y$ be the unique neighbor of $x$ in $V(G) \setminus \{w\}$.
        Without loss of generality, $H$ contains the edges $i_x i_y$ for $i \in \{1,2,3\}$.
        Furthermore, $y \in \Pi_{\geq 4}$. Indeed, if $y \in \Pi_3$, then $\rho_G(\{v,w,x,y\}) = 1$; then,  \Cref{lem:j+2} implies that $V(G) = \{v,w,x,y\}$, and \Cref{lem:unique-weak} is contradicted.

        Observe that $y$ is a cut-vertex of $G$, and let $G_1=G[\{v,w,x,y\}]$ and $G_2=G-\{v,w,x\}$.
        We claim that $\rho(y) = 6$. Indeed, suppose $\rho(y) = 4$. 
        Then, \Cref{lem:cut-4} implies that for some $i \in \{1,2\}$, $(G_i, \rho,H,L,\mathcal D)$ has no $\frac 15$-distribution, contradicting the criticality of $(G,\rho)$. Therefore, $y \in \Pi_6$.

        Let $\rho'$ be obtained from $\rho$ by updating $\rho'(y) = 4$, and let $L'$ be obtained from $L$ by updating the basepoint of $y$ to $1_y$.
        By \Cref{lem:cut-4}, for some $i \in \{1,2\}$, $(G_i,\rho',H,L',\mathcal D)$ has no $\frac 15$-distribution.

        We claim that $(G_1, \rho', H,L',\mathcal D)$ has a $\frac 15$-distribution. To see this, we observe that assigning one of the following $(H,L)$-colorings $\phi_j$ to $G_1$ uniformly at random causes each $c \in V(H)$ to be assigned with appropriate probability: $\phi_1 = (1_v,3_w,1_x,3_y)$, 
        $\phi_2 = (1_v, 3_w, 1_x, 3_y)$, 
        $\phi_3 = (2_v, 3_w, 1_x, 3_y)$, 
        $\phi_4 = (2_v, 3_w, 1_x, 2_y)$, 
        $\phi_5 = (3_v,2_w,1_x,2_y)$, 
        $\phi_6 = (3_v,2_w,1_x,2_y)$,
        $\phi_7 = (3_v,1_w,2_x,1_y)$,
        $\phi_8 = (3_v,1_w,2_x,1_y)$,
        $\phi_9 = (3_v,1_w,3_x,1_y)$,
        $\phi_{10} = (3_v,1_w,3_x,1_y)$.
        In particular, each of $2_y$ and $3_y$ is assigned to $y$ with probability $\frac{3}{10}$. Thus, $(G_1, \rho', H,L',\mathcal D)$ has a $\frac 15$-distribution.

        Therefore, $(G_2, \rho', H,L',\mathcal D)$ has no $\frac 15$-distribution. 
        Thus, $G_2$ has some vertex subset $U$ for which $G_2[U]$ contains a spanning $(\rho',\frac 15)$-critical subgraph.
        Since $G$ is $(\rho, \frac 15)$-critical, $\rho$ and $\rho'$ do not agree on $U$; therefore, $U$ contains $y$.
        If $\rho'_{G'}(U) \leq 0$, then $\rho_G(U) \leq 2$, which implies that $\rho_G(U \cup \{v,w,x\}) \leq \rho_G(U) - 4 + \rho_G(\{v,w,x\}) = \rho_G(U) - 2 \leq 0$, contradicting \Cref{lem:j+2}. Therefore,
        $(G'[U],\rho')$ has a spanning exceptional subgraph, and $\rho'_{G'}(U) \geq 1$.
        Thus, by \Cref{obs:Ilkyoo},
        $G'[U]$ is a $C_2$, $\rho'(y) = 4$, and $U$ contains a single additional vertex $z$ for which $\rho'(z) = \rho(z) = 6$, and such that $z$ is joined to $y$ by parallel edges.

        Now, as $\rho_G(\{v,w,x,y,z\}) = 2$,
        at most one edge $e$ joins $\{v,w,x,y,z\}$ to $V(G) \setminus \{v,w,x,y,z\}$.
        If no such edge $e$ exists, then $d(z) = 2$ and $d(y) = 3$, contradicting \Cref{lem:butterfly}.
        Similarly, if $e$ exists and is not incident with $z$, then $d(z) = 2$ and $y$ is incident with at most one parallel edge pair; thus,  \Cref{lem:butterfly} is contradicted again. Therefore, we conclude that $e$ is incident with $z$, so that $d(z) = 3$, and $z$ has a single neighbor $z' \in V(G) \setminus \{y\}$ (see  \Cref{fig:butterfly-chain}).

        \begin{figure}
        \begin{center}
        \begin{tikzpicture}[scale=1.2, every node/.style={circle, draw, fill=gray!30, inner sep=2pt}]

        \node[draw = white,fill = white] (z) at (0,-0.5) {$v$};
        \node[draw = white,fill = white] (z) at (1,-0.5) {$w$};
        \node[draw = white,fill = white] (z) at (2,-0.5) {$x$};
        \node[draw = white,fill = white] (z) at (3,-0.5) {$y$};
        \node[draw = white,fill = white] (z) at (4,-0.5) {$z$};
        \node[draw = white,fill = white] (z) at (5,-0.5) {$z'$};

        \node (v) at (0,0) {};
        \node (w) at (1,0) {};
        \node (x) at (2,0) {};
        \node (y) at (3,0) {};
        \node (z) at (4,0) {};
        \node (z') at (5,0) {};

          \draw[bend left=10] (v) to (w);
          \draw[bend right=10] (v) to (w);
        \draw[bend left=10] (x) to (w);
          \draw[bend right=10] (x) to (w);
        \draw (x) -- (y);
        \draw[bend right=10] (y) to (z);
        \draw[bend left=10] (y) to (z);
        \draw (z) -- (z');
        \end{tikzpicture}
        \end{center}
            \caption{The figure shows the graph described in the proof of \Cref{lem:full-2-parallel}.}
            \label{fig:butterfly-chain}
        \end{figure}
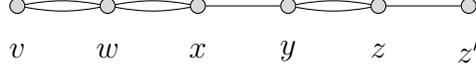

        Now, we are ready for our final contradiction. As $y$ and $z$ are $3$-vertices of $\Pi_6$ joined by parallel edges,
        \Cref{lem:3-parallel-method} implies that the graph $G'$ obtained from $G$ by deleting $\{y,z\}$ and adding an edge $x z'$ has a vertex subset $U$ for which $G'[U]$ has a spanning $(\rho,\frac 15)$-critical subgraph.
        As $G$ is $(\rho,\frac 15)$-critical, $U$ contains $x$ and $z'$.
        If $(G'[U],\rho)$ contains a spanning exceptional $C_2$, then we must have $xz' \in E(G)$, a contradiction to our assumption that $N_G(x) = \{w,y\}$.
        If $G'[U]$ 
        contains a spanning subgraph $I_m \in \mathcal I$, then as $I_m$ is $2$-connected, Menger's theorem implies that there exists a path $P$ from $x$ to $z'$ in $G'$ that avoids the edge $xz'$.
        Thus, as $P$ is also a path in $G$,
        $x$ and $z'$ are joined in $G$ by two edge-disjoint paths, a contradiction to our assumption that $xy$ is a cut-edge of $G$. Therefore, $\rho_{G'}(U) \leq 0$.

        Now, we see that $\rho_G(U \cup \{y,z\}) = \rho_{G'}(U) + 4 - 8 + \rho_G(\{y,z\}) = \rho_{G'}(U) \leq 0$, and thus \Cref{lem:j+2} is contradicted. This completes the proof.
    \end{enumerate}
\end{proof}

\subsection{Terminal blocks of $G$}
The main goal of this subsection
is to show that if $G$ is not $2$-connected, then every terminal block of $G$ contains a vertex of $\Pi_3$.

\begin{lemma}
    \label{lem:2-char}
    If $S \subseteq \Pi_{\geq 4}$ 
    satisfies $|E_G(S,\overline S)| = 1$,
    then $\rho_G(S) = 2$.
\end{lemma}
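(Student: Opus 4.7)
The bound $\rho_G(S) \geq 2$ is immediate from \Cref{lem:j+2}, so the content is to show $\rho_G(S) \leq 2$. Let $uv$ be the unique edge joining $S$ and $\overline S$, with $u \in S$, $v \in \overline S$. By \Cref{lem:pendent-heavy}, $d(u) \geq 2$, so $u$ has at least one neighbor in $S$; hence $|S| \geq 2$, the edge $uv$ is a cut-edge, $u$ is a cut-vertex, and $G[S]$ is connected.

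The first key step is to establish $\rho(u) = 6$, which is a prerequisite for the main reduction. Suppose for contradiction $\rho(u) = 4$. Apply \Cref{lem:cut-4} to the cut-vertex $u$ with $G_1 = G[\overline S \cup \{u\}]$ and $G_2 = G[S]$; the hypothesis $V(G_2) \subseteq \Pi_{\geq 4}$ holds by assumption on $S$. Since $\rho(u) = 4$, the updated potential $\rho'$ equals $\rho$ and $L' = L$ (no basepoint update is triggered). The lemma then asserts non-flexibility of $(G_i, \rho, H, L, \mathcal D)$ for some $i$; but both $G_1, G_2$ are proper subgraphs of $G$ (as $S \setminus \{u\}$ and $\overline S$ are nonempty), so by criticality of $G$ they are both $\frac 15$-flexible, a contradiction.

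Next, I would apply \Cref{lem:reducible} with $U = S$, whose hypotheses are now satisfied since the only vertex of $S$ with an edge to $\overline S$ is $u \in \Pi_6$. This yields a potential $\rho'$ on $S$ with $\rho'(u) = 3$ (and $\rho' = \rho$ elsewhere) and a $\frac 15$-admissible distribution $\mathcal D'$ such that $(G[S], \rho', H, L, \mathcal D')$ has no $\frac 15$-distribution. Let $G''$ be a $(\rho', \frac 15)$-critical subgraph of $G[S]$. Since $\rho'$ and $\rho$ agree off $u$, we must have $u \in V(G'')$ (otherwise $G''$ would be a $(\rho, \frac 15)$-critical proper subgraph of $G$). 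By minimality of $G$, \Cref{thethm} applies to $G''$; but since $\rho'(u) = 3$, $G''$ cannot lie in $\mathcal I$ (which requires $\rho' \equiv 6$) nor be an exceptional $C_2$ (which requires $\rho'(u) \in \{4,6\}$). Therefore $\rho'_{G''}(V(G'')) \leq 0$.

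Setting $W = V(G'')$ and using $G'' \subseteq G[W]$, we obtain $\rho'_G(W) \leq \rho'_{G''}(W) \leq 0$, whence $\rho_G(W) = \rho'_G(W) + (\rho(u) - \rho'(u)) \leq 3$. A parity observation is crucial: because $W \subseteq S \subseteq \Pi_{\geq 4}$, every $\rho(w)$ is even, so $\rho_G(W)$ is even; thus $\rho_G(W) \leq 2$. Combined with $\rho_G(W) \geq 1 + |E_G(W, \overline W)| \geq 2$ from \Cref{lem:j+2} (the edge $uv$ forces $|E_G(W, \overline W)| \geq 1$), we conclude $\rho_G(W) = 2$ and $|E_G(W, \overline W)| = 1$. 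Hence $uv$ is the only edge from $W$ to its complement, leaving no edges between $W$ and $S \setminus W$; connectedness of $G[S]$ forces $W = S$, and so $\rho_G(S) = 2$. The step I expect to be the main obstacle is the preliminary $\rho(u) = 6$ argument, since \Cref{lem:reducible} cannot be applied to $U = S$ without it; once that is in place the rest is a clean chain of applications, with the even-parity upgrade from $\leq 3$ to $\leq 2$ being the one slightly non-mechanical step.
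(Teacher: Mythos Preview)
Your proof is correct and follows essentially the same approach as the paper's: first establish $\rho(u)=6$ via \Cref{lem:cut-4}, then apply \Cref{lem:reducible} to obtain a $(\rho',\tfrac15)$-critical subgraph on some $W\subseteq S$ with $\rho'_G(W)\leq 0$, and finally use the even-parity upgrade together with \Cref{lem:j+2} and connectedness of $G[S]$ to force $W=S$ and $\rho_G(S)=2$. Your labeling in the application of \Cref{lem:cut-4} (taking $G_2=G[S]$ so that the hypothesis $V(G_2)\subseteq\Pi_{\geq 4}$ is visibly satisfied) is in fact slightly cleaner than the paper's.
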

\begin{proof}
    If $G[S]$ is disconnected, then 
    by the connectivity of $G$,
    each component of $G[S]$ has an edge in its boundary, which contradicts 
    the assumption that 
    $|E_G(S,\overline S)| = 1$. Therefore, $G[S]$ is connected.
    By \Cref{lem:j+2}, $\rho_G(S) \geq 2$.
    By \Cref{lem:pendent-heavy}, $|S| \geq 2$.
    Now, let $v \in S$ be the unique vertex with a neighbor in $ V(G) \setminus S$.

    We claim that $\rho(v) = 6$.
    Indeed, suppose $\rho(v) = 4$.
        Let $G_1 = G[S]$, and let $G_2=G-(S\setminus\{v\})$. 
        As $G$ is $(\rho,\frac 15)$-critical,
        for each $i \in \{1,2\}$, $(G_i,\rho,H,L,\mathcal D)$ has a $\frac 15$-distribution, contradicting \Cref{lem:cut-4}.
        
    Therefore, $\rho(v) = 6$.
        We define $\rho'(v) = 3$ and $\rho'(u) = \rho(u)$ for each $u \in S \setminus \{v\}$.
        By \Cref{lem:reducible},
        there is a $\frac 15$-admissible distribution $\mathcal D'$ on the forbidden color at $v$
        such that $(G[S],\rho',H,L,\mathcal D')$ is not $\frac 15$-flexible.
        Then, there is a subset $S' \subseteq S$ for which $G[S']$ has a spanning $(\rho',\frac 15)$-critical subgraph $G'$.
        Since $S' \subsetneq V(G)$ and $G$ is $(\rho, \frac 15)$-critical,
        $S'$ contains $v$.
        As $\rho'(v) = 3$, $G'$ is not exceptional, and hence $\rho'_G(S') \leq 0$.
        Therefore, $\rho_G(S') \leq 3$, and by parity, $\rho_G(S') \leq 2$.
        Since $G[S]$ is connected and $S'$ contains $v$, $|E_G(S',\overline{S'})| \geq 2$
        whenever
        $S' \subsetneq S$; therefore, $S' = S$ by \Cref{lem:j+2}, and $\rho_G(S)\leq 2$.
        Finally, by \Cref{lem:j+2}, $\rho_G(S) = 2$.
\end{proof}

\begin{lemma} 
\label{lem:potential-2}
    For each nonempty subset $U \subseteq V(G)$,
    if $\rho_G(U) \leq 2$ and $U \setminus \{u\} \subseteq \Pi_{\geq 4}$ for some $u \in U$,
    then $U = V(G)$.
\end{lemma}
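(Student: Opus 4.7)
The plan is to assume for contradiction that some $U\subsetneq V(G)$ satisfies the hypotheses, pick such a $U$ with $|U|$ minimum, and derive a contradiction. First tighten the parameters: \Cref{lem:j+2} together with the connectivity of $G$ gives $\rho_G(U)\geq 1+|E_G(U,\overline U)|\geq 2$, so $\rho_G(U)=2$ and $|E_G(U,\overline U)|=1$. A parity check—any lone $\Pi_3$ vertex in $U$ would make $\sum_{w\in U} \rho(w)$ odd and hence $\rho_G(U)$ odd—forces $U\subseteq\Pi_{\geq 4}$, and \Cref{lem:2-char} then forces the unique boundary vertex $v\in U$ into $\Pi_6$. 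Note that $v$ is a cut-vertex of $G$ and that $G[U]$ is connected.

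The main step is to apply \Cref{lem:cut-4} at the cut-vertex $v$ with $\rho^*(v)=4$ and $\rho^*=\rho$ elsewhere, taking $G_1=G[\overline U\cup\{v\}]$ and $G_2=G[U]$; this is valid because $V(G_2)=U\subseteq\Pi_{\geq 4}$. Whichever of $(G_1,\rho^*)$ or $(G_2,\rho^*)$ has no $\frac15$-distribution contains a $(\rho^*,\frac15)$-critical subgraph $G^{\dagger}$ on vertex set $W^{\dagger}$, to which \Cref{thethm} applies by the minimality of $G$ as a counterexample. Most subcases are dispatched immediately: $G^{\dagger}\in\mathcal I$ or an exceptional $C_2$ avoiding $v$ gives $\rho^*=\rho$ on $W^{\dagger}$, contradicting \Cref{obs:no-exception}; an exceptional $C_2$ through $v$ on the $G_1$-side requires a second $U$-to-$\overline U$ edge, contradicting $|E_G(U,\overline U)|=1$; and a critical subgraph with $\rho^*_{G^{\dagger}}(W^{\dagger})\leq 0$ avoiding $v$ gives $\rho_G(W^{\dagger})\leq 0$, contradicting \Cref{lem:j+2}.

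When $(G_1,\rho^*)$ fails, the only surviving subcase has $v\in W^{\dagger}$ with $\rho^*_{G^{\dagger}}(W^{\dagger})\leq 0$, and \Cref{lem:j+2} pins $\rho_G(W^{\dagger})=2$. Using $W^{\dagger}\cap U=\{v\}$ and the fact that the unique $U$-to-$\overline U$ edge $vv'$ does not join $W^{\dagger}\setminus\{v\}\subseteq\overline U$ to $U\setminus\{v\}$, we get $\rho_G(W^{\dagger}\cup U)=\rho_G(W^{\dagger})+\rho_G(U)-\rho(v)=2+2-6=-2$, contradicting \Cref{lem:j+2} or the counterexample property $\rho_G(V(G))\geq 1$.

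When $(G_2=G[U],\rho^*)$ fails, the critical subgraph yields $v\in W^{\dagger}\subseteq U$ with $\rho_G(W^{\dagger})=2$. If $W^{\dagger}\subsetneq U$ then $W^{\dagger}$ also satisfies the lemma's hypotheses and is strictly smaller than $U$, contradicting the minimality of $|U|$. The exceptional $C_2$ subcase $W^{\dagger}=\{v,b\}$ with $|E_G(v,b)|=2$ and $\rho(b)=6$ is squeezed by \Cref{lem:butterfly}, \Cref{lem:weak-deg}, and the bound $d_G(v)+d_G(b)\leq 7$ from \Cref{lem:j+2} on $\{v,b\}$ into $d_G(v)=d_G(b)=3$, whereupon \Cref{lem:3-parallel-method} applied to $\{v,b\}$ produces a smaller graph whose minimum-counterexample analysis gives a contradiction. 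The hardest part is the remaining subcase $W^{\dagger}=U$, where $G[U]$ itself becomes $(\rho^*,\frac15)$-critical; handling this requires combining the \Cref{lem:cut-4}-based analysis with the \Cref{lem:reducible}-based reduction (which, under the same minimality argument applied to $G[U]$, forces $G[U]$ to be $(\rho',\frac15)$-critical for the potential $\rho'(v)=3$), and then exploiting \Cref{lem:unique-weak} and \Cref{lem:full-2-parallel} on the resulting rigid structure to rule out the configuration.
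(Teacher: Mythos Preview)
Your overall strategy mirrors the paper's---reduce to a single boundary edge, then apply \Cref{lem:cut-4}---but two choices diverge from the paper and create real problems. The paper takes a \emph{largest} violating set $U$ and applies \Cref{lem:cut-4} at the outside neighbour $w\in\overline U$ (with $G_1=G[U\cup\{w\}]$, $G_2=G-U$), whereas you take a \emph{smallest} $U$ and cut at the inside vertex $v\in U$ (with $G_2=G[U]$). Maximality is what makes the exceptional-$C_2$ case painless in the paper: when a parallel neighbour $w'$ of $w$ appears, either $w'$ reaches back into $U$ (potential contradiction) or $U+w+w'$ is a strictly larger violating set.

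Your $(G_2,\rho^*)$ analysis does not close. In the exceptional $C_2$ subcase $W^{\dagger}=\{v,b\}$, you claim the degrees are forced to $d_G(v)=d_G(b)=3$, but \Cref{lem:butterfly} needs a $2$-vertex and does not apply; from $\rho_G(\{v,b\})=4$ and \Cref{lem:j+2} you only get $d_G(v)+d_G(b)\le 7$, which together with $d_G(v)\ge 3$ and $d_G(b)\ge 3$ (via \Cref{lem:full-2-parallel}) still allows $(d_G(v),d_G(b))\in\{(3,4),(4,3)\}$, so \Cref{lem:3-parallel-method} is unavailable. More seriously, the case $W^{\dagger}=U$ is exactly the case where $G[U]$ is $(\rho^*,\frac15)$-critical with $\rho^*_{G[U]}(U)=0$, and this is fully consistent with \Cref{thethm}; your sketch of combining \Cref{lem:reducible} with \Cref{lem:unique-weak} and \Cref{lem:full-2-parallel} does not produce a contradiction, since those lemmas are properties of the fixed counterexample $(G,\rho)$ and do not transfer to $(G[U],\rho')$ or $(G[U],\rho^*)$. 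Switching to a maximal $U$ and cutting at $w\in\overline U$, as the paper does, eliminates both difficulties.
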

\begin{proof}
    %If $U \subseteq \Pi_{\geq 4}$, then the conclusion follows from Lemma \ref{lem:term-4}.
    Suppose that the lemma is false, and let $U \subsetneq V(G)$ be the largest subset of $V(G)$ that violates the lemma.
    We observe that $U \subseteq \Pi_{\geq 4}$.
    Indeed, if not, then 
    there is a unique vertex $u \in U \cap \Pi_3$.
    Then, by parity, $\rho_G(U) = 1$, and then \Cref{lem:j+2} implies that $V(G) = U$,
    contradicting the assumption that $U \subsetneq V(G)$.
    %contradicting \Cref{lem:unique-weak}.
    
    Since each vertex of $U$ has even potential and $\rho_G(U) \geq 1$ by \Cref{lem:j+2},
    $\rho_G(U) = 2$, and
    there is a unique vertex $v \in U$ with a single neighbor $w \in V(G) \setminus U$.
    If $\rho(w) = 3$, then $\rho_G(U + w) = 1$, and then \Cref{lem:j+2} implies that $V(G) = U + w$, contradicting \Cref{lem:unique-weak}.
    Therefore, $\rho(w) \geq 4$.
    If $\rho(w) = 4$, then $\rho_G(U + w) = 2$.
    Furthermore, as $d_G(w) \geq 2$ by \Cref{lem:pendent-heavy},
    $U+w \subsetneq V(G)$.
    This contradicts our choice of $U$.
    Therefore, $\rho(w) = 6$.

    Let $\rho':V(G) \rightarrow \{3,4,6\}$ be obtained from $\rho$ by updating $\rho'(w) = 4$.
    Let $G_1 = G[U \cup \{w\}]$, and
    let $G'' = G - U$.
    By \Cref{lem:cut-4},
    for some $i \in \{1,2\}$, $(G_i,\rho',H,L,\mathcal D)$ has no $\frac 15$-distribution.
    \begin{claim}
    $(G_1,\rho',H,L,\mathcal D)$ has a $\frac 15$-distribution.   
    \end{claim}
    \begin{proof}
        If not, then there is a subset $S \subseteq V(G_1)$ containing $w$ for which $\rho'_G(S) \leq 0$ or $G[S]$ contains a spanning exceptional graph.
        As $w$ is joined to $U$ by a cut-edge, no exceptional graph of $(G_1,\rho')$ contains $w$; therefore,
        $\rho'_G(S) \leq 0$.
        Then, $\rho_G(S) \leq 2$, and hence $\rho_G(S - w) \leq 0$, contradicting \Cref{lem:j+2}.
    \end{proof}

    \begin{claim}
        $(G_2, \rho',H,L,\mathcal D)$ has a $\frac 15$-distribution.
    \end{claim}
    \begin{proof}
        If no such distribution exists, then there is a subset $S \subseteq V(G_2)$ containing $w$ for which $\rho'_G(S) \leq 0$ or $(G[S],\rho')$ has a spanning exceptional $C_2$ of potential $2$.
        In the latter case, as $\rho'(w) = 4$, 
        this implies that $G[S]$ is a $C_2$, so that $w$ has a neighbor $w' \in V(G_2)$ joined to $w$ by parallel edges.
        By~\Cref{lem:weak-parallel}, $\rho(w')\geq 4$.
        If $\rho(w')= 4$, then $(G[\{w,w'\}],\rho)$ is exceptional, a contradiction. Therefore, $\rho(w') = 6$.
        Then, by \Cref{lem:full-2-parallel}, $d_G(w') \geq 3$.
        If $w'$ has a neighbor in $U$, then $\rho_G(U + w + w') \leq \rho_G(U) + \rho(w) + \rho(w') - 16 \leq -2$, contradicting \Cref{lem:j+2}.
        Otherwise,
        if $w'$ has no neighbor in $U$, then $w'$ has a neighbor in $V(G) \setminus (U + w)$; then,
        $U + w + w' \subsetneq V(G)$, and $\rho_G(U + w + w') \leq \rho_G(U) + \rho(w) + \rho(w') - 12 = \rho_G(U) \leq 2$, contradicting the maximality of $U$.
    \end{proof}
    Thus, we have a contradiction by \Cref{lem:cut-4}.
\end{proof}

\begin{lemma}
\label{lem:term-4}
    If $B$ is a terminal block of $G$ with $V(B) \subseteq \Pi_{\geq 4}$, then $B = G$.
\end{lemma}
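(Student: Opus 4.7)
The plan is to assume for contradiction that $B \neq G$, let $v$ be the unique cut-vertex of $B$, and apply \Cref{lem:cut-4} to the splitting $G_1 = G[(V(G) \setminus V(B)) \cup \{v\}]$ and $G_2 = G[V(B)] = B$, for which $V(G_2) = V(B) \subseteq \Pi_{\geq 4}$, so that the hypothesis of \Cref{lem:cut-4} is satisfied.

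First I would verify that $B$ is $2$-connected with $|V(B)| \geq 3$: $B$ cannot be a single edge, since that would leave a pendant vertex in $\Pi_{\geq 4}$, contradicting \Cref{lem:pendent-heavy}; and $B$ cannot be a $C_2$ of two parallel edges, since that would leave a $2$-vertex incident with parallel edges, contradicting \Cref{lem:full-2-parallel}. In particular, $v$ has at least two edges in $B$ to $V(B) \setminus \{v\}$. Next I would split on $\rho(v) \in \{4,6\}$. If $\rho(v) = 4$, then $\rho' = \rho$ in \Cref{lem:cut-4}, and since $G_1$ and $G_2$ are proper subgraphs, the $(\rho,\frac 15)$-criticality of $G$ supplies $\frac 15$-distributions for both $(G_i, \rho, H, L, \mathcal D)$, contradicting \Cref{lem:cut-4}.

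The substantive case is $\rho(v) = 6$. Here I would define $\rho'$ on $V(G)$ by $\rho'(v) = 4$ and $\rho' = \rho$ on $V(G) \setminus \{v\}$, and show $(G_i, \rho')$ is $\frac 15$-flexible for each $i \in \{1,2\}$; since \Cref{lem:cut-4} permits any basepoint $\omega'(v)$, this yields $\frac 15$-distributions for $(G_1, \rho', H, L', \mathcal D)$ and $(G_2, \rho', H, L', \mathcal D)$ simultaneously, contradicting that lemma. To prove $(G_i, \rho')$ is $\frac 15$-flexible, suppose not. By \Cref{obs:minimal}, $G_i$ has a $(\rho', \frac 15)$-critical subgraph $G_i^*$, and since $G_i^* \subsetneq G$, the minimality of $G$ forces $G_i^*$ to satisfy the conclusion of \Cref{thethm}. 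I would then handle the three possibilities: (I) $G_i^* \in \mathcal I$ with $\rho' \equiv 6$ on $V(G_i^*)$, which forces $v \notin V(G_i^*)$ and exhibits $G_i^*$ as an exceptional subgraph of $(G,\rho)$, contradicting \Cref{obs:no-exception}; (II) $G_i^*$ is a $C_2$ with $\rho'_{G_i^*}(V(G_i^*)) = 2$, whose two vertices either both lie in $V(G) \setminus \{v\}$ (again producing an exceptional $C_2$ in $(G, \rho)$, contradicting \Cref{obs:no-exception}) or consist of $v$ and some $y$ with $\rho(y) = 6$ joined by two parallel edges in $G$, so that $\{v, y\} \subseteq \Pi_6$, $\rho_G(\{v,y\}) = 4$, and \Cref{lem:C2C4} produces a fractional $(H,L)$-packing on $G[\{v,y\}]$, contradicting \Cref{lem:gen-diamond}; (III) $\rho'_{G_i^*}(V(G_i^*)) \leq 0$.

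The main obstacle is case (III). Since $E(G_i^*) \subseteq E_G[V(G_i^*)]$, we first deduce $\rho'_G(V(G_i^*)) \leq 0$. If $v \notin V(G_i^*)$, this reads $\rho_G(V(G_i^*)) \leq 0$, contradicting \Cref{lem:j+2}. If $v \in V(G_i^*)$, we get $\rho_G(V(G_i^*)) \leq 2$, hence $\rho_G(V(G_i^*)) \in \{1,2\}$ by \Cref{lem:j+2}; the value $1$ forces an empty edge boundary of $V(G_i^*)$ and hence $V(G_i^*) = V(G)$ by connectedness, contradicting $V(G_i^*) \subsetneq V(G)$. For $\rho_G(V(G_i^*)) = 2$, I would split on $i$: when $i=2$, $V(G_2^*) \subseteq V(B) \subseteq \Pi_{\geq 4}$, so \Cref{lem:potential-2} with $U = V(G_2^*)$ forces $V(G_2^*) = V(G)$, contradicting $V(G_2^*) \subseteq V(B) \subsetneq V(G)$; when $i=1$, the identity $V(G_1) \cap V(B) = \{v\}$ gives $V(G_1^*) \cap V(B) = \{v\}$, so the at least two edges from $v$ to $V(B) \setminus \{v\}$ inside $B$ all cross the boundary of $V(G_1^*)$, forcing $|E_G(V(G_1^*), \overline{V(G_1^*)})| \geq 2$ and hence $\rho_G(V(G_1^*)) \geq 3$ by \Cref{lem:j+2}, a contradiction. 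This exhausts every case, produces $\frac 15$-distributions for both $(G_i, \rho', H, L', \mathcal D)$, and completes the contradiction with \Cref{lem:cut-4}.
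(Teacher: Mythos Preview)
Your overall architecture is sound and matches the paper's: split at the cut-vertex $v$ via \Cref{lem:cut-4} and show both sides admit $\frac 15$-distributions. Cases (I) and (III) are handled correctly, as is the $\rho(v)=4$ case.

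The gap is in case (II) when $v\in V(G_i^\ast)$. You assert that \Cref{lem:C2C4} ``produces a fractional $(H,L)$-packing on $G[\{v,y\}]$,'' but the lemma establishes precisely the opposite: its conclusion is that $H[L(v),L(y)]$ is a $C_2$-component together with a $C_4$-component, and this is exactly the cover structure that admits \emph{no} fractional packing. (Up to relabeling, the only $(H,L)$-colorings of this $C_2$ are $(1_v,3_y),(2_v,3_y),(3_v,1_y),(3_v,2_y)$, so $3_v$ necessarily appears with probability $\tfrac23$, not $\tfrac13$.) Consequently \Cref{lem:gen-diamond} cannot be invoked, and your contradiction evaporates.

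This case is genuinely delicate and is where the paper spends the most effort. The paper first disposes of the case $\rho_G(V(B))=2$ via \Cref{lem:potential-2}, then uses \Cref{lem:2-char} to obtain $d_{G-V(B)+v}(v)\geq 2$. With that in hand: for $y$ on the $B$-side, parallel edges inside $B$ force $d_B(v)\geq 3$, hence $d_G(v)=5$ by \Cref{lem:weak-deg}; then \Cref{lem:j+2} on $\{v,y\}$ forces $d_G(y)=2$, contradicting \Cref{lem:full-2-parallel}. For $y$ on the $G_1$-side, \Cref{lem:full-2-parallel} gives $d_G(y)\geq 3$, and \Cref{lem:j+2} on $\{v,y\}$ together with $d_B(v)\geq 2$ pins down the boundary so that $V(B)\cup\{y\}\subseteq\Pi_{\geq 4}$ has a single boundary edge; then \Cref{lem:2-char} and \Cref{lem:potential-2} finish. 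Your argument can be repaired along these lines, but it requires this degree-counting; there is no shortcut through \Cref{lem:gen-diamond}.
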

\begin{proof}
    Let $B$  be 
    a terminal block of $G$, and suppose that $V(B) \subseteq \Pi_{\geq 4}$ and $B \neq G$.
    If $\rho_G(V(B)) = 2$, then we are done by  \Cref{lem:potential-2}; therefore, we assume that $\rho_G(V(B)) \geq 4$.
    Let $w$ be the cut-vertex of $B$, and let $G' = G - (V(B) - w)$.
    Since $V(B) \subseteq \Pi_{\geq 4}$,  \Cref{lem:pendent-heavy} implies that $B$ is not a $K_2$; therefore, $B$ is $2$-connected. 
    In particular, $d_B(w) \geq 2$.
    Therefore, by  \Cref{lem:j+2},
    $\rho_G(V(G')) \geq 3$.

    Since $\rho_G(V(B)) \geq 4$, \Cref{lem:2-char} implies that $d_{G'}(w) \geq 2$.
    Furthermore, we observe that if $w$ is adjacent to some neighbor $v \in V(B)$ by parallel edges, then $d_B(w) \geq 3$. 
    Indeed, in this case, if $d_B(w) = 2$, then as $B$ is $2$-connected,
    $V(B) = \{v,w\}$.
    Then, $d_G(v) = 2$, 
    contradicting  \Cref{lem:full-2-parallel}.

    Now, define 
    $\rho':V(G) \rightarrow \{3,4,6\}$ so that $\rho'(v) = \rho(v)$ for each $v \in V(G) \setminus \{w\}$ and $\rho'(w) = 4$.
    By \Cref{lem:cut-4},
    either $(B, \rho',H,L,\mathcal D)$ has no $\frac 15$-distribution, or 
    $(G',\rho',H,L,\mathcal D)$ has no $\frac 15$-distribution.

    \begin{claim}
    There is a $\frac 15$-distribution for 
    $(B, \rho',H,L, \mathcal D)$.
    \end{claim}
    \begin{proof}
    Suppose that no such distribution exists.
    Then, since $\rho'(w) = 4$, the minimality of $(G,\rho)$ implies that 
    there is a subset $S \subseteq V(B)$ containing $w$ for which
    either   
    $G[S]$ is a $C_2$ and $\rho'_G(S) = 2$, or
    $\rho'_G(S) \leq 0$.
    In the former case, 
    $w$ is joined to some $v \in V(B)$ by parallel edges, so as observed above, $d_B(w) \geq 3$.
    As $d_{G'}(w) \geq 2$,  \Cref{lem:weak-deg}
    implies that $d_G(w) = 5$.
    We let $U = \{v,w\}$,
    and we observe that $\rho_G(U) \leq 4$;
    therefore, by  \Cref{lem:j+2},
    $E_G(U,\overline U)$ consists of exactly three edges,
    each one incident with $w$.
    Then, $d_G(v) = 2$, and since $v$ is incident with parallel edges, \Cref{lem:full-2-parallel} is contradicted.
    Therefore, we assume that $\rho'_G(S) \leq 0$ and hence $\rho_G(S) \leq 2$.
    Since $S$ contains $w$, 
    and since $w$ has two neighbors in $V(G') \setminus S$,
    \Cref{lem:j+2} is contradicted.
    \end{proof}
    \begin{claim}
        There is a $\frac 15$-distribution for $(G',\rho',H,L,\mathcal D)$.
    \end{claim}
    \begin{proof}
        If $\rho(w) = 4$, then $\rho = \rho'$, and the claim follows from the minimality of $(G,\rho)$. Therefore, we assume that $\rho(w) = 6$.
        Suppose that no such distribution exists. Then,
        there is a subset $S \subseteq V(G')$ containing $w$ for which 
        either $(G[S],\rho')$ has a spanning exceptional subgraph or
        $\rho'_G(S) \leq 0$.
        As $G$ is $(\rho,\frac 15)$-critical, $\rho$ and $\rho'$ do not agree on $S$; therefore, $w \in S$.
        If $\rho'_G(S) \leq 0$, then $\rho_G(S) \leq 2$,
        and the fact that $d_B(w) \geq 2$ gives us a contradiction by \Cref{lem:j+2}.
        Therefore,
        $\rho'_G(S) \geq 1$, and
        $(G[S],\rho')$ has a spanning exceptional subgraph.
        By \Cref{obs:Ilkyoo},
        $(G[S],\rho')$ is an exceptional $C_2$,       
        and thus a vertex $w' \in V(G')$ is joined to $w$ by parallel edges.
         We write $U = \{w,w'\}$.
        By \Cref{lem:weak-parallel}, 
        $\rho(w') \geq 4$. 
        If $\rho(w') = 4$, then the subgraph $G[U]$ is exceptional, a contradiction; therefore, $\rho(w') = 6$,  and 
         $\rho_G(U) = 4$.
        By \Cref{lem:full-2-parallel},
        $d_G(w') \geq 3$.
        Since $d_B(w) \geq 2$,
        it follows from  \Cref{lem:j+2} that $|E_G(U, \overline U) | = 3$ and that $d_G(w') = 3$.
        Furthermore, as $w$ is a cut-vertex of $B$, $w'$ has no neighbor in $B$, so $V(B)  + w' \subsetneq V(G)$.
        Thus,
        % {\PB as $|E_G(U, \overline U) | = 3$,}
        $B + w'$ has only one edge on its boundary and is a subset of $\Pi_{\geq 4}$,
        and thus $\rho_G(B + w') = 2$ by \Cref{lem:2-char}.
        Then, we have a contradiction by  \Cref{lem:potential-2}.
    \end{proof}
    Now, using the same technique as in \Cref{lem:2-char}, we combine the distributions for $(B,\rho',H,L,\mathcal D)$ and $(G',\rho',H,L,\mathcal D)$
    to construct a $\frac 15$-distribution for $(G,\rho,H,L,\mathcal D)$, a contradiction.
\end{proof}

\subsection{Parallel edges}
The goal of this section is to show that no two $3^-$-vertices in $G$ are joined by parallel edges.
We first need to forbid the following graph as a subgraph of $G$.
Let $H_5$ be the \emph{house graph} in \Cref{fig:I5}.

\begin{figure}
\begin{center}

\begin{tikzpicture}[xscale=0.7,scale=1.5, every node/.style={circle, draw, fill=gray!30, inner sep=2pt}]
  % Define positions of the 5 vertices on a circle

\node[draw = white, fill = white] (z) at (1.3,-0.6) {$v$};
\node[draw = white, fill = white] (z) at (-1.3,-0.6) {$u$};
\node[draw = white, fill = white] (z) at (-1.3,0.2) {$u'$};
\node[draw = white, fill = white] (z) at (1.3,0.2) {$v'$};

%\node[draw = white, fill = white] (z) at (-105:1.8) {$H_5$};

\node[draw = white, fill = white] (z) at (0,0.8) {$w$};

\node (v1) at (-1,0) {};
\node (v2) at (1,0) {};
\node (v3) at (0,0.5) {};

\node (u1) at (-1,-0.5) {};
\node (u2) at (1,-0.5) {};

  % Draw single edges
  \draw (u2) -- (v2) -- (v3);
  \draw (v3) -- (v1) -- (u1);
  \draw (v1) -- (v2);

  % Draw double edges using slight bending
  \draw[bend left=10] (u1) to (u2);
  \draw[bend right=10] (u1) to (u2);
\end{tikzpicture}
\begin{tikzpicture}[xscale=0.7,scale=1.5, every node/.style={circle, draw, fill=gray!30, inner sep=2pt}]
  % Define positions of the 5 vertices on a circle

\node[draw = white, fill = white] (z) at (1.3,-0.6) {$v$};
\node[draw = white, fill = white] (z) at (-1.3,-0.6) {$u$};
\node[draw = white, fill = white] (z) at (-1.3,0.2) {$u'$};
\node[draw = white, fill = white] (z) at (1.3,0.2) {$v'$};

%\node[draw = white, fill = white] (z) at (-105:1.8) {$H_5$};

%\node[draw = white, fill = white] (z) at (0,0.8) {$w$};

\node (v1) at (-1,0) {};
\node (v2) at (1,0) {};
%\node (v3) at (0,0.5) {};

\node (u1) at (-1,-0.5) {};
\node (u2) at (1,-0.5) {};

  % Draw single edges
  \draw (u2) -- (v2);
  \draw (v1) -- (u1);
  \draw (v1) -- (v2);

  % Draw double edges using slight bending
  \draw[bend left=10] (u1) to (u2);
  \draw[bend right=10] (u1) to (u2);
\end{tikzpicture}
\end{center}
\caption{On the left is the house graph $H_5$, and on the right is its subgraph $H_4$. If $u$ and $v$ are deleted from $H_5$ and an edge is added from $u'$ to $v'$,
as in \Cref{lem:kck},
then $I_3$ is obtained. 
%{\PB I don't think we actually need $H_4$}
}
\label{fig:I5}
\end{figure}
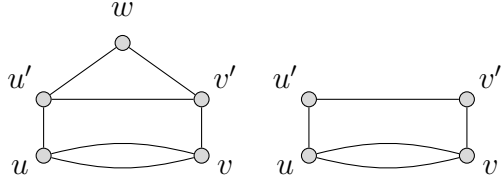

\begin{lemma}
\label{lem:I5}
    $G$ does not have $H_5$ as a subgraph.
\end{lemma}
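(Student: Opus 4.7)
My plan is to assume $H_5 \subseteq G$ on vertex set $U = \{u, v, u', v', w\}$ (labeled as in \Cref{fig:I5}) and derive a contradiction in two steps: first force $G = H_5$ via potential arguments, then contradict this by exhibiting a $\frac 15$-distribution for $H_5$.

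For the first step, since $u$ and $v$ are joined by parallel edges, \Cref{lem:weak-parallel} gives $\rho(u), \rho(v) \geq 4$. Counting the seven edges of $H_5$,
\[\rho_G(U) \leq \sum_{x \in U} \rho(x) - 28 \leq 30 - 28 = 2,\]
with equality only if $U \subseteq \Pi_6$ and $G[U] = H_5$. Combined with $\rho_G(U) \geq 1$ from \Cref{lem:j+2} and the fact that $\rho$ takes values in $\{3,4,6\}$, the sum $\sum_{x \in U} \rho(x)$ must equal $30$, forcing $U \subseteq \Pi_6$, $\rho_G(U) = 2$, and $G[U] = H_5$ exactly. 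Then \Cref{lem:potential-2} forces $V(G) = U$, so $G = H_5$.

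For the second step, by \Cref{lem:C2C4} I may assume $H[L(u), L(v)]$ has $3_u 3_v$ as its $C_2$-component and $\{1_u 1_v, 1_u 2_v, 2_u 1_v, 2_u 2_v\}$ as its $C_4$-component, so the valid joint assignments of $(\phi(u), \phi(v))$ are exactly $\{(3_u, 1_v), (3_u, 2_v), (1_u, 3_v), (2_u, 3_v)\}$. For each DP $3$-coloring $(\phi(u'), \phi(v'), \phi(w))$ of the triangle $u'v'w$, the single-edge constraints $\phi(u) \neq M_{uu'}^{-1}(\phi(u'))$ and $\phi(v) \neq M_{vv'}^{-1}(\phi(v'))$ admit at least two valid extensions to $(u, v)$, except in the single ``bad'' case where $\phi(u') = M_{uu'}(3_u)$ and $\phi(v') = M_{vv'}(3_v)$, which simultaneously forbids $3_u$ at $u$ and $3_v$ at $v$. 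A suitably weighted distribution over the resulting $(H, L)$-colorings of $H_5$, obtained by boosting the weight of any undersampled vertex--color pair, then yields $\Pr(\phi(x) = c) \geq \frac 15$ at every vertex--color pair, contradicting the choice of $G$ as a counterexample.

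The main obstacle is a finite case analysis on the matchings $M_{uu'}$, $M_{vv'}$ and on the triangle cover to verify that such a weighting always exists. The key observation is that the triangle $u' v' w$ admits multiple valid DP $3$-colorings (typically far more than the three forced by its monodromy, since \Cref{lem:C2C4} imposes no constraint on the single edges), of which at most one is the bad coloring, and the extendable colorings collectively assign every color to every vertex of $U$. The explicit weighting is then constructed in the style of \Cref{lem:not-butterfly} and \Cref{lem:full-2-parallel}.
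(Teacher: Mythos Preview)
Your Step 1 is correct and essentially identical to the paper's opening paragraph: the potential count forces $U\subseteq\Pi_6$, $G[U]=H_5$, and then \Cref{lem:potential-2} gives $G=H_5$.

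Step 2, however, is not a proof but a plan whose hard part is never carried out. You correctly observe that there are exactly four admissible pairs for $(\phi(u),\phi(v))$, that every DP $3$-coloring of the triangle $u'v'w$ extends to exactly two of these pairs except for at most one ``bad'' triangle coloring, and that the bad coloring is determined by $M_{uu'}(3_u)$ and $M_{vv'}(3_v)$. But the sentence ``a suitably weighted distribution \ldots\ obtained by boosting the weight of any undersampled vertex--color pair then yields $\Pr(\phi(x)=c)\ge\frac15$'' is precisely what needs to be proved, and you defer it to ``a finite case analysis on the matchings $M_{uu'},M_{vv'}$ and on the triangle cover'' that you never perform. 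That analysis is the entire content of the lemma: one must simultaneously hit every color at $u',v',w$ and every one of the four $(u,v)$-pairs with total weight at least $\frac15$, and the constraints interact nontrivially through the triangle cover and through the location of the bad pair. Achievability of each color (which you argue informally) is far weaker than the existence of a $\frac15$-distribution.

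The paper takes a genuinely different route that avoids a brute case split. It identifies $u'$ and $v'$ into a single vertex $u^*$ along a carefully chosen matching $M_0$ (so that the fibers of $3_u$ and $3_v$ are not merged), and then sets up a bipartite graph $B$ between a multiset $J$ of five colors in $L'(u^*)$ (with the non-$C_4$ color doubled) and the four $(u,v)$-pairs $K$. A short Hall argument shows $B$ has a $K$-saturating matching; this matching directly furnishes five $(H,L)$-colorings of $H_5$, chosen uniformly, which hit every color at $u,v,u',v'$ with probability at least $\frac15$. A brief three-case check (depending on the cycle structure of $H'[L(u^*),L(w)]$) then handles $w$. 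This replaces your open-ended case analysis on the full cover by a single structural argument plus a tiny residual split.
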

\begin{proof}
Suppose that there is a subset $U \subseteq V(G)$ for which $G[U]$ contains a spanning subgraph isomorphic to $H_5$.
 (See \Cref{fig:I5} for the labels of the vertices of $H_5$.)
If $H_5$ is not  isomorphic to  $G[U]$, then
$\rho_G(U) \leq -2$; therefore, $H_5$ is an induced subgraph of $G$.
If $\rho(u) \leq 4$ for some $u \in U$, then $\rho_G(U) \leq 0$, a contradiction; therefore, $U \subseteq \Pi_6$.
Therefore, $\rho_G(U) = 2$, and hence by  \Cref{lem:potential-2}, $U = V(G)$, so that $G$ is isomorphic to $H_5$.
By \Cref{lem:C2C4},
we assume without loss of generality that $H$ contains the edges $i_u i_v$ for each $i \in \{1,2,3\}$, as well as the edges $1_u 2_v$ and $2_u 1_v$.
We also assume without loss of generality that $H$ contains the edges $i_u i_{u'}$ and $i_v i_{v'}$ for $i \in \{1,2,3\}$.

We construct a distribution on $(H,L)$-colorings of $G$.
Let $B_0$ be a bipartite graph with partite sets $L(u')$ and $L(v')$.
Let $i_{u'}$ and $j_{v'}$ be adjacent in $B_0$ if and only if $i_{u'} j_{v'} \not \in E(H)$ and $(i,j) \neq (3,3)$. 
 Let $M_0$ be a perfect matching in $B_0$.
 % {\IC where is $M_0$ used?}
 % {\PB I think it's supposed to help us make $H'$. }
 % {\IC thanks!}
Obtain $G'$ from $G$ 
by identifying $u'$ and $v'$ into a new vertex $u^*$ and deleting the loop.
Note that $G'$ is obtained from $I_3$ by adding a new vertex that is joined to the $2$-vertex of $I_3$ by two parallel edges.
Define a list $L'(u^*) = \{1_{u^*}, 2_{u^*}, 3_{u^*}\}$,
and let $L'(x) = L(x)$ for all $x \in V(G') \setminus \{u^*\}$.
Define a function $f:V(H) \rightarrow (V(H) \setminus (L(u') \cup L(v'))) \cup L'(u^*)$
that maps each color $i_{u'}$
and its unique neighbor $j_{v'} \in N_{M_0}(i_{u'})$ to $i_{u^*}$,
and that acts as the identity on $V(H) \setminus (L(u') \cup L(v'))$.
Then,
obtain $H'$ by taking the image of $H$ under $f$ 
as a full graph homomorphism
(which maps edges to edges and non-edges to non-edges) and deleting any parallel edges induced by $L(u^*)$, so that $L(u^*)$ induces a clique.
Since
the image of $H$ under $f$ has no loop (because we identified among non-edges given by $M_0$),
every $(H',L')$-coloring $\phi'$ of $G'$ can be transformed into an $(H,L)$-coloring $\phi$ of $G$ by  
letting $\phi = f^{-1} (\phi')$.

Before we construct a distribution on $(H',L')$-colorings $\phi'$ of $G'$, we need the following claim.
If $E_H(L'(u^*), L'(w))$ contains a $4$-cycle $C$,
then
let $j_{u^*}$ be the unique vertex of $L'(u^*
)$ not belonging to $C$.
Otherwise, choose $j_{u^*} \in L'(j_{u^*})$ arbitrarily.
Let $J = \{j_{u^*},j_{u^*},(j+1)_{u^*}, (j+1)_{u^*}, (j+2)_{u^*}\}$ be a multiset, with indices wrapping around.
Let $K = \{(1_u, 3_v), (2_u, 3_v), (3_u, 1_v), (3_u, 2_v)\}$ be the set of the four non-adjacent color pairs $(i_u, i'_v) \in L'(u) \times L'(v)$.
Let $B$ be a bipartite graph with parts $J$ and $K$,
where a pair $(i_u, i'_v) \in K$ is adjacent to a color $c \in J$
if and only if $c \not \in N_{H'}(i_u) \cup N_{H'}(i'_v)$.
See \Cref{fig:house-bip}.

\begin{figure}
\begin{tikzpicture}[xscale=0.7,scale=1.5, every node/.style={circle, draw, fill=gray!30, inner sep=2pt}]

\def\usx{2}
\def\wx{\usx+2}
\def\wy{0.5}
\def\h{-1}   
\def\y{0.3}
\def\bip{\wx+3.5}
\def\by{-1}

\node[draw = white, fill = white] (z) at (\usx,3.5*\y) {$L'(u^*)$};

\node[draw = white, fill = white] (z) at (\wx,\wy+4.2*\y) {$L'(w)$};

\node[draw = white, fill = white] (z) at (-1,\h-1.2*\y) {$L'(u)$};

\node[draw = white, fill = white] (z) at (1,\h-1.2*\y) {$L'(v)$};

\node[draw = white, fill = white, rectangle] (z) at (\wx+0.5,\wy+3*\y) {$1_w$};
\node[draw = white, fill = white, rectangle] (z) at (\wx+0.5,\y+\wy) {$3_w$};
\node[draw = white, fill = white, rectangle] (z) at (\wx+0.5,\wy+2*\y) {$2_w$};

\node (1u*) at (\usx,2*\y) {};
\node (2u*) at (\usx,\y) {};
\node (3u*) at (\usx,0) {};

\node (3u) at (-1,\h) {};
\node (2u) at (-1,\h+\y) {};
\node (1u) at (-1,\h+2*\y) {};

\node (3v) at (1,\h) {};
\node (2v) at (1,\h+\y) {};
\node (1v) at (1,\h+2*\y) {};

\node (3w) at (\wx,\y+\wy) {};
\node (2w) at (\wx,\wy+2*\y) {};
\node (1w) at (\wx,\wy+3*\y) {};

\draw (1v) to (1u*);
\draw (2v) to (2u*);
\draw (3v) to (3u*);

\draw [out = 180, in = 180,looseness = 2] (1u*) to (3u);
\draw [out = 180, in = 115,looseness = 0.5] (2u*) to (1u);
\draw [out = 180, in = 150,looseness = 1] (3u*) to (2u);

\draw (1u) -- (1v) -- (2u) -- (2v) -- (1u);
\draw (2u*) -- (2w) -- (3u*) -- (3w) -- (2u*);

  \draw[bend left=5] (3u) to (3v);
  \draw[bend right=5] (3u) to (3v);

  \draw[bend left=5] (1u*) to (1w);
  \draw[bend right=5] (1u*) to (1w);

\node[draw = white, fill = white] (z) at (\bip - 0.5,\by) {$3_{u^*}$};
\node[draw = white, fill = white] (z) at (\bip - 0.5,\by+2*\y) {$2_{u^*}$};
\node[draw = white, fill = white] (z) at (\bip - 0.5,\by+4*\y) {$2_{u^*}$};
\node[draw = white, fill = white] (z) at (\bip - 0.5,\by+6*\y) {$1_{u^*}$};
\node[draw = white, fill = white] (z) at (\bip - 0.5,\by+8*\y) {$1_{u^*}$};

\node[draw = white, fill = white, rectangle] (z) at (\bip + 3,\by+6*\y) {$(1_u,3_v)$};
\node[draw = white, fill = white, rectangle] (z) at (\bip + 3,\by+0*\y) {$(3_u,2_v)$};
\node[draw = white, fill = white, rectangle] (z) at (\bip + 3,\by+2*\y) {$(3_u,1_v)$};
\node[draw = white, fill = white, rectangle] (z) at (\bip + 3,\by+4*\y) {$(2_u,3_v)$};

\node[draw = white, fill = white, rectangle] (z) at (\bip, \by-2*\y) {$J$};
\node[draw = white, fill = white, rectangle] (z) at (\bip+2, \by-2*\y) {$K$};

\node (a5) at (\bip,\by) {};
\node (a4) at (\bip,\by+2*\y) {};
\node (a3) at (\bip,\by+4*\y) {};
\node (a2) at (\bip,\by+6*\y) {};
\node (a1) at (\bip,\by+8*\y) {};
\node (b4) at (\bip + 2, \by) {};
\node (b3) at (\bip + 2, \by+2*\y) {};
\node (b2) at (\bip + 2, \by+4*\y) {};
\node (b1) at (\bip + 2, \by+6*\y) {};

\draw (a1) -- (b1) -- (a2) -- (b2) -- (a1);
\draw (b2) -- (a3) -- (b3) -- (a5) -- (b4);
\draw (b2) -- (a4) -- (b3);

\end{tikzpicture}
\caption{A cover $(H',L')$ of the graph $G'$ in the proof of \Cref{lem:I5}, along with the bipartite graph $B$ constructed from $(H',L')$. For each $x \in V(G')$, $L'(x)$ is shown with $1_x$ on top and $3_x$ on the bottom. Here, as $1_{u^*}$ is the unique vertex of $L'(u^*)$ that does not belong to a $4$-cycle in $E_{H'}(L'(u^*), L'(w))$,
$j_{u^*} = 1_{u^*}$.
Each pair $(i_u, i'_v) \in K$ is adjacent exactly to those $c \in J$ for which $c$ is adjacent to neither $i_u$ nor $i'_v$ in $H'$.
}
\label{fig:house-bip}
\end{figure}
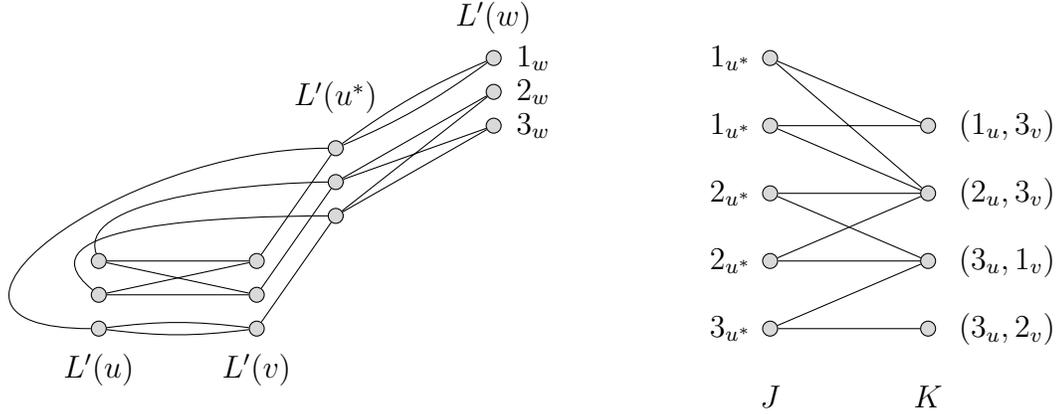

\begin{claim}
\label{claim:B-matching}
    $B$ has a matching $M$ that saturates $K$.
\end{claim}
\begin{proof}
We use Hall's theorem.
First, consider a pair $\mb i = (i_u, i'_v) \in K$.
Each color in $\mb i$ has a single neighbor in $L'(u^*)$,
and this neighbor appears in $J$ with multiplicity at most $2$;
therefore, $|N_B(\mb i)| \geq 5 - 2 \times 2 = 1$.

Next, consider two distinct pairs $\mb i, \mb i' \in K$.
We claim that $|N_B(\mb i) \cup N_B(\mb i')| \geq 3$.
For each $c \in L'(u^*)$,
 $c \not \in N_B(\mb i) \cup N_B(\mb i')$ 
 if and only if
$c$ is adjacent in $H'$ to a color of $\mb i$ and a color of $\mb i'$.
In particular, if $\mb i \cap \mb i' \neq \emptyset$, then at most one color $c \in L'(u^*)$ is adjacent to a color from both $\mb i$ and $\mb i'$,
and then $|N_B(\mb i) \cup N_B(\mb i')| \geq 5 - 1 \times 2 = 3$.
Otherwise, suppose $\mb i \cap \mb i' = \emptyset$,
 so that without loss of generality, $3_u \in \mb i$ and $3_v \in \mb i'$.
As no color of $L'(u^*)$ is adjacent in $H'$ to both $3_u$ and $3_v$,
only a single color $c \in L'(u^*)$
is adjacent to a color in $\mb i$ and a color in $\mb i'$.
Therefore, again,
$|N_B(\mb i) \cup N_B(\mb i')| \geq 5-1 \times 2 = 3$.

Next, 
let $\mb i, \mb i', \mb i'' \in K$ be three distinct pairs, and suppose that
for some $c \in L'(u^*)$,
$c \not \in N_B(\mb i) \cup N_B(\mb i') \cup N_B(\mb i'')$.
Then, it follows that $c$ has a neighbor via $H'$ in each of $\mb i, \mb i', \mb i''$.
Without loss of generality, $\mb i \cap \mb i' = \{3_v\}$, so that $3_v \in N_{H'}(c)$.
As $3_u \not \in N_{H'}(c)$ by our construction of $H'$, it follows that no color of $\mb i''$ is a neighbor of $c$ in $H'$, a contradiction.
Therefore, $|N_B(\mb i) \cup N_B(\mb i') \cup N_B(\mb i'')| = |J| = 5$.
This also implies that $|N_B(K)| = |J| = 5$.
Thus, $B$ has a matching $M$ that saturates $K$ by Hall's theorem.
\end{proof}

Now, we construct a distribution on $(H',L')$-colorings $\phi'$ of $G'$ as follows.
Using \Cref{claim:B-matching},
we find a matching $M$ in $B$ that saturates $K$.
We 
obtain a subgraph $M'$ of $B$ by 
adding one extra edge to $M$ so that $d_{M'}(j) = 1$ for each $j \in J$, and we observe that $d_{M'}(\mb i) \geq 1$ for each $\mb i \in K$.
Then, we choose an element $c \in J$ uniformly at random (accounting for multiplicity), and we assign $\phi'(u^*) = c$.
Next,
we let $\mb i = (i_u, i'_v)$ be the unique
element of $N_{M'}(c)$,
and we assign $\phi'(u) = i_u$ and $\phi'(v) = i'_v$.
Next, we choose a color $c' \in L'(w)$ uniformly at random from the non-neighbors of $c$ in $H'$, and we assign $\phi'(w) = c'$.
Finally, we let $\phi = f^{-1}(\phi')$ and observe that $\phi$ is an $(H,L)$-coloring of $G$.

We claim that the random process for producing $\phi$ gives us a $\frac 15$-distribution for $(G,\rho,H,L,\mathcal D)$.
For this, it suffices to show that for each $x \in V(G)$ and $i_x \in L(x)$, $\Pr(\phi(x) = i_x) \geq \frac 15$.
When $x \in \{u',v'\}$,
this follows from the fact that each color of $L'(u^*)$ is assigned as $\phi'(u^*)$ with probability in $\{\frac 15, \frac 25\}$.
When $x  \in \{u,v\}$,
this follows 
from the fact that $i_x$ belongs to some pair $\mb i \in K$,
which is used to define $\phi'$ at $(u,v)$
with probability at least $\frac 15$.
Finally, we consider the case when $x = w$.
We consider three cases, based on the structure of $H'[L(u^*) \cup L(w)]$.
\begin{enumerate}
\item 
If $H'[L(u^*), L(w)]$  consists of three copies of $C_2$,
then $\phi'$ assigns the neighbor of $i_x$ to $u^*$
with probability at most $\frac 25$, so $\Pr(\phi(x) = i_x) \geq \frac{1 - 2/5}{2} = \frac{3}{10}$.
\item 
If $H'[L(u^*), L(w)]$ 
is a $C_6$,
then $\phi(x) = i_x$ exactly when 
$\phi'$ assigns the unique non-neighbor of $i_x$ in $L'(u^*)$ to $u^*$, which occurs with probability at least $\frac 15$.
\item 
Finally, suppose that $H'[L(u^*), L(w)]$  consists of a $C_2$-component and a $C_4$-component.
If $i_x$ belongs to the $C_2$-component,
then $\phi(x) = i_x$ exactly when a non-neighbor of $i_x$ in $L'(u^*)$ is assigned to $u^*$,
which occurs with probability at least $\frac 35$.
If $i_x$ belongs to the $C_4$-component, then $\phi(x) = i_x$ with conditional probability $\frac 12$ whenever $\phi'(u^*) = j_{u^*}$;
 recall that $j_{u^*}$ is the unique non-neighbor of $i_x$ in $L'(u^*)$.
By construction, $j_{u^*}$ appears in $J$ with multiplicity $2$ and hence is assigned by $\phi'$ to $u^*$ with probability $\frac 25$.
Therefore, $\Pr(\phi(x) = i_x) = \frac 15$.
\end{enumerate}
Hence, $(G,\rho,H,L,\mathcal D)$
has a $\frac 15$-distribution, and this contradiction proves the lemma.
\end{proof}

\begin{lemma} 
\label{lem:kck}
    No two $3$-vertices of $ \Pi_6$ are joined by parallel edges.
\end{lemma}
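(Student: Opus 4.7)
My plan is to suppose, for contradiction, that two $3$-vertices $u,v \in \Pi_6$ are joined by parallel edges, and to derive a contradiction via \Cref{lem:3-parallel-method}. By \Cref{lem:double-edges}, $|E_G(u,v)| = 2$, so $u$ and $v$ have unique other neighbors $u'$ and $v'$. If $u' = v'$, then $\rho_G(\{u,v,u'\}) = \rho(u') - 4$, so \Cref{lem:j+2} forces $\rho(u') = 6$, giving $\rho_G(\{u,v,u'\}) = 2$. Then \Cref{lem:potential-2} yields $V(G) = \{u,v,u'\}$, so $G \cong I_1 \in \mathcal I$, contradicting the assumption that $G$ is a counterexample.

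When $u' \neq v'$, I would normalize the cover: \Cref{lem:C2C4} fixes $H[L(u),L(v)]$ as a $C_2+C_4$, and relabelings of $L(u'),L(v')$ put the single edges $uu'$ and $vv'$ into the identity-matching form assumed by \Cref{lem:3-parallel-method}. That lemma then yields that the reduced tuple $(\hat G,\rho,\hat H,L,\mathcal D)$ admits no $\frac 15$-distribution, where $\hat G := G - \{u,v\} + u'v'$. By the minimality of $(G,\rho)$, $\hat G$ contains a $(\rho,\frac 15)$-critical subgraph $G''$, and since $G$ itself is critical, $G''$ must use the added edge $u'v'$, so $\{u',v'\} \subseteq V(G'')$.

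The key bookkeeping identity is that, for $U := V(G'') \cup \{u,v\}$, we have $\rho_G(U) = \rho_{\hat G}(V(G''))$: the four edges incident to $u,v$ in $G[U]$ (the double $uv$ plus $uu'$ and $vv'$) together with $\rho(u)+\rho(v) = 12$ exactly offset the single added edge $u'v'$ and the $-16$ contribution. Applying \Cref{thethm} inductively to $G''$ gives three cases. If $\rho_{\hat G}(V(G'')) \leq 0$, then $\rho_G(U) \leq 0$, contradicting \Cref{lem:j+2} when $U \subsetneq V(G)$ and the counterexample assumption $\rho_G(V(G)) \geq 1$ when $U = V(G)$. If $G'' \in \mathcal I$ and $u'v' \notin E(G)$, then $u'v'$ has multiplicity $1$ in $\hat G[V(G'')]$, and \Cref{obs:Im+1} yields $G[U] \in \mathcal I$, contradicting \Cref{obs:no-exception}. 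In the remaining cases---$G'' \in \mathcal I$ with $u'v' \in E(G)$, or $G''$ an exceptional $C_2$---\Cref{lem:potential-2} forces $V(G) = U$, and $G$ becomes a specific small multigraph; in particular, the $m=1$ instance of the former yields $G \cong H_5$, contradicting \Cref{lem:I5}.

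The main obstacle I anticipate is the residual small-graph cases not resolved by \Cref{lem:I5}: namely, the $m \geq 2$ subcase of $G'' \in \mathcal I$ with an added multiplicity-$2$ edge (giving a family of $(2m+3)$-vertex multigraphs obtained from $I_m$ by splicing in the double edge $uv$ at one of its duplicated edges), and the exceptional-$C_2$ subcase (a specific $4$-vertex multigraph with one vertex in $\Pi_4$). I expect to handle these by directly exhibiting a $\frac 15$-distribution on the corresponding multigraph under every DP $3$-cover, contradicting criticality and exploiting the $C_2+C_4$ structure on $uv$ from \Cref{lem:C2C4} together with a case analysis on the matching at $u'v'$.
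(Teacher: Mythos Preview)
Your outline tracks the paper's proof almost exactly through the setup, the $u'=v'$ case, the invocation of \Cref{lem:3-parallel-method}, the potential bookkeeping $\rho_G(U)=\rho_{\hat G}(V(G''))$, the $\rho\leq 0$ case, the $I_m$ case with $u'v'\notin E(G)$ via \Cref{obs:Im+1}, the $m=1$ case via \Cref{lem:I5}, and the exceptional-$C_2$ case (which the paper also handles by an explicit list of ten colorings of $H_4$).

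The one genuine gap is the $m\geq 2$ subcase of $G''\in\mathcal I$ with $u'v'\in E(G)$. You propose to exhibit a $\tfrac15$-distribution directly for the resulting \emph{infinite} family of $(2m+3)$-vertex multigraphs, one for each $m\geq 2$ and each possible cover. That is not obviously feasible with a finite case analysis, and you have not indicated any inductive mechanism for it. The paper sidesteps this entirely with a single extra idea you are missing: at the very start, among all eligible parallel pairs $(u,v)$, choose one so that the reduced graph $\hat G$ has no subgraph in $\mathcal I$ \emph{if such a choice exists}. Then, once the $m\geq 2$ case is reached and $V(G)=U\cup\{u,v\}\subseteq\Pi_6$, the paper picks a different parallel pair $x,y$ inside $I_m$ (available because $m\geq 2$), forms $G''=G-\{x,y\}+x'y'$, and shows $G''$ has no $\mathcal I$-subgraph: every proper subset has potential $\geq 4$ by \Cref{lem:potential-2}, and $G''$ itself contains the $4$-cycle $uvv'u'$ so $G''\notin\mathcal I$. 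This contradicts the extremal choice of $(u,v)$ and finishes the case with no distribution-building at all. Adding this extremal-choice sentence at the outset is the missing ingredient.
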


\begin{proof}
    Suppose that $u, v \in  \Pi_6$ are $3$-vertices joined by parallel edges. 
    By \Cref{lem:double-edges}, $|E_G(u,v)| = 2$.
    Let $u'$ be the unique neighbor of $u$ apart from $v$, and let $v'$ be the unique neighbor of $v$ apart from $u$.    
    If $u' = v'$, then $G$ contains a copy of $I_1$, contradicting \Cref{obs:no-exception}; therefore, we assume that $u' \neq v'$.
    Let $G'$ be obtained from $G$ by deleting $u$ and $v$ and adding the edge $u'v'$. 
    Subject to the conditions above, let $u$ and $v$ be chosen so that $G'$ has no subgraph in $\mathcal I$ if possible. 
    Obtain $H'$ from $H$ by deleting $L(u)\cup L(v)$ and adding edges $i_{u'}i_{v'}$ for $i\in\{1,2,3\}$.

    By \Cref{lem:3-parallel-method},
    $(G',\rho,H',L,\mathcal D)$ has no $\frac 15$-distribution.
    Therefore, 
    for some $U \subseteq V(G')$, $(G'[U],\rho)$ has a spanning $(\rho,\frac 15)$-critical subgraph.
    Thus,
    either $\rho_{G'}(U) \leq 0$,
    or $\rho_{G'}(U) \geq 1$ and $(G'[U],\rho)$ has a spanning exceptional subgraph.
    We consider several cases.
    \begin{enumerate}
        \item Suppose that $\rho_{G'}(U) \leq 0$.
        Then, $\rho_G(U \cup \{u,v\}) = \rho_{G'}(U) - 4 + \rho_G(\{u,v\}) = \rho_{G'}(U) \leq 0$, contradicting \Cref{lem:j+2}.
        \item Suppose that $\rho_{G'}(U) \geq 1$ and $G'[U]$ has a spanning subgraph $I_m \in \mathcal I$.
    If $|E_{G'}(u',v')| = 1$, 
    then
    $G$ contains a copy of $I_{m+1}$ by \Cref{obs:Im+1}, which contradicts \Cref{obs:no-exception}. 
    So we may assume $|E_{G'}(u',v')| = 2$.
    By \Cref{obs:Ilkyoo},
    $U \subseteq \Pi_6$, which implies that $G'[U] = I_m$.
    Furthermore, $\rho_G(U \cup \{u,v\}) = \rho_{G'}(U) - 4 + \rho_G(\{u,v\}) = \rho_{G'}(U) = 2$,
    so \Cref{lem:potential-2} implies that $V(G) = U \cup \{u,v\}$.
    Hence, $V(G) = \Pi_6$.

    If  $m=1$, then $G$ is isomorphic to $H_5$, contradicting  \Cref{lem:I5}.
    Therefore,  $m \geq 2$, so $I_m$ has $2m + 1 \geq 5$ vertices and $m$ pairs of parallel edges. Let $x,y$ be two vertices disjoint from $u, v$  that are joined by parallel edges in $G'$. 
    Clearly, $d_G(x) = d_G(y) = 3 $,
    and $|E_G(x,y)| = 2$.
    Writing $x',y'$ for the respective neighbors of $x,y$ in $V(G) \setminus \{x,y\}$,
    we consider the graph $G''$ obtained 
    from $G$
    by deleting $x,y$ and adding the edge $x'y'$.
    We note that $\rho_{G''}(V(G'')) = 2$, and in particular, all vertices of $G''$ are in $\Pi_6$.
    Also,
    for each proper subset $U'' \subsetneq V(G'')$ containing $x'$ and $y'$,
    $\rho_{G''}(U'') = \rho_G(U'' \cup \{x,y\}) \geq 4$ by  \Cref{lem:potential-2}.
    Furthermore, for each proper subset $U '' \subsetneq V(G'')$ not containing both $x'$ and $y'$,
    $\rho_{G''}(U'') = \rho_G(U'') \geq 4$
    by \Cref{lem:potential-2}.
    Therefore, no proper subgraph of $G''$
    belongs to $\mathcal I$.
    Since $G''$ contains the $4$-cycle $uvv'u'$, 
    it also holds that $G'' \not \in \mathcal I$.
    Therefore, $G''$ has no subgraph in $\mathcal I$. 
    Thus, the existence of the pair $x,y$ contradicts the choice of $u,v$. %, and we assume that $G'$ has no subgraph in $\mathcal I$.
    \item Finally, suppose that $(G'[U], \rho)$ is an exceptional $C_2$.
    Then, $\rho_G(U \cup \{u,v\}) = \rho_{G'}(U) - 4 + \rho_G(\{u,v\}) = 2$, and hence \Cref{lem:potential-2} implies that
    $V(G) = U \cup \{u,v\}$.
    Thus, $G$ is isomorphic to $H_4$, and without loss of generality, $\Pi_6 = \{u',u,v\}$ and $\Pi_4 = \{v'\}$.
    Furthermore, as $(G',\rho,H',L,\mathcal D)$ has no $\frac 15$-distribution,    
    we may assume without loss of generality
    that $\omega(v') = 1_{v'}$ and that
    $H'$ contains the edges $1_{u'} 2_{v'}, 2_{u'} 1_{v'}$ (as noted after Observation \ref{obs:C2}).
    %{\IC(to be pedantic, \Cref{lem:C2C4} says this for subgraphs of $G$ only. is it  okay?)}
    %{\PB This is a good point.
    %What I really want to say is that there is only one way that an exceptional $C_2$ will forbid a distribution, and it involves this kind of cover. Maybe I should try harder to make this clear.
    %In fact, maybe it's worth pointing this out in the observation about why this is an exception.
    %}
     Recall that $H'$ contains the edges $i_{u'},i_{v'}$ for each $i \in \{1,2,3\}$.
    Thus, $H$ contains the edges $1_{u'} 2_{v'}, 2_{u'} 1_{v'}, 3_{u'} 3_{v'}$. Then, we find a $\frac 15$-distribution for $(G',\rho,H',L',\mathcal D)$
    by assigning $G$ 
    an $(H,L)$-coloring $\phi$ chosen uniformly at random from the following:
    $\phi_1 = (3_{u'}, 1_u, 3_v, 1_{v'})$,
    $\phi_2 = (1_{u'}, 2_u, 3_v, 1_{v'})$,
    $\phi_3 = (1_{u'}, 3_u, 2_v, 1_{v'})$,
    $\phi_4 = (1_{u'}, 3_u, 2_v, 1_{v'})$,
    $\phi_5 = (3_{u'}, 1_u, 3_v, 2_{v'})$,
    $\phi_6 = (1_{u'}, 2_u, 3_v, 2_{v'})$,
    $\phi_7 = (2_{u'}, 3_u, 1_v, 2_{v'})$,
    $\phi_8 = (2_{u'}, 3_u, 2_v, 3_{v'})$,
    $\phi_9 = (2_{u'}, 3_u, 1_v, 3_{v'})$,
    $\phi_{10} = (2_{u'}, 3_u, 2_v, 3_{v'})$.
    It is easy to check that this random assignment gives a $\frac 15$-distribution; in particular, $\Pr(\phi(v') = 2_{v'}) = \Pr(\phi(v') = 3_{v'}) = \frac 3{10}$. Thus, we have a contradiction.
    \end{enumerate}
\end{proof}

\begin{lemma}
\label{lem:3-parallel}
    No two $3^-$-vertices are joined by parallel edges.
\end{lemma}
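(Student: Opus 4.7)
The plan is to derive a contradiction from the assumption that some pair $u, v \in V(G)$ of $3^-$-vertices is joined by parallel edges, by systematically eliminating all possibilities for $(\rho(u), \rho(v))$ using the structural lemmas already established. Since $u$ and $v$ are incident with parallel edges, each has degree at least $2$, and by \Cref{lem:double-edges} we have $|E_G(u,v)| = 2$ exactly.

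First, \Cref{lem:weak-parallel} forbids any vertex of $\Pi_3$ from being incident with parallel edges, so $u, v \in \Pi_{\geq 4}$. Next, by \Cref{lem:full-2-parallel}, no $2$-vertex is incident with parallel edges, so together with the assumption that $u,v$ are $3^-$-vertices, we conclude $d(u) = d(v) = 3$. Now \Cref{lem:kck} rules out the case that $u$ and $v$ both lie in $\Pi_6$, so without loss of generality $\rho(u) = 4$.

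It remains to rule out $(\rho(u), \rho(v)) \in \{(4,4), (4,6)\}$ by a short potential calculation combined with the gap lemma \Cref{lem:j+2}. Since $u$ and $v$ are $3$-vertices sharing two edges, each has exactly one neighbor in $V(G) \setminus \{u,v\}$, so $|E_G(\{u,v\}, \overline{\{u,v\}})| = 2$. On the other hand, $\rho_G(\{u,v\}) = \rho(u) + \rho(v) - 4|E_G(u,v)| = \rho(u) + \rho(v) - 8$, which equals $0$ if $\rho(v) = 4$ and $2$ if $\rho(v) = 6$. In both cases, \Cref{lem:j+2} demands $\rho_G(\{u,v\}) \geq 1 + 2 = 3$, a contradiction.

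Because this argument is essentially a cleanup combining \Cref{lem:weak-parallel}, \Cref{lem:full-2-parallel}, \Cref{lem:kck}, and the gap lemma \Cref{lem:j+2}, I do not anticipate any genuine obstacle. The only thing to be careful about is matching the range of cases to the lemmas already proved, ensuring the degree and potential reductions are applied in the correct order.
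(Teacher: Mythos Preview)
Your proof is correct and follows essentially the same approach as the paper: both invoke \Cref{lem:weak-parallel}, \Cref{lem:full-2-parallel}, and \Cref{lem:kck} to reduce to the cases $(\rho(u),\rho(v))\in\{(4,4),(4,6)\}$, and then finish with a short potential computation. The only cosmetic difference is the final step: the paper dispatches the $(4,6)$ case by noting that $G[\{u,v\}]$ is an exceptional $C_2$ (contradicting \Cref{obs:no-exception}) and the $(4,4)$ case via $\rho_G(\{u,v\})=0$ with \Cref{lem:j+2}, whereas you handle both cases uniformly by computing $|E_G(\{u,v\},\overline{\{u,v\}})|=2$ and applying the boundary form of \Cref{lem:j+2}.
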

\begin{proof}
    Suppose that two $3^-$-vertices $u,v$ are joined by parallel edges. 
    By \Cref{lem:weak-parallel}, 
    $u, v\in\Pi_{\geq 4}$. 
 By \Cref{lem:full-2-parallel}, each of $u$ and $v$ has degree  $3$. 
By \Cref{lem:kck}, $\{u, v\}\not\subseteq\Pi_6$.
 Now, either $\rho(\{u, v\})= 4+6-2(4)=2$, so there is an exceptional subgraph, or $\rho(\{u, v\})=4+4-2(4)=0$, which is a contradiction to \Cref{lem:j+2}.
\end{proof}

\begin{lemma}
\label{lem:deg2-parallel}
    No $2$-vertex
    is incident with parallel edges.
\end{lemma}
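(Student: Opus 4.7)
My plan is a one-line proof: the statement of \Cref{lem:deg2-parallel} coincides verbatim with \Cref{lem:full-2-parallel} (proved in the previous subsection), so we may simply invoke that result. That said, \Cref{lem:full-2-parallel} is proved with quite a lot of machinery, and it is natural to check whether the additional lemmas proved since then (in particular \Cref{lem:kck} and \Cref{lem:3-parallel}) allow a cleaner, self-contained argument at this point.

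If one prefers such a fresh argument, I would proceed as follows. Suppose for contradiction that a $2$-vertex $v$ is incident with parallel edges to a neighbor $w$. By \Cref{lem:weak-parallel} we have $v\in\Pi_{\geq 4}$, and by \Cref{lem:double-edges} we have $|E_G(v,w)|=2$. Since $v$ is a $3^-$-vertex, \Cref{lem:3-parallel} forces $d_G(w)\geq 4$. Then \Cref{lem:butterfly} pins down the local structure completely: $v,w\in\Pi_6$, $d_G(w)=4$, and $w$ has a further neighbor $x\in\Pi_6\setminus\{v\}$ with $|E_G(w,x)|=2$. Observe that \Cref{lem:kck} does not apply directly to the pair $\{w,x\}$ because $d_G(w)=4$, so more work is still needed.

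The main obstacle is to turn this rigid local structure into a contradiction. I would split on the two possible configurations of $H[L(w),L(x)]$ given by \Cref{lem:C2C4}. In the ``symmetric'' configuration (i.e., $H$ contains $1_w2_x$ and $2_w1_x$), I would delete $v$, take any $\frac{1}{5}$-distribution on $G-v$ guaranteed by criticality, and extend it to $G$ by the rule $\phi(v)=i_v\iff\phi(x)=i_x$; the key point is that $N_H(i_v)\cap L(w)=N_H(i_x)\cap L(w)$, so the extension is proper, and the required probability bounds transfer directly. In the ``twisted'' configuration, let $y$ be the third neighbor of $x$. Using \Cref{lem:pendent-heavy} and \Cref{lem:cut-4} applied at the cut-vertex $y$, I would force $\rho(y)=6$, and then combine \Cref{lem:potential-2} with \Cref{lem:3-parallel-method} to propagate the parallel-edge rigidity along a chain. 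This chain must eventually either violate \Cref{lem:j+2} or terminate in an exceptional subgraph contradicting \Cref{obs:no-exception}. This last propagation step is exactly where \Cref{lem:full-2-parallel}'s proof does its heaviest lifting, and I do not expect to shorten it significantly using only the lemmas of the current subsection; consequently, the one-line proof ``apply \Cref{lem:full-2-parallel}'' is the cleanest option.
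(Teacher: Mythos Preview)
Your one-line proof is correct and matches the paper's approach: the paper writes ``This follows directly from Lemmas \ref{lem:weak-parallel} and \ref{lem:full-2-parallel},'' but since \Cref{lem:full-2-parallel} has the identical statement, the reference to \Cref{lem:weak-parallel} is redundant and your single citation suffices. Your subsequent exploration of an alternative argument is unnecessary here, as you correctly conclude yourself.
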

\begin{proof}
    This follows directly from Lemmas \ref{lem:weak-parallel} and \ref{lem:full-2-parallel}.
\end{proof}

\begin{lemma}
\label{lem:C2C2}
    Each $v \in V(G)$ is incident with at most one pair of parallel edges.
\end{lemma}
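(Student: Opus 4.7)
The plan is to suppose, for contradiction, that some $v \in V(G)$ is incident with two pairs of parallel edges, meeting distinct neighbors $u_1, u_2$ with $|E_G(v, u_i)| = 2$ for $i \in \{1,2\}$. I will derive a contradiction by examining the potential of $U = \{v, u_1, u_2\}$, using only the structural lemmas already established.

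First I would invoke \Cref{lem:weak-parallel}, which says that no vertex of $\Pi_3$ is joined to another vertex by parallel edges. Since $v$ is joined to $u_1$ by parallel edges, and each $u_i$ is joined to $v$ by parallel edges, this forces $v, u_1, u_2 \in \Pi_{\geq 4}$. Consequently $\rho$ takes only even values on $U$, and in particular $\rho_G(U)$ is even.

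Next I would count edges inside $U$. By construction, $|E_G(v,u_1)| + |E_G(v,u_2)| = 4$, so $|E(G[U])| \geq 4$, with strict inequality precisely when $u_1 u_2 \in E(G)$. If $u_1 u_2 \in E(G)$, then $|E(G[U])| \geq 5$ and
\[\rho_G(U) \leq 3 \cdot 6 - 4 \cdot 5 = -2,\]
contradicting \Cref{lem:j+2}. Hence $u_1u_2 \notin E(G)$ and $|E(G[U])| = 4$, yielding the upper bound $\rho_G(U) \leq 3 \cdot 6 - 4 \cdot 4 = 2$, with equality only if $U \subseteq \Pi_6$. Combining this bound with the even parity of $\rho_G(U)$ and the lower bound $\rho_G(U) \geq 1$ from \Cref{lem:j+2}, I conclude $\rho_G(U) = 2$ and $U \subseteq \Pi_6$.

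Since $U \subseteq \Pi_{\geq 4}$ and $\rho_G(U) = 2$, \Cref{lem:potential-2} applies and forces $V(G) = U$. But then $G$ is precisely the multigraph obtained from $K_{1,2}$ by giving each edge multiplicity $2$, contradicting \Cref{lem:not-butterfly}. I do not anticipate any serious obstacle here; the argument is a short potential-plus-parity computation, and the key insight is that once $v$ has two parallel-edge neighbors, the triple $\{v,u_1,u_2\}$ is so tight that it must exhaust all of $V(G)$, landing in the already-excluded configuration of \Cref{lem:not-butterfly}.
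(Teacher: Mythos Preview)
Your proof is correct and follows essentially the same route as the paper: compute the potential of $U=\{v,u_1,u_2\}$, force $U\subseteq\Pi_6$ and $\rho_G(U)=2$, then apply \Cref{lem:potential-2} to conclude $V(G)=U$. The only cosmetic difference is the final contradiction: you invoke \Cref{lem:not-butterfly} directly, while the paper observes that $d(u_1)=2$ and cites \Cref{lem:deg2-parallel}; both finish the argument equally well.
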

\begin{proof}
    Suppose that $v \in V(G)$ has neighbors $u,w$ for which $|E_G(v,u)| = |E_G(v,w)| =2$.
    If $\{u, v, w\}\not\subseteq\Pi_6$, then $\rho(\{u,v,w\})\leq 6+6+4-4(4)=0$, which contradicts \Cref{lem:j+2}. 
    Otherwise, 
    $\{u,v,w\} \subseteq  \Pi_6$, so that 
    $V(G) = \{u,v,w\}$ by  \Cref{lem:potential-2}. Then, $d(u) = 2$, contradicting \Cref{lem:deg2-parallel}.
\end{proof}

\section{ Discharging }
\subsection{Setup}
We aim to use discharging to show that $\rho_G(V(G)) \leq 0$, leading to a contradiction.
Recall that for each $v \in V(G)$, $\sigma(v) = \rho(v) - 2d(v)$ and $\sum_{v\in V(G)}\sigma(v) = \rho_{G}(V(G)) \geq 1$.
We let each vertex $v$ begin with charge $\sigma(v)$,
and then we aim to redistribute the charge in a way that shows that $\sum_{v \in V(G)} \sigma(v) \leq 0$, giving us a contradiction.

Call a vertex \emph{nonnegative} (resp., \emph{positive}) if $\sigma(v)$ is
 nonnegative (resp., positive). 
  Recall that by \Cref{lem:pendent-heavy}, there is no $1$-vertex in $\Pi_{\geq4}$.
 The positive vertices are
 $2$-vertices $v \in \Pi_6$ and $1$-vertices $v \in \Pi_3$. 
 Hence, every positive vertex $v$ satisfies $\sigma(v) \leq 2$.
A \emph{conductive} vertex is a vertex $v$ for which $d(v) \in \{2,3\}$ and $\rho(v) = 2d(v)$, or $d(v) = 2$ and $\rho(v) = 3$.
Note that every vertex $v$ for which $\sigma(v) = 0$ is conductive.
We define a \emph{conductive path} as a path whose internal vertices are all conductive. 
Note that two adjacent vertices also form a conductive path.

\begin{lemma}
\label{lem:pos-con}
    No two positive
    vertices are conductively connected.
\end{lemma}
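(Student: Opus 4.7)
The plan is to take a shortest conductive path $P$ between the two positive vertices and derive a contradiction with Lemma~\ref{lem:reducible-tree}.

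Suppose, for contradiction, that distinct positive vertices $u$ and $v$ are conductively connected, and among all conductive paths from $u$ to $v$ choose one $P = x_0 x_1 \cdots x_k$ (with $x_0 = u$, $x_k = v$) with $k$ as small as possible; set $U = V(P)$. I would first verify that $U$ satisfies the hypothesis of Lemma~\ref{lem:reducible-tree}. Each interior vertex $x_i$ is conductive, so either $x_i \in \Pi_{\leq 4}$ is a $2$-vertex whose two neighbors are the adjacent vertices on $P$ (so $x_i$ has no neighbor in $\overline U$), or $x_i \in \Pi_6$ is a $3$-vertex with two of its three neighbors on $P$ (so $|E_G(x_i, \overline U)| \leq 1$). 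Each endpoint is positive and hence either a $1$-vertex in $\Pi_3$ (no neighbor in $\overline U$) or a $2$-vertex in $\Pi_6$ (at most one edge to $\overline U$). Lemma~\ref{lem:reducible-tree} therefore tells us that $G[U]$ is not a tree.

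Next, I would argue that $G[U]$ is in fact the simple path $P$, which is a tree, yielding the contradiction. Every vertex of $P$ has degree at most $3$, so Lemma~\ref{lem:3-parallel} rules out parallel edges along $P$. Moreover, a chord $x_i x_j$ with $j - i \geq 2$ would produce the strictly shorter path $x_0 \cdots x_i x_j \cdots x_k$, whose internal vertices form a subset of $\{x_1, \ldots, x_{k-1}\}$ and hence remain conductive; this violates the minimality of $P$. Consequently $G[U]$ coincides with $P$, completing the contradiction.

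There is no serious obstacle: the argument reduces to invoking Lemma~\ref{lem:reducible-tree} together with the ``no parallel edges among $3^-$-vertices'' conclusion of Lemma~\ref{lem:3-parallel}. The only point that needs care is confirming that the chord-shortcut path is still conductive, and this follows because its interior is a subset of the interior of $P$.
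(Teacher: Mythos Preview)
Your proof is correct and follows essentially the same approach as the paper: take a shortest conductive path between the two positive vertices, use Lemma~\ref{lem:3-parallel} to rule out parallel edges among its ($3^-$-)vertices, use minimality to rule out chords, and then apply Lemma~\ref{lem:reducible-tree} to the resulting induced path. The paper's version is terser (it simply asserts that the shortest conductive path is induced without parallel edges), but the content is identical.
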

\begin{proof}
    Suppose that $u$ and $v$ are two distinct positive vertices. 
    Let $T$ be a shortest conductive path in $G$ joining $u$ and $v$, and observe that $T$ is induced without 
    parallel edges by \Cref{lem:3-parallel}.
    Furthermore, each $w \in V(T)$ has at most one
    neighbor
    in $V(G) \setminus V(T)$,
    and if $\rho(w) \leq 4$, then $w$ has no
    neighbor in $V(G) \setminus V(T)$.
    Therefore, $T$ satisfies the conditions of \Cref{lem:reducible-tree}.
    As $T$ is a tree, we have a contradiction.
\end{proof}

\begin{lemma}
\label{lem:one-positive}
    $G$ has at least two positive vertices.
\end{lemma}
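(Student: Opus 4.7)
The plan is to argue by contradiction: if $G$ has at most one positive vertex then I would derive a $\frac{1}{5}$-distribution for $(G,\rho,H,L,\mathcal D)$, violating criticality. Having zero positive vertices immediately gives $\rho_G(V(G)) \leq 0$, contradicting $\rho_G(V(G)) \geq 1$, so exactly one positive vertex $v$ exists and $\sigma(v)\in\{1,2\}$.

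If $\sigma(v)=1$, then $v\in\Pi_3$ with $d(v)=1$. By \Cref{lem:unique-weak} we have $|\Pi_3|\geq 2$, so pick $u\in\Pi_3\setminus\{v\}$; since $u$ is nonpositive, \Cref{lem:weak-deg} forces $d(u)=2$ and hence $\sigma(u)=-1$. All other vertices contribute at most $0$, so $\rho_G(V(G))\leq 0$, a contradiction. If $\sigma(v)=2$, then $v\in\Pi_6$ is a $2$-vertex; since vertices of $\Pi_3$ contribute odd $\sigma$ while vertices of $\Pi_{\geq 4}$ contribute even $\sigma$, $\rho_G(V(G))$ has the same parity as $|\Pi_3|$. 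Because each $u\in\Pi_3$ contributes at most $-1$ and \Cref{lem:unique-weak} forbids $|\Pi_3|=1$, counting forces $|\Pi_3|=0$, and then $\rho_G(V(G))=2$ with every non-$v$ vertex conductive. In particular $V(G)\subseteq\Pi_{\geq 4}$, so by \Cref{lem:term-4} $G$ is $2$-connected and $G-v$ is connected; \Cref{lem:potential-2} combined with parity also yields $\rho_G(U)\geq 4$ for every nonempty $U\subseteq V(G-v)$. These are exactly the hypotheses of \Cref{lem:path} applied to $G-v$, producing a distribution $\Phi'$ on $(H,L)$-colorings of $G-v$ with $\Pr(\phi'(u)=c)\in[\tfrac{3}{10},\tfrac{2}{5}]$, and $\Pr(\phi'(u)=c)=\tfrac{3}{10}$ for $u\in\Pi_4$ and $c\neq\omega(u)$.

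To close the argument I would extend $\Phi'$ to a $\frac{1}{5}$-distribution on $G$ by sampling $\phi(v)$ conditionally on $(\phi'(a),\phi'(b))$, where $a,b$ are the two neighbors of $v$. Because each color $c\in L(v)$ is blocked only by one color in $L(a)$ and one in $L(b)$, each of marginal probability at most $\tfrac{2}{5}$, a union bound shows $c$ is available with probability at least $\tfrac{1}{5}$. Encoding the conditional choice as a $3\times 9$ stochastic matrix, with rows indexed by $L(v)$, columns by pairs $(\phi'(a),\phi'(b))$, and zero entries where the corresponding color is blocked, in the spirit of the matrix constructions in \Cref{lem:pendent-heavy} and \Cref{lem:butterfly}, should yield $\Pr(\phi(v)=c)\geq\tfrac{1}{5}$ for every $c\in L(v)$, producing the $\frac{1}{5}$-distribution on $G$ and the contradiction.

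The hard part is verifying feasibility of this matrix. By LP duality, feasibility reduces to proving $\Pr(c_1\text{ avail}\vee c_2\text{ avail})\geq\tfrac{2}{5}$ for every pair $\{c_1,c_2\}\subseteq L(v)$; the marginal bound alone only yields $\tfrac{1}{5}$ for a single color. Establishing the pairwise bound forces us to exploit the structure of the joint distribution of $(\phi'(a),\phi'(b))$ coming from the tree-packing step inside \Cref{lem:path}: in particular, $\Pr(\phi'(a)=i_a,\phi'(b)=j_b)=0$ whenever $i_a,j_b$ are adjacent in $H$, and the two disjoint packings on the shortest $(a,b)$-path in $G-v$ impose an alternation that concentrates the joint mass on either the ``diagonal'' or the ``antidiagonal'', depending on the parity of that path's length.
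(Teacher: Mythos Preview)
Your reduction to the case $\sigma(v)=2$, $\Pi_3=\varnothing$, $V(G)\subseteq\Pi_{\geq 4}$, and $\rho_G(U)\geq 4$ for every nonempty proper $U$ is correct and matches the paper. The divergence is in how you finish, and there the proposal has a real gap.

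You apply \Cref{lem:path} to all of $G-v$ and then try to extend at $v$, which has \emph{two} neighbours $a,b$ in $G-v$. You correctly identify that feasibility of the conditional matrix reduces, by LP duality, to the pairwise bound $\Pr(c_1\text{ avail}\vee c_2\text{ avail})\geq\tfrac25$, and you also correctly observe that this does not follow from the marginal bounds $\Pr(\phi'(a)=\cdot),\Pr(\phi'(b)=\cdot)\in[\tfrac3{10},\tfrac25]$ alone. Indeed, a joint law with $\Pr(\phi'(a)=1_a,\phi'(b)=2_b)=\tfrac25$, $\Pr(\phi'(a)=2_a,\phi'(b)=1_b)=\tfrac3{10}$, $\Pr(\phi'(a)=3_a,\phi'(b)=3_b)=\tfrac3{10}$ satisfies all marginal constraints yet gives $\Pr(c_1,c_2\text{ both blocked})=\tfrac7{10}$ for the identity cover. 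Your appeal to ``alternation on the shortest $(a,b)$-path'' inside \Cref{lem:path} is only a heuristic: the construction in \Cref{lem:path} is recursive, and $a,b$ need not lie on the same path $P$ of the same recursive layer, so no single tree-packing controls their joint distribution. Nothing in the paper provides such a joint bound, and you do not prove one.

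The paper sidesteps exactly this difficulty by never extending at a vertex with two external neighbours. It splits into two cases. If $p$ is the \emph{unique} $2$-vertex (so $V(G)=\Pi_6$), it takes a terminal block $B$ of $G-p$, observes that $B$ is $2$-connected subcubic with two $2$-vertices, gets a fractional $(H,L)$-packing from \Cref{lem:red-3}, and contradicts \Cref{lem:gen-diamond}. If some other $2$-vertex exists (so $\Pi_4\neq\varnothing$), it deletes a shortest path $P$ from $p$ to the nearest $w\in\Pi_4$, applies \Cref{lem:path} to $G-V(P)$, and extends along $P$ by tree-packing; the only place a nontrivial conditional choice is needed is at $w$, which has exactly \emph{one} neighbour $w'$ outside $P$, so the matrix is $3\times3$ indexed by $\phi'(w')$ and can be written down explicitly. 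Your uniform approach trades this case split for a $3\times9$ feasibility problem that you have not solved.
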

\begin{proof}
If $G$ has no positive vertex, then $\rho_G(V(G))\leq 0$, contradicting  \Cref{lem:j+2}.
Suppose that $G$ has exactly one positive vertex $p$. Note that $\sigma(p) = \rho(p) -  2 d(p) \leq 2$.
By \Cref{lem:j+2}, $2 \geq  \sum_{v \in V(G)} \sigma(v) =\rho_{G}(V(G)) \geq 1$.

    If $\sigma(p) = 1$, 
    then $\sigma(v) = 0$ for all $v \in V(G) \setminus \{p\}$; then, $\Pi_3 = \{p\}$, contradicting \Cref{lem:unique-weak}.
    Thus, $\sigma(p)=2$ so $p$ is a $2$-vertex in $\Pi_6$.
    Therefore, at most one vertex $x$ satisfies $\sigma(x) = -1$, and all other vertices $v \in V(G) \setminus \{p,x\}$ satisfy $\sigma(v) = 0$ and hence are conductive.
    If $x$ exists,  then, $\Pi_3 = \{x\}$, and \Cref{lem:unique-weak} is violated.
    Therefore, $x$ does not exist, and 
    $V(G) \subseteq \Pi_{\geq 4}$.
    Furthermore, $G$ is subcubic,
     and by \Cref{lem:3-parallel}, $G$ is simple.
    By \Cref{lem:term-4}, $G$ is $2$-connected, and by \Cref{lem:potential-2}, each $U \subsetneq V(G)$ satisfies $\rho_G(U) \geq 4$.
    We consider two cases.
    \begin{enumerate}
        \item 
    Suppose $p$ is the unique $2$-vertex of $G$.
    Then, every other vertex $v \in V(G) \setminus \{p\}$ satisfies $d(v) = \frac 12 \rho(v) = 3$.
    In particular, $V(G) \subseteq \Pi_6$.
    Let $G' = G-p$, and let $B$ be a terminal block of $G'$. 
    As $B$ has minimum degree $2$, $B$ is $2$-connected.
    If no vertex of $B$ is adjacent to $p$, then $B$ is a terminal block of $G$, a contradiction; therefore, $B$ has a neighbor $p'$ of $p$.
    Letting $w$ be the cut-vertex of $B$ in $G'$, we see that no vertex of $V(B) \setminus \{p',w\}$ has a neighbor in $V(G) \setminus V(B)$.
    Furthermore, as $G$ is $2$-connected, $p' \neq w$.
    Therefore, $\rho_G(V(B)) = \sigma_B(p') + \sigma_B(w) = 4$, and $B$
    contains two  $2$-vertices. Therefore, $B$ has a fractional $(H,L)$-packing by \Cref{lem:red-3}, contradicting  \Cref{lem:gen-diamond}.
    
    \item On the other hand, suppose that there is some $2$-vertex in $V(G) \setminus \{p\}$.
    As $V(G) \subseteq \Pi_{\geq 4}$ and $p$ is the unique positive vertex of $G$, this implies that every $v \in V(G) \setminus \{p\}$ satisfies $\rho(v) = 2d(v)$, and $|\Pi_4| \geq 1$.
    Let $w \in \Pi_4$ be a closest vertex of $\Pi_4$ to $p$, and let $P$ be a shortest path from $p$ to $w$.
    Observe that $V(P) \cap \Pi_4 = \{w\}$.
    Without loss of generality, $\omega(w) = 1_w$.
    Write $w'$ for the unique neighbor of $w$ in $V(G) \setminus V(P)$.

    Now, let $G' = G - V(P)$.
    We observe that since each $U \subseteq V(G')$ satisfies $\rho_G(U) \geq 4$,
    each component of $G'$ satisfies \Cref{lem:path}.
    Therefore, there exists a $\frac 15$-distribution $\Phi'$ for $(G',\rho,H,L,\mathcal D)$
    such that for each $v \in V(G')$ and $c \in L(v)$, $\Pr(\phi'(v) = c) \in [\frac 3{10}, \frac 25]$.
    We construct a $\frac 15$-distribution $\Phi$ for $(G,\rho,H,L,\mathcal D)$ as follows.
    First, we sample an $(H,L)$-coloring $\phi'$ of $G'$ using $\Phi'$, and we let $\phi$ agree with $\phi'$ on $G'$. 
    For each $i \in \{1,2,3\}$, let $p_i = \Pr(\phi'(w') = i_{w'})$.
    Also, define $q_i = 10p_i - 3$, and observe that $q_1 + q_2 + q_3 = 1$ and $q_i \in [0,1]$.

    Next, for each $v \in V(P)$,
    let $L'(v) = L(v) \setminus N_H(\phi')$.
    In particular, $|L'(v)| = 2$ for each $v \in V(P)$.
    Next, use \Cref{lem:tree-packing}
    to find
    two disjoint $(H,L')$-colorings $\phi_1, \phi_2$ of $P$.
    Finally, writing $\phi'(w') = i_{w'}$,
    we choose an index $j \in \{1,2,3\}$
    with conditional probability equal to entry $A_{ji}$ in the following matrix $A$:
    \[
        \frac 1{10} \begin{bmatrix} 0 & \frac{2}{p_2} & \frac{2}{p_3} \\
        \frac{1 + q_1 + q_2}{p_1} &0 & \frac{1 + q_3}{p_3} \\
        \frac{1 + q_1 + q_3}{p_1} & \frac{1+q_2}{p_2} & 0 
    \end{bmatrix}.
    \]
    Note that in each column $i$, each entry is at least $\frac{1+q_i}{10 p_i} = \frac{p_i-\frac 15}{p_i} = 1 - \frac{1}{5p_i} \geq 1 - \frac{1}{5(3/10)} = \frac 13$.

    Having chosen $j$, we let $\phi$ agree with the coloring $\phi^* \in \{\phi_1, \phi_2\}$ for which $\phi^*(w) = j_w$.

    Now, 
    we claim that $\Phi$ is a $\frac 15$-distribution for $(G,\rho,H,L,\mathcal D)$.
    First, for each $v \in V(G) \setminus V(P)$ and $c \in L(v)$, 
    by our assumption on $\Phi'$,
    $\Pr(\phi(v) = c) = \Pr(\phi'(v) = c) \geq \frac 15$, and when $v \in \Pi_4$,
    $\Pr(\phi(v) = c) = \Pr(\phi'(v) = c) = \frac 3{10}$ for each $c \in L(v) \setminus \{\omega(v)\}$.
    Next,
    we observe $V(P) \setminus \{w\} \subseteq \Pi_6$.
    By our choice of $\Phi'$,
    for each $v \in V(P) \setminus \{w\}$ and $c \in L(v)$,
    $\Pr(c \in L'(v)) \geq \frac 35$,
    and $\phi(v) = c$ with subsequent conditional probability at least $\frac 13$;
    therefore, $\Pr(\phi(v) = c) \geq \frac 15$.
    Finally,
    writing $\mb p = [p_1,p_2,p_3]^T$,
    we see that 
    the colors $(1_w, 2_w, 3_w)$ are assigned to $w$ according to the probability vector
     $A \mb p = [2/5,3/10,3/10]^T$.
     As $\omega(w) = 1_w$,
     $\Phi$ is a $\frac 15$-distribution for $(G,\rho,H,L,\mathcal D)$, a contradiction.
    \end{enumerate}    
\end{proof}

A vertex $w$ 
is
\emph{insulated} if $\sigma(w) \leq -2$. 
 By our definition of $\sigma$, 
 \begin{equation}\label{nin0}
 \mbox{
 each non-insulated negative vertex $v$  has $\sigma(v)=-1$
 and thus is a  $2$-vertex in $\Pi_3$. }   
\end{equation}
 By \Cref{lem:pos-con},
\begin{equation}\label{nin}
 \mbox{each   $2$-vertex in $\Pi_3$ is conductively connected with at most one positive vertex.}   
\end{equation}
 By \Cref{lem:weak-deg},
\begin{equation}\label{insul}
 \parbox{14cm}{no insulated vertex is in $\Pi_3$, each insulated vertex in $\Pi_4$ has degree $3$, and each insulated vertex in $\Pi_6$ has degree $4$ or $5$. }   
\end{equation}

We also observe that every vertex of $G$ is insulated, conductive, or positive.
Indeed, if $\sigma(v) \leq -2$, then $v$ is insulated. If $\sigma(v) = -1$, then $\rho(v) = 3$ and $d(v) = 2$, so that $v$ is conductive. If $\sigma(v) = 0$, then $\rho(v) = 6$ and $d(v) = 3$, or $\rho(v) = 4$ and $d(v) = 2$,
so that $v$ is conductive.
Finally, if $\sigma(v) \geq 1$, then $v$ is positive.

\begin{lemma}
\label{lem:positive-insulated}
Every positive vertex is conductively connected with an insulated vertex.
\end{lemma}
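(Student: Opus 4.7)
The plan is to argue by contradiction. Suppose that some positive vertex $p$ is not conductively connected to any insulated vertex, and let $S$ denote the set of vertices conductively connected to $p$, together with $p$ itself. The strategy is to prove that these assumptions force $S = V(G)$, which will contradict the combination of \Cref{lem:one-positive} and \Cref{lem:pos-con}.

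First I would classify the vertices of $S$. By the contradiction hypothesis, $S$ contains no insulated vertex. By \Cref{lem:pos-con}, $S$ contains no positive vertex other than $p$. Since every vertex of $G$ is insulated, conductive, or positive (the trichotomy noted in the setup of this subsection), every element of $S \setminus \{p\}$ must be conductive.

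The key step is to show that $S$ is closed under taking neighbors in $G$. Given $v \in S$ and a neighbor $w$ of $v$: if $v = p$, then the single edge $pw$ already forms a conductive path (it has no internal vertices), so $w \in S$. Otherwise $v \in S \setminus \{p\}$ is conductive by the classification above; concatenating any conductive $p$-to-$v$ path with the edge $vw$ produces a $p$-to-$w$ path whose only new internal vertex is $v$, which is conductive, so the extended path is still conductive and $w \in S$. Because $G$ is connected (as $G$ is $(\rho,\frac 15)$-critical) and $p \in S$, this closure property forces $S = V(G)$. By \Cref{lem:one-positive}, there is a second positive vertex $p' \neq p$, and then $p' \in V(G) = S$ yields a conductive path from $p$ to $p'$, contradicting \Cref{lem:pos-con}.

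There should be essentially no technical obstacle here. The argument relies only on the trichotomy of vertices into the insulated, conductive, and positive classes and on the definitional fact that appending a single edge to a conductive path keeps it conductive precisely when the previous endpoint — which becomes the new internal vertex — is conductive. The only mild subtlety is that the neighbors of $p$ itself land in $S$ by the vacuous length-one case of a conductive path, which is explicitly built into the definition given in the paper.
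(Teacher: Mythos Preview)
Your proof is correct and follows essentially the same approach as the paper's: the paper argues in two sentences that, since $p$ is conductively connected to neither an insulated nor another positive vertex, $V(G)$ must consist of $p$ together with conductive vertices, contradicting \Cref{lem:one-positive}. Your version simply makes explicit the closure-under-neighbors step (via the trichotomy and the definition of conductive path) that the paper leaves implicit.
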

\begin{proof}
Suppose that $p$ is a positive vertex that is not conductively connected with an insulated vertex. 
As $p$ is not conductively connected with another positive vertex by  \Cref{lem:pos-con}, $V(G)$ consists entirely of $p$ and conductive vertices, contradicting  \Cref{lem:one-positive}.
\end{proof}

\begin{lemma} 
\label{lem:expensive-cut}
    If $p \in V(G)$ satisfies $\sigma(p) = 2$, 
    then $p$ is conductively connected with at least two negative vertices.
\end{lemma}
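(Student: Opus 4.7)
The plan is to mimic the strategy of \Cref{lem:positive-insulated}: assume $p$ is conductively connected with at most one negative vertex and derive a contradiction using structural analysis, \Cref{lem:cut-4}, and \Cref{lem:path}.

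Suppose $p$ has at most one conductively connected negative vertex. By \Cref{lem:positive-insulated} exactly one exists; call it $w$. First I would show $w$ must be insulated: if instead $w$ were a $2$-vertex in $\Pi_3$ (hence conductive with $\sigma(w) = -1$), then \Cref{lem:positive-insulated} applied to $p$ would still produce a conductively connected insulated vertex distinct from $w$, a second negative vertex. By the same reasoning the set $S_p$ of conductive vertices conductively connected with $p$ has $\sigma(v)=0$ for every $v \in S_p$, so $S_p \subseteq \Pi_{\geq 4}$. Set $U = \{p\} \cup S_p \cup \{w\}$. Mirroring the proof of \Cref{lem:positive-insulated}, any neighbor in $G$ of a vertex of $\{p\} \cup S_p$ already lies in $U$ (otherwise it would be conductive and in $S_p$, positive but distinct from $p$ contradicting \Cref{lem:pos-con}, or insulated and hence equal to $w$). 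Hence every edge of $E_G(U, V(G) \setminus U)$ is incident with $w$. If $V(G) = U$, then $\rho_G(V(G)) = \sigma(p) + \sum_{c \in S_p} \sigma(c) + \sigma(w) \leq 2 + 0 - 2 = 0$, contradicting \Cref{lem:j+2}; hence $V(G) \neq U$ and $w$ is a cut-vertex. Writing $G_1 = G[U]$ and $G_2 = G - (U \setminus \{w\})$, we have $V(G_1) \cap V(G_2) = \{w\}$ and $V(G_1) \subseteq \Pi_{\geq 4}$.

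Define $\rho'$ from $\rho$ by setting $\rho'(w) = 4$, and let $L'$ coincide with $L$ except possibly for the basepoint of $w$. By \Cref{lem:cut-4}, one of $(G_1, \rho', H, L', \mathcal D)$, $(G_2, \rho', H, L', \mathcal D)$ has no $\tfrac{1}{5}$-distribution, so I plan to derive a contradiction by exhibiting $\tfrac{1}{5}$-distributions for both. For $G_2$: if $\rho(w) = 4$ then $\rho' = \rho$ on $V(G_2)$ and criticality of $(G, \rho)$ delivers one; if $\rho(w) = 6$, any $(\rho', \tfrac{1}{5})$-critical subgraph $G_2^\ast \subseteq G_2$ must contain $w$, and applying \Cref{thethm} eliminates every case---the $\mathcal I$-case requires $\rho'(w) = 6$; the exceptional-$C_2$ case places a parallel-edge pair at $w$ that is ruled out by \Cref{lem:deg2-parallel} or \Cref{lem:kck} or, after adjoining $\{p\} \cup S_p$, by \Cref{lem:j+2} and \Cref{lem:potential-2}; the $\rho'_{G_2^\ast}(V(G_2^\ast)) \leq 0$ case gives $\rho_G(V(G_2^\ast) \cup \{p\} \cup S_p) \leq 0$, again contradicting \Cref{lem:j+2}.

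For $G_1$: by \Cref{lem:potential-2} every nonempty subset of $V(G_1 - w)$ has $\rho_G$-potential at least $4$, and since each vertex of $G_1 - w$ is either $p$ (with $d_{G_1-w}(p) \leq 2$) or a conductive vertex of $S_p \subseteq \Pi_{\geq 4}$, we have $d_{G_1-w}(v) \leq \rho(v)/2$ throughout. Thus \Cref{lem:path} applies to each component of $G_1 - w$, yielding a distribution on $(H,L)$-colorings $\phi$ of $G_1 - w$ with $\Pr(\phi(v) = c) \in [\tfrac{3}{10}, \tfrac{2}{5}]$ everywhere and $= \tfrac{3}{10}$ at non-basepoints of $\Pi_4$-vertices. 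I then extend $\phi$ to $w$ by conditionally sampling $\phi(w) \in L(w) \setminus N_H(\phi|_{G_1 - w})$ so that the marginal at $w$ matches the $\rho'(w) = 4$ targets ($\omega(w)$ with probability $\tfrac{2}{5}$, the other colors with probability $\tfrac{3}{10}$), using a matrix-assignment akin to those in \Cref{lem:pendent-heavy,lem:butterfly}. The principal obstacle is verifying that this conditional sampling is always feasible: when $w$ has many neighbors in $U$, some colors at $w$ may be forbidden too often, and a careful case analysis exploiting the $C_2/C_4$ structure provided by \Cref{lem:C2C4} together with the $\mathcal I$-exclusion of $G$ will be needed to confirm that the target marginals at $w$ are achievable.
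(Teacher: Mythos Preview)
Your approach diverges substantially from the paper's and leaves a real gap unresolved. The paper's proof is three lines: letting $U$ be the set of vertices conductively connected with $p$, one observes $U \subseteq \Pi_{\ge 4}$, and if $U \ne V(G)$ then $w$ is a cut-vertex, so some terminal block $B$ of $G$ lies inside $U$; since $V(B) \subseteq \Pi_{\ge 4}$, \Cref{lem:term-4} forces $B = G$, a contradiction. All the delicate work you are attempting is already packaged in \Cref{lem:term-4} (whose proof handles precisely the parallel-edge-at-$w$ obstruction and the potential bookkeeping you are rediscovering).

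Your route via \Cref{lem:cut-4} is not obviously salvageable as written. In the $G_1$ step you apply \Cref{lem:path} to $G_1 - w$ and then must extend to $w$ with prescribed marginals $(\tfrac{2}{5}, \tfrac{3}{10}, \tfrac{3}{10})$. Using \Cref{lem:potential-2} one can show $w$ has at most two edges into $U$, but with two neighbors of $w$ in $G_1 - w$, \Cref{lem:path} controls only their \emph{marginals}, not their joint distribution. For suitable covers the probability that a given color at $w$ is available can drop to about $\tfrac{1}{5}$, which is too small to hit a target of $\tfrac{3}{10}$ or $\tfrac{2}{5}$. The matrix devices of \Cref{lem:pendent-heavy} and \Cref{lem:one-positive} that you allude to treat the single-neighbor case; the two-neighbor extension is genuinely harder, and ``case analysis exploiting the $C_2$/$C_4$ structure'' is not a plan but a hope. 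Your $G_2$ treatment of the exceptional-$C_2$ case is likewise incomplete: neither \Cref{lem:deg2-parallel} nor \Cref{lem:kck} applies when $d(w) \ge 4$, and the potential bound after ``adjoining $\{p\} \cup S_p$'' is asserted rather than computed. The clean fix is to drop the direct construction and invoke \Cref{lem:term-4}.
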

\begin{proof}  
    Suppose that $\sigma(p) = 2$.
    By \Cref{lem:positive-insulated},
    $p$ is conductively connected with at least one insulated vertex $w \in V(G)$.
    Note  that $w \in \Pi_{\geq 4}$.
     Let $U$ be the set of vertices that are conductively connected with $p$.

    Suppose that $w$ is the only negative vertex conductively connected with $w$.    
    Then, every vertex $v \in U\setminus\{p, w\}$ satisfies $\sigma(v) = 0$, and in particular, $U \subseteq \Pi_{\geq 4}$.
    Note that $p,w \in U$.
    If
    $U = V(G)$, then, $\rho_G(V(G)) = \sigma(p) + \sigma(w) \leq 0$, a contradiction.
    Therefore, $U \neq V(G)$, and by the definition of conductive connectivity, $w$ is a cut-vertex of $G$.
    Then, some terminal block of $G$ is a subset of $U$, contradicting  \Cref{lem:term-4}. 
\end{proof}

\begin{lemma} 
\label{lem:pos-UB}
    If $w \in V(G)$ is insulated, then $w$ is conductively connected with at most $2 d(w) - \rho(w) = -\sigma(w)$ positive vertices.
\end{lemma}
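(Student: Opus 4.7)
The plan is as follows. Suppose toward contradiction that $w$ is conductively connected to at least $k := -\sigma(w)+1 = 2d(w)-\rho(w)+1$ positive vertices, and fix such positives $p_1,\dots,p_k$. The three admissible parameter cases $(\rho(w),d(w))\in\{(4,3),(6,4),(6,5)\}$ all satisfy $k\le d(w)$ and $k\ge d(w)-1$; these two inequalities will drive the argument. For each $i$ I would fix a \emph{shortest} conductive path $P_i$ from $w$ to $p_i$, set $T=\bigcup_i P_i$, and let $U=V(T)$. The goal is to show that $G[U]=T$ is a tree satisfying the hypotheses of \Cref{lem:reducible-tree}, which then yields the desired contradiction.

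The first claim is that the $P_i$ are pairwise internally vertex-disjoint and use pairwise distinct first edges at $w$. Indeed, if $P_i$ and $P_j$ shared an internal (hence conductive) vertex $c$, concatenating the $p_i$-to-$c$ reversed segment of $P_i$ with the $c$-to-$p_j$ segment of $P_j$, and extracting a simple sub-path, would produce a conductive path from $p_i$ to $p_j$ that avoids $w$, contradicting \Cref{lem:pos-con}; shared first edges are ruled out by the same type of argument, since the shared first vertex would have to be either a shared conductive internal vertex (just excluded) or would force $p_i=p_j$. It follows that $T$ is a spider rooted at $w$, hence a tree. The second claim is that $G[U]=T$, i.e., $G[U]$ has no edges beyond those of $T$. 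Any extra edge $xy$ admits a short case analysis according to the types of $x,y\in U$ (each of them being $w$, a positive $p_i$, or an internal conductive vertex of some $P_j$): it either provides a strict shortcut or chord in some $P_i$ contradicting shortest-path choice, or combines with the tails of two paths to yield a conductive path between two distinct positives, contradicting \Cref{lem:pos-con}. This case analysis is the bulk of the work and the main obstacle, but every sub-case reduces to one of these two uniform contradictions.

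Finally, I would verify the hypotheses of \Cref{lem:reducible-tree} for $U$. Every $u\in U\cap\Pi_{\le 4}$—namely $w$ when $\rho(w)=4$, conductive $2$-vertices in $\Pi_3\cup\Pi_4$ on the paths, and positive $1$-vertices in $\Pi_3$—has all its neighbors on the associated $P_i$, hence in $U$. Every $u\in U\cap\Pi_6$ has at most one neighbor outside $U$: when $u=w\in\Pi_6$ this amounts to $d(w)-k\le 1$, which is precisely the inequality $k\ge d(w)-1$ recorded above; and for the remaining $\Pi_6$-vertices in $U$ (conductive $3$-vertices internal to the paths and positive $2$-vertices), the unique non-path neighbor lies outside $U$ by the no-extra-edges analysis of the previous paragraph. \Cref{lem:reducible-tree} then asserts that $G[U]$ is not a tree, contradicting $G[U]=T$, and completing the proof.
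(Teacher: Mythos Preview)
Your proposal is correct and follows essentially the same approach as the paper: take shortest conductive paths from $w$ to the positive vertices, argue that their union is an induced tree, and then verify the hypotheses of \Cref{lem:reducible-tree} vertex by vertex to obtain a contradiction. The paper is terser (it asserts in one line that the union is an induced tree, using \Cref{lem:pos-con}), whereas you spell out the internal-disjointness of the paths and the ``no extra edges'' case analysis; the only cosmetic difference is that the paper takes all conductively connected positives while you take exactly $k$ of them, which does not affect the argument.
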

\begin{proof}
Let $Q$ be the set of all positive vertices  that are conductively connected with $w$,  and suppose that $|Q| \geq 2d(w) - \rho(w) + 1$. For each
$q \in Q$, let $U_q$ be the  set of vertices on some shortest conductive path joining $w$ and $q$. 
Then, let $U = \bigcup_{q \in Q} U_q$.
As each $U_q$ is chosen to be shortest, $G[U_q]$ is an induced path in $G$ for each $q \in Q$.
As no
conductive path joins two positive vertices by \Cref{lem:pos-con},
$G[U]$ is an induced tree $T$ in $G$.

We will show that each vertex $v$ of $T$ satisfies $d_{V(G)\setminus T}(v)\leq 1$, and if $v\in \Pi_{\leq 4}$, then $d_{V(G)\setminus T}(v)=0$.
First, if $v$ is a leaf of $T$, then since $v$ is positive,
$(d_G(v), \rho(v))\in\{(1,3),(2,6)\}$. 
Thus, $d_{V(G)\setminus T}(v)\leq 1$, and if $v\in \Pi_3$, then $d_{V(G)\setminus T}(v)=0$.
As $d_T(w)=|Q| \geq 2d_G(w) - \rho(w) + 1$,
\[d_{V(G) \setminus T}(w) = d_G(w) - d_T(w) \leq -d_G(w) + \rho(w) - 1.\]
Since $w$ is insulated, $(d_G(v), \rho(v))\in\{(3,4),(4,6),(5,6)\}$, so $d_{V(G)\setminus T}(w)\leq1$, and if $w\in\Pi_4$, then $d_{V(G)\setminus T}(v)=0$.
For each $v\in V(T)\setminus (Q\cup\{w\})$, $d_T(v) = 2$,
so
\[
d_{V(G) \setminus T}(v) = d_G(v) - 2 \leq 1
\]
by the conductivity of $v$,
and when $\rho(v) \leq 4$, $d_{V(G) \setminus T}(v) = 0$.
Therefore, the set $U$ satisfies all conditions of
\Cref{lem:reducible-tree}.
As $G[U]$ is a tree, we have a contradiction.
\end{proof}

\subsection{The procedure}

%Now, we proceed with discharging as follows.
Let each $v \in V(G)$ begin with charge $\sigma(v)$.
Then, we carry out the following:
\begin{quote}
    If $\sigma(v) \geq 1$, then $v$ gives charge $1$ to each negative vertex conductively connected with $v$.
\end{quote}
We claim that after this step, each vertex of $G$ has a nonpositive charge, as follows:
\begin{itemize}
    \item Suppose that $\sigma(v) = 2$.
    By \Cref{lem:expensive-cut}, $v$ is conductively connected with at least two negative vertices. Therefore, $v$ gives away charge at least $2$, for a final charge of at most $0$.
    \item Suppose that $\sigma(v) = 1$, so that $\rho(v) = 3$ and $d(v) = 1$. Then $v$ is conductively connected with a negative vertex by \Cref{lem:positive-insulated}. Therefore, $v$ gives away at charge least $1$, for a final charge of at most $0$.
    \item Suppose that $\sigma(v) = 0$, so that $v$ is conductive. Then $v$
    neither gives nor receives charge, so the final charge of $v$ is  $0$.
    \item Suppose that $\sigma(v) = -1$, so that $\rho(v) = 3$ and $d(v) = 2$.
    Then, as $v$ is conductive,
     \Cref{lem:pos-con} implies that
     $v$ is conductively connected with at most one positive vertex and thus receives charge at most $1$, for a final charge of at most $0$.
    \item Suppose that $\sigma(v) \leq -2$, so that $v$ is insulated.
    Then, by \Cref{lem:pos-UB},
    $v$ is conductively connected with at most $2d(v) - \rho(v)$ positive vertices.
    Hence, as $v$ receives charge $1$ from each conductively connected positive vertex,
    the final charge of $v$ is at most $\sigma(v) + 2d(v) - \rho(v) = 0$.
\end{itemize}
Since each vertex $v \in V(G)$ satisfies $\sigma(v) \leq 2$, these cases are exhaustive. Therefore, since each vertex ends the discharging process with nonnegative charge, and  no charge is created or destroyed during discharging, we have that 
\[\rho_G(V(G)) = \sum_{v \in V(G)} \sigma(v) \leq 0.\]
Therefore, $G$ is not a counterexample to \Cref{thethm}. This final contradiction completes the proof.

\bibliographystyle{abbrvurl}
{\small
\bibliography{ref}}

\end{document}